\documentclass[italian,english]{amsart}

\usepackage{amssymb}
\usepackage{babel}
\usepackage{color}
\usepackage[T1]{fontenc}
\usepackage[pdfborder={0 0 0},ocgcolorlinks]{hyperref}
\usepackage{prettyref}

\numberwithin{equation}{section}

\makeatletter
\newcommand\thmsname{\protect\theoremname}
\newcommand\nm@thmtype{theorem}
\theoremstyle{plain}

\newenvironment{namedthm}[1][Undefined Theorem Name]{
  \ifx{#1}{Undefined Theorem Name}\renewcommand\nm@thmtype{theorem*}
  \else\renewcommand\thmsname{#1}\renewcommand\nm@thmtype{namedtheorem}
  \fi
  \begin{\nm@thmtype}}
  {\end{\nm@thmtype}}
\makeatother

\theoremstyle{plain}
\newtheorem{thm}{\protect\theoremname}[section]
\newtheorem{prop}[thm]{\protect\propositionname}
\newtheorem{fact}[thm]{\protect\factname}
\newtheorem{lem}[thm]{\protect\lemmaname}
\newtheorem{cor}[thm]{\protect\corollaryname}
\theoremstyle{remark}
\newtheorem{rem}[thm]{\protect\remarkname}
\newtheorem*{acknowledgement*}{\protect\acknowledgementname}
\theoremstyle{definition}
\newtheorem{defn}[thm]{\protect\definitionname}

\newrefformat{def}{\protect\definitionname\ \ref{#1}}
\newrefformat{prop}{\protect\propositionname\ \ref{#1}}
\newrefformat{fact}{\protect\factname\ \ref{#1}}
\newrefformat{cor}{\protect\corollaryname\ \ref{#1}}
\newrefformat{lem}{\protect\lemmaname\ \ref{#1}}
\newrefformat{rem}{\protect\remarkname\ \ref{#1}}
\newrefformat{sub}{\protect{Subsection}\ \ref{#1}}

\providecommand{\acknowledgementname}{Acknowledgement}
\providecommand{\corollaryname}{Corollary}
\providecommand{\definitionname}{Definition}
\providecommand{\factname}{Fact}
\providecommand{\lemmaname}{Lemma}
\providecommand{\propositionname}{Proposition}
\providecommand{\remarkname}{Remark}
\providecommand{\theoremname}{Theorem}

\newcommand\no{\mathbf{No}}
\newcommand\on{\mathbf{On}}

\newcommand\J{\mathbb{J}}
\newcommand\K{\kappa_{\no}}
\newcommand\F{\mathbb{F}}
\newcommand\li{\mathbb{L}}
\newcommand\M{\mathfrak{M}}
\newcommand\N{\mathbb{N}}
\newcommand\Q{\mathbb{Q}}
\newcommand\R{\mathbb{R}}
\newcommand\T{\R^{*}\M}
\newcommand\Z{\mathbb{Z}}
\newcommand\nin{\mathrel{\notin}}
\newcommand\eps{\varepsilon}
\newcommand\m{\mathfrak{m}}
\newcommand\n{\mathfrak{n}}
\newcommand\om{\mathfrak{o}}
\newcommand\simple{\mathrel{<_{s}}}
\newcommand\simpleq{\mathrel{\leq_{s}}}
\newcommand\trunc{\mathrel{\vartriangleleft}}
\newcommand\trunceq{\mathrel{\trianglelefteq}}
\newcommand\ntrunc{\mathrel{\blacktriangleleft}}
\newcommand\ntrunceq{\mathrel{\ooalign{{\raise-1ex\hbox{$\relbar$}}\cr\raise.1ex\hbox{$\ntrunc$}}}}
\newcommand\vell{\ell}
\newcommand\vless{\mathrel{\prec}}
\newcommand\vleq{\mathrel{\preceq}}
\newcommand\vgreater{\mathrel{\succ}}
\newcommand\vgeq{\mathrel{\succeq}}
\newcommand\veq{\mathrel{\asymp}}
\newcommand\lless{\mathrel{\prec^{{\scriptscriptstyle \mathrm{L}}}}}
\newcommand\lleq{\mathrel{\preceq^{{\scriptscriptstyle \mathrm{L}}}}}
\newcommand\lgreater{\mathrel{\succ^{{\scriptscriptstyle \mathrm{L}}}}}
\newcommand\lgeq{\mathrel{\succeq^{{\scriptscriptstyle \mathrm{L}}}}}
\newcommand\lequal{\mathrel{\asymp^{{\scriptscriptstyle \mathrm{L}}}}}
\newcommand\kless{\mathrel{\prec^{{\scriptscriptstyle \mathrm{K}}}}}
\newcommand\kleq{\mathrel{\preceq^{{\scriptscriptstyle \mathrm{K}}}}}
\newcommand\kgreater{\mathrel{\succ^{{\scriptscriptstyle \mathrm{K}}}}}
\newcommand\kgeq{\mathrel{\succeq^{{\scriptscriptstyle \mathrm{K}}}}}
\newcommand\kequal{\mathrel{\asymp^{{\scriptscriptstyle \mathrm{K}}}}}
\newcommand\suchthat{\,:\,}
\newcommand\somed{D}
\newcommand\simpled{\partial}

\DeclareMathOperator\ntrank{NR}
\DeclareMathOperator\dom{dom}
\DeclareMathOperator\img{Im}
\DeclareMathOperator\supp{S}

\newcommand\bracket[1]{\R\langle\langle#1\rangle\rangle}
\newcommand\ka[1]{\kappa_{-#1}}
\newcommand\sign[1]{\operatorname{sign}(#1)}
\newcommand\convex[2]{(#1;#2)}

\title{Surreal numbers, derivations and transseries}
\date{May 13, 2015}
\author{Alessandro Berarducci}

\thanks{A.B.\ was partially supported by PRIN 2012
  ``\foreignlanguage{italian}{Logica, Modelli e Insiemi}'' and by a
  Leverhulme Visiting Professorship (VP2-2013-055) at the School of
  Mathematical Sciences, Queen Mary University of London. }

\address{\foreignlanguage{italian}{Universit\`a di Pisa, Dipartimento
    di Matematica, Largo Bruno Pontecorvo 5, 56127 Pisa, PI}, Italy}

\email{berardu@dm.unipi.it}

\author{Vincenzo Mantova}

\thanks{V.M.\ was supported by FIRB2010 ``New advances in the Model
  Theory of exponentiation'' RBFR10V792 at the University of Camerino
  and by ERC AdG ``Diophantine Problems'' 267273 at the
  \foreignlanguage{italian}{Scuola Normale Superiore}.}

\address{\foreignlanguage{italian}{Scuola Normale Superiore, Classe di
    Scienze, Piazza dei Cavalieri 7, 56126 Pisa, PI}, Italy.}

\email{vincenzo.mantova@sns.it}

\subjclass[2010]{03C64, 16W60, 04A10, 26A12, 13N15.}

\keywords{surreal numbers, transseries, Hardy fields, differential fields}

\begin{document}

\begin{abstract}
  Several authors have conjectured that Conway's field of surreal
  numbers, equipped with the exponential function of Kruskal and
  Gonshor, can be described as a field of transseries and admits a
  compatible differential structure of Hardy-type. In this paper we
  give a complete positive solution to both problems. We also show
  that with this new differential structure, the surreal numbers are
  Liouville closed, namely the derivation is surjective.
\end{abstract}

\maketitle

\section{Introduction}

Conway's class ``$\mathbf{No}$'' of surreal numbers is a remarkable
mathematical structure introduced in \cite{Conway1976}. Besides being
a \emph{universal domain} for ordered fields (in the sense that every
ordered field whose domain is a set can be embedded in $\mathbf{No}$),
it admits an exponential function $\exp:\mathbf{No}\to\mathbf{No}$
\cite{Gonshor1986} and an interpretation of the real analytic
functions restricted to finite numbers \cite{Alling1987}, making it,
thanks to the results of \cite{Ressayre1993,Dries1994}, into a model
of the theory of the field of real numbers endowed with the
exponential function and all the real analytic functions restricted to
a compact box \cite{DriesE2001}.

It has been suggested that $\no$ could be equipped with a derivation
compatible with $\exp$ and with its natural structure of generalized
power series field. One would like such a derivation to formally
behave as the natural derivation on the germs at infinity of functions
$f:\R\to\R$ belonging to a ``Hardy field''
\cite{Bourbaki1976,Rosenlicht1983,Miller2012}.  This can be given a
precise meaning through the notion of $H$-field
\cite{Aschenbrenner2002,Aschenbrenner2005}, a formal algebraic
counterpart of the notion of Hardy field.

A related conjecture is that $\no$ can be viewed as a universal domain
for various generalized power series fields equipped with an
exponential function, including \'Ecalle's field of \emph{transseries}
\cite{Ecalle1992} (introduced in connection with Dulac's problem) and
its variants, such as the \emph{logarithmic-exponential series} of L.\
van den Dries, A.\ Macintyre and D.\ Marker
\cite{Dries1997,DriesMM2001}, the \emph{exponential-logarithmic
  series} of S.\ Kuhlmann \cite{Kuhlmann2000}, and the transseries of
J.\ van der Hoeven \cite{VanderHoeven1997,VanderHoeven2006} and M.\
Schmeling \cite{Schmeling2001}. Referring to logarithmic-exponential
series, in \cite{DriesE2001} the authors say that ``There are also
potential connections with the theory of surreal numbers of Conway and
Kruskal, and super exact asymptotics\textquotedblright . Some years
later, a more precise formulation was given in
\cite{VanderHoeven2006}: ``We expect that it is actually possible to
construct isomorphisms between the class of surreal numbers and the
class of generalized transseries of the reals with so called
transfinite iterators of the exponential function and nested
transseries. A start of this project has been carried out in
collaboration with my former student M.\ Schmeling
\cite{Schmeling2001}. If this project could be completed, this would
lead to a remarkable correspondence between growth-rate functions and
numbers.'' Further steps in this direction were taken by S.\ Kuhlmann
and M.\ Matusinski in \cite{Kuhlmann2011b,Kuhlmann2012} leading to the
explicit conjecture that $\no$ is a field of ``exponential-logarithmic
transseries'' \cite[Conj.\ 5.2]{Kuhlmann2014} and can be equipped with
a ``Hardy-type series derivation'' \cite[p.\ 368]{Matusinski2012}.

In this paper we give a complete solution to the above problems
showing that the surreal numbers have a natural transseries structure
in the sense of \cite[Def.\ 2.2.1]{Schmeling2001} (although not in the
sense of \cite{Kuhlmann2014}) and finding a compatible Hardy-type
derivation.

We expect that these results will lead to a considerable
simplification of the treatment of transseries (which will be
investigated in a forthcoming paper) and thus provide a valuable tool
for the study of the asymptotic behavior of functions. In the light of
the model completeness conjectures of \cite{Aschenbrenner2013}, we
also expect that $\no$, equipped with this new differential structure,
is an elementary extension of the field of the logarithmic-exponential
series.

\medskip

In order to describe the results in some detail, we recall that the
surreal numbers can be represented as binary sequences of transfinite
ordinal length, so that one can endow $\no$ with a natural tree-like
well-founded partial order $\simple$ called ``simplicity relation''.
Another very useful representation describes surreal numbers as
infinite sums $\sum_{x\in\no}a_{x}\omega^{x}$, where
$x\mapsto\omega^{x}$ is Conway's omega-function, $a_{x}\in\R$ for all
$x$, and the support $\{\omega^{x}:a_{x}\neq0\}$ is a reverse
well-ordered set, namely every non-empty subset has a maximum. In
other words, $\no$ coincides with the \emph{Hahn field}
$\R((\omega^{\no}))$ with coefficients in $\R$ and monomial group
$(\omega^{\no},\cdot)$ (see \prettyref{sub:Hahn-fields}).  In
particular, we have a well defined notion of infinite ``summable''
families in $\no$. In this paper we prove:

\begin{namedthm}[Theorem A (\ref{thm:D-surreal})]
  Conway's field of surreal numbers $\no$ admits a derivation
  $\somed:\no\to\no$ satisfying the following properties:
  \begin{enumerate}
  \item Leibniz' rule: $\somed(xy)=x\somed(y)+y\somed(x)$;
  \item strong additivity:
    $\somed\left(\sum_{i\in I}x_{i}\right)=\sum_{i\in I}\somed(x_{i})$
    if $(x_{i}\suchthat i\in I)$ is summable;
  \item compatibility with exponentiation:
    $\somed(\exp(x))=\exp(x)\somed(x)$;
  \item constant field $\R$: $\ker(\somed)=\R$;
  \item $H$-field: if $x>\N$, then $\somed(x)>0$.
  \end{enumerate}
\end{namedthm}

We call \textbf{surreal derivation} any function $D:\no\to\no$
satisfying properties (1)-(5) in Theorem~A. We show in fact that there
are several surreal derivations, among which a ``simplest'' one
$\simpled:\no\to\no$.  We can prove that the simplest derivation
$\simpled$ satisfies additional good properties, such as
$\simpled(\omega)=1$ and the existence of anti-derivatives.

\begin{namedthm}[Theorem B (\ref{thm:liou-closed-small})]
  The field $\no$ of surreal numbers equipped with $\simpled$ is a
  Liouville closed $H$-field with small derivation in the sense of
  \cite[p.\ 3]{Aschenbrenner2002}, namely, $\simpled$ is surjective
  and sends infinitesimals to infinitesimals.
\end{namedthm}

In the course of the proof, we also discover that $\no$ is a field of
transseries as anticipated in \cite{VanderHoeven2006}.

\begin{namedthm}[Theorem C (\ref{thm:t4})]
  $\no$ is a field of transseries in the sense of \cite[Def.\
  2.2.1]{Schmeling2001}.
\end{namedthm}

As an application of the above results, we observe that the existence
of surreal derivations yields an immediate proof that $\no$ satisfies
the statement of Schanuel's conjecture ``modulo $\R$'', thanks to Ax's
theorem \cite{Ax1971}, similarly to what was observed in
\cite{Kuhlmann2013} for various fields of transseries
(\prettyref{cor:schanuel}).  Since $\no$ is a monster model of the
theory of $\R_{\exp}$, the same statement follows for every elementary
extension of $\R_{\exp}$.  It is actually known that any model of the
theory of $\R_{\exp}$ satisfies an even stronger Schanuel type
statement ``modulo $\mathrm{dcl}(\emptyset)$'' (see \cite{Jones2008}
and \cite{Kirby2010}).

\medskip

The strategy to prove the existence of a surreal derivation $\somed$
is the following. Let $\J\subset\no$ be the non-unital ring of the
\textbf{purely infinite numbers}, consisting of the surreal numbers
$\sum_{x\in\no}a_{x}\omega^{x}$ having only infinite monomials
$\omega^{x}$ in their support (namely, $x>0$ whenever
$a_{x}\neq0$). It is known that
\[
\omega^{\no}=\exp(\J),
\]
so we can write $\no=\R((\omega^{\no}))=\R((\exp(\J)))$. In other
words, every surreal number can be written in the form
$\sum_{\gamma\in\J}r_{\gamma}\exp(\gamma)$.  We baptize this
``Ressayre form'' in honor of J.-P.\ Ressayre, who showed in
\cite{Ressayre1993} that every ``real closed exponential field''
admits a similar representation. A surreal derivation must satisfy
\begin{equation}
  \somed\left(\sum_{\gamma\in\J}r_{\gamma}\exp(\gamma)\right)=
  \sum_{\gamma\in\J}r_{\gamma}\somed\left(\exp(\gamma)\right)=
  \sum_{\gamma\in\J}r_{\gamma}\exp(\gamma)\somed(\gamma).
  \label{eq:deriv-ressayre}
\end{equation}
Using the displayed equation, the problem of defining $\somed$ is
reduced to the problem of defining $\somed(\gamma)$ for $\gamma\in\J$.

The iteration of this procedure is not sufficient by itself to find a
definition of $D$. For instance, the above equation gives almost no
information on the values of $\somed$ on the subclass $\li$ of the
\textbf{log-atomic numbers}, namely the elements $\lambda\in\no$ such
that all the iterated logarithms $\log_{n}(\lambda)$ are of the form
$\exp(\gamma)$ for some $\gamma\in\J$. Indeed, for $\lambda\in\li$,
the above equation reduces merely to
$\somed(\exp(\lambda))=\exp(\lambda)\somed(\lambda)$, and it is easy
to see that this condition is not sufficient for a map
$\somed_{\li}:\li\to\no$ to extend to a surreal derivation.

\medskip

As pointed out in the work of S.\ Kuhlmann and M.\ Matusinski
\cite{Kuhlmann2011b,Kuhlmann2014}, the class $\li$ of log-atomic
numbers is crucial for defining a derivation, so we should first give
some details about the relationship between $\li$ and $\no$. On the
face of the definition it is not immediate that $\li$ is non-empty,
but it can be shown that $\omega\in\li$, and more generally that every
``$\eps$-number'' (see \cite{Gonshor1986} for a definition) belongs to
$\li$. In fact, in \cite{Kuhlmann2014} there is an explicit
parametrization of a class of log-atomic numbers, properly including
the $\eps$-numbers, called ``$\kappa$-numbers''.  In the same paper it
is conjectured that the $\kappa$-numbers generate $\li$ under
application of $\log$ and $\exp$. However, we will show that the class
$\li$ is even larger (\prettyref{prop:kappa-not-gen-L}) and we shall
provide an explicit parametrization of the whole of $\li$
(\prettyref{cor:lambda-is-L}). It turns out that log-atomic numbers
can be seen as the natural representatives of certain equivalence
classes (\prettyref{def:levels}) which are similar but finer than
those in \cite{Kuhlmann2014}, and correspond exactly to the ``levels''
of a Hardy field \cite{Rosenlicht1987,Marker1997}, except that in our
case there are uncountably many levels (actually a proper class of
them).

Once $\li$ is understood, consider the smallest subfield
$\bracket{\li}$ of $\no$ containing $\R\cup\li$ and closed under
taking $\exp$, $\log$ and infinite sums. We shall see that
$\bracket{\li}$ is the largest subfield of $\no$ satisfying axiom ELT4
of \cite[Def.\ 5.1]{Kuhlmann2014} (\prettyref{prop:bracket-elt4}),
which was there proposed as part of a general notion of transseries
(and is satisfied by the logarithmic-exponential series of
\cite{Dries1997} and the exponential-logarithmic series of
\cite{Kuhlmann2000}). Clearly, any derivation on $\bracket{\li}$
satisfying (1)-(5) (as in Theorem A) is uniquely determined by its
restriction to $\li$. A natural question is now whether
$\bracket{\li}=\no$.  This is equivalent to the first part of
Conjecture 5.2 in \cite{Kuhlmann2014}.  However, we shall prove that
axiom ELT4 fails in the surreal numbers, and therefore the inclusion
$\bracket{\li}\subseteq\no$ is strict (see
\prettyref{thm:elt4-fails}).

\medskip

Despite the fact that $\li$ does not generate $\no$ under $\exp$,
$\log$ and infinite sums, a fundamental issue in our construction is
understanding how a surreal derivation should behave on $\li$.  One
can verify that if a map $\somed_{\li}:\li\to\no$ extends to a surreal
derivation, then necessarily $\somed_{\li}(\lambda)>0$ for all
$\lambda\in\li$, and moreover
\begin{equation}
  |\log(\somed_{\li}(\lambda))-\log(\somed_{\li}(\mu))|<
  \frac{1}{n}|\lambda-\mu|\label{eq:val-ineq}
\end{equation}
for all $\lambda,\mu\in\li$ and all $n\in\N$. This inequality plays a
crucial role in this paper, and it can be proved to hold for the
natural derivation on any Hardy field closed under $\log$, provided
$\lambda$, $\mu$ and $\left|\lambda-\mu\right|$ are positive infinite.

We start our construction by defining a ``pre-derivation''
$\somed_{\li}:\li\to\no^{>0}$ satisfying \prettyref{eq:val-ineq} and
$\somed_{\li}(\exp(\lambda))=\exp(\lambda)\somed_{\li}(\lambda)$ for
all $\lambda\in\li$. It turns out that the \emph{simplest}
pre-derivation, which we call $\simpled_{\li}:\li\to\no^{>0}$, can be
calculated by a rather explicit formula. For this, we need a bit of
notation involving a subclass of the $\kappa$-numbers of
\cite{Kuhlmann2014}.  For $\alpha\in\on$ (where $\on$ is the class of
all ordinal numbers) define inductively $\ka{\alpha}\in\no$ as the
simplest positive infinite surreal number less than
$\log_{n}(\ka{\beta})$ for all $n\in\N$ and $\beta<\alpha$. With this
notation we have (see \prettyref{def:simple-pre-D})
\[
\simpled_{\li}(\lambda) = \exp\left(-\sum_{\exists
    n\suchthat\exp_{n}(\ka{\alpha})>\lambda}
  \sum_{i=1}^{\infty}\log_{i}(\ka{\alpha}) +
  \sum_{i=1}^{\infty}\log_{i}(\lambda)\right),
\]
where $\alpha$ ranges in $\on$. (For the sake of exposition, we shall
use the above formula as definition of $\simpled_{\li}$, and only at
the end of the paper we shall prove that it is the simplest
pre-derivation; see \prettyref{thm:simplest-pre-D}.)

\medskip

Once $\simpled_{\li}:\li\to\no^{>0}$ is given, we can use
\prettyref{eq:deriv-ressayre} to give a tentative definition of a
surreal derivation $\simpled:\no\to\no$ extending $\simpled_{\li}$. We
adopt the same formalism used by Schmeling in
\cite{Schmeling2001}. First of all, we recall Schmeling's notion of
``path'' . Given $x=\sum_{\gamma\in\J}r_{\gamma}\exp(\gamma)\in\no$, a
\textbf{path} of $x$ is a function $P:\N\to\no$ such that
\begin{itemize}
\item $P(0)$ is a term of $x$, namely $P(0)=r_{\gamma}\exp(\gamma)$
  for some $r_{\gamma}\neq0$;
\item if $P(n)=r\exp(\eta)$, then $P(n+1)$ is a term of $\eta$.
\end{itemize}
For the moment, we restrict our attention to $\bracket{\li}$. As we
already mentioned, $\bracket{\li}$ satisfies axiom ELT4 of
\cite{Kuhlmann2014}, which can be paraphrased as saying that for all
$x\in\bracket{\li}$, every path $P$ of $x$ enters $\li$, namely there
is $n=n_{P}\in\N$ such that $P(n)\in\li$. Let $\mathcal{P}(x)$ be the
set of all paths of $x$. Iterating \prettyref{eq:deriv-ressayre}, we
immediately see that our desired surreal derivation $\simpled$
extending $\simpled_{\li}$ must satisfy
\begin{equation}
  \simpled(x)=
  \sum_{P\in\mathcal{P}(x)}\prod_{i<n_{P}}P(i)\cdot\simpled_{\li}(P(n_{P})),
  \label{eq:path-der}
\end{equation}
provided the terms on right-hand side are ``summable'' in the sense of
the Hahn field structure of $\no$.

Guided by this observation, we use the right-hand side of
\prettyref{eq:path-der} as the \emph{definition} of $\simpled(x)$ for
$x\in\bracket{\li}$.  For general $x\in\no$, we define $\simpled(x)$
using the same equation, but discarding the paths $P$ of $x$ that
never enter $\li$. Our problem is now reduced to showing that the
above sum is indeed summable, so that $\simpled(x)$ is well-defined.

A formally similar problem was tackled by Schmeling in
\cite{Schmeling2001} in order to extend derivations on transseries
fields to their exponential extensions. For our problem, we use some
of his techniques, even though our starting function $\simpled_{\li}$
is not a derivation, and $\no$ cannot be seen as an exponential
extension. Most importantly, however, we need to verify that $\no$ is
a field of transseries.

The key step is proving the existence of a suitable ordinal valued
function $\ntrank:\no\to\on$, which we call ``nested truncation rank''
(\prettyref{thm:nested-trunc-simplifies},
\prettyref{def:nested-rank}).  Using the inequality
\prettyref{eq:val-ineq} and the fact that the values of
$\simpled_{\li}$ are monomials, we are able to prove by induction on
the rank that the terms in \prettyref{eq:path-der} are summable, and
therefore $\simpled$ is well-defined. It is then easy to prove that
$\simpled$ is also a surreal derivation, proving Theorem A. Theorem C
also follows easily from the existence of the rank.

\medskip

The study of the rank requires an in-depth investigation of the
behavior of $\exp$ with respect to the simplicity relation $\simple$;
this is non-trivial, as in general it is not true that if $x$ is
simpler than $y$, then $\exp(x)$ is simpler than $\exp(y)$ (e.g.,
$\omega$ is simpler than $\log(\omega)$, but $\exp(\omega)$ is not
simpler than $\exp(\log(\omega))=\omega$). To carry out this analysis,
we provide a short characterization of $\exp$ which may be of
independent interest (\prettyref{thm:characterization-exp}), and prove
that surreal numbers simplify under some natural operations that we
call ``nested truncations'' (see \prettyref{def:nested-trunc} and
\prettyref{thm:nested-trunc-simplifies}).  For instance, the classical
truncation of a series to one of its initial segments is a special
case of nested truncation. Moreover, if $\gamma$ is a nested
truncation of some $\delta\in\J$, then $\exp(\gamma)$ is also a nested
truncation of $\exp(\delta)$.

Since simplicity is well-founded, nested truncations are well-founded
as well. We then define the rank $\ntrank:\no\to\on$ as the foundation
rank of nested truncations (\prettyref{def:nested-rank}). Thus in
particular the rank strictly decreases under non-trivial nested
truncations.  The properties of the rank are crucial to show that
$\simpled$ is well-defined. For instance, the numbers of rank $0$ are
exactly the elements of $\pm\li^{\pm1}\cup\R$
(\prettyref{cor:nested-rank-zero}), on which $\simpled$ can be easily
calculated using $\simpled_{\li}$.  On the other hand, if
$\gamma\in\J$, then $\gamma$ and $\exp(\gamma)$ have the same rank
(\prettyref{prop:ntrank-gamma-exp-gamma}).

\medskip

The existence of the rank $\ntrank$ is essentially equivalent to the
fact that $\no$ has a suitable structure of field of transseries as it
was variously conjectured, so it is worth discussing the critical
axioms in some detail. We have already commented on the fact that
$\bracket{\li}$ is the largest subfield of $\no$ satisfying axiom ELT4
of \cite{Kuhlmann2014}, but that unfortunately
$\bracket{\li}\subsetneq\no$.  On the other hand, $\no$ satisfies a
similar but weaker axiom isolated under the name ``T4'' in
\cite{Schmeling2001}, where it is given as part of an axiomatization
of a more general notion of transseries inspired by the nested
expressions of \cite{VanderHoeven1997}. In the context of the surreal
numbers, T4 reads as follows:
\begin{itemize}
\item[\textbf{T4.}] \textbf{}for all sequences of monomials
  $\m_{i}\in\exp(\J)$, with $i\in\N$, such that
  \[
  \m_{i}=\exp(\gamma_{i+1}+r_{i+1}\m_{i+1}+\delta_{i+1})
  \]
  where $r_{i+1}\in\R^{*}$, $\gamma_{i+1},\delta_{i+1}\in\J$, and
  $\supp(\gamma_{i+1})>\supp(r_{i+1}\m_{i+1})>\supp(\delta_{i+1})$
  (where $\supp$ denotes the support), there exists $k\in\N$ such that
  $r_{i+1}=\pm1$ and $\delta_{i+1}=0$ for all $i\geq k$.
\end{itemize}
Note that the sequence $P(n):=r_{n}\m_{n}$ (with $r_{0}:=1$) is a path
of $\m_{0}$, according to the definition given above. The condition
$\delta_{i+1}=0$ states that a path must stop bifurcating on the
``right'' from a certain point on. (The aforementioned axiom ELT4
further prescribes that $\gamma_{i+1}=0$, in which case the path
eventually stops bifurcating on both sides, and therefore enters
$\li$.) An immediate consequence of the existence of the rank
$\ntrank$ is that T4 holds in the surreal numbers, from which Theorem
C follows.

\medskip

The proof of Theorem B, namely that $\simpled$ is surjective, is based
on other techniques, and it relies on some further properties of the
function $\simpled$. Indeed, to prove Theorem B we first verify that
$\simpled$ satisfies the hypothesis of a theorem of Rosenlicht
\cite{Rosenlicht1983}, so that we may establish the existence of
asymptotic integrals (\prettyref{prop:class-asy-int}), and then we
iterate asymptotic integration transfinitely many times and prove,
using a version of Fodor's lemma, that the procedure eventually yields
an integral. We shall also give an example of a rather natural surreal
derivation that is not surjective (see
\prettyref{def:natural-nonsimple-pre-D}).

\medskip

The structure of the paper is reflected in the following table of
contents.

\makeatletter\@starttoc{toc}{}\makeatother

\begin{acknowledgement*}
  The authors want to thank the referee for the very careful reading
  of the manuscript. This work was completed while the first author
  was a Leverhulme Visiting Professor (VP2-2013-055) at the School of
  Mathematical Sciences, Queen Mary University of London. He wishes to
  thank the Leverhulme Trust for the support and Dr. Ivan
  Toma\v{s}i\'{c} for the invitation and the VP application.
\end{acknowledgement*}

\section{Surreal numbers}

We assume some familiarity with the ordered field of surreal numbers
(see \cite{Conway1976,Gonshor1986}) which we denote by $\no$. In this
section we give a brief presentation of the basic definitions and
results, and we fix the notations that will be used in the rest of the
paper.

\subsection{Order and simplicity}

The usual definition of the class $\no$ of surreal numbers is by
transfinite recursion, as in \cite{Conway1976}. However, it is also
possible to give a more concrete equivalent definition, as in
\cite{Gonshor1986}.

The domain of $\no$ is the \emph{class} $\no=2^{<\on}$ of all binary
sequences of some ordinal length $\alpha\in\on$, namely the functions
of the form $s:\alpha\to2=\{0,1\}$ (Gonshor writes ``$-,+$'' instead
of ``$0,1$''). The \textbf{length} (also called \textbf{birthday}) of
a surreal number $s$ is the ordinal number $\alpha=\dom(s)$.  Note
that $\no$ is not a set but a proper class, and all the relations and
functions we shall define on $\no$ are going to be class-relations and
class-functions, usually constructed by transfinite induction.

We say that $x\in\no$ is \textbf{simpler} than $y\in\no$ if
$x\subseteq y$, i.e., if $x$ is an initial segment of $y$ as a binary
sequence; we shall write $x\simpleq y$ when this is the case. We say
that $x$ is \textbf{strictly simpler} than $y,$ written $x\simple y$,
if $x\simpleq y$ and $x\neq y$. Note that $\simpleq$ is well-founded,
and the empty sequence, which will play the role of the number zero,
is simpler than any other surreal number. Moreover, the simplicity
relation is a binary tree-like partial order on $\no$, with the
immediate successors of a node $x\in\no$ being the sequences
$x^{\smallfrown}0$ and $x^{\smallfrown}1$ obtained by appending $0$ or
$1$ at the end of the binary sequence $x$.

We can introduce a total order $<$ on $\no$ in the following way.
Writing $x^{\smallfrown}y$ for the concatenation of binary sequences,
we stipulate that $x^{\smallfrown}0<x<x^{\smallfrown}1$ and more
generally
$x^{\smallfrown}0^{\smallfrown}u<x<x^{\smallfrown}1^{\smallfrown}v$
for every $u,v$. This defines a total order on $\no$ which coincides
with the lexicographic order on sequences of the same length.

We say that a \emph{subclass} $C$ of $\no$ is \textbf{convex} if
whenever $x<y$ are in $C$, every surreal number $z$ such that $x<z<y$
is also in $C$. It is easy to see that every non-empty convex class
contains a simplest number (given by the intersection $\bigcap C$).

Given two \emph{sets} $A\subseteq\no$ and $B\subseteq\no$ with $A<B$
(meaning that $a<b$ for all $a\in A$ and $b\in B)$, the \emph{class}
\[
\convex AB:=\left\{ y\in\no\suchthat A<y<B\right\}
\]
is non-empty and convex and therefore it contains a simplest number
$x$ which is denoted by
\[
x=A\mid B.
\]
Such a pair $A\mid B$ is called a \textbf{representation of $x$}, and
we call $\convex AB$ the \textbf{associated convex class}.

Every surreal number $x$ has several different representations
$x=A\mid B=A'\mid B'$.  For instance, if $A$ is cofinal with $A'$ and
$B$ is coinitial with $B'$, then clearly $\convex AB=\convex{A'}{B'}$.
In this situation, we shall say that $A\mid B=A'\mid B'$ \textbf{by
  cofinality}.  On the other hand, it may well happen that
$A\mid B=A'\mid B'$ even if $A$ is not cofinal with $A'$ or $B$ is not
coinitial with $B'$, because two distinct convex classes may still
have the same simplest number in common. The \textbf{canonical}
representation $x=A\mid B$ is the unique one such that $A\cup B$ is
exactly the set of all surreal numbers strictly simpler than $x$.

\begin{rem}
  \label{rem:simple-test}By definition, if $x=A\mid B$ and $A<y<B$,
  then $x\simpleq y$.
\end{rem}

However, it does not follow from $x=A\mid B$ and $x\simpleq y$ that
$A<y<B$.

\begin{defn}
  \label{def:simple-rep}We call a representation $x=A\mid B$
  \textbf{simple} if $x\simpleq y$ implies $A<y<B$.
\end{defn}

In other words, a representation is simple when the associated convex
class $\convex AB$ is as large as possible. An example of simple
representation is the canonical one. In fact, we have the following.

\begin{prop}
  \label{prop:simple-rep}Let $c,x,y\in\no$. We have:
  \begin{enumerate}
  \item if $c\simple x\simpleq y$, then $c<x$ if and only if $c<y$;
  \item if $x=A\mid B$, and $A\cup B$ contains only numbers strictly
    simpler than $x$, then $A\mid B$ is simple; in particular, every
    surreal number admits a simple representation (for instance the
    canonical one).
  \end{enumerate}
\end{prop}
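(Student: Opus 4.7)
For part~(1), the plan is to argue directly from the representation of surreal numbers as binary sequences together with the lexicographic convention $x^{\smallfrown}0^{\smallfrown}u < x < x^{\smallfrown}1^{\smallfrown}v$. Let $\alpha = \dom(c)$. Because $c \simple x$, the bit $x(\alpha)$ is defined and $c$ is the restriction $x\upharpoonright\alpha$. The order convention then gives that $c < x$ holds precisely when $x(\alpha) = 1$ and $x < c$ holds precisely when $x(\alpha) = 0$. Since $x \simpleq y$, we have $\dom(x) \leq \dom(y)$, so $\alpha < \dom(y)$, and $y$ agrees with $x$ on its first $\dom(x)$ bits; in particular $y(\alpha) = x(\alpha)$. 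The same dichotomy therefore determines the comparison between $c$ and $y$, yielding $c<x \iff c<y$.

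For part~(2), the plan is to reduce it to~(1). Suppose $x = A \mid B$ with every element of $A \cup B$ strictly simpler than $x$, and let $y$ be any surreal number with $x \simpleq y$. For each $a \in A$ we have $a \simple x \simpleq y$ and $a < x$, so~(1) (applied with $c = a$) gives $a < y$. Symmetrically, for each $b \in B$ we have $b \simple x \simpleq y$ and $x < b$, so~(1) gives $y < b$. Hence $A < y < B$, which by \prettyref{def:simple-rep} is exactly the statement that $A \mid B$ is a simple representation.

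Finally, the ``in particular'' assertion is immediate: in the canonical representation, $A \cup B$ is by construction the set of all surreal numbers strictly simpler than $x$, so the hypothesis of~(2) is satisfied.

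I do not expect any genuine obstacle here; both parts are bookkeeping on binary sequences. The only minor subtlety is making sure that $\dom(c) < \dom(y)$ in part~(1) so that $y(\alpha)$ is actually defined, and that the lexicographic order convention is applied consistently on both sides of the biconditional. Once~(1) is set up carefully, part~(2) is a one-line corollary, and the canonical representation supplies a concrete witness.
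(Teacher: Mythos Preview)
Your proposal is correct and follows essentially the same approach as the paper: part~(1) is deduced directly from the lexicographic order on binary sequences (the paper simply says it ``follows at once from the definition of $<$'', while you spell out the bit-at-position-$\alpha$ argument), and part~(2) is reduced to~(1) in exactly the way the paper does. The only difference is that you provide more detail for~(1) than the paper bothers to record.
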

\begin{proof}
  Point (1) follows at once from the definition of $<$. For (2), let
  $x=A\mid B$ be as in the hypothesis and let $c\in A\cup B$. We have
  $c\simple x$. If we now assume $x\simpleq y$, then, by (1),
  $c<x\leftrightarrow c<y$.  Since $c\in A\cup B$ was arbitrary, we
  obtain $A<y<B$, and therefore $A\mid B$ is simple.
\end{proof}

\subsection{Field operations}

We can define ring operations $+,\cdot$ on $\no$ by induction on
simplicity as follows:
\[
x+y:=\{x'+y,\ x+y'\}\mid\{x''+y,\ x+y''\}
\]
where $x'$ ranges over the numbers simpler than $x$ such that $x'<x$
and $x''$ ranges over the numbers simpler than $x$ such that $x<x''$;
in other words, when $x=\{x'\}\mid\{x''\}$ and $y=\{y'\}\mid\{y''\}$
are the canonical representations of $x$ and $y$ respectively.

The definition of the product is slightly more complicated:
\[
xy:=\{x'y+xy'-x'y',\ x''y+xy''-x''y''\}\mid\{x'y+xy''-x'y'',\
x''y+xy'-x''y'\}.
\]
The first expression in the left bracket ensures $x'y+xy'-x'y'<xy$,
namely $(x-x')(y-y')>0$. The mnemonic rule for the other expressions
can be obtained in a similar way.

\begin{rem}
  \label{rem:uniform}The definitions of sum and product are
  \textbf{uniform} in the sense of \cite[p.\ 15]{Gonshor1986}, namely
  the equations that define $x+y$ and $xy$ continue to hold if we
  choose arbitrary representations $x=A\mid B$ and $y=C\mid D$ (not
  necessarily the canonical ones) and we let $x',x'',y',y''$ range
  over $A,B,C,D$ respectively.
\end{rem}

It is well know that these operations, together with the order, make
$\no$ into an ordered field, which is in fact a real closed field (see
\cite[Thm.\ 5.10]{Gonshor1986}). In particular, we have a unique
embedding of the rational numbers in $\no$, so we can identify $\Q$
with a subfield of $\no$. The subgroup of the \textbf{dyadic
  rationals} $\frac{m}{2^{n}}\in\Q$, with $m,n\in\N$, correspond
exactly to the surreal numbers $s:k\to\{0,1\}$ of finite ordinal
length $k\in\N$.

The \textbf{real numbers} $\R$ can be isomorphically identified with a
subfield of $\no$ by sending $z\in\R$ to the number $A\mid B$ where
$A\subseteq\no$ is the set of rationals $<z$ and $B\subseteq\no$ is
the set of rationals $>z$. This turns out to be a homomorphism, and
therefore it agrees with the previous definition for $z\in\Q$.  We may
thus write $\Q\subset\R\subset\no$. By \cite[p.\ 33]{Gonshor1986}, the
length of a real number is at most $\omega$ (the least infinite
ordinal). There are however surreal numbers of length $\omega$ which
are not real numbers.

The \textbf{ordinal numbers} can be identified with a subclass of
$\no$ by sending the ordinal $\alpha$ to the sequence
$s:\alpha\to\{0,1\}$ with constant value $1$. Under this
identification, the ring operations of $\no$, when restricted to the
ordinals $\on\subset\no$, coincide with the Hessenberg sum and product
of ordinal numbers. On the other hand, the sequence
$s:\alpha\to\{0,1\}$ with constant value $0$ corresponds to the
additive inverse of the ordinal $\alpha$, namely $-\alpha$. We remark
that $x\in\on$ if and only if $x$ admits a representation of the form
$x=A\mid B$ with $B$ empty, and similarly $x\in-\on$ if and only if we
can write $x=A\mid B$ with $A$ empty.  Under the above identification
of $\Q$ as a subfield of $\no$, the natural numbers $\N\subseteq\Q$
are exactly the finite ordinals.

\subsection{\label{sub:Hahn-fields}Hahn fields}

Let $\Gamma$ be an ordered abelian group, written multiplicatively.
We recall the definition of the \textbf{Hahn field} $\R((\Gamma))$
with coefficients in $\R$ and ``monomials'' in $\Gamma$. The domain of
$\R((\Gamma))$ consists of all the functions $f:\Gamma\to\R$ whose
\textbf{support} $\supp(f):=\{\m\in\Gamma\ :\ f(\m)\neq0\}$ is a
\textbf{reverse well-ordered} subset of $\Gamma$, namely every
non-empty subset of $\Gamma$ has a maximum (when $\Gamma$ is a proper
class, we still require that $\supp(f)$ is a set). For each $f$ which
is not identically $0$, $\supp(f)$ has a maximum element $\m$; if
$f(\m)>0$, we say that $f$ is positive. For later reference, given
$\m\in\M$, the \textbf{truncation} of $f$ at $\m$ is the function
$f|\m:\M\to\R$ which coincides with $f$ on arguments $>\m$ and is zero
on arguments $\leq\m$.

The addition of two elements $f,g\in\R((\Gamma))$ is defined as
\[
(f+g)(\m):=f(\m)+g(\m),
\]
and the multiplication is given by
\[
(f\cdot g)(\m):=\sum_{\n+\om=\m}f(\n)g(\om).
\]
Since the supports are reverse well-ordered, only finitely many terms
of the latter sum can be non-zero, hence the multiplication is well
defined. These operations make $\R((\Gamma))$ into an ordered field
(which is real closed when $\Gamma$ is divisible).

It is well known that $\no$ can be endowed with a Hahn field
structure.  Towards this goal, recall that two non-zero surreal
numbers $x,y\in\no^{*}$ are in the same \textbf{archimedean class} if
each of them is bounded in modulus by an integer multiple of the
other, namely $|x|\leq k|y|$ and $|y|\leq k|x|$ for some $k\in\N$.

A positive surreal number $x\in\no^{*}$ is called a \textbf{monomial}
if it is the simplest positive element in its archimedean class. The
class $\M\subset\no^{*}$ of all monomials is a group under
multiplication (see \prettyref{fact:omega}). A \textbf{term} is a
non-zero real number $r\in\R^{*}$ multiplied by a monomial; we denote
by $\T$ the class of all terms.

One of Conway's remarkable insights is that we can identify $\no$ with
$\R((\M))$ in the following way. Given $f\in\R((\M))$, write $f_{\m}$
for the real number $f(\m)$. Note that $f_{\m}\m=f(\m)\m\in\R\M$ is a
well defined element of $\no$. We define the map
$\sum:$$\R((\M))\to\no$ by induction on the order type of the support.

\begin{defn}
  \label{def:infinite-sum}Let $f\in\R((\M))$.
  \begin{enumerate}
  \item If the support of $f$ is empty, we define $\sum f:=0\in\no$.
  \item If the support of $f$
    contains a smallest monomial $\n$, we define
    \[
    \sum f:=\sum f|\n+f_{\n}\n.
    \]
  \item If the support of $f$
    is non-empty and has no smallest monomial, we define
    \[
    \sum f:=\left\{ \sum f|\m+q'\m\right\} \mid\left\{ \sum f|\m+q''\m\right\} 
    \]
    where $\m$
    varies in $\supp(f)$
    and $q'$,
    $q''$ varies among the rational numbers such that $q'<f_{\m}<q''$.
  \end{enumerate}
\end{defn}

We remark that in (3), for $\sum f$ to be well defined, one needs to
show by a simultaneous induction that each number on the left-hand
side is smaller than each number on the right-hand side (see \cite[p.\
59]{Gonshor1986} for a detailed argument).

By \cite[Lemma 5.3]{Gonshor1986}, if $\supp(f)$ contains a smallest
element $\m$, then $\sum f=\sum f|\m+f_{\m}\m$ can be characterized as
the simplest surreal number such that, for every $q',q''\in\Q$ with
$q'<f_{\m}<q''$, we have $\sum f|\m+q'\m<\sum f<\sum f|\m+q''\m$.  It
then follows that the three cases in \prettyref{def:infinite-sum} can
be subsumed under a single equation, as follows.

\begin{prop}
  \label{prop:sum-simplest} For every $f\in\R((\M))$ we have
  \[
  \sum f=\left\{ \sum f|\m+q'\m\right\} \mid\left\{ \sum f|\m+q''\m\right\} 
  \]
  where $\m$ varies in $\supp(f)$ and $q'$, $q''$ varies among the
  rational numbers such that $q'<f_{\m}<q''$.
\end{prop}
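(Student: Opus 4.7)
The plan is to verify the proposition case by case, matching up with the three clauses of \prettyref{def:infinite-sum}. Two of these cases are essentially free: if $\supp(f) = \emptyset$, the right-hand side is the empty cut $\emptyset \mid \emptyset = 0$, agreeing with $\sum f = 0$; and if $\supp(f)$ is non-empty with no smallest monomial, the formula is literally clause (3) of the definition.

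The substantive case is (2), where $\supp(f)$ has a smallest monomial $\n$. The definition gives $\sum f = \sum f|\n + f_{\n}\n$, and the Gonshor lemma quoted just above the proposition characterizes this as the simplest surreal $y$ with
\[
\sum f|\n + q'\n < y < \sum f|\n + q''\n
\]
for all rationals $q' < f_{\n} < q''$. What I need to strengthen is: $\sum f$ must be simplest among $y$ satisfying the analogous bounds for \emph{every} $\m \in \supp(f)$, not only for $\m = \n$.

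I would first check that $\sum f$ itself satisfies all the extra inequalities. Fix $\m \in \supp(f)$ with $\m > \n$ and decompose
\[
\sum f = \sum f|\m + f_{\m}\m + \sum g_{\m},
\]
where $g_{\m} \in \R((\M))$ collects the terms of $f$ whose monomials lie strictly below $\m$. The support of $g_{\m}$ is contained in $\{\n' \in \M : \n' < \m\}$, so its leading monomial is strictly smaller than $\m$; consequently $|\sum g_{\m}| < q\m$ for every positive rational $q$. This rearranges to $\sum f|\m + q'\m < \sum f < \sum f|\m + q''\m$ for all rationals $q' < f_{\m} < q''$, as required.

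The minimality step is then pure bookkeeping. Let $C_{1}$ denote the convex class cut out by the inequalities for all $\m \in \supp(f)$, and $C_{2} \supseteq C_{1}$ the one cut out by only the $\m = \n$ inequalities. By Gonshor's lemma $\sum f$ is simplest in $C_{2}$; by the previous step $\sum f \in C_{1} \subseteq C_{2}$; and any $y \simple \sum f$ lying in $C_{1}$ would also lie in $C_{2}$, contradicting minimality of $\sum f$ there. The only non-formal input is the archimedean estimate on $\sum g_{\m}$, which is the main — though still minor — obstacle; once it is in hand, everything else is formal manipulation of cuts.
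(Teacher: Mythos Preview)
Your argument is correct and fleshes out exactly what the paper leaves implicit (its entire proof is the clause ``It then follows'' before the statement, together with the remark on the empty-support case afterward). One small caveat: the decomposition $\sum f = \sum f|\m + f_{\m}\m + \sum g_{\m}$ is an instance of additivity of $\sum$, which in the paper's exposition is only recorded \emph{after} this proposition; you can justify it directly by induction on the order type of $\supp(f)$ (apply the inductive hypothesis to $f|\n$ to obtain the $\m$-inequalities for $\sum f|\n$, then absorb the correction $f_{\n}\n\prec\m$), or simply cite Gonshor for additivity.
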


This also holds when $\supp(f)=\emptyset$; indeed, in this case
$\sum f$ is just the simplest surreal numbers satisfying the empty set
of inequalities, hence $\sum f=0$. We could in fact take the above
equation as the definition of $\sum f$, but then it would be more
difficult to verify that $\sum(f+g)=\sum f+\sum g$.

As a matter of notations, we write $\sum_{\m\in\M}f_{\m}\m$ for
$\sum f$, namely we think of $\sum f$ as a decreasing formal infinite
sum of terms $f_{\m}\m$ with reverse well-ordered support. It turns
out that the map $\sum:\R((\M))\to\no$ is an isomorphism of ordered
fields (in particular it is surjective), so we can identify
$f\in\R((\M))$ with $\sum f=\sum_{\m}f_{\m}\m\in\no$ and write
\[
\no=\R((\M)).
\]

A short proof of surjectivity is in Conway's book \cite[pp.\
32-33]{Conway1976}, which however should be integrated with some
details that can be found in Gonshor (in particular \cite[Lemmas 5.2
and 5.3]{Gonshor1986}).  We remark that Conway and Gonshor prove the
result in the opposite direction, by defining a map $\no\to\R((\M))$
which is the inverse of our $\sum:\R((\M))\to\no$. For a full proof
see \cite[Thm. 5.6]{Gonshor1986}.

Under the identification $f=\sum f$ we can drop the summation sign in
\prettyref{prop:sum-simplest}. For instance, when $f$ is a single
monomial $\m$ the equation reduces to
\[
\m=\left\{ q'\m\right\} \mid\{q''\m\}
\]
where $q',q''$ range over the rational numbers with $q'<1<q''$.

The identification $\no=\R((\M))$ makes it possible to extend to $\no$
the various notions that are given on Hahn fields:
\begin{defn}
  Let $x\in\no$ and write $x=\sum_{\m}x_{\m}\m$.
  \begin{enumerate}
  \item The \textbf{support} $\supp(x)$ of $x$ is the support of the
    corresponding element of $\R((\M))$, namely
    $\supp(x):=\{\m\in\M\suchthat x_{\m}\neq0\}$.
  \item The \textbf{terms} of $x$ are the numbers in the set
    $\{x_{\m}\m\suchthat x_{\m}\neq0\}\subset\T$.
  \item The \textbf{coefficient} of $\m$ in $x$ is $x_{\m}$.
  \item The \textbf{leading monomial} of $x$ is the maximal monomial
    in $\supp(x)$.
  \item The \textbf{leading term} of $x$ is the leading monomial
    multiplied by its coefficient.
  \item Given $\n\in\M$, the \textbf{truncation} of $x$ at $\n$ is the
    number $x|\n:=\sum_{\m>\n}x_{\m}\m$. If $y\in\no$ is a truncation
    of $x$, we write $y\trunceq x$, and $y\trunc x$ if moreover
    $x\neq y$.
  \end{enumerate}
\end{defn}

The relation $\trunceq$ is clearly a partial order with a tree-like
structure, and it is actually a weakening of the simplicity relation.

\begin{prop}
  \label{prop:trunc-simple}If $x\trunceq y$, then $x\simpleq y$.
\end{prop}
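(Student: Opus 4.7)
The plan is to prove $x \trunceq y \Rightarrow x \simpleq y$ by combining the ``simplest element of a convex class'' description of infinite sums (\prettyref{prop:sum-simplest}) with the simplicity test in \prettyref{rem:simple-test}. Without loss of generality, $x = y|\n$ for some $\n \in \M$. If $\supp(y|\n) = \emptyset$, then $x = 0$, which is the empty binary sequence and hence simpler than every other surreal number; this handles the trivial case.

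Assume $\supp(y|\n)$ is non-empty. By \prettyref{prop:sum-simplest} applied to $y|\n$, we have the representation
\[
  y|\n = \bigl\{ (y|\n)|\m + q'\m \bigr\} \mid \bigl\{ (y|\n)|\m + q''\m \bigr\},
\]
where $\m$ ranges over $\supp(y|\n)$ and $q', q''$ range over the rationals with $q' < (y|\n)_{\m} < q''$. A direct unpacking of the definition shows that for every $\m \in \supp(y|\n)$, i.e.\ for every $\m \in \supp(y)$ with $\m > \n$, we have $(y|\n)|\m = y|\m$ and $(y|\n)_{\m} = y_{\m}$, because the terms of $y$ with monomial $> \m$ are unaffected by truncating at the smaller $\n$. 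So the representation of $y|\n$ is
\[
  y|\n = \bigl\{ y|\m + q'\m \bigr\} \mid \bigl\{ y|\m + q''\m \bigr\},
\]
with $\m \in \supp(y)$, $\m > \n$, and $q' < y_{\m} < q''$ rational.

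Now apply \prettyref{prop:sum-simplest} to $y$ itself. The associated convex class is defined by the \emph{same} family of inequalities, but using \emph{all} $\m \in \supp(y)$, not just those with $\m > \n$. In particular, for every $\m \in \supp(y)$ with $\m > \n$ and every rational $q' < y_{\m} < q''$, we have $y|\m + q'\m < y < y|\m + q''\m$. Therefore $y$ lies in the convex class associated to the representation of $y|\n$ displayed above. By \prettyref{rem:simple-test}, this forces $y|\n \simpleq y$, as desired.

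There is no serious obstacle; the only point that requires a moment's care is the identity $(y|\n)|\m = y|\m$ for $\m > \n$, and the observation that the ``cut'' producing $y|\n$ is a subfamily of the ``cut'' producing $y$, so that $y$ automatically sits inside the convex class defining $y|\n$.
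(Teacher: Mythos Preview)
Your proof is correct and follows essentially the same approach as the paper: both use \prettyref{prop:sum-simplest} to represent $x$ and $y$, observe that the cut defining $x$ is a subfamily of the cut defining $y$ (since $\supp(x)\subseteq\supp(y)$ and the truncations and coefficients agree on $\supp(x)$), and conclude via \prettyref{rem:simple-test}. The paper phrases the key step as $A\subseteq A'$ and $B\subseteq B'$, while you phrase it as $y$ lying in the convex class associated to the representation of $y|\n$; these are the same observation.
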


In \cite[Thm.\ 5.12]{Gonshor1986} this statement obtained as a
corollary of an explicit calculation of the binary sequence
corresponding to an infinite sum, but it can also be immediately
deduced from \prettyref{prop:sum-simplest}.  We include the proof to
illustrate the method, as it will be applied again in the sequel.

\begin{proof}[Proof of \prettyref{prop:trunc-simple}]
  Given $x\in\no$, by \prettyref{prop:sum-simplest} we can write
  $x=A\mid B$ where
  \[
  A=\left\{ x|\n+q'\n\right\} ,\quad B=\left\{ x|\n+q''\n\right\}
  \]
  with $\n$ varying in $\supp(x)$, and $q'$, $q''$ varying among the
  rational numbers such that $q'<x_{\n}<q''$. Similarly, $y=A'\mid B'$
  where
  \[
  A'=\left\{ y|\n+q'\n\right\} ,\quad B'=\left\{ y|\n+q''\n\right\}
  \]
  with $\n\in\supp(y)$ and $q'<y_{\n}<q''$ . Since $x\trunceq y$ we
  have $\supp(x)\subseteq\supp(y)$, and for every $\n\in\supp(x)$ we
  have $x|\n=y|\n$ and $x_{\n}=y_{\n}$. It follows that
  $A\subseteq A'$ and $B\subseteq B'$, hence $x\simpleq y$.
\end{proof}

\subsection{Summability}

The identification $\no=\R((\M))$ makes it possible to extend to $\no$
the notion of infinite sum.

\begin{defn}
  \label{def:summability}Let $I$ be a set and
  $(x_{i}\suchthat i\in I)$ be an indexed family of surreal
  numbers. We say that $(x_{i}\suchthat i\in I)$ is \textbf{summable}
  if $\bigcup_{i}\supp(x_{i})$ is reverse well-ordered and if for each
  $\m\in\bigcup_{i}\supp(x_{i})$ there are only finitely many $i\in I$
  such that $\m\in\supp(x_{i})$.

  When $(x_{i}\suchthat i\in I)$ is summable, we define its
  \textbf{sum} $y:=\sum_{i\in I}x_{i}$ as the unique surreal number
  such that $\supp(y)\subseteq\bigcup_{i}\supp(x_{i})$ and, for every
  $\m\in\M$,
  \[
  y_{\m}=\left(\sum_{i\in I}x_{i}\right)_{\m}=\sum_{i\in
    I}(x_{i})_{\m}.
  \]
\end{defn}

By assumption, for each $\m$ there are finitely many $i$ such that
$(x_{i})_{\m}\neq0$, so that each $y_{\m}$ is a well defined real
number.

The result is coherent with our previous definitions: if $x\in\no$,
the family $(x_{\m}\m\suchthat\m\in\supp(x))$ is obviously summable,
and its sum $\sum_{\m}x_{\m}\m$ in the just defined sense is exactly
$x$.

\begin{rem}
  \label{rem:summability-criterion}The following criterion for
  summability follows at once from the definition:
  $(x_{i}\suchthat i\in I)$ is summable if and only if there are no
  injective sequence $n\mapsto i_{n}\in I$ and monomials
  $\m_{n}\in\supp(x_{i_{n}})$ such that $\m_{n}\leq\m_{n+1}$ for every
  $n\in\N$.
\end{rem}

\begin{rem}
  It can be verified that infinite sums are infinitely associative and
  distributive over products, see \cite{Gonshor1986} for the details.
\end{rem}

\begin{defn}
  A function $F:\no\to\no$ is \textbf{strongly linear} if for all
  $x=\sum_{\m}x_{\m}\m$ we have $F(x)=\sum_{\m}x_{\m}F(\m)$ (in
  particular, $(x_{\m}F(\m)\suchthat\m\in\M)$ is summable).
\end{defn}

\begin{prop}
  \label{prop:strong-linearity}If $F:\no\to\no$ is strongly linear,
  then for any summable $(x_{i}\suchthat i\in I)$ the family
  $(F(x_{i})\suchthat i\in I)$ is summable and
  \[
  F\left(\sum_{i\in I}x_{i}\right)=\sum_{i\in I}F(x_{i}).
  \]
\end{prop}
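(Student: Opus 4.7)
The plan is to reduce everything to the summability of a single doubly-indexed family and then invoke infinite associativity (Remark 2.11) to rearrange the sums two different ways.

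First, let $S := \bigcup_{i\in I}\supp(x_{i})$, which by hypothesis is reverse well-ordered. Form the auxiliary surreal number $w := \sum_{\m\in S}\m \in \no$ (with all coefficients equal to $1$); its support is precisely $S$, so strong linearity applied to $w$ gives that the family $(F(\m)\suchthat \m\in S)$ is summable. This is the key extra input from strong linearity that the hypotheses on the $x_{i}$ alone do not provide.

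Next, I would show that the doubly-indexed family $\bigl((x_{i})_{\m}\,F(\m)\suchthat (i,\m)\in I\times S,\ (x_{i})_{\m}\neq 0\bigr)$ is summable. I would apply the criterion of Remark~\ref{rem:summability-criterion}: suppose for contradiction there is an injective sequence $n\mapsto (i_{n},\m_{n})$ of pairs with $(x_{i_{n}})_{\m_{n}}\neq 0$ and monomials $\n_{n}\in\supp(F(\m_{n}))$ satisfying $\n_{n}\leq\n_{n+1}$. There are two cases. If the sequence $(\m_{n})$ takes only finitely many values, some $\m^{*}$ repeats infinitely often, forcing infinitely many distinct $i_{n}$ with $\m^{*}\in\supp(x_{i_{n}})$, contradicting the summability of $(x_{i}\suchthat i\in I)$. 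Otherwise, we may extract a subsequence on which $(\m_{n_{k}})$ is injective, contradicting the summability of $(F(\m)\suchthat \m\in S)$ established in the previous step.

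Having summability of the doubly-indexed family, I would apply infinite associativity (Remark~\ref{rem:summability-criterion}'s companion, the associativity/distributivity statement recalled right after it). Summing first over $\m$ for each fixed $i$ gives, by strong linearity,
\[
\sum_{(i,\m)}(x_{i})_{\m}F(\m) \;=\; \sum_{i\in I}\sum_{\m}(x_{i})_{\m}F(\m) \;=\; \sum_{i\in I}F(x_{i}),
\]
which in passing shows that $(F(x_{i})\suchthat i\in I)$ is summable: any bad sequence for this family would pull back via a choice $\m_{n}\in\supp(x_{i_{n}})$ with $\n_{n}\in\supp(F(\m_{n}))$ to a bad sequence for the doubly-indexed family. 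Summing first over $i$ for each fixed $\m$ gives, using that the coefficient of $\m$ in $\sum_{i}x_{i}$ is $\sum_{i}(x_{i})_{\m}$,
\[
\sum_{(i,\m)}(x_{i})_{\m}F(\m) \;=\; \sum_{\m}\Bigl(\sum_{i}(x_{i})_{\m}\Bigr)F(\m) \;=\; F\Bigl(\sum_{i\in I}x_{i}\Bigr),
\]
where the last equality is strong linearity applied to $\sum_{i}x_{i}$. Equating the two expressions yields the claim.

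The main obstacle is the summability of the doubly-indexed family, which is the only place where both hypotheses (summability of $(x_{i})$ and strong linearity of $F$) are used in an essential, combined way; everything else is formal manipulation via associativity. The use of the single auxiliary element $w=\sum_{\m\in S}\m$ to bootstrap strong linearity into summability of $(F(\m)\suchthat\m\in S)$ is what makes the two-case argument via Remark~\ref{rem:summability-criterion} go through cleanly.
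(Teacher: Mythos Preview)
Your argument is correct and follows the same underlying idea as the paper: rewrite $F(\sum_i x_i)$ via strong linearity as $\sum_{\m}\sum_i (x_i)_{\m}F(\m)$ and then swap the order of summation to obtain $\sum_i F(x_i)$. The paper compresses this into a single displayed chain of equalities and leaves the summability of the doubly-indexed family (hence the legitimacy of the swap) implicit in the appeal to infinite associativity; your version spells this out, and the auxiliary element $w=\sum_{\m\in S}\m$ is a clean device---absent from the paper---to extract summability of $(F(\m)\suchthat\m\in S)$ from strong linearity even when cancellations make $\supp(\sum_i x_i)$ smaller than $S$.
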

\begin{proof}
  We have
  \[
  F\left(\sum_{i\in
      I}x_{i}\right)=F\left(\sum_{\m\in\M}\left(\sum_{i\in
        I}x_{i}\right)_{\m}\m\right)=\sum_{\m\in\M}\sum_{i\in
    I}(x_{i})_{\m}F(\m)=\sum_{i\in I}F(x_{i}).\qedhere
  \]
\end{proof}

\subsection{Purely infinite numbers}

We use Hardy's notation ``$\vleq$'' for the \textbf{dominance}
relation.

\begin{defn}
  Given $x,y\in\no$ we write
  \begin{itemize}
  \item $x\vleq y$ if $|x|\leq k|y|$ for some $k\in\N$;
  \item $x\vless y$ if $|x|<\frac{1}{k}|y|$ for all positive $k\in\N$;
  \item $x\veq y$ if $\frac{1}{k}|y|\leq|x|\leq k|y|$ for some
    positive $k\in\N$;
  \item $x\sim y$ if $x-y\vless x$ (equivalently
    $\left|1-\frac{y}{x}\right|\vless1$ when $x\neq0$).
  \end{itemize}
\end{defn}

We say that $x\in\no$ is \textbf{finite} if $x\vleq1$. We say that $x$
is \textbf{infinitesimal} if $x\vless1$. We shall denote the class of
all infinitesimal numbers by $o(1)$. In general, we denote by $o(x)$
the class of all $y\in\no$ such that $\frac{y}{x}$ is infinitesimal,
namely such that $y\vless x$. Note that $x\veq y$ if and only if $x$
and $y$ are in the same archimedean class.

We say that $x\in\no$ is \textbf{purely infinite} if all the monomials
$\m\in\M$ in its support are infinite (or equivalently $\supp(x)>1)$.
The non-unital ring of purely infinite numbers of $\no$ shall be
denoted by $\J$. We have a direct sum decomposition
\[
\no=\J+\R+o(1)
\]
as a real vector space.

The \textbf{surreal integers} are the numbers in $\J+\Z$. They
coincide with Conway's ``omnific integers'', namely the numbers $x$
such that $x=\{x-1\}\mid\{x+1\}$.

\subsection{The omega-map}

Another remarkable feature of surreal numbers is that the class $\M$
of monomials can be parametrized in a rather canonical way by the
surreal numbers themselves.

\begin{defn}[{\cite[p.\ 31]{Conway1976}}]
  \label{def:omega}Given $x\in\no,$ we let
  \[
  \omega^{x}:=\{0,k\omega^{x'}\}\mid\left\{
    \frac{1}{2^{k}}\omega^{x''}\right\}
  \]
  where $k$ ranges over the natural numbers, $x'$ ranges over the
  surreal numbers such that $x'\simple x$ and $x'<x$, and $x''$ over
  the surreal numbers such that $x''\simple x$ and $x<x''$.
\end{defn}

As in the case of the ring operations, the\textbf{ }above definition
is uniform, namely if $x=A\mid B$ is any representation of $x$, the
equation in the definition of $\omega^{x}$ remains true if we let
$x',x''$ range over $A,B$ respectively. The following remark follows
at once from the fact that for $x\simpleq y$ the convex class
associated to the above representation of $\omega^{x}$ includes the
convex class associated to the corresponding representation of
$\omega^{y}$.

\begin{rem}
  \label{rem:omega-simple}If $x\simpleq y$, then
  $\omega^{x}\simpleq\omega^{y}$.
\end{rem}

\begin{fact}[{\cite[Thms.\ 19 and 20]{Conway1976}, \cite[Thms.\ 5.3
    and 5.4]{Gonshor1986}}]
  \label{fact:omega}The map $x\mapsto\omega^{x}$ is an isomorphism
  from $(\no,+,<)$ to $(\M,\cdot,<)$. In particular, $\omega^{x}$ is
  the simplest positive representative of its archimedean class,
  $\omega^{0}=1$ and $\omega^{x+y}=\omega^{x}\cdot\omega^{y}$.
\end{fact}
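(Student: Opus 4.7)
The plan is to run a single transfinite-induction scheme establishing well-definedness, positivity, and the key archimedean comparison simultaneously, and then bootstrap to multiplicativity, membership in $\M$, and surjectivity.

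First, by induction on the simplicity of $x$, I would prove that $\omega^{x}$ is a well-defined positive surreal number, while simultaneously establishing that for any $y_{1} < y_{2}$ strictly simpler than $x$ one has $k\omega^{y_{1}} < 2^{-k}\omega^{y_{2}}$ for every $k \in \N$, i.e., $\omega^{y_{1}} \vless \omega^{y_{2}}$. This archimedean statement is exactly the inequality needed for the left set of the defining representation of $\omega^{x}$ to lie below the right set, so well-definedness reduces to the induction; positivity is immediate from the presence of $0$ in the left set; and the base case $\omega^{0} = \{0\} \mid \emptyset = 1$ is direct.

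Next I would extend the archimedean comparison to arbitrary $x_{1} < x_{2}$ using the binary-tree structure of simplicity. Let $z$ be the meet of $x_{1}$ and $x_{2}$ in the simplicity tree. If $z = x_{1}$, then $x_{1}$ appears as a left option in the canonical representation of $x_{2}$ and $\omega^{x_{1}} \vless \omega^{x_{2}}$ is built into the definition; the case $z = x_{2}$ is symmetric. Otherwise $z$ is strictly simpler than both, and since $x_{1}, x_{2}$ lie in opposite subtrees below $z$ we must have $x_{1} < z < x_{2}$; combining the two preceding cases gives $\omega^{x_{1}} \vless \omega^{z} \vless \omega^{x_{2}}$. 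In particular $x \mapsto \omega^{x}$ is strictly order-preserving and injective.

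Multiplicativity $\omega^{x+y} = \omega^{x}\omega^{y}$ is then proved by joint induction on the simplicity of $(x,y)$. Using uniformity (Remark~\ref{rem:uniform}) both for the sum and for the $\omega$-recursion, one expands
\[
\omega^{x+y} = \{k\omega^{x'+y},\ k\omega^{x+y'}\} \mid \{2^{-k}\omega^{x''+y},\ 2^{-k}\omega^{x+y''}\},
\]
and rewrites each option as a product via the inductive hypothesis. Separately one expands $\omega^{x}\omega^{y}$ using the surreal product formula applied to $\omega^{x} = \{0, k\omega^{x'}\} \mid \{2^{-k}\omega^{x''}\}$ and its counterpart for $\omega^{y}$. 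Both sides are then shown to be the simplest element of a common convex class, using positivity of $\omega^{x}, \omega^{y}$ and the presence of $0$ as a left option of each to absorb the Conway cross-terms of the product formula. I expect this verification to be the main technical obstacle, since the four-entry bracket coming from the product must be reconciled with the two-entry bracket on the left of the equation.

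Finally, $\omega^{x}$ is the simplest positive element of its archimedean class: any $y > 0$ with $y \veq \omega^{x}$ is placed by the archimedean comparison strictly between all left and right options of $\omega^{x}$, so lies in the defining convex class, whence $\omega^{x} \simpleq y$. In particular $\omega^{x} \in \M$. For surjectivity, given $\m \in \M$, set
\[
x_{0} := \{x \suchthat \omega^{x} \vless \m\} \mid \{x \suchthat \omega^{x} \vgreater \m\};
\]
monotonicity ensures the left set lies below the right set, and both sets are bounded in length (otherwise one could manufacture monomials incomparable in the archimedean sense to $\m$ despite $\m$ having a fixed finite depth of sign-expansion structure), so $x_{0}$ is a genuine surreal number. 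One then checks via the previous steps that $\omega^{x_{0}} \veq \m$, and concludes $\omega^{x_{0}} = \m$ from minimality of $\m$ in its archimedean class.
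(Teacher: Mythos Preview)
The paper does not prove this statement: it is recorded as a \emph{Fact} with citations to Conway and Gonshor, and no proof is given in the paper itself. Your outline follows essentially the route taken in those sources (simultaneous induction for well-definedness and the archimedean comparison, then multiplicativity by joint induction exploiting uniformity, then simplest-in-class, then surjectivity), so at the level of strategy you are aligned with the literature the paper cites.

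There is, however, a genuine gap in your surjectivity step. You set
\[
x_{0} := \{x \suchthat \omega^{x} \vless \m\} \mid \{x \suchthat \omega^{x} \vgreater \m\}
\]
and claim that both brackets are ``bounded in length''. They are not: for instance if $\m>1$ then every negative surreal $x$ satisfies $\omega^{x}\vless\m$, so the left bracket is a proper class; your parenthetical justification about ``fixed finite depth of sign-expansion structure'' does not apply, since monomials have arbitrary ordinal length. The representation $A\mid B$ is only meaningful when $A$ and $B$ are \emph{sets}. The standard fix is to restrict to options of bounded birthday (e.g.\ birthday less than that of $\m$), or equivalently to proceed by induction on the simplicity of $\m$: for each positive $c\simple\m$ one already has, by induction, a $y_{c}$ with $\omega^{y_{c}}\veq c$, and these $y_{c}$ form genuine sets from which $x_{0}$ can be built. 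Without some such bound, your $x_{0}$ is not defined.
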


From the equalities $\no=\R((\M))$ and $\M=\omega^{\no}$ we obtain
\[
\no=\R((\omega^{\no})),
\]
namely every surreal number $x$ can be written uniquely in the form
\[
x=\sum_{y\in\no}a_{y}\omega^{y}
\]
where $a_{y}\in\R$ and $a_{y}\neq0$ if and only if $\omega^{y}$ is in
the support of $x$. This representation is called the \textbf{normal
  form} of $x$ and it coincides with Cantor's normal form when
$x\in\on\subset\no$.

\section{Exponentiation}

\subsection{Gonshor's exponentiation}

The surreal numbers admit a well behaved exponential function defined
as follows.

\begin{defn}[{\cite[p.\ 145]{Gonshor1986}}]
  \label{def:exp}Let $x=\{x'\}\mid\{x''\}$ be the canonical
  representation of $x$. We define inductively
  \[
  \exp(x):=\left\{
    0,\,\exp(x')\cdot[x-x']_{n},\,\exp(x'')[x-x'']_{2n+1}\right\}
  \mid\left\{
    \frac{\exp(x'')}{[x''-x]_{n}},\,\frac{\exp(x')}{[x'-x]_{2n+1}}\right\},
  \]
  where $n$ ranges in $\N$ and
  \[ [x]_{n}=1+\frac{x}{1!}+\dots+\frac{x^{n}}{n!},
  \]
  with the further convention that the expressions containing terms of
  the form $[y]_{2n+1}$ are to be considered only when $[y]_{2n+1}>0$.
\end{defn}

It can be shown that the function $\exp$ is a surjective homomorphism
$\exp:(\no,+)\to(\no^{>0},\cdot)$ which extends $\exp$ on $\R$ and
makes $(\no,+,\cdot,\exp)$ into an elementary extension of
$(\R,+,\cdot,\exp)$ (see \cite[Cor.\ 2.11, Cor.\ 4.6]{Dries1994},
\cite{DriesE2001} and \cite{Ressayre1993}).

We recall here a list of properties that were proved in
\cite{Gonshor1986}.  We shall use them to give an alternative
characterization of the function $\exp$.

\begin{fact}
  \label{fact:exp}The function $\exp$ has the following properties.
  \begin{enumerate}
  \item \prettyref{def:exp} is uniform \cite[Cor. 10.1]{Gonshor1986}.
  \item The restriction of $\exp$ to $\R\subseteq\no$ is the real
    exponential function \cite[Thm.\ 10.2]{Gonshor1986}.
  \item If $\eps\vless1$, then the sequence
    $(\frac{\eps^{n}}{n!}\,:\,n\in\N)$ is summable and
    $\exp(\eps)=\sum_{n}\frac{\eps^{n}}{n!}$ \cite[Thm.\
    10.3]{Gonshor1986}.
  \item The function $\exp$ is an isomorphism from $(\no,+,<)$ to
    $(\no^{>0},\cdot,<)$.  In particular, $\exp(0)=1$ and
    $\exp(x+y)=\exp(x)\cdot\exp(y)$ for every $x,y\in\no$ \cite[Cor.\
    10.1]{Gonshor1986}.
  \item If $x>0$, then $\exp(\omega^{x})=\omega^{\omega^{g(x)}}$,
    where $g:\no^{>0}\to\no$ is defined by
    \[
    g(x):=\left\{ c(x),g(x')\right\} |\left\{ g(x'')\right\} ,
    \]
    and $c(x)$ is the unique number such that $\omega^{c(x)}$ and $x$
    are in the same archimedean class \cite[Thm.\ 10.11]{Gonshor1986}.
  \item If $x=\sum_{y}a_{y}\omega^{y}$ is purely infinite, then
    \cite[Thm.\ 10.13]{Gonshor1986}
    \[
    \exp\left(\sum_{y}a_{y}\omega^{y}\right)=\omega^{\sum_{y}a_{y}\omega^{g(y)}}.
    \]
  \end{enumerate}
\end{fact}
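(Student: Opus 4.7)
The statement is labelled \textbf{Fact} because every item is a direct quotation from Gonshor's book \cite{Gonshor1986}, with the exact reference attached to each point. The plan is therefore to indicate the key ingredient of each proof rather than carry it out in detail.

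For (1), the uniformity of \prettyref{def:exp} follows by transfinite induction on simplicity, exactly as in the proof of uniformity for sum and product (\prettyref{rem:uniform}): any representation of $x$ cofinally refines the canonical one, and the convex class cut out by the options is unchanged. For (2), applied to $x \in \R$, the Taylor polynomials $[y]_n$ and their reciprocals squeeze $\exp(x)$ between the usual real lower and upper approximations to $e^x$, forcing agreement with the classical exponential. For (3), the hypothesis $\eps \vless 1$ makes the leading monomial of $\eps^n/n!$ strictly decrease in $n$, so the union $\bigcup_n \supp(\eps^n/n!)$ is reverse well-ordered and each monomial lies in only finitely many supports; the summability criterion of \prettyref{rem:summability-criterion} then applies, and direct comparison of the two defining convex classes identifies $\exp(\eps)$ with the sum of its Taylor series.

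For (4), the multiplicative structure of the left and right options of \prettyref{def:exp} is tailored so that, by cofinality, the convex classes defining $\exp(x+y)$ and $\exp(x)\cdot\exp(y)$ coincide; the same inductive setup yields strict monotonicity on positives, hence on all of $\no$. For (5), specializing to $x > 0$ one has that $\exp(\omega^x)$ dominates every polynomial in $\omega^x$, so its archimedean class is itself represented by a monomial of the form $\omega^{\omega^{g(x)}}$ for a uniquely determined $g(x)$; unfolding \prettyref{def:exp} and isolating the simplest representative within that archimedean class gives the recursive formula for $g$. Finally, (6) follows by combining (3), (4), and (5): the purely infinite exponent splits additively, the resulting family is summable, so $\exp$ distributes multiplicatively over the series, and (5) then reduces each factor $\exp(a_y \omega^y)$ to an explicit power of $\omega$, which reassembles into the stated formula.

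The main technical obstacle, if one were to redo Gonshor's work from scratch, is (5): the computation of the function $g$ and the identification of the leading monomial of $\exp(\omega^x)$ require a simultaneous transfinite induction on simplicity, together with a careful analysis of how the $\omega^{\bullet}$-parametrization of $\M$ interacts with the multiplicative options in \prettyref{def:exp}. Once (5) is in hand, (6) is essentially bookkeeping with summability.
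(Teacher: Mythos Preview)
Your reading is correct: the paper does not prove this statement at all. It is stated as a \emph{Fact} with each item tagged by the precise reference in \cite{Gonshor1986}, and the authors simply invoke it downstream. So there is no in-paper proof to compare against; your decision to indicate the key ingredients behind each item, rather than reproduce Gonshor's arguments, is exactly the right response.

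Two minor remarks on the sketches themselves. In (4), monotonicity of $\exp$ is not obtained by first proving it on positives and then extending; Gonshor derives it globally from the inductive inequalities built into \prettyref{def:exp}. In (6), appealing to (4) to ``distribute $\exp$ multiplicatively over the series'' is a bit loose: the functional equation $\exp(x+y)=\exp(x)\exp(y)$ is a statement about \emph{finite} sums, and one cannot pass to an infinite product without already knowing the target is a monomial. Gonshor's actual route is to show directly that $G(\gamma):=\sum_y a_y \omega^{g(y)}$ is well-defined (reverse well-ordered support) and then verify $\exp(\gamma)=\omega^{G(\gamma)}$ inductively, which is what the paper exploits later in the proof of \prettyref{thm:characterization-exp}(2). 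Neither point affects the overall correctness of your summary, since you flag (6) as bookkeeping once (5) is available, and that is accurate in spirit.
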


\begin{defn}
  Let $\log:\no^{>0}\to\no$ (called \textbf{logarithm}) be the inverse
  of $\exp$.

  We let $\exp_{n}$ and $\log_{n}$ be the $n$-fold iterated
  compositions of $\exp$ and $\log$ with themselves.
\end{defn}

\begin{rem}
  \label{rem:expansion-log}One can easily verify that if
  $\varepsilon\vless1$ we must have
  \[
  \log(1+\varepsilon)=\sum_{n=1}^{\infty}(-1)^{n-1}\frac{\varepsilon^{n}}{n}.
  \]
\end{rem}

In the sequel we shall make repeated use of the fact that $\exp$ grows
more than any polynomial. In particular we have:

\begin{rem}
  \label{rem:exp-grows}If $x>\N$, then $\exp(x)\vgreater x^{n}$ for
  every $n\in\N$.
\end{rem}

\subsection{Ressayre form}

By \cite[Thms.\ 10.8, 10.9]{Gonshor1986}, the monomials are the image
under $\exp$ of the purely infinite surreal numbers:
\[
\exp(\J)=\M=\omega^{\no}.
\]
Since $\no=\R((\M))=\R((\omega^{\no}))$, it then follows that
$\no=\R((\exp(\J)))$ as well, namely every surreal number $x\in\no$
can be written uniquely in the form
\[
x=\sum_{\gamma\in\J}r_{\gamma}\exp(\gamma)
\]
where $r_{\gamma}\neq0$ if and only if $\exp(\gamma)\in\supp(x)$.  We
call this the \textbf{Ressayre form} of $x\in\no$. We stress the fact
that, unlike the case of normal form
$x=\sum_{y\in\no}a_{y}\omega^{y}$, the summation is indexed by
elements of $\J$.

\begin{defn}
  Given a non-zero number $x=\sum_{\gamma\in\J}r_{\gamma}\exp(\gamma)$
  we define $\vell(x)\in\J$ as the maximal $\gamma$ such that
  $r_{\gamma}\neq0$, or in other words, as the logarithm $\gamma$ of
  the largest monomial $\m=\exp(\gamma)$ in its support.
\end{defn}

It is easy to verify that $\vell:\no\setminus\{0\}\to\J$ satisfies:

\begin{enumerate}
\item $\vell(x+y)\leq\max\{\vell(x),\vell(y)\},$ with the equality
  holding if $\vell(x)\neq\vell(y)$;
\item $\vell(xy)=\vell(x)+\vell(y)$.
\end{enumerate}

This makes $-\vell$ into a Krull valuation. We call $\vell(x)$ the
\textbf{$\vell$-value} of $x$.

\begin{rem}
  \label{rem:vell}Given $x,y\in\no^{*}$ we have
  \begin{itemize}
  \item $x\vleq y$ if and only if $\vell(x)\leq\vell(y)$,
  \item $x\vless y$ if and only if $\vell(x)<\vell(y)$,
  \item $x\veq y$ if and only if $\vell(x)=\vell(y)$,
  \item $x\sim y$ if and only if $\vell(x-y)<\vell(x)$.
  \end{itemize}
  In particular, if $x\vgreater1$, then $\vell(x)>0$, hence
  $\vell(x)\vgreater1$.  We also observe that if $x\not\veq1$, then
  $\vell(x)\sim\log(x)$.
\end{rem}

\subsection{A characterization of $\exp$}

In order to understand the interaction between $\exp$ and the
simplicity relation $\simple$, we first give a rather short
characterization of $\exp$. We start with Gonshor's description
\prettyref{fact:exp} and we further simplify it by dropping any
references to the omega-map or to the function $g$.

\begin{thm}
  \label{thm:characterization-exp}The function $\exp:\no\to\no$ is
  uniquely determined by the following properties:
  \begin{enumerate}
  \item if $\m\in\M^{>1}$ is an infinite monomial, then
    \[
    \exp(\m)=\left\{ \m^{k},\,\exp(\m')^{k}\right\} \mid\left\{
      \exp(\m'')^{\frac{1}{k}}\right\}
    \]
    where $k$ ranges in the positive integers and $\m',\m''$ range in
    the set of monomials simpler than $\m$ and such that respectively
    $\m'<\m$ and $\m<\m''$;
  \item if $\gamma=\sum_{\m\in\M}\gamma_{\m}\m\in\J$ is a purely
    infinite surreal number, then
    \[
    \exp(\gamma)=\left\{ 0,\,\exp(\gamma|\m)\exp(\m)^{q'}\right\}
    \mid\left\{ \exp(\gamma|\m)\exp(\m)^{q''}\right\}
    \]
    where $\m$ ranges in $\supp(\gamma)$ and $q',q''$ range among the
    rational numbers such that $q'<\gamma_{\m}<q''$;
  \item if $x=\gamma+r+\eps$, where $\gamma\in\J$, $r\in\R$ and
    $\eps\in o(1)$, then
    \[
    \exp(\gamma+r+\eps)=\exp(\gamma)\cdot\exp(r)\cdot\left(\sum_{n=0}^{\infty}\frac{\eps^{n}}{n!}\right),
    \]
    where $\exp(r)$ is the value of the real exponential function at
    $r$.
  \end{enumerate}
\end{thm}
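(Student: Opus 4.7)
I would prove the theorem in two halves: uniqueness (any function $E:\no\to\no$ satisfying (1)--(3) must coincide with $\exp$), and existence (Gonshor's $\exp$ satisfies all three properties).

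Uniqueness will proceed by three nested inductions, one per property. First, the direct sum decomposition $\no=\J+\R+o(1)$ together with property (3) (and the uniqueness of the real exponential) reduces $E$ on $\no$ to its values on $\J$. Second, property (2) expresses $E(\gamma)$ for $\gamma\in\J$ as the simplest element of a convex class whose endpoints depend only on $E(\gamma|\m)$ at truncations and $E(\m)$ at monomials of $\supp(\gamma)$; induction on the reverse well-ordered support (using that each $\gamma|\m$ has strictly shorter support) reduces $E$ on $\J$ to $E$ on $\M^{>1}$. Third, property (1) is a cut whose endpoints only involve $E(\m')$ for monomials $\m'\simple\m$, so transfinite induction on simplicity determines $E$ on $\M^{>1}$ and hence $E=\exp$ on all of $\no$.

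For existence, property (3) is immediate from \prettyref{fact:exp}(2)--(4): multiplicativity decomposes $\exp(\gamma+r+\eps)$ into three factors, with $\exp(r)$ the real exponential and $\exp(\eps)=\sum_n\eps^n/n!$ the convergent series on infinitesimals. For property (2), I would apply $\exp$ to the cut $\gamma=\{\gamma|\m+q'\m\}\mid\{\gamma|\m+q''\m\}$ given by \prettyref{prop:sum-simplest}: since $\exp$ is a strictly increasing isomorphism onto $\no^{>0}$ (\prettyref{fact:exp}(4)), these inequalities transform into the ones in (2), with positivity of $\exp$ supplying the extra lower option $0$. The simplicity claim---that $\exp(\gamma)$ really is the \emph{simplest} element of the resulting convex class---would then be deduced by unfolding \prettyref{fact:exp}(6), which presents $\exp(\gamma)$ as $\omega^{\sum_y a_y \omega^{g(y)}}$, and matching this against the uniform omega-cut in \prettyref{def:omega}.

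The main obstacle will be property (1). For $\m=\omega^x$ with $x>0$, \prettyref{fact:exp}(5) gives $\exp(\m)=\omega^{\omega^{g(x)}}$, and I must recognize this as the simplest element of the cleaner cut $\{\m^k,\exp(\m')^k\}\mid\{\exp(\m'')^{1/k}\}$. My plan is to apply the uniform omega-definition (\prettyref{def:omega}) twice in succession to $\omega^{\omega^{g(x)}}$, using the recursion $g(x)=\{c(x),g(x')\}\mid\{g(x'')\}$ to expand the inner layer. The resulting options involve $\omega^{k'\omega^{c(x)}}$, which, via $\omega^{c(x)}\veq x$, produces a sequence cofinal with $\{\m^k\}_{k\in\N}$; and $\omega^{\omega^{g(x')}}$, which equals $\exp(\omega^{x'})=\exp(\m')$ by a second use of \prettyref{fact:exp}(5), with $\m'=\omega^{x'}$ simpler than $\m$ by \prettyref{rem:omega-simple} (and analogously for $\m''$). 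A cofinality argument then shows that the Gonshor cut and the cut in (1) define convex classes with the same simplest element $\exp(\m)$. The delicate part is handling the possibility that simpler monomials of $\m$ need not all come from $\omega^{x'}$ with $x'\simple x$, and ensuring the cofinality matching still goes through in those cases---this is where the uniformity of the omega-map (and the freedom to choose arbitrary representations) is crucial.
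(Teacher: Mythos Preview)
Your plan is correct and matches the paper's proof closely: for (1) the paper expands $\omega^{\omega^{g(x)}}$ via two nested applications of the uniform omega-cut and simplifies by cofinality exactly as you outline, and for (2) it introduces the auxiliary strongly linear map $G(\gamma)=\sum_y a_y\omega^{g(y)}$ (your ``unfolding of \prettyref{fact:exp}(6)'') to transport the cut of \prettyref{prop:sum-simplest} through $\omega^{(\cdot)}$. Your closing worry is unfounded: since $x\mapsto\omega^x$ is an order-isomorphism onto $\M$ that preserves $\simpleq$ (\prettyref{rem:omega-simple}), it also reflects it---predecessors of a node in a tree form a chain, so $\omega^{x'}\simple\omega^x$ with $x'\not\simple x$ would force $\omega^x$ and $\omega^{x'}$ into opposite subtrees of some common $\omega^z$, a contradiction---hence the monomials simpler than $\m=\omega^x$ are exactly the $\omega^{x'}$ with $x'\simple x$.
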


Point (1) is the minimum requirement that ensures that the
$\exp_{\restriction\M^{>1}}$ is increasing and grows more than any
power function. Point (2) shows that, for $\gamma\in\J$,
$\exp(\gamma)$ is the simplest element satisfying some natural
inequalities determined by the values of $\exp$ on the truncations of
$\gamma$. Point (3) just says that the behavior on the finite numbers
is the one given by the classical Taylor expansion of $\exp$.

\begin{proof}
  (1) Let $x\in\no^{>0}$ be such that $\m=\omega^{x}.$ By
  \prettyref{fact:exp} we have
  $\exp(\m)=\exp(\omega^{x})=\omega^{\omega^{g(x)}}$. Since
  $y=g(x)=\left\{ c(x),\,g(x')\right\} \mid\left\{ g(x'')\right\} $ we
  have
  \[
  \omega^{g(x)}=\left\{ 0,\,k\omega^{c(x)},\,k\omega^{g(x')}\right\}
  \mid\left\{ \frac{1}{k}\omega^{g(x'')}\right\} ,
  \]
  where $k$ ranges in $\N^{*}$. Computing $\omega^{y}$ with
  $y=\omega^{g(x)}$ we obtain
  \[
  \omega^{\omega^{g(x)}}=\left\{
    0,\,j\omega^{0},\,j\omega^{k\omega^{c(x)}},\,j\omega^{k\omega^{g(x')}}\right\}
  \mid\left\{ \frac{1}{j}\omega^{\frac{1}{k}\omega^{g(x'')}}\right\}
  \]
  where $j$ and $k$ range in $\N^{*}$.

  By cofinality, since $k$ varies over the positive integers, we can
  drop the coefficients $j$, $\frac{1}{j}$ and the first two
  expressions; moreover, $\{k\omega^{c(x)}\suchthat k\in\N^{*}\}$ is
  cofinal with $\{kx\suchthat k\in\N^{*}\}$. We deduce that
  \[
  \omega^{\omega^{g(x)}}=\left\{
    \omega^{kx},\,\omega^{k\omega^{g(x')}}\right\} \mid\left\{
    \omega^{\frac{1}{k}\omega^{g(x'')}}\right\} .
  \]

  Now, recalling that $\omega^{ky}=(\omega^{y})^{k}$ and
  $\omega^{\omega^{g(y)}}=\exp(\omega^{y})$, we get
  \[
  \exp(\m)=\omega^{\omega^{g(x)}}=\left\{
    \m^{k},\,\exp(\omega^{x'})^{k}\right\} \mid\left\{
    \exp(\omega^{x''})^{1/k}\right\} .
  \]

  Finally, by \prettyref{rem:omega-simple}, we note that the monomials
  $\m',\m''$ simpler than $\m$ with $\m'<\m$, $\m<\m''$ are exactly
  those of the form $\omega^{x'}$, $\omega^{x''}$ with $x'$, $x''$
  simpler than $x$ and such that respectively $x'<x$ and $x<x''$, and
  we are done.

  (2) Given a purely infinite surreal number
  $\gamma=\sum_{\m\in\M}\gamma_{\m}\m=\sum_{y\in\no}a_{y}\omega^{y}$,
  let $G(\gamma):=\sum_{y}a_{y}\omega^{g(y)}$. By
  \prettyref{fact:exp}, we have $\exp(\gamma)=\omega^{G(\gamma)}$. It
  is immediate to see that $G$ is strictly increasing, strongly
  linear, surjective, and sends monomials to monomials. In particular,
  for all $\n\in\M$, $G(\gamma)|\n=G(\gamma|\m)$ where
  $\m=G^{-1}(\n)$. By \prettyref{prop:sum-simplest} it follows that
  \[
  G(\gamma)=\left\{ G(\gamma)|G(\m)+q'G(\m)\right\} \mid\left\{
    G(\gamma)|G(\m)+q''G(\m)\right\}
  \]
  where $\m$ ranges in $\supp(\gamma)$ and $q',q''$ range among the
  rational numbers such that $q'<\gamma_{\m}<q''$.

  By definition of $\omega^{y}$, setting $y=G(\gamma)$, we obtain
  \[
  \omega^{G(\gamma)}=\left\{
    0,\,k\omega^{G(\gamma)|G(\m)+q'G(\m)}\right\} \mid\left\{
    \frac{1}{k}\omega^{G(\gamma)|G(\m)+q''G(\m)}\right\}
  \]
  with $k$ ranging in $\N^{*}$ and $\m,q',q''$ as above. By
  cofinality, we can drop $k$:
  \[
  \omega^{G(\gamma)}=\left\{
    0,\,\omega^{G(\gamma)|G(\m)+q'G(\m)}\right\} \mid\left\{
    \omega^{G(\gamma)|G(\m)+q''G(\m)}\right\} .
  \]

  Since
  $\omega{}^{G(\gamma)|G(\m)+qG(\m)}=\omega^{G(\gamma|\m)}\left(\omega^{G(\m)}\right)^{q}=\exp(\gamma|\m)\exp(\m)^{q}$,
  we get
  \[
  \exp(\gamma)=\omega^{G(\gamma)}=\{0,\,\exp(\gamma|\n)\exp(\m)^{q'}\}\mid\{\exp(\gamma|\m)\exp(\m)^{q''}\},
  \]
  as desired.

  Part (3) follows easily from \prettyref{fact:exp}.
\end{proof}

\section{\label{sec:nested}Nested truncations}

Unlike the omega-map, the function $\exp$ is not monotone with respect
to simplicity, as $x\simpleq y$ does not always imply
$\exp(x)\simpleq\exp(y)$; for instance, we have
$\omega\simple\log(\omega)$ while
$\exp(\log(\omega))=\omega\simple\exp(\omega)$.  However, under some
additional assumptions $\exp$ does preserve simplicity.  For example,
$\exp$ preserves simplicity if we know that $x$ is a truncation of $y$
(this is well known, although it seems to have never been stated in
this form).

\begin{prop}
  \label{prop:trunc-exp-simplifies}Let $\gamma,\delta\in\J$. If
  $\gamma\trunceq\delta$, then
  $\exp(\gamma)\simpleq\exp(\delta)$.
\end{prop}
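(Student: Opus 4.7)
The plan is to mirror exactly the argument used in Proposition 2.10 (the fact that ordinary truncation implies simplicity), but now replacing Proposition 2.14 with the representation of $\exp$ on purely infinite numbers given by Theorem 3.3(2). Since both $\gamma$ and $\delta$ belong to $\J$, we have direct access to these representations, and the combinatorial relationship between them will be inherited from the truncation relation $\gamma \trunceq \delta$.

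Concretely, I would first invoke Theorem 3.3(2) to write
\[
\exp(\gamma) = A \mid B, \qquad \exp(\delta) = A' \mid B',
\]
where
\[
A = \{0\} \cup \{\exp(\gamma|\m)\exp(\m)^{q'} : \m \in \supp(\gamma),\ q' < \gamma_\m\},
\]
\[
B = \{\exp(\gamma|\m)\exp(\m)^{q''} : \m \in \supp(\gamma),\ q'' > \gamma_\m\},
\]
and $A', B'$ are defined analogously using $\supp(\delta)$. Next I would exploit the definition of $\trunceq$: since $\gamma \trunceq \delta$, we have $\supp(\gamma) \subseteq \supp(\delta)$, and for each $\m \in \supp(\gamma)$ both $\gamma|\m = \delta|\m$ and $\gamma_\m = \delta_\m$. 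This gives termwise equality of the expressions indexed by any $\m \in \supp(\gamma)$, so $A \subseteq A'$ and $B \subseteq B'$.

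Finally, since $\exp(\delta) = A' \mid B'$ means in particular that $A' < \exp(\delta) < B'$, the inclusions $A \subseteq A'$ and $B \subseteq B'$ yield $A < \exp(\delta) < B$. Remark 2.8 then gives $\exp(\gamma) \simpleq \exp(\delta)$, as desired.

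I expect the proof to be short and essentially combinatorial; the only subtle point is to make sure the representation supplied by Theorem 3.3(2) is of the form to which Remark 2.8 applies, i.e., that we really do have $\exp(\gamma) = A \mid B$ in the sense of "simplest element of the associated convex class". That is precisely the content of the theorem, so no further work is needed. The real content has already been absorbed into Theorem 3.3, which makes this proposition a transparent consequence.
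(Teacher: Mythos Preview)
Your proposal is correct and follows essentially the same approach as the paper: apply the representation of $\exp$ on $\J$ from \prettyref{thm:characterization-exp}(2) to both $\gamma$ and $\delta$, use $\gamma\trunceq\delta$ to obtain the inclusions $A\subseteq A'$ and $B\subseteq B'$, and conclude $\exp(\gamma)\simpleq\exp(\delta)$ via \prettyref{rem:simple-test}. The paper's proof is in fact slightly terser than yours, omitting the explicit justification via $\supp(\gamma)\subseteq\supp(\delta)$, $\gamma|\m=\delta|\m$, and $\gamma_\m=\delta_\m$, but the argument is identical.
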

\begin{proof}
  The argument is similar to the one for
  \prettyref{prop:trunc-simple}, so we will be brief. As in
  \prettyref{thm:characterization-exp}(2), we can write
  $\exp(\gamma)=A\mid B$ where
  \[
  A=\left\{ 0,\,\exp(\gamma|\m)\exp(\m)^{q'}\right\},\quad B=\left\{
    \exp(\gamma|\m)\exp(\m)^{q''}\right\} ,
  \]
  with $\m\in\supp(\gamma)$ and $q'<\gamma_{\m}<q''$. Similarly,
  $\exp(\delta)=A'\mid B'$ where
  \[
  A'=\left\{ 0,\,\exp(\delta|\m)\exp(\m)^{q'}\right\},\quad B'=\left\{
    \exp(\delta|\m)\exp(\m)^{q''}\right\} ,
  \]
  with $\m\in\supp(\delta)$ and $q'<\delta_{\m}<q''$. Since
  $\gamma\trunceq\delta$, we have $A\subseteq A'$ and $B\subseteq B'$,
  and therefore $\exp(\gamma)\simpleq\exp(\delta)$, as desired.
\end{proof}

The above proposition is far from being sufficient for our purposes.
For instance, since $\exp(\gamma)\not\trunceq\exp(\delta)$ for
$\gamma\neq\delta$ in $\J$, we cannot iterate it to compare
$\exp(\exp(\gamma))$ and $\exp(\exp(\delta))$. We remedy this problem
by defining a more powerful notion of ``nested truncation''.

\begin{defn}
  We say that a sum $x_{1}+x_{2}+\dots+x_{n}$ of surreal numbers is in
  \textbf{standard form} if
  $\supp(x_{1})>\supp(x_{2})>\dots>\supp(x_{n})$.

  Given $x\in\no^{*}$, we let $\sign x:=1$ if $x>0$ and $\sign x:=-1$
  if $x<0$.
\end{defn}

\begin{defn}
  \label{def:nested-trunc}For $n\in\N$, we define $\ntrunceq_{n}$ on
  $\no^{*}$ inductively as follows:
  \begin{enumerate}
  \item $x\ntrunceq_{0}y$ if $x\trunceq y$;
  \item $x\ntrunceq_{n+1}y$ if there are $\gamma\ntrunceq_{n}\delta$
    in $\J^{*}$, $z,w\in\no$ and $r\in\R^{*}$ such that
    \[
    x=z+\sign r\exp(\gamma),\quad y=z+r\exp(\delta)+w,
    \]
    where both sums are in standard form.
  \end{enumerate}

  We say that $x\ntrunceq y$, or that $x$ is a \textbf{nested
    truncation} of $y$, if $x\ntrunceq_{n}y$ for some $n\in\N$. We
  write $x\ntrunc y$ if $x\ntrunceq y$ and $x\neq y$.
\end{defn}

It is important to observe that $\ntrunceq$ is defined only for
non-zero surreal numbers, so that $0\not\ntrunceq x$ for every
$x$. This ensures that if $x\ntrunceq\delta$ for some
$\delta\in\J^{*}$, then $x\in\J^{*}$ (see
\prettyref{prop:nested-trunc-J}).

\begin{rem}
  Even if $z+r\exp(\delta)+w$ is in standard form, and
  $\gamma\ntrunceq\delta$, the sum $z+\sign r\exp(\gamma)$ might not
  be in standard form, and therefore it might not be a nested
  truncation of $z+r\exp(\delta)+w$.
\end{rem}

\begin{rem}
  \label{rem:ntrunc-std-form}For all $x,y\in\no^{*}$ and $z\in\no$, if
  $z+x$ and $z+y$ are both in standard form, then $x\ntrunceq y$ if
  and only if $z+x\ntrunceq z+y$. However, the equivalence may not
  hold if one of the sums is not in standard form.
\end{rem}

\begin{rem}
  \label{rem:nested-trunc-sign}For all $x,y\in\no^{*}$, if
  $x\ntrunceq y$, then $x>0$ if and only if $y>0$. Moreover,
  $x\ntrunceq y$ if and only if $-x\ntrunceq-y$.
\end{rem}

\begin{rem}
  \label{rem:ntrunc-mon}For all $x\in\no^{*}$ and $\m\in\M$, if
  $x\ntrunceq\m$, then $x\in\M$.
\end{rem}

Nested truncations behave rather similarly to truncations. First of
all, like $\trunceq$, the relation $\ntrunceq$ is a partial order.
\begin{prop}
  The relation $\ntrunceq$ is a partial order on $\no^{*}$.
\end{prop}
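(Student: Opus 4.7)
The plan is to verify the three partial-order axioms separately. Reflexivity is immediate: since $\trunceq$ is reflexive on $\no^{*}$ and $\ntrunceq_{0}$ coincides with $\trunceq$, every $x \in \no^{*}$ satisfies $x \ntrunceq_{0} x$ and hence $x \ntrunceq x$.

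For transitivity, given $x \ntrunceq_{m} y$ and $y \ntrunceq_{n} z$, I would prove $x \ntrunceq_{k} z$ for some $k$ by induction on $m + n$. The base case $m = n = 0$ reduces to transitivity of $\trunceq$. In the inductive step, unfold the top-level step, say $y \ntrunceq_{n+1} z$, as $y = z_{0} + \sign r \exp(\gamma)$ and $z = z_{0} + r\exp(\delta) + w$ in standard form with $\gamma \ntrunceq_{n} \delta$. Either the step from $x$ to $y$ takes place entirely within the truncation $z_{0}$, in which case \prettyref{rem:ntrunc-std-form} preserves the standard-form decomposition and transitivity follows by composing at the outer level; or it modifies the exponential term $\sign r \exp(\gamma)$, in which case unfolding further and applying the inductive hypothesis inside $\gamma$ yields a $\gamma' \ntrunceq \delta$ that recomposes to witness $x \ntrunceq_{k} z$.

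For antisymmetry, assume $x \ntrunceq_{m} y$ and $y \ntrunceq_{n} x$ with $m, n$ chosen minimally. If $m = n = 0$, antisymmetry of $\trunceq$ gives $x = y$. In the mixed case $m = 0$, $n \geq 1$ (the other being symmetric), write $y = z + \sign r \exp(\gamma)$, $x = z + r\exp(\delta) + w$ in standard form with $\gamma \ntrunceq \delta$, and use $x \trunceq y$: the inclusion $\supp(x) \subseteq \supp(y)$ together with the standard-form ordering $\supp(z) > \exp(\delta) > \supp(w)$ forces $w = 0$ and $\gamma = \delta$, and then matching coefficients forces $r = \sign r$, so $|r| = 1$ and $x = y$.

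The remaining case $m, n \geq 1$ is the main obstacle: one has two independent nested decompositions of both $x$ and $y$. The plan is to align them by comparing leading monomials. Standard form guarantees that the leading monomial of $y$ is the leading monomial of its outer $z$-component whenever that component is non-zero, so the two $z$-components in the two decompositions of $y$ must share a common initial segment; using \prettyref{rem:ntrunc-std-form} to push the discrepancy into the exponential slot, one reduces to an antisymmetry question for the inner relations $\gamma \ntrunceq \delta$ at strictly smaller levels, where the inductive hypothesis applies. The bookkeeping is fiddly but uses no idea beyond standard form and the inductive hypothesis.
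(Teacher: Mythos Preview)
Your reflexivity and transitivity arguments are essentially the paper's: same inductive unfolding of the nested decomposition, same two cases (the step hits the $z$-prefix or the exponential tail).

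For antisymmetry, however, the paper takes a cleaner route that avoids your case split on $(m,n)$ entirely. The key observation you are missing is that $x\ntrunceq y$ forces the order type of $\supp(x)$ to be at most that of $\supp(y)$: for $\ntrunceq_{0}$ this is truncation, and for $\ntrunceq_{n+1}$ the support of $x=z+\sign r\exp(\gamma)$ has order type $\mathrm{otp}(\supp(z))+1$, while that of $y=z+r\exp(\delta)+w$ is at least as large. Hence $x\ntrunceq y$ and $y\ntrunceq x$ together give equal order types. The paper then inducts only on the single index $n$ in $x\ntrunceq_{n}y$: in the step $n>0$, the decomposition $y=z+r\exp(\delta)+w$ must have $w=0$ by the order-type equality, and then from $y\ntrunceq x$ and \prettyref{rem:ntrunc-std-form} one reads off $r=\pm 1$ and $\delta\ntrunceq\gamma$, so the inductive hypothesis applied to $\gamma\ntrunceq_{n-1}\delta$ and $\delta\ntrunceq\gamma$ finishes.

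Your plan for the case $m,n\geq 1$ (align the two $z$-components as comparable truncations, push discrepancies into the exponential slot) can be made to work, but it is not just bookkeeping: you need to rule out the configuration where both ``tails'' $w_1,w_2$ are nonzero, and the cleanest way to do that is precisely the order-type inequality above. Without it you end up re-deriving an equivalent statement by hand. So your sketch is salvageable, but the paper's order-type observation is the missing idea that turns the ``fiddly'' case into a two-line argument.
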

\begin{proof}
  Reflexivity is trivial since $x\ntrunceq_{0}x$ always holds.

  For antisymmetry, assume $x\ntrunceq_{n}y$ and $y\ntrunceq x$ for
  some $n$. We prove by induction on $n$ that $x=y$. Note first that
  the supports of $x$ and $y$ must clearly have the same order type.
  In the case $n=0$, since $x\trunceq y$, this immediately implies
  that $x=y$. If $n>0$, write $x=z+\sign r\exp(\gamma)$ and
  $y=z+r\exp(\delta)+w$ in standard form with
  $\gamma\ntrunceq_{n-1}\delta$. The observation on the order type
  immediately implies that $w=0$, and since $y\ntrunceq x$, we get
  that $r=\sign r=\pm1$ and $\delta\ntrunceq\gamma$. By the inductive
  hypothesis we obtain $\gamma=\delta,$ hence $x=y$.

  For transitivity, assume $x\ntrunceq_{n}y\ntrunceq_{m}u$ for some
  $n,m\in\N$. If $m=0$, then $y\trunceq u$, from which it easily
  follows that $x\ntrunceq_{n}u$. Similarly, if $n=0$, then
  $x\trunceq y$, which implies $x\ntrunceq_{m}u$.

  If $m>0$ and $n>0$, write $y=z+\sign r\exp(\gamma)$ and
  $u=z+r\exp(\delta)+w$ in standard form with
  $\gamma\ntrunceq_{m-1}\delta$ and $\gamma,\delta\in\J^{*}$.  If
  $x\ntrunceq_{n}z$, it follows from $z\trunceq u$ that
  $x\ntrunceq_{n}u$, and we are done. If $x\not\ntrunceq_{n}z$, we
  must have $x=z+\sign r\exp(\gamma')$ in standard form with
  $\gamma'\ntrunceq_{n-1}\gamma$ and $\gamma'\in\J^{*}$.  By induction
  on $n$, this implies that $\gamma'\ntrunceq\delta$, which means that
  $x\ntrunceq u$, concluding the argument.
\end{proof}

\begin{rem}
  The relation $\ntrunceq$ is the smallest transitive one such that:
  \begin{enumerate}
  \item for all $x,y\in\no^{*}$, $x\trunceq y$ implies $x\ntrunceq y$;
  \item for all $\gamma,\delta\in\J^{*}$, $\gamma\ntrunceq\delta$
    implies $\exp(\gamma)\ntrunceq\exp(\delta)$ and
    $-\exp(\gamma)\ntrunceq-\exp(\delta)$;
  \item for all $\m\in\M^{\neq1}$ and $r\in\R^{*}$,
    $\sign r\m\ntrunceq r\m$;
  \item for all $x,y\in\no^{*}$ and $z\in\no$, if $x\ntrunceq y$, then
    $z+x\ntrunceq z+y$, provided both sums are in standard form.
  \end{enumerate}
\end{rem}

Moreover, the two relations share the following convexity property.

\begin{prop}
  \label{prop:trunc-convex}For all $x\in\no$, the class
  $\{y\in\no\suchthat x\trunceq y\}$ is convex.
\end{prop}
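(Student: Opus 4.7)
The plan is to reformulate the condition $x \trunceq y$ in terms of the dominance relation $\vless$, and then exploit the fact that the interval $[y_1, y_2]$ is controlled in magnitude by its endpoints. Specifically, I claim that for a non-zero $x$, the condition $x \trunceq y$ is equivalent to requiring that $y = x + t$ where $|t| \vless \m$ for every $\m \in \supp(x)$. The forward direction is immediate: if $x = y|\n$, then $t = y - x$ has $\supp(t) \leq \n$, while every $\m \in \supp(x)$ satisfies $\m > \n$; since distinct monomials lie in distinct archimedean classes, $\max\supp(t) \vless \m$, so $|t|\vless \m$. Conversely, if $|t| \vless \m$ for every $\m \in \supp(x)$, then the maximum monomial $\mu$ of $\supp(t)$ (if $t\neq 0$) lies strictly below every $\m \in \supp(x)$, and taking $\n = \mu$ gives $x = (x+t)|\n = y|\n$. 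The case $x = 0$ is handled separately and trivially, since then every $y$ satisfies $x \trunceq y$ and $\no$ is convex.

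Now assume $x \trunceq y_1$ and $x \trunceq y_2$ with $y_1 \leq z \leq y_2$, and set $t_i := y_i - x$. Then $t_1 \leq z - x \leq t_2$, and therefore
\[
  |z - x| \leq \max\{|t_1|,\,|t_2|\}.
\]
By the reformulation above, $|t_i| \vless \m$ for every $\m \in \supp(x)$ and every $i\in\{1,2\}$, so the same holds for $|z-x|$. Applying the converse direction of the reformulation to $z - x$, we conclude $x \trunceq z$, which is exactly the required convexity. The one non-trivial ingredient is the archimedean-class observation that distinct monomials cannot be $\veq$, which is what converts the strict inequality $\max\supp(t) < \min\supp(x)$ (or, when $\supp(x)$ has no minimum, an analogous statement) into $|t|\vless\m$ and back. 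Everything else is bookkeeping, so I do not anticipate a serious obstacle.
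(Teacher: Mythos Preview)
Your proof is correct and follows essentially the same approach as the paper's: both rest on the characterization that $x \trunceq y$ if and only if $|y-x| \vless \m$ for every $\m \in \supp(x)$, and then use that this condition is preserved under passing to intermediate values. The only cosmetic difference is that the paper observes $x$ itself lies in the class, so any point $u$ in the convex hull already lies between $x$ and a single $y$ with $x\trunceq y$, avoiding the $\max\{|t_1|,|t_2|\}$ step.
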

\begin{proof}
  Let $x\in\no$ be given and let $u$ be a number in the convex hull of
  $\{y\in\no\suchthat x\trunceq y\}$. This easily implies that there
  exists $y\in\no$ with $x\trunceq y$ such that $u$ is between $x$ and
  $y$, and in particular $|u-x|\leq|y-x|$. Since $x\trunceq y$, we
  have $|y-x|\vless\m$ for all $\m\in\supp(x)$, which implies that
  $|u-x|\vless\m$ for all $\m\in\supp(x)$. Therefore, $x\trunceq u$,
  as desired.
\end{proof}

\begin{prop}
  \label{prop:nested-trunc-convex}For all $x\in\no^{*}$, the class
  $\{y\in\no^{*}\suchthat x\ntrunceq y\}$ is convex.
\end{prop}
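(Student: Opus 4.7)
The plan is to proceed by strong induction on $N := \max(n_1, n_2)$, where $n_1, n_2 \in \N$ are chosen minimally so that $x \ntrunceq_{n_1} y_1$ and $x \ntrunceq_{n_2} y_2$, with $u \in \no^*$ satisfying $y_1 \le u \le y_2$. The base case $N = 0$ reduces immediately to \prettyref{prop:trunc-convex}: both $y_1$ and $y_2$ are plain truncations of $x$, and so is any $u$ between them.

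For the inductive step ($N \ge 1$), one may assume (up to swapping $y_1$ and $y_2$) that $n_2 = N$. The definition of $\ntrunceq_N$ forces $x$ to admit a standard-form decomposition $x = z + \sign r \exp(\gamma)$ with $\gamma \in \J^*$ and $r \in \R^*$, and produces $y_2 = z + r_2 \exp(\delta_2) + w_2$ in standard form with $\sign r_2 = \sign r$ and $\gamma \ntrunceq_{N-1} \delta_2$. The key structural observation is that $y_1$ also decomposes over the \emph{same} prefix $z$, since $z$ is uniquely determined by $x$ (it is $x$ minus its final term): if $n_1 \ge 1$ this is direct from the definition, and if $n_1 = 0$ then $x \trunceq y_1$ forces $\supp(y_1 - x) < \exp(\gamma)$, so $y_1 = z + \sign r \exp(\gamma) + w_1$ in standard form, matching the required shape with $\delta_1 = \gamma$. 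A sign flip via \prettyref{rem:nested-trunc-sign} further reduces to $\sign r = +1$.

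Set $v_i := y_i - z$ and $\nu := u - z$, so that $0 < v_1 \le \nu \le v_2$. Writing $\nu = r_u \exp(\delta_u) + w_u$ in standard form (with $r_u > 0$), comparison of leading monomials of these three positive numbers yields $\exp(\delta_1) \le \exp(\delta_u) \le \exp(\delta_2)$, hence $\delta_1 \le \delta_u \le \delta_2$; moreover $\delta_u \in \J^*$, because $\gamma \neq 0$ forces $\exp(\delta_1)$ and $\exp(\delta_2)$ to lie strictly on a single side of $1$, pinning $\exp(\delta_u)$ to the same side. Applying the inductive hypothesis to $\gamma$ (whose witnesses $\gamma \ntrunceq \delta_i$ have maximum depth $N - 1 < N$) then gives $\gamma \ntrunceq \delta_u$. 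By the definition of $\ntrunceq_{k+1}$ this upgrades to $\sign r \exp(\gamma) \ntrunceq \nu$, and \prettyref{rem:ntrunc-std-form} applied with prefix $z$ (noting $\supp(z) > \exp(\delta_2) \ge \exp(\delta_u)$) delivers the desired $x \ntrunceq u$.

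The main obstacle is the structural observation that $y_1$ and $y_2$ decompose over a common prefix $z$ coming from $x$: this uniform presentation is exactly what allows the induction to close, by reducing the convexity statement for $x$ at depth $N$ to the convexity statement for the structurally simpler element $\gamma \in \J^*$ at depth $N-1$. A secondary bookkeeping point, but one that must be handled carefully, is verifying that $\delta_u$ remains non-zero (so that $\gamma \ntrunceq \delta_u$ is even defined) and that the standard-form hypotheses required to invoke \prettyref{rem:ntrunc-std-form} propagate correctly; both follow from the sign constraint imposed by $\gamma \in \J^*$ together with the original standard form of $x$.
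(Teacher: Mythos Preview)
Your argument is correct and follows the same inductive skeleton as the paper, but you work harder than necessary. The paper exploits the fact that $x$ itself lies in $\{y\in\no^{*}\suchthat x\ntrunceq y\}$ (since $x\ntrunceq_{0}x$): given any $u$ in the convex hull, one of the two witnesses can therefore be replaced by $x$, so it suffices to show that if $x\ntrunceq_{n}y$ and $u$ lies between $x$ and $y$ then $x\ntrunceq_{n}u$. This reduces the induction to a single depth parameter $n$ rather than your $\max(n_{1},n_{2})$, and eliminates the case split on whether $n_{1}=0$ or $n_{1}\geq1$ as well as the need to argue that $y_{1}$ and $y_{2}$ share the common prefix $z$. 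Concretely, in the paper's setup one automatically has $\delta_{1}=\gamma$, so the recursive call is simply ``$\delta'$ lies between $\gamma$ and $\delta$, hence $\gamma\ntrunceq_{n-1}\delta'$ by induction.'' Your two-witness route is a legitimate alternative and the bookkeeping you do (uniqueness of $z$ from $x$, non-vanishing of $\delta_{u}$, propagation of standard form via $\supp(z)>\exp(\delta_{2})\geq\exp(\delta_{u})$) is all sound; it just duplicates work that disappears once you notice $x$ can serve as one endpoint.
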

\begin{proof}
  Let $x\in\no^{*}$ be given and let $u$ be a number in the convex
  hull of $\{y\in\no^{*}\suchthat x\ntrunceq y\}$. This easily implies
  that there exist $y\in\no^{*}$ and $n\in\N$ with $x\ntrunceq_{n}y$
  such that $u$ is between $x$ and $y$. We reason by induction on $n$
  to prove that $x\ntrunceq_{n}u$.

  If $n=0$, the conclusion follows trivially from
  \prettyref{prop:trunc-convex}.

  If $n>0$, there are $\gamma\ntrunceq_{n-1}\delta$ both in $\J^{*}$,
  $z,w\in\no$, $r\in\R^{*}$ such that
  \[
  x=z+\sign r\exp(\gamma),\quad y=z+r\exp(\delta)+w,
  \]
  with both sums in standard form. Since $z\trunceq x,y$, we have that
  $z\trunceq u$ as well by \prettyref{prop:trunc-convex}. Moreover,
  since $u$ is between $x$ and $y$, we have $u\neq z$. Therefore,
  there are unique $\delta'\in\J$, $w'\in\no$, $r'\in\R^{*}$ such that
  \[
  u=z+r'\exp(\delta')+w'
  \]
  is in standard form. We clearly have $\sign{r'}=\sign r$ and
  $\delta'$ between $\delta$ and $\gamma$. By
  \prettyref{rem:nested-trunc-sign} $\delta$ and $\gamma$ have the
  same sign, hence $\delta'\neq0$.  By induction, we deduce that
  $\gamma\ntrunceq_{n-1}\delta'$, whence $x\ntrunceq_{n}u$, as
  desired.
\end{proof}

Next we show that $\J$, which is closed under $\trunceq$, is also
closed under $\ntrunceq$.

\begin{prop}
  \label{prop:nested-trunc-J}For all $x\in\no^{*}$ and
  $\delta\in\J^{*}$, if $x\ntrunceq\delta$, then
  $x\in\J^{*}$.
\end{prop}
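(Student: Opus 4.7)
The plan is to induct on the least $n \in \N$ such that $x \ntrunceq_{n} \delta$. Recall that $\J$ is characterized by $\supp(\cdot) > 1$, so it suffices to show that $\supp(x) > 1$ (nonzero is given by $x \in \no^{*}$).

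For the base case $n = 0$, we have $x \trunceq \delta$, hence $\supp(x) \subseteq \supp(\delta)$. Since $\delta \in \J^{*}$ gives $\supp(\delta) > 1$, we immediately get $\supp(x) > 1$, so $x \in \J^{*}$.

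For the inductive step with $n \geq 1$, unpack the definition of $\ntrunceq_{n}$: there exist $\gamma \ntrunceq_{n-1} \epsilon$ both in $\J^{*}$, $z, w \in \no$, and $r \in \R^{*}$ with
\[
  x = z + \sign r \exp(\gamma), \qquad \delta = z + r\exp(\epsilon) + w,
\]
both in standard form. Because $\delta \in \J^{*}$, every monomial in $\supp(\delta) = \supp(z) \cup \{\exp(\epsilon)\} \cup \supp(w)$ must exceed $1$. In particular $\supp(z) > 1$ and $\exp(\epsilon) > 1$, the latter giving $\epsilon > 0$.

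The one remaining point is showing $\exp(\gamma) > 1$, i.e.\ $\gamma > 0$, so that the new term contributed to $x$ is also infinite. This is where Remark~\ref{rem:nested-trunc-sign} enters: since $\gamma \ntrunceq \epsilon$ in $\no^{*}$ and $\epsilon > 0$, we conclude $\gamma > 0$. (Note that one does \emph{not} need the inductive hypothesis for this, since $\gamma \in \J^{*}$ is already built into the definition of $\ntrunceq_{n}$; the only nontrivial content of the induction is passing sign information down through the nesting, which is exactly what Remark~\ref{rem:nested-trunc-sign} provides.) Combining, $\supp(x) = \supp(z) \cup \{\exp(\gamma)\}$ is bounded below by $1$, and $x \in \J^{*}$ as required.

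There is no real obstacle: the argument is just a case analysis on the inductive clause defining $\ntrunceq_{n}$, with the crucial (but already-established) input being sign preservation under nested truncation, which lets us promote the positivity of $\epsilon$ forced by $\delta \in \J$ to positivity of the inner exponent $\gamma$.
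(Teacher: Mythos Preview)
Your proof is correct and follows essentially the same approach as the paper's own argument: handle the truncation case directly via support containment, and for $n\geq 1$ unpack the standard-form decomposition, observe that $\delta\in\J$ forces $z\in\J$ and $\epsilon>0$, then invoke \prettyref{rem:nested-trunc-sign} to transfer positivity to $\gamma$. Your observation that the inductive hypothesis is never actually invoked is accurate and matches the paper, which likewise treats the two cases separately without a genuine induction.
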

\begin{proof}
  If $x\trunceq\delta$ the conclusion is trivial. If
  $x\ntrunceq_{n+1}\delta$, there are
  $\gamma\ntrunceq_{n}\delta'\in\J^{*}$, $z,w\in\no$ and $r\in\R^{*}$
  such that
  \[
  x=z+\sign r\exp(\gamma),\quad\delta=z+r\exp(\delta')+w.
  \]
  Note that $\delta'>0$ since $\delta\in\J$. By
  \prettyref{rem:nested-trunc-sign} we must have $\gamma>0$. Moreover,
  we clearly have $z\in\J$. It follows that $x\in\J^{*}$, as desired.
\end{proof}

Finally, just like $x\trunceq y$ implies $x\simpleq y$
(\prettyref{prop:trunc-simple}), also $x\ntrunceq y$ implies
$x\simpleq y$ (\prettyref{thm:nested-trunc-simplifies}); in
particular, the relation $\ntrunceq$ is well-founded. This implies
that $\ntrunceq$ has an associated ordinal rank which is crucial for
our inductive proofs. The rest of the section is devoted to the proof
of the fact that $\ntrunceq$ is well-founded.

\subsection{Products and inverses of monomials}

We first establish a few formulas for products and inverses of
monomials in the case we are working with representations of a special
type.

\begin{defn}
  Let $x=A\mid B$ with $x>0$. We say that $A\mid B$ is a
  \textbf{monomial representation} if for all $y\in\no$ and
  $k\in\N^{*}$, we have $A<y<B$ if and only if $A<ky<B$. Equivalently,
  $A\mid B$ is a monomial representation if
  \begin{enumerate}
  \item for every $a\in A$ there is $a'\in A$ such that $2a\leq a'$;
  \item for every $b\in B$ there is $b'\in B$ such that
    $b'\leq\frac{1}{2}b$.
  \end{enumerate}
\end{defn}

As the name suggests, monomial representations define monomials.

\begin{prop}
  If $x\in\no^{>0}$ admits a monomial representation, then
  $x\in\M$.
\end{prop}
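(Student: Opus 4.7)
The plan is to show that every positive surreal number in the archimedean class of $x$ lies in the convex class $\convex{A}{B}$ associated to the monomial representation. Since $x$ is by definition the simplest element of $\convex{A}{B}$, it will then follow that $x$ is the simplest positive representative of its archimedean class, which is precisely the definition of a monomial.

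The key observation is that the biconditional in the definition of monomial representation makes $\convex{A}{B}$ closed under multiplication by any positive rational. Indeed, if $y \in \convex{A}{B}$ and $k \in \N^{*}$, then $A < y < B$ gives $A < ky < B$ by the biconditional, so $ky \in \convex{A}{B}$; conversely, applying the same biconditional with $ky$ in place of $y$ shows that $y/k \in \convex{A}{B}$ as well. In particular, both $kx$ and $x/k$ belong to $\convex{A}{B}$ for every $k \in \N^{*}$.

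Now let $y \in \no^{>0}$ satisfy $y \veq x$. By definition of the archimedean relation, there exists $k \in \N^{*}$ with $\tfrac{x}{k} \leq y \leq kx$. If $y$ equals one of these endpoints, then $y \in \convex{A}{B}$ by the previous step; otherwise, the convexity of $\convex{A}{B}$ together with $\tfrac{x}{k}, kx \in \convex{A}{B}$ yields $y \in \convex{A}{B}$ anyway. In either case $x \simpleq y$ since $x$ is the simplest element of $\convex{A}{B}$. Therefore $x$ is the simplest positive element in its archimedean class, so $x \in \M$ by definition. No real obstacle is anticipated here; the argument is essentially a direct unpacking of the definitions of monomial representation, archimedean class, and $A \mid B$.
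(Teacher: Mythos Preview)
Your proposal is correct and follows essentially the same approach as the paper: both arguments show that the entire positive part of the archimedean class of $x$ lies in the convex class $\convex{A}{B}$, and then conclude that $x$, being the simplest element there, must be the monomial of its class. The paper states the inclusion of the archimedean class in $\convex{A}{B}$ as ``clearly'' and then compares $x$ with the unique monomial $\m\veq x$ via the two relations $\m\simpleq x$ and $x\simpleq\m$; you instead spell out the closure of $\convex{A}{B}$ under multiplication by positive rationals and conclude directly that $x$ is simplest in its class, which amounts to the same thing.
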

\begin{proof}
  Suppose that $x=A\mid B$ is a monomial representation of
  $x$. Clearly, $A<y<B$ for every positive number $y$ such that
  $y\veq x$, and in particular, $A<\m<B$ for the unique monomial
  $\m\in\M$ such that $\m\veq x$. Since a monomial is the simplest
  positive element of its archimedean class, we have $\m\simpleq
  x$.
  But $x\simpleq\m$ holds as well, since $x$ is the simplest number
  such that $A<x<B$, and therefore $x=\m\in\M$.
\end{proof}

Conversely, all monomials admit monomial representations. In fact, we
shall prove that all simple representations of monomials are monomial.

\begin{lem}
  \label{lem:simple-km} Let $\m,\n\in\M$ with $\m\simple\n$.
  \begin{enumerate}
  \item If $\m<\n$, then $k\m\simple\n$ for all $k\in\N$.
  \item If $\m>\n$, then $\frac{1}{2^{k}}\m\simple\n$ for all
    $k\in\N$.
  \end{enumerate}
\end{lem}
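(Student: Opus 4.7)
The plan is to reduce the claim to the corresponding statement for exponents via the omega-map. Write $\m = \omega^x$ and $\n = \omega^y$. Since $x \mapsto \omega^x$ is an order isomorphism from $(\no, <)$ to $(\M, <)$ that preserves simplicity (\prettyref{fact:omega} and \prettyref{rem:omega-simple}), the hypothesis $\m \simple \n$ translates to $x \simple y$; moreover, $\m < \n$ is equivalent to $x < y$ and $\m > \n$ is equivalent to $x > y$.

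Next, I would invoke the representation from \prettyref{def:omega}:
\[
\omega^y = \left\{0,\, k\omega^{y'}\right\} \mid \left\{\frac{1}{2^k}\omega^{y''}\right\},
\]
where $y'$ ranges over the surreal numbers simpler than $y$ with $y' < y$, $y''$ over those simpler than $y$ with $y < y''$, and $k$ over the positive integers. The essential input from Gonshor's analysis of the $\omega$-map (\cite[Thms.\ 5.3 and 5.4]{Gonshor1986}, summarized in \prettyref{fact:omega}) is that this representation is simple in the sense of \prettyref{def:simple-rep}, meaning that every option on either side is strictly simpler than $\omega^y$.

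Granted this, both parts of the lemma follow at once. For (1), $x \simple y$ with $x < y$, so $x$ qualifies as one of the $y'$; hence $k\m = k\omega^x$ is a left option of $\omega^y$, and therefore $k\m \simple \n$ for every positive $k \in \N$, while the case $k = 0$ is trivial because $0 \simple \n$. For (2), $x \simple y$ with $x > y$, so $x$ qualifies as one of the $y''$; hence $\frac{1}{2^k}\m = \frac{1}{2^k}\omega^x$ is a right option of $\omega^y$, and therefore $\frac{1}{2^k}\m \simple \n$ for every $k \in \N$.

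The one delicate ingredient, and the main obstacle if one wants a fully self-contained argument, is the simplicity of the displayed representation of $\omega^y$, i.e.\ the assertion that the concrete options $k\omega^{y'}$ and $(1/2^k)\omega^{y''}$ are strictly simpler than $\omega^y$ rather than merely simpler-or-equal. A direct inductive attempt is circular, since it amounts to the lemma itself applied to simpler monomials; however, this is established once and for all by Gonshor's explicit computation of the binary expansion of $\omega^y$, and may be used here as a black box.
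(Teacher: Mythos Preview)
Your route is genuinely different from the paper's, and the difference is worth spelling out. The paper argues directly: it writes $k\m=A\mid B$ (respectively $\frac{1}{2^k}\m=A\mid B$) from the uniform multiplication formula and checks by hand that $A<\n<B$, using only $\m\simple\n$, $\m\lessgtr\n$, and \prettyref{prop:simple-rep}. Your argument instead observes that the lemma is an immediate consequence of the statement that the representation in \prettyref{def:omega} is \emph{simple}, and then treats that statement as known.

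Within the paper's logical structure this is circular: the simplicity of the representation of $\omega^y$ is precisely \prettyref{cor:omega-rep-simple}, and the paper derives it \emph{from} the present lemma. Your citation of \prettyref{fact:omega} for this point is not correct; that fact only records that $x\mapsto\omega^x$ is an ordered-group isomorphism onto $\M$, and says nothing about simplicity of the displayed options. You recognise the issue in your final paragraph and propose to import the simplicity from Gonshor's explicit sign-expansion computation of $\omega^y$. That is legitimate in principle---Gonshor does give the sign sequence of $\omega^y$ explicitly---but you should cite the actual result (around \cite[Thm.~5.11]{Gonshor1986}) rather than \prettyref{fact:omega}, and be aware that you are then taking on a substantial outside computation to avoid a half-page elementary argument. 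A smaller point: the implication $\omega^x\simple\omega^y\Rightarrow x\simple y$ that you use is the converse of \prettyref{rem:omega-simple} and needs a one-line justification (it follows from injectivity together with the tree structure of $\simpleq$).

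In short: the paper's proof is self-contained and has \prettyref{cor:omega-rep-simple} as a payoff; your proof assumes (an equivalent of) \prettyref{cor:omega-rep-simple} from an external source and reads the lemma off. Both are valid, but they sit at opposite ends of the same logical dependency.
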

\begin{proof}
  (1) Let $\m<\n$ be given. We wish to prove that $k\m\simple\n$.  We
  recall that $k=\{0,1,\dots,k-1\}\mid\emptyset$. By definition of
  product, we have $k\m=A\mid B$ with
  \begin{eqnarray*}
    A & = & \left\{ k'\m+k\m'-k'\m'\right\},\\
    B & = & \left\{ k'\m+k\m''-k'\m''\right\},
  \end{eqnarray*}
  where $k'$ ranges in $\{0,1,\ldots,k-1\}$.

  By \prettyref{rem:simple-test}, it suffices to check that $A<\n<B$.
  We can easily verify that
  \[
  k'\m+k\m'-k'\m'=k'\m+(k-k')\m'<k\m<\n,
  \]
  and therefore $A<\n$. Moreover,
  \[
  k'\m+k\m''-k'\m''=k'\m+(k-k')\m''\geq\m''.
  \]
  But $\m''>\m$ and $\m''\simple\m\simple\n$, hence by
  \prettyref{prop:simple-rep} $\m''>\n$. Therefore, $A<\n<B$, hence
  $k\m\simple\n$, as desired.

  (2) Let $\m>\n$ be given. We recall that
  $\frac{1}{2^{k}}=\left\{ 0\right\} \mid\left\{
    \frac{1}{2^{k'}}\right\} $
  for $k'=0,1,\dots,k-1$, hence $\frac{1}{2^{k}}\m=A\mid B$ with
  \begin{eqnarray*}
    A & = & \left\{ \frac{1}{2^{k}}\m',\ \frac{1}{2^{k'}}\m+\frac{1}{2^{k}}\m''-\frac{1}{2^{k'}}\m''\right\},\\
    B & = & \left\{ \frac{1}{2^{k}}\m'',\ \frac{1}{2^{k'}}\m+\frac{1}{2^{k}}\m'-\frac{1}{2^{k'}}\m'\right\}.
  \end{eqnarray*}

  Again, it suffices to verify that $A<\n<B$. We compare each
  expression in the above brackets with $\n$.

  \begin{itemize}
  \item We have $\m'\simple\m\simple\n$ and $\m'<\m$. By
    \prettyref{prop:simple-rep} it follows that $\m'<\n$ and \emph{a
      fortiori} $\frac{1}{2^{k}}\m'<\n$.
  \item If $k>0$, the expression
    $\frac{1}{2^{k'}}\m+\frac{1}{2^{k}}\m''-\frac{1}{2^{k'}}\m''$ is
    negative (so in particular $<\n$) because $\m\vless\m''$ and
    $k'<k$. If $k=0$, the expression can be dropped since $k'$ ranges
    in the empty set.
  \item Since $\m''\simple\m\simple\n$ and $\m''>\m$, we obtain
    $\n<\m''$ by \prettyref{prop:simple-rep}. Moreover, since
    $\m''\simple\n$ and $\n$ is the simplest positive number in its
    archimedean class, $\m''$ must belong to a different archimedean
    class, and therefore $\n\vless\m''$. It follows that
    $\n<\frac{1}{2^{k}}\m''$, as desired.
  \item If $k>0$, then
    $\frac{1}{2^{k'}}\m+\frac{1}{2^{k}}\m'-\frac{1}{2^{k'}}\m'\geq\frac{1}{2^{k}}\m>\n$.
    If $k=0$, the expression can be dropped since $k'$ ranges in the
    empty set.
  \end{itemize}

  We have thus proved that $A<\n<B$, whence
  $\frac{1}{2^{k}}\m\simple\n$, as desired.
\end{proof}

\begin{cor}
  \label{cor:simple-vell}Let $x\simple\m\simpleq z$, with $\m\in\M$.
  Then $x\vless\m$ if and only if $x\vless z$.
\end{cor}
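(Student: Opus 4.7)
The plan is to reduce the biconditional to a comparison involving the simplest monomial in the archimedean class of $x$, and then apply \prettyref{lem:simple-km} together with \prettyref{prop:simple-rep}.

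First, observe that $\m>0$ and $\m\simpleq z$ force $z>0$, since the binary sequence of $z$ has that of $\m$ as a prefix and $\m$'s starts with the positive bit. If $x=0$, then both sides of the biconditional hold trivially, and if $\m=z$ the statement is tautological. So I may assume $x\neq 0$ and $\m\simple z$; the same prefix argument applied to $x\simple\m$ yields $x>0$.

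Let $\n\in\M$ be the simplest positive monomial in the archimedean class of $x$, so $\n\veq x$, whence $x\vless\m\iff\n\vless\m$ and $x\vless z\iff\n\vless z$. By minimality of $\n$ we have $\n\simpleq x\simple\m$, and $\n=\m$ is ruled out (it would make $x$ a simpler positive element of the archimedean class of $\m$), so $\n\simple\m$; similarly $\n\veq\m$ is ruled out by the minimality of $\m$, so either $\n<\m$, i.e.\ $\n\vless\m$, or $\n>\m$, i.e.\ $\n\vgreater\m$.

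In the case $\n\vless\m$, \prettyref{lem:simple-km}(1) gives $k\n\simple\m$ for every $k\in\N$, so \prettyref{prop:simple-rep}(1) applied to $k\n\simple\m\simpleq z$ yields $k\n<\m\iff k\n<z$; the left-hand side holds for every $k$ by $\n\vless\m$, so $k\n<z$ for every $k$, giving $\n\vless z$ and thus both sides of the original biconditional. In the case $\n\vgreater\m$, \prettyref{lem:simple-km}(2) gives $\n/2^{k}\simple\m$, and \prettyref{prop:simple-rep}(1) gives $\n/2^{k}<\m\iff\n/2^{k}<z$; the left-hand side is now false for every $k$, so $\n/2^{k}\geq z$ for every $k$, forcing $z\vless\n$, and both sides of the original biconditional fail. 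The main subtlety is the reduction from $x$ to the monomial $\n$ in its archimedean class, which provides the input required by \prettyref{lem:simple-km}; once the dichotomy $\n\vless\m$ or $\n\vgreater\m$ is in hand, \prettyref{prop:simple-rep} transfers comparisons with $\m$ to comparisons with $z$ through the auxiliary family $k\n$ or $\n/2^{k}$.
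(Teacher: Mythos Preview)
Your proof is correct and follows essentially the same approach as the paper's: both reduce to the monomial $\n$ in the archimedean class of $x$, establish $\n\simple\m$ with $\n\not\veq\m$, and then use \prettyref{lem:simple-km} together with \prettyref{prop:simple-rep}(1) to transfer the comparison from $\m$ to $z$ via the families $k\n$ or $\n/2^{k}$. The only cosmetic difference is that you carry out the reduction to $\n$ explicitly at the start and argue the biconditional entirely in terms of $\n$, whereas the paper keeps $x$ in the final inequalities; the logical content is identical.
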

\begin{proof}
  By \prettyref{prop:simple-rep}, $x<\m$ if and only if
  $x<z$. Excluding the trivial cases, we may assume that
  $x\neq0$. Moreover, recall that since $\m>0$ and $x\simple\m$ we
  have $x>0$, and since $\m$ is the simplest number in its archimedean
  class we have $x\not\veq\m$.

  Let $\n$ be the unique monomial such that $\n\veq x$; since $x>0$ we
  have $\n\simpleq x$ and therefore $\n\simple\m$. We apply
  \prettyref{lem:simple-km} to $\m$ and $\n$, distinguishing two
  cases.

  If $x\vless\m$ we have $\n\vless\m$, and in particular $\n<\m$.  Let
  $k$ be any integer such that $k\n>x$. By
  \prettyref{lem:simple-km}(1), we have $k\n\simple\m$. From
  $k\n\simple\m\simpleq z$ and $k\n<\m$, we obtain $k\n<z$ by
  \prettyref{prop:simple-rep}. Since $x<k\n<z$ and $k$ is arbitrarily
  large, we conclude that $x\vless z$, as desired.

  If $x\vgreater\m$ we proceed similarly, applying
  \prettyref{lem:simple-km}(2) to $\m$ and
  $\frac{1}{2^{k}}\n$.
\end{proof}

\begin{cor}
  \label{cor:omega-rep-simple}The representation
  $\omega^{x}=\{0,k\omega^{x'}\}\mid\left\{
    \frac{1}{2^{k}}\omega^{x''}\right\} $
  of \prettyref{def:omega} is simple.
\end{cor}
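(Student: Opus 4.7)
The plan is to deduce the corollary immediately from Proposition~\ref{prop:simple-rep}(2), which says that a representation is simple as soon as every element on the left and right side is strictly simpler than the number being represented. So I only need to verify that each of the numbers $0$, $k\omega^{x'}$, and $\tfrac{1}{2^{k}}\omega^{x''}$ appearing in the given representation is strictly simpler than $\omega^{x}$.

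The element $0$ is the empty sequence, hence strictly simpler than any non-zero surreal, and $\omega^{x}>0$. For the remaining two cases, I first use Remark~\ref{rem:omega-simple} together with injectivity of the omega-map (Fact~\ref{fact:omega}): from $x'\simple x$ we obtain $\omega^{x'}\simpleq\omega^{x}$ with $\omega^{x'}\neq\omega^{x}$, hence $\omega^{x'}\simple\omega^{x}$; by Fact~\ref{fact:omega} the omega-map is order-preserving, so $\omega^{x'}<\omega^{x}$. The symmetric argument gives $\omega^{x''}\simple\omega^{x}$ with $\omega^{x''}>\omega^{x}$.

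Now I apply Lemma~\ref{lem:simple-km}, which is tailor-made for exactly this situation: part (1) of that lemma, applied to $\m=\omega^{x'}\simple\omega^{x}=\n$ with $\m<\n$, gives $k\omega^{x'}\simple\omega^{x}$ for every $k\in\N$; part (2), applied to $\m=\omega^{x''}\simple\omega^{x}=\n$ with $\m>\n$, gives $\tfrac{1}{2^{k}}\omega^{x''}\simple\omega^{x}$ for every $k\in\N$.

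All elements of the representation are therefore strictly simpler than $\omega^{x}$, and Proposition~\ref{prop:simple-rep}(2) concludes the proof. There is no real obstacle here: the corollary is essentially a bookkeeping consequence of Lemma~\ref{lem:simple-km}, which does the heavy lifting of showing that scaling a monomial by a positive integer (resp.\ dividing by a power of $2$) preserves strict simplicity when the scaled monomial still lies on the correct side of the target.
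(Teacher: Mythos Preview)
Your proof is correct and follows essentially the same approach as the paper: use Remark~\ref{rem:omega-simple} (together with injectivity of the omega-map) to get $\omega^{x'},\omega^{x''}\simple\omega^{x}$, apply Lemma~\ref{lem:simple-km} to handle the coefficients $k$ and $\tfrac{1}{2^{k}}$, and conclude via Proposition~\ref{prop:simple-rep}(2). You simply spell out a few details (the case of $0$, the strictness of $\simple$, and the order relations needed for Lemma~\ref{lem:simple-km}) that the paper leaves implicit.
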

\begin{proof}
  By \prettyref{rem:omega-simple},
  $\omega^{x'},\omega^{x''}\simple\omega^{x}$, hence by
  \prettyref{lem:simple-km}
  $k\omega^{x'},\frac{1}{2^{k}}\omega^{x''}\simple\omega^{x}$ for
  every $k\in\N$. Therefore, by \prettyref{prop:simple-rep} the
  representation is simple.
\end{proof}

\begin{prop}
  \label{prop:mon-rep}For all $x\in\M$, any simple representation of
  $x$ is a monomial representation.
\end{prop}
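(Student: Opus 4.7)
The plan is to reduce the statement to a check on a single concrete representation, by observing that being a monomial representation depends only on the convex class $\convex{A}{B}$ and not on the specific choice of $A$ and $B$ that bound it.

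More precisely, the condition ``$A<y<B$ iff $A<ky<B$ for all $y\in\no$ and $k\in\N^{*}$'' in the definition of monomial representation is just the assertion that the class $\convex{A}{B}$ is closed under the maps $y\mapsto ky$ for every $k\in\N^{*}$, and it makes no further reference to $A,B$ individually. Hence, whenever two representations have the same convex class, they are either both monomial or neither is.

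With that in hand, for $\m=\omega^{x}\in\M$ the specific representation $\omega^{x}=\{0,k\omega^{x'}\}\mid\{\frac{1}{2^{k}}\omega^{x''}\}$ of \prettyref{def:omega} is simple by \prettyref{cor:omega-rep-simple}, and it is a monomial representation by direct inspection: doubling a left-hand entry $k\omega^{x'}$ yields $(2k)\omega^{x'}$, which is again of the prescribed form, and halving a right-hand entry $\frac{1}{2^{k}}\omega^{x''}$ yields $\frac{1}{2^{k+1}}\omega^{x''}$, again of the prescribed form. Now if $\m=A\mid B$ is any simple representation, then by \prettyref{rem:simple-test} and the definition of simple representation we have $\convex{A}{B}=\{y\in\no \suchthat \m\simpleq y\}$, which does not depend on the choice of simple representation of $\m$ and therefore coincides with the convex class of the representation above. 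By the principle in the previous paragraph, $A\mid B$ is also a monomial representation.

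I do not anticipate any real obstacle here; the only subtle point is to notice that the monomial-representation property is an invariant of the convex class, which is essentially just a careful reading of the definition.
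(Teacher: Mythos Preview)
Your proposal is correct and is essentially the same argument as the paper's own proof: both observe that the monomial-representation property depends only on the convex class $\convex{A}{B}$, that the $\omega$-map representation of \prettyref{def:omega} is simple (via \prettyref{cor:omega-rep-simple}) and visibly monomial, and that all simple representations of a given number share the same convex class. The only cosmetic difference is that the paper states the last point directly, whereas you spell out that this common class is $\{y\suchthat \m\simpleq y\}$ using \prettyref{rem:simple-test} and \prettyref{def:simple-rep}.
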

\begin{proof}
  Suppose that $x\in\M$ and let $y\in\no$ be such that $x=\omega^{y}$.
  The representation of $\omega^{y}$ given by \prettyref{def:omega} is
  clearly monomial, and it is also simple by
  \prettyref{cor:omega-rep-simple}.  Since all simple representations
  $A\mid B$ define the same convex class
  $\convex AB=\left\{ z\in\no\suchthat A<z<B\right\} $, all simple
  representations are monomial, as desired.
\end{proof}

Thanks to the above observation, we can find simplified formulas for
the product of two monomials and for the inverse of a monomial.

\begin{prop}
  \label{prop:product-mon}Let $\m,\n\in\M$. If
  $\m=\{\m'\}\mid\{\m''\}$ and $\n=\{\n'\}\mid\{\n''\}$ are two
  monomial representations, then
  $\m\n=\{\m'\n+\m\n'\}\mid\{\m\n'',\,\m''\n\}$.
\end{prop}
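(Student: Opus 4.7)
The plan is to start from the uniform product formula (\prettyref{rem:uniform}) applied to the given representations and then show that, using the archimedean properties forced by the monomial condition, the four classes of elements in each bracket can be replaced cofinally by the two simpler ones in the statement. Write
\begin{align*}
  L &= \{\m'\n+\m\n'-\m'\n',\ \m''\n+\m\n''-\m''\n''\},\\
  R &= \{\m'\n+\m\n''-\m'\n'',\ \m''\n+\m\n'-\m''\n'\},
\end{align*}
so that $\m\n = L\mid R$, and let $L_0 = \{\m'\n+\m\n'\}$ and $R_0 = \{\m\n'',\m''\n\}$. The goal is $L_0\mid R_0 = \m\n$, which follows if I show $L_0<\m\n<R_0$ and $\convex{L_0}{R_0}\subseteq\convex{L}{R}$: since $\m\n$ is simplest in $\convex{L}{R}$, and it will lie in the smaller class, it is simplest there as well.

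First I would derive the archimedean consequences of the monomial conditions. If some $\m'\in A$ were positive with $\m'\veq\m$, the doubling condition would force $A$ to contain an infinite sequence $\m'\leq \bar\m'\leq \bar{\bar\m}'\leq \cdots$ with each term at least twice the previous, all $<\m$ and $\veq\m$, but $2^n\m'>\m$ eventually, a contradiction. Hence every positive $\m'\in A$ satisfies $\m'\vless\m$; symmetrically, $\m''\vgreater\m$ for all $\m''\in B$, and analogously for $C, D$ with respect to $\n$. I also observe that $A$ must contain a non-negative element, for otherwise $\convex{A}{B}$ would contain $0$, which is simpler than $\m$, contradicting $\m = A\mid B$; similarly for $C$. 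The inclusion $\m\n\in\convex{L_0}{R_0}$ then follows from the direct computation $\m\n-(\m'\n+\m\n') = \m(\n-\n')-\m'\n>0$ (using $\n>\n'$ and, when $\m'>0$, $\m'\vless\m$), together with $\m\n'',\m''\n>\m\n$.

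The heart of the argument is the inclusion $\convex{L_0}{R_0}\subseteq\convex{L}{R}$, which requires (i) every element of $L$ is $\leq$ some element of $L_0$, and (ii) every element of $R$ is $\geq$ some element of $R_0$. For (i), I split on the sign of $\m'\n'$: when $\m'\n'\geq 0$, the element $\m'\n+\m\n'$ itself dominates the corresponding Type~1 element $\m'\n+\m\n'-\m'\n'$. When $\m'\n'<0$ but $\n+\n'\geq 0$ (or the symmetric $\m+\m'\geq 0$), the doubling property yields $\bar\m'\geq 2\m'$ in $A$ so that $\bar\m'\n+\m\n'$ dominates. When $\m'\n'<0$ and these sums are negative, a short estimate using $\m'\vless\m$ shows the Type~1 element is itself negative, hence dominated by any non-negative element of $L_0$. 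Type~2 elements $\m''\n+\m\n''-\m''\n''$ are negative because $\m''\n''\vgreater\m''\n,\m\n''$, and are likewise dominated. For (ii), when $\m'\leq 0$ in a Type~3 element, $\m\n''\leq \m'\n+\m\n''-\m'\n''$ holds directly from $\m'(\n-\n'')\geq 0$; when $\m'>0$, I pick $\bar\n''\leq\n''/2$ in $D$ via halving and verify $\m\bar\n''\leq \m'\n+\m\n''-\m'\n''$ using $\m/2\vgreater\m'$. Type~4 is handled symmetrically with halving in $B$.

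The main obstacle is the bookkeeping in (i)--(ii), where one must juggle the signs of $\m',\n'$ and the archimedean comparisons between $\m'$ and $\m$, $\n'$ and $\n$, etc. The critical inputs throughout are the strict archimedean inequalities $\m'\vless\m$, $\m''\vgreater\m$ (and symmetrically for $\n$) forced by the monomial doubling/halving, together with the existence of non-negative elements in $A$ and $C$. Once these are in place, the argument reduces to the case analysis sketched above.
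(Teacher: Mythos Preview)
Your proof is correct and follows essentially the same strategy as the paper's: both verify $L_0<\m\n<R_0$ and then show $\convex{L_0}{R_0}\subseteq\convex{L}{R}$ using the archimedean inequalities $\m'\vless\m\vless\m''$, $\n'\vless\n\vless\n''$ together with the doubling/halving property of monomial representations. The paper avoids your sign case analysis by discarding the negative $\m'$ and $\n'$ from the representations at the outset (valid since, as you observed, $A$ and $C$ contain non-negative elements, and the truncated representations remain monomial), after which the verification that any $y\in\convex{L_0}{R_0}$ satisfies $A<y<B$ becomes almost immediate.
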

\begin{proof}
  Since $\m,\n>0$, we may discard the expressions $\m',\n'$ that are
  strictly less than $0$ from the representations of $\m$ and $\n$.
  Since the representations are monomial, we then obtain
  $\m'\vless\m,$ $\m\vless\m''$, $\n'\vless\n$, $\n\vless\n''$. It
  follows immediately that
  $\{\m'\n+\m\n'\}<\m\n<\{\m\n'',\,\m''\n\}$. Let now $y\in\no$ be a
  number such that $\{\m'\n+\m\n'\}<y<\{\m\n'',\,\m''\n\}$. We need to
  prove that $\m\n\simpleq y$.

  By definition of product, we have $\m\n=A\mid B$ with
  \begin{eqnarray*}
    A & = & \left\{ \m'\n+\m\n'-\m'\n',\,\m''\n+\m\n''-\m''\n''\right\} ,\\
    B & = & \left\{ \m'\n+\m\n''-\m'\n'',\,\m''\n+\m\n'-\m''\n'\right\} .
  \end{eqnarray*}
  By \prettyref{prop:simple-rep}, it suffices to show that $A<y<B$.
  The inequality $A<y$ follows immediately from the assumption
  $\left\{ \m'\n+\m\n'\right\} <y$, as the first expression in $A$ is
  smaller than $\m'\n+\m\n'$ and the second one is negative. For
  $y<B$, observe that, since the given representations of $\n,\m$ are
  monomial, the assumption $y<\left\{ \m\n'',\,\m''\n\right\} $
  implies $y<\left\{ \frac{1}{k}\m\n'',\,\frac{1}{k}\m''\n\right\} $
  for any positive $k\in\N$. The inequality $y<B$ then follows easily
  from the fact that each element of $B$ is dominated by an expression
  of the form $\m\n''$ or $\m''\n$. 
\end{proof}

\begin{prop}
  \label{prop:inverse-mon}If $\m=\{\m'\}\mid\{\m''\}$ is a monomial
  representation, then
  \[
  \m^{-1}=\left\{ 0,\,\left(\m''\right)^{-1}\right\} \mid\left\{
    \left(\m'\right)^{-1}\right\}
  \]
  where $\left(\m'\right)^{-1}$ is only taken when $\m'>0$.
\end{prop}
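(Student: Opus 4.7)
The plan is to follow the pattern of \prettyref{prop:product-mon}, proving the claim by directly verifying that $\m \cdot z = 1$, where $z$ denotes the expression on the right-hand side.

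First I confirm that the representation $z := \{0, (\m'')^{-1}\} \mid \{(\m')^{-1}\}$ is well-formed and itself monomial. Positivity of $(\m')^{-1}$ for $\m' > 0$, together with $(\m'')^{-1} < (\m')^{-1}$ (from $\m' < \m < \m''$), shows that the left set lies strictly below the right. For the monomial property, I translate through inversion: the coinitiality of $\{\m''\}$ with $\{\m''/k\}$ yields cofinality of $\{(\m'')^{-1}\}$ with $\{k(\m'')^{-1}\}$, and dually the cofinality of $\{\m' : \m'>0\}$ with $\{k\m'\}$ yields coinitiality of $\{(\m')^{-1}\}$ with $\{(\m')^{-1}/k\}$. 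Thus $z$ is a positive monomial.

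Since both $\m$ and $z$ are now given by monomial representations, \prettyref{prop:product-mon} lets me compute
\[
\m z = \{\m' z,\; \m' z + \m(\m'')^{-1}\} \mid \{\m(\m')^{-1},\; \m'' z\},
\]
where $\m'$ and $\m''$ range independently on each side. I then check that $1$ lies in this convex class. For the left-hand terms: $\m' z < 1$ is trivial if $\m' \leq 0$ and follows from $z < (\m')^{-1}$ otherwise; iterating with $k\m' \in A^{>0}$ (using cofinality of the monomial representation) upgrades this to $\m' z \vless 1$, so $\m' z$ is infinitesimal. Combined with $\m/\m''$ infinitesimal (since $\m \vless \m''$), this yields $\m' z + \m(\m'')^{-1} < 1$. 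For the right-hand terms, $\m'' z > 1$ is immediate from $z > (\m'')^{-1}$, and $\m(\m')^{-1}$ is positive infinite since $\m' \vless \m$ for $\m' > 0$.

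It remains to argue that $1$ is the simplest element of this convex class, which then gives $\m z = 1$ and thus $z = \m^{-1}$. The only simpler candidate is $0$, but $\{\m'\}$ must contain a non-negative element (otherwise $0$ itself would lie in the convex class defining $\m>0$, contradicting the simplicity of $\m$ there), so the left set of $\m z$ contains a non-negative value $\m' z \geq 0$, which prevents $0$ from strictly exceeding all left terms. The principal technical obstacle is the bound $\m' z + \m(\m'')^{-1} < 1$, whose proof requires exactly the upgrade from $\m' z < 1$ to $\m' z \vless 1$ granted by the monomial scaling of $\{\m'\}$.
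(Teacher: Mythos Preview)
Your proof is correct and follows essentially the same approach as the paper's: define $z$ by the stated representation, observe it is monomial, apply \prettyref{prop:product-mon} to compute $\m z$, verify that $1$ lies in the resulting convex class, and rule out $0$ by noting that the left set of $\m$ must contain a non-negative element. The paper compresses the inequality check into ``it is easy to verify that $A<1<B$'', whereas you spell out the upgrade $\m' z \vless 1$ needed for the mixed term $\m' z + \m(\m'')^{-1}$; this extra detail is sound and indeed necessary for that step.
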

\begin{proof}
  Let
  \[
  \n:=\left\{ 0,\,\left(\m''\right)^{-1}\right\} \mid\left\{
    \left(\m'\right)^{-1}\right\} ,
  \]
  where $\left(\m'\right)^{-1}$ is only taken when $\m'>0$. We need to
  prove that $\m\n=1$. We observe that the above representation of
  $\n$ is monomial, and therefore $\n\in\M$ by
  \prettyref{prop:mon-rep}.  By \prettyref{prop:product-mon},
  $\m\n=A\mid B$ where
  \begin{eqnarray*}
    A & = & \left\{ \m'\n,\,\m'\n+\m\left(\m''\right)^{-1}\right\} \\
    B & = & \left\{ \m\left(\m'\right)^{-1},\,\m''\n\right\} ,
  \end{eqnarray*}
  and $\left(\m'\right)^{-1}$ is only taken when $\m'>0$. Using
  $(\m'')^{-1}<\n<(\m')^{-1},$ it is easy to verify that
  $A<1<B$. Since $A$ contains at least one non-negative number of the
  form $\m'\n$, it follows that $1=A\mid B=\m\n$, as
  desired.
\end{proof}

\begin{cor}
  \label{cor:inverse-mon-simple}If $\m,\n\in\M$ and $\m\simpleq\n$,
  then $\m^{-1}\simpleq\n^{-1}$.
\end{cor}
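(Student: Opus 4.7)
The plan is to proceed by well-founded induction on $\m$ with respect to $\simpleq$: the inductive hypothesis asserts that for every monomial $\m_{0}\simple\m$ and every monomial $\n_{0}$ with $\m_{0}\simpleq\n_{0}$, we have $\m_{0}^{-1}\simpleq\n_{0}^{-1}$. The case $\m=\n$ is trivial, so assume $\m\simple\n$.

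Writing $\m=\omega^{x}$ with $x=\{x'\}\mid\{x''\}$ the canonical representation, \prettyref{def:omega} gives the monomial representation
\[
\m=\{0,\,k\omega^{x'}\}\mid\{\tfrac{1}{2^{k}}\omega^{x''}\},
\]
and applying \prettyref{prop:inverse-mon}, followed by a cofinality substitution of the rational coefficients, yields
\[
\m^{-1}=\{0,\,k(\omega^{x''})^{-1}\}\mid\{\tfrac{1}{2^{k}}(\omega^{x'})^{-1}\}.
\]

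The key step is showing that this last representation is simple. By \prettyref{rem:omega-simple}, $\omega^{x'},\omega^{x''}\simple\m$, so applying the inductive hypothesis with $\m_{0}=\omega^{x'}$ (respectively $\omega^{x''}$) and $\n_{0}=\m$ gives $(\omega^{x'})^{-1}\simpleq\m^{-1}$ and $(\omega^{x''})^{-1}\simpleq\m^{-1}$; the injectivity of inversion makes both of these strict. \prettyref{lem:simple-km} then yields $k(\omega^{x''})^{-1}\simple\m^{-1}$ and $\tfrac{1}{2^{k}}(\omega^{x'})^{-1}\simple\m^{-1}$ for every $k$, so by \prettyref{prop:simple-rep}(2) the representation is indeed simple.

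To conclude, I would verify that $\n^{-1}$ lies in the convex class determined by this simple representation. Since $\omega^{x''}\simple\m\simpleq\n$, \prettyref{prop:simple-rep}(1) gives $\omega^{x''}>\n$; being two distinct monomials they belong to distinct archimedean classes, hence $\omega^{x''}\vgreater\n$, and therefore $k(\omega^{x''})^{-1}<\n^{-1}$ for all $k\in\N^{*}$. The symmetric argument applied to $\omega^{x'}$ gives $\n^{-1}<\tfrac{1}{2^{k}}(\omega^{x'})^{-1}$ for all $k$, whence $\m^{-1}\simpleq\n^{-1}$, closing the induction. I expect the main obstacle to be establishing simplicity of the representation of $\m^{-1}$, which is precisely where the inductive hypothesis is consumed and where one must rely on the fact that a simple representation of a monomial is monomial.
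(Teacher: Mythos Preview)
Your proof is correct, but it takes a considerably longer route than the paper's. The paper avoids induction entirely: starting from a simple representation $\m=\{\m'\}\mid\{\m''\}$ (which is automatically monomial by \prettyref{prop:mon-rep}), the assumption $\m\simpleq\n$ together with the \emph{simplicity of the representation of $\m$} already gives $\m'<\n<\m''$; inverting yields $\{0,(\m'')^{-1}\}<\n^{-1}<\{(\m')^{-1}\}$, and \prettyref{prop:inverse-mon} then gives $\m^{-1}\simpleq\n^{-1}$ directly via \prettyref{rem:simple-test}.

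The key difference is \emph{where} simplicity is used. You work to establish that the representation of $\m^{-1}$ produced by \prettyref{prop:inverse-mon} is simple, and this is what forces the induction. But \prettyref{rem:simple-test} does not require the representation $\m^{-1}=A\mid B$ to be simple: it only requires $A<\n^{-1}<B$, which is exactly your final verification step. In other words, your ``key step'' (the simplicity of the representation of $\m^{-1}$) is never actually consumed; once you have shown $k(\omega^{x''})^{-1}<\n^{-1}<\tfrac{1}{2^{k}}(\omega^{x'})^{-1}$, you are done. The paper instead leverages simplicity on the $\m$ side, where it comes for free from the canonical representation, and this is what collapses the argument to three lines. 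Your approach does buy something, namely the observation that the representation of $\m^{-1}$ in \prettyref{prop:inverse-mon} is itself simple, but that fact is not needed here.
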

\begin{proof}
  Take a simple monomial representation $\m=\{\m'\}\mid\{\m''\}$,
  which exists by \prettyref{prop:mon-rep}. Since $\m\simpleq\n$ we
  have $\{\m'\}<\n<\{\m''\}$. This immediately implies that
  \[
  \left\{ 0,\left(\m''\right)^{-1}\right\} <\n^{-1}<\left\{
    \left(\m'\right)^{-1}\right\}
  \]
  when $\m'>0$, and therefore $\m^{-1}\simpleq\n^{-1}$ by
  \prettyref{prop:inverse-mon}.
\end{proof}

\subsection{Simplicity}

Using the tools of the above subsection, we can prove several results
regarding the interaction between $\exp$ and $\simpleq$.

We start by generalizing the implication
$x\trunceq y\rightarrow x\simpleq y$ to sums in standard form.

\begin{prop}
  \label{prop:simple-tail}Let $x,y,z\in\no$. If both $z+x$ and $z+y$
  are in standard form and $x\simpleq y$, then
  $z+x\simpleq z+y$.
\end{prop}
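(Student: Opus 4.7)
My plan is to mimic the proof of \prettyref{prop:trunc-simple}. Using \prettyref{prop:sum-simplest}, I would write
\[
z+x = A \mid B, \qquad z+y = A' \mid B',
\]
with $A = \{(z+x)|\m + q'\m\}$ and $B = \{(z+x)|\m + q''\m\}$ as $\m$ ranges over $\supp(z+x)$ and $q',q''$ range over the rationals with $q' < (z+x)_{\m} < q''$, and $A',B'$ defined analogously from $z+y$. By \prettyref{rem:simple-test} it then suffices to verify $A < z+y < B$.

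Because the two sums are in standard form, one has $\supp(z+x) = \supp(z) \sqcup \supp(x)$ and $\supp(z+y) = \supp(z) \sqcup \supp(y)$, and I would split the verification according to which part the index $\m$ lies in. For $\m \in \supp(z)$, both $(z+x)|\m$ and $(z+y)|\m$ equal $z|\m$ (every monomial of $x$ or $y$ lies strictly below $\m$) and both coefficients equal $z_{\m}$, so the bounds indexed by such $\m$ in $A,B$ coincide literally with those in $A',B'$ and hence bound $z+y$ on the correct side. For $\m \in \supp(x)$, the bounds take the form $z + (x|\m) + q\m$, and cancelling the common $z$ reduces the task to proving
\[
(x|\m) + q'\m \,<\, y \,<\, (x|\m) + q''\m
\qquad\text{whenever } q' < x_{\m} < q''.
\]

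The corresponding inequalities with $x$ in place of $y$ hold by \prettyref{prop:sum-simplest} applied to $x$, and the main obstacle is transferring them to $y$ using $x \simpleq y$. A direct appeal to \prettyref{prop:simple-rep}(1) is unavailable, because the representation furnished by \prettyref{prop:sum-simplest} need not be simple in the sense of \prettyref{def:simple-rep}: for instance, \prettyref{prop:sum-simplest} gives $\omega = \{q'\omega : q' < 1\} \mid \{q''\omega : q'' > 1\}$, yet $\omega \simple \omega/2$ while $\omega/2$ fails the inequality $q'\omega < \omega/2$ for any rational $q' > 1/2$. I would overcome this by invoking Gonshor's explicit formula for the binary sequence of a Hahn sum (\cite[Thm.~5.12]{Gonshor1986}), the same tool underlying \prettyref{prop:trunc-simple}: it shows that the binary sequence of $z+x$ is the sequence of $z$ followed by a sign-adjusted copy of the sequence of $x$, and likewise for $z+y$, so the initial-segment relation $x \simpleq y$ propagates directly to $z+x \simpleq z+y$.
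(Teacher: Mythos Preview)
Your initial approach via \prettyref{prop:sum-simplest} is reasonable, and you correctly identify the obstacle: that representation is not simple, so the inequalities indexed by $\m\in\supp(x)$ do not transfer from $x$ to $y$ under the mere hypothesis $x\simpleq y$. But your way of overcoming the obstacle is not a proof. You appeal to \cite[Thm.~5.12]{Gonshor1986} for the claim that the sign sequence of $z+x$ is that of $z$ followed by a ``sign-adjusted copy'' of that of $x$; however, Gonshor's theorem expresses the sign expansion of a normal form $\sum r_i\omega^{y_i}$ as a concatenation of blocks in which each block depends on the \emph{exponents of all preceding terms} (through his reduced sign expansions). Thus the tail of $z+x$ after the prefix $z$ is computed from the normal form of $x$ together with data coming from $z$, not from the sign sequence of $x$ alone; your phrase ``sign-adjusted'' is left undefined, and you give no argument that this construction sends the relation $x\simpleq y$ (which is a statement about sign sequences, not about normal forms, and need not imply $x\trunceq y$) to an initial-segment relation between the two tails. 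At that point the write-up has effectively abandoned the argument and asserted the conclusion.

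The paper's proof takes a quite different route and avoids the Hahn-series representation altogether. It works directly with the recursive definition of surreal addition: taking the canonical (hence simple) representations $z=\{z'\}\mid\{z''\}$ and $x=\{x'\}\mid\{x''\}$, one has
\[
z+x=\{z'+x,\ z+x'\}\mid\{z''+x,\ z+x''\},
\]
and it suffices to check that $z+y$ lies strictly between these options. The bounds $z+x'<z+y<z+x''$ follow at once from $x'<y<x''$, which holds because the canonical representation of $x$ is simple and $x\simpleq y$. For $z'+x<z+y<z''+x$ one uses the standard-form hypothesis: from $z'\simple z$ one gets $z\not\trunceq z'$ (by \prettyref{prop:trunc-simple}), so some monomial $\m\in\supp(z)$ satisfies $\m\vleq z-z'$; standard form then forces $x,y\vless\m$, hence $|x-y|<|z-z'|$, and the required inequality drops out. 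This argument is short, self-contained, and needs nothing about explicit sign expansions.
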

\begin{proof}
  Let $z=\{z'\}\mid\{z''\}$ and $x=\{x'\}\mid\{x''\}$ be canonical,
  and in particular simple, representations. By definition of sum, we
  have $z+x=A\mid B$ where
  \[
  A=\{z'+x,\,z+x'\},\quad B=\{z''+x,\,z+x''\}.
  \]
  It suffices to verify that $A<z+y<B$. Since $x\simpleq y$ and
  $x=\{x'\}\mid\{x''\}$ is simple, we have $x'<y$, $y<x''$. After
  adding $z$ on all sides, we get $z+x'<z+y$, $z+y<z+x''$.

  It remains to show that $z'+x<z+y$, $z+y<z''+x$. Let $\tilde{z}$ be
  either $z'$ or $z''$. Since $\tilde{z}\simple z$, we have
  $z\not\trunceq\tilde{z}$ by \prettyref{prop:trunc-simple}, and
  therefore there is a monomial $\m\in\supp(z)$ such that
  $\m\vleq\tilde{z}-z$. Since $z+x$ and $z+y$ are in standard form, we
  know that $x,y\vless\n$ for all $\n\in\supp(z)$, and in particular
  $x,y\vless\m$. It follows that $x-y\vless\m\vleq\tilde{z}-z$, which
  implies that $|x-y|<|\tilde{z}-z|$. If $\tilde{z}=z'$, we get
  $x-y<z-z'$, or in other words, $z'+x<z+y$. If $\tilde{z}=z''$, we
  get $y-x<z''-z$, or in other words, $z+y<z''+x$.

  Therefore, $A<z+y<B$, which implies $z+x\simpleq z+y$, as desired.
\end{proof}

A similar statement holds when taking the exponential of sums
expressed in standard form, as follows.

\begin{prop}
  \label{prop:simple-exp-tail}Let $\gamma,\delta,\eta\in\J$. If
  $\eta+\gamma$ and $\eta+\delta$ are in standard form and
  $\exp(\gamma)\simpleq\exp(\delta)$, then
  $\exp(\eta+\gamma)\simpleq\exp(\eta+\delta)$.
\end{prop}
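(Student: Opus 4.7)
The plan is to use the characterization \prettyref{thm:characterization-exp}(2) to write $\exp(\eta+\gamma) = A \mid B$ with
\[
A = \{0\} \cup \{\exp((\eta+\gamma)|\m)\exp(\m)^{q'}\}, \quad B = \{\exp((\eta+\gamma)|\m)\exp(\m)^{q''}\},
\]
where $\m$ ranges in $\supp(\eta+\gamma)$ and $q',q''$ are rationals with $q' < (\eta+\gamma)_{\m} < q''$, and then verify $A < \exp(\eta+\delta) < B$ so that \prettyref{rem:simple-test} yields the conclusion. Because $\eta+\gamma$ and $\eta+\delta$ are in standard form, $\supp(\eta)$ is disjoint from and dominates both $\supp(\gamma)$ and $\supp(\delta)$; I split the verification according to whether $\m \in \supp(\eta)$ or $\m \in \supp(\gamma)$.

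For $\m \in \supp(\eta)$ one has $(\eta+\gamma)|\m = \eta|\m$ and $(\eta+\gamma)_{\m} = \eta_{\m}$. The required inequality $\exp(\eta|\m)\exp(\m)^{q'} < \exp(\eta+\delta)$ is equivalent, via $\log$, to $\eta - \eta|\m - q'\m + \delta > 0$. This surreal number has leading monomial $\m$ with coefficient $\eta_{\m} - q' > 0$, because by the standard form hypothesis $\supp(\delta)$ lies strictly below $\m$, so the contribution of $\delta$ is dominated by $\m$; hence the number is positive. The symmetric inequality for the $B$ side is identical.

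For $\m \in \supp(\gamma)$ one has $(\eta+\gamma)|\m = \eta + \gamma|\m$ and $(\eta+\gamma)_{\m} = \gamma_{\m}$, and after factoring $\exp(\eta)$ from both sides the desired inequalities become
\[
\exp(\gamma|\m)\exp(\m)^{q'} < \exp(\delta) < \exp(\gamma|\m)\exp(\m)^{q''}.
\]
These are precisely the inequalities $A(\gamma) < \exp(\delta) < B(\gamma)$ for the representation $\exp(\gamma) = A(\gamma) \mid B(\gamma)$ supplied by \prettyref{thm:characterization-exp}(2). Given the hypothesis $\exp(\gamma) \simpleq \exp(\delta)$, they will follow once one knows this representation is simple in the sense of \prettyref{def:simple-rep}. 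But in the proof of \prettyref{thm:characterization-exp} this representation is obtained from the omega-map representation $\omega^{G(\gamma)} = \{0, k\omega^{y'}\} \mid \{2^{-k}\omega^{y''}\}$ by a sequence of cofinality moves; since cofinality leaves the convex class $\convex{A}{B}$ unchanged and \prettyref{cor:omega-rep-simple} already establishes that the omega-map representation is simple, the characterization representation is simple as well.

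Combining the two cases with the trivial $0 < \exp(\eta+\delta)$, we obtain $A < \exp(\eta+\delta) < B$ and conclude $\exp(\eta+\gamma) \simpleq \exp(\eta+\delta)$ via \prettyref{rem:simple-test}. The main obstacle is the last point, namely transferring simplicity from the omega-map representation to the representation of $\exp(\gamma)$ given by \prettyref{thm:characterization-exp}(2); once this is in hand, the rest is a leading-term computation almost identical to that in the proof of \prettyref{prop:trunc-exp-simplifies}.
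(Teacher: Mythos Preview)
Your argument has a genuine gap in Case~2. You claim that the representation of $\exp(\gamma)$ from \prettyref{thm:characterization-exp}(2) is simple, but this is false. Take $\gamma=\log\omega$ and $\delta=2\log\omega$ (with, say, $\eta=\omega$ so that the standard form hypotheses hold). Then $\exp(\gamma)=\omega\simpleq\omega^{2}=\exp(\delta)$, yet for $\m=\log\omega$ and $q''=\tfrac{3}{2}$ the inequality $\exp(\delta)<\exp(\gamma|\m)\exp(\m)^{q''}$ reads $\omega^{2}<\omega^{3/2}$, which fails. In fact the convex class of that representation is exactly $\{z>0:\gamma\trunceq\log z\}$, and $\exp(\gamma)\simpleq\exp(\delta)$ does \emph{not} force $\gamma\trunceq\delta$.

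The error in your justification is that the omega-map representation used in the proof of \prettyref{thm:characterization-exp}(2) is obtained by \emph{uniformity} from the (generally non-canonical) representation of $G(\gamma)$ supplied by \prettyref{prop:sum-simplest}; it is not the canonical omega-map representation, and \prettyref{cor:omega-rep-simple} only covers the latter. Cofinality moves preserve the convex class, but the starting convex class here is already too small.

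The paper avoids this by working multiplicatively: writing $\m=\exp(\eta)$, $\n=\exp(\gamma)$, $\om=\exp(\delta)$, it uses the product formula \prettyref{prop:product-mon} applied to the \emph{canonical} (hence simple, hence monomial) representations $\m=\{\m'\}\mid\{\m''\}$ and $\n=\{\n'\}\mid\{\n''\}$ to get $\m\n=\{\m'\n+\m\n'\}\mid\{\m\n'',\m''\n\}$, and then checks these four inequalities against $\m\om$. The key step uses \prettyref{cor:simple-vell} together with the observation that $\vell(\m)\not\trunceq\vell(\tilde\m)$ for $\tilde\m\simple\m$, which combined with the standard form hypothesis $\supp(\vell(\m))\vgreater\vell(\n),\vell(\om)$ yields $|\vell(\tilde\m)-\vell(\m)|>|\vell(\om)-\vell(\n)|$. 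Your Case~1 is essentially fine, but Case~2 needs to be replaced by an argument of this type that genuinely exploits the canonical (simple) representation of $\n=\exp(\gamma)$ rather than the one from \prettyref{thm:characterization-exp}(2).
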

\begin{proof}
  Let $\m=\exp(\eta)$, $\n=\exp(\gamma)$ and $\om=\exp(\delta)$. Our
  hypothesis says that $\n\simpleq\om$ and
  $\supp(\vell(\m))\vgreater\vell(\n),\vell(\om)$, and we must prove
  that $\m\n\simpleq\m\om$.

  Consider the two canonical representations
  $\m=\left\{ \m'\right\} \mid\left\{ \m''\right\} $,
  $\n=\left\{ \n'\right\} \mid\left\{ \n''\right\} $. Recall that
  since $\m$, $\n$ are monomials we must have $\m'\vless\m$,
  $\m\vless\m''$, $\n'\vless\n$, $\n\vless\n''$. Moreover, since the
  representations are canonical, they are simple, and by
  \prettyref{prop:mon-rep} they are monomial.

  By \prettyref{prop:product-mon}, $\exp(\eta+\gamma)=\m\n=A\mid B$
  with
  \begin{eqnarray*}
    A & = & \left\{ \m'\n+\m\n'\right\} ,\\
    B & = & \left\{ \m\n'',\,\m''\n\right\} .
  \end{eqnarray*}

  It now suffices to prove that $A<\m\om=\exp(\eta+\delta)<B$, namely
  $\m'\n+\m\n'<\m\om$, $\m\om<\m\n''$ and $\m\om<\m''\n$. Simplifying
  further, it suffices to show that $\n'\vless\om$, $\om\vless\n''$,
  $\m'\n\vless\m\om$ and $\m\om\vless\m''\n$.

  Since $\n',\n''\simple\n\simpleq\om$, the inequalities
  $\n'\vless\om$, $\om\vless\n''$ follow immediately from
  \prettyref{cor:simple-vell} and $\n'\vless\n$, $\n\vless\n''$. For
  $\m'\n\vless\m\om$ and $\m\om\vless\m''\n$, we note that it is
  equivalent to saying that $\vell(\m'\n)<\vell(\m\om)$ and
  $\vell(\m\om)<\vell(\m''\n)$. Rearranging the terms, we wish to
  prove that $\vell(\m')-\vell(\m)<\vell(\om)-\vell(\n)$ and
  $\vell(\om)-\vell(\n)<\vell(\m'')-\vell(\m)$.

  Let $\tilde{\m}$ be either $\m'$ or $\m''$. If
  $\vell(\m)\trunceq\vell(\tilde{\m})$, then by
  \prettyref{prop:trunc-exp-simplifies}
  $\m=\exp(\vell(\m))\simpleq\exp(\vell(\tilde{\m}))=\tilde{\m}$,
  contradicting $\tilde{\m}\simple\m$. Therefore, we have
  $\vell(\m)\not\trunceq\vell(\tilde{\m})$, or in other words, there
  is a monomial $\mathfrak{p}\in\supp(\vell(\m))$ such that
  $\mathfrak{p}\vleq\vell(\tilde{\m})-\vell(\m)$. From the hypothesis
  $\supp(\vell(\m))\vgreater\vell(\n),\vell(\om)$ we obtain
  $\supp(\vell(\m))\vgreater\vell(\om)-\vell(\n)$, and in particular
  $\mathfrak{p}\vgreater\vell(\om)-\vell(\n)$. Therefore,
  $\vell(\tilde{\m})-\vell(\m)\vgreater\vell(\om)-\vell(\n)$, from
  which it follows that
  $|\vell(\tilde{\m})-\vell(\m)|>|\vell(\om)-\vell(\n)|$.  Recalling
  that $\tilde{\m}$ is one of $\m'$ or $\m''$, this easily implies
  $\vell(\m')-\vell(\m)<\vell(\om)-\vell(\n)$ and
  $\vell(\om)-\vell(\n)<\vell(\m'')-\vell(\m)$, as desired.
\end{proof}

Moreover, $\exp$ preserves simplicity under suitable assumptions.

\begin{prop}
  \label{prop:exp-simple-mon}Let $\m,\n\in\M^{>1}$ be such that
  $\m\simpleq\n$ and $\log(\m)\vless\n$. Then
  $\exp(\m)\simpleq\exp(\n)$ and
  $\exp(-\m)\simpleq\exp(-\n)$.
\end{prop}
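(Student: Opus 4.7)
The plan is to apply \prettyref{thm:characterization-exp}(1) to obtain an explicit representation of $\exp(\m)$, and then invoke \prettyref{rem:simple-test} to conclude $\exp(\m)\simpleq\exp(\n)$. Concretely, I will write
\[
\exp(\m)=\{\m^{k},\exp(\m')^{k}\}\mid\{\exp(\m'')^{1/k}\}
\]
with $k\in\N^{*}$ and $\m',\m''\in\M$ simpler than $\m$ satisfying $\m'<\m<\m''$, and show that $\exp(\n)$ lies strictly between the two sides.

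Three inequalities must be verified. First, $\m^{k}<\exp(\n)$ rewrites as $k\log(\m)<\n$, which is immediate from the hypothesis $\log(\m)\vless\n$ (noting that $\log(\m)>0$ since $\m>1$); this is the only place the hypothesis $\log(\m)\vless\n$ is used. Second, $\exp(\m')^{k}<\exp(\n)$ reduces to $k\m'<\n$: since $\m$ is the simplest positive monomial in its archimedean class, any simpler monomial $\m'<\m$ satisfies $\m'\vless\m$, hence $k\m'<\m$, and \prettyref{lem:simple-km}(1) gives $k\m'\simple\m\simpleq\n$, so \prettyref{prop:simple-rep}(1) propagates $k\m'<\m$ to $k\m'<\n$. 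Third, $\exp(\n)<\exp(\m'')^{1/k}$ reduces to $\n<\m''/k$; the key step is to show $\m''\vgreater\n$. The monomials $\m''$ and $\m$ are distinct, so $\m<\m''$ forces $\m\vless\m''$; likewise $\m''$ and $\n$ are distinct monomials (because $\m''\simple\m\simpleq\n$), hence lie in different archimedean classes, so $\m''\vless\n$ or $\m''\vgreater\n$. Applying \prettyref{cor:simple-vell} with $x=\m''$ and $z=\n$ yields $\m''\vless\m$ iff $\m''\vless\n$; combined with $\m''\vgreater\m$, we conclude $\m''\vgreater\n$, whence $\m''/k>\n$.

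Finally, the statement for $\exp(-\m)$ will be deduced from the one for $\exp(\m)$ via \prettyref{cor:inverse-mon-simple}: since $\m\in\M^{>1}\subseteq\J^{>0}$, the values $\exp(\pm\m),\exp(\pm\n)$ all lie in $\M=\exp(\J)$, and $\exp(-\m)=\exp(\m)^{-1}$, $\exp(-\n)=\exp(\n)^{-1}$, so $\exp(\m)\simpleq\exp(\n)$ immediately yields $\exp(-\m)\simpleq\exp(-\n)$.

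The main obstacle I expect is the third case, since it is where the characterization of $\exp$ forces us to mix simplicity information (coming from $\m\simpleq\n$) with archimedean information (coming from $\m,\n\in\M$), and where a factor $1/k$ must be absorbed. The fact that $\log(\m)\vless\n$ enters only in the first case might look suspicious, but the hypothesis is genuinely necessary: the example $\m=\omega$, $\n=\log(\omega)$ satisfies $\m\simpleq\n$ yet $\exp(\m)=\exp(\omega)\not\simpleq\omega=\exp(\n)$, precisely because $\log(\m)=\n\not\vless\n$.
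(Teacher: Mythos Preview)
Your proof is correct and follows essentially the same approach as the paper: both use the representation of $\exp(\m)$ from \prettyref{thm:characterization-exp}(1), verify $A<\exp(\n)<B$, and deduce the $\exp(-\m)$ case from \prettyref{cor:inverse-mon-simple}. The only difference is cosmetic: for the inequalities $k\m'<\n$ and $\frac{1}{k}\m''>\n$ the paper applies \prettyref{prop:simple-rep}(1) directly (getting $\m'<\n$ and $\m''>\n$, then using that distinct monomials lie in distinct archimedean classes), whereas you route through \prettyref{lem:simple-km} and \prettyref{cor:simple-vell}; both justifications are valid.
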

\begin{proof}
  It suffices to show that $\exp(\m)\simpleq\exp(\n)$, as the second
  part then follows from \prettyref{cor:inverse-mon-simple}. By
  \prettyref{thm:characterization-exp}(1), we have $\exp(\m)=A\mid B$
  with
  \[
  A=\left\{ \m^{k},\exp(\m')^{k}\right\} ,\ B=\left\{
    \exp(\m'')^{\frac{1}{k}}\right\} ,
  \]
  where $\m',\m''$ range over the infinite monomials simpler than $\m$
  and such that respectively $\m'<\m$, $\m<\m''$, and $k$ runs over
  the positive integers. For the conclusion, it suffices to verify
  that $A<\exp(\n)<B$.

  Since $\m'\simple\m\simpleq\n$ and $\m'<\m$, we have $\m'<\n$.  It
  follows that $k\m'<\n$ for all $k$ (since $\m',\n$ are monomials),
  and therefore $\exp(\m')^{k}=\exp(k\m')<\exp(\n)$. Similarly, since
  $\m''\simple\m\simpleq\n$ and $\m''>\m$, we have $\m''>\n$. This
  implies that $\frac{1}{k}\m''>\n$ for all $k$, and therefore
  $\exp(\n)<\exp(\m'')^{\frac{1}{k}}$.

  Finally, since $\log(\m)\vless\n$ and $\log(\m),\n>0$ we have that
  $k\log(\m)<\n$ for all $k\in\N$. In particular,
  $\exp(k\log(\m))=\m^{k}<\exp(\n)$ for all $k\in\N$. Therefore,
  $A<\exp(\n)<B$, as desired.
\end{proof}

\begin{prop}
  \label{prop:exp-LM}If $\gamma\in\J^{*}$ and $\m\in\M^{>1}$ is the
  leading monomial of $\gamma$, then
  $\exp(\sign{\gamma}\m)\simpleq\exp(\gamma)$.
\end{prop}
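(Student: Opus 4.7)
The plan is to pass to the $\omega$-representations of both exponentials and then apply \prettyref{rem:omega-simple}. Let $\sigma := \sign\gamma \in \{\pm 1\}$ and write $\gamma = r\m + \gamma'$ in standard form, with $r = \gamma_\m \in \R^{*}$ satisfying $\sign r = \sigma$ and $\gamma' \in \J$ with $\supp(\gamma') < \m$. By \prettyref{fact:exp}(6), the strongly linear map $G\colon \J \to \no$ defined by $\sum_y a_y \omega^y \mapsto \sum_y a_y \omega^{g(y)}$ satisfies $\exp(\gamma) = \omega^{G(\gamma)}$ and $\exp(\sigma\m) = \omega^{\sigma G(\m)}$. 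Since $g$ is strictly increasing, $G$ sends monomials to monomials and preserves the order of supports; writing $\M_0 := G(\m) \in \M$ and $\delta := G(\gamma')$, we get $G(\gamma) = r\M_0 + \delta$ in standard form, with $\delta \vless \M_0$.

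By \prettyref{rem:omega-simple}, it then suffices to prove that $\sigma\M_0 \simpleq r\M_0 + \delta$. Because $r \neq 0$ and $\delta \vless \M_0$, the number $r\M_0 + \delta$ lies in the archimedean class of $\M_0$ and shares the sign $\sigma$ of $\sigma\M_0$. Now $\M_0$ is, by definition of a monomial, the $\simple$-minimum of the positive elements of its archimedean class; moreover, the order-reversing involution $x \mapsto -x$ is a $\simple$-automorphism of $\no$ (it corresponds to swapping $0$ and $1$ in sign expansions), so $-\M_0$ is the $\simple$-minimum of the negative elements of the same class. Hence $\sigma\M_0$ is the $\simple$-minimum of the elements of sign $\sigma$ in this archimedean class, which yields the required $\sigma\M_0 \simpleq r\M_0 + \delta$, and one more application of \prettyref{rem:omega-simple} completes the proof.

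The main subtlety I expect is that the seemingly natural strategy---placing $\exp(\gamma)$ inside the convex class of the representation of $\exp(\sigma\m)$ furnished by \prettyref{thm:characterization-exp}(2)---breaks down precisely when $|r| < 1$: for $q'$ strictly between $|r|$ and $1$, the monomial $\exp(\m)^{q'}$ already overshoots $\exp(\gamma)$, so that representation is not simple enough to test simplicity against $\exp(\gamma)$. The detour through the $\omega$-map circumvents this by converting the question into a simplicity comparison among elements of a single archimedean class, where the defining property of monomials does all the work.
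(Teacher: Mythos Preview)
Your proof is correct and takes a genuinely different route from the paper. The paper reduces to the case $\gamma>0$ via \prettyref{cor:inverse-mon-simple}, then uses the representation of $\exp(\m)$ from \prettyref{thm:characterization-exp}(1) (not (2)) and checks directly that $\exp(\gamma)$ lies in the associated convex class: since $\frac{1}{k}\m\leq\gamma\leq k\m$ for some $k$, one has $\exp(\m)^{1/k}\leq\exp(\gamma)\leq\exp(\m)^{k}$, and these bounds already separate $\exp(\gamma)$ from the left and right options. Your approach instead passes through the $\omega$-map via \prettyref{fact:exp}(6), reducing the question to $\sigma G(\m)\simpleq G(\gamma)$, which follows from the defining property of monomials as simplest positive representatives of their archimedean classes (together with the $\simpleq$-compatibility of $x\mapsto -x$). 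Your argument is more uniform, handling both signs at once without a separate appeal to \prettyref{cor:inverse-mon-simple}, and is conceptually cleaner in that it isolates the single structural fact being used; the paper's argument is more self-contained, relying only on the new characterization of $\exp$ and avoiding the detour through $g$ and $G$. Your closing remark about the failure of the strategy based on \prettyref{thm:characterization-exp}(2) is correct, but note that the paper sidesteps exactly this issue by using the coarser representation from part (1).
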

\begin{proof}
  By \prettyref{cor:inverse-mon-simple}, it suffices to prove the case
  where $\gamma>0$.

  Call $\exp(\m)=A\mid B$ the representation given by
  \prettyref{thm:characterization-exp}(1).  Since clearly
  $\m\veq\gamma$ and $\gamma>0$, there is some positive $k\in\N$ such
  that $\frac{1}{k}\m\leq\gamma\leq k\m$, hence
  $\exp(\m)^{\frac{1}{k}}\leq\exp(\gamma)\leq\exp(\m)^{k}$.  It is now
  easy to verify that
  $A<\exp(\m)^{\frac{1}{k}}\leq\exp(\gamma)\leq\exp(\m)^{k}<B$, and
  therefore that $\exp(\m)\simpleq\exp(\gamma)$, as desired.
\end{proof}

Putting all of the above results together, we are finally able to
prove that $x\ntrunceq y$ implies $x\simpleq y$.

\begin{thm}
  \label{thm:nested-trunc-simplifies}For all $x,y\in\no^{*}$, if
  $x\ntrunceq y$ then $x\simpleq y$. Therefore, the relation
  $\ntrunceq$ is well founded.
\end{thm}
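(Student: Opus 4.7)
The plan is induction on $n$, where $x \ntrunceq_n y$. The base case $n = 0$ is immediate from \prettyref{prop:trunc-simple} (since $x \trunceq y$ implies $x \simpleq y$). The second assertion, well-foundedness of $\ntrunceq$, follows at once from the first together with the well-foundedness of $\simpleq$.

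For the inductive step, write $x = z + \sign r \exp(\gamma)$ and $y = z + r\exp(\delta) + w$ in standard form with $\gamma \ntrunceq_n \delta$ in $\J^{*}$. I would strengthen the inductive hypothesis to assert simultaneously that whenever $\gamma' \ntrunceq_n \delta'$ with $\gamma', \delta' \in \J^{*}$, one also has $\exp(\gamma') \simpleq \exp(\delta')$ and $\exp(-\gamma') \simpleq \exp(-\delta')$; the base case for this auxiliary clause comes from \prettyref{prop:trunc-exp-simplifies} together with \prettyref{cor:inverse-mon-simple}. Granting the strengthened hypothesis at level $n$, the main claim at level $n+1$ is established by the simplicity chain
\[
x \;\simpleq\; z + \sign r \exp(\delta) \;\simpleq\; z + r\exp(\delta) \;\simpleq\; z + r\exp(\delta) + w \;=\; y.
\]
The first link uses $\exp(\gamma) \simpleq \exp(\delta)$ from the auxiliary clause (sign-compatible, since simplicity is preserved by negation on binary sequences) combined with \prettyref{prop:simple-tail}. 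The second link uses \prettyref{prop:simple-tail} together with the fact that $\sign r \exp(\delta) \simpleq r \exp(\delta)$: the monomial $\exp(\delta)$ is the simplest positive element of its archimedean class, so $\sign r \exp(\delta)$ is simpler than any real-multiple of $\exp(\delta)$ of the same sign. The third link is by \prettyref{prop:trunc-simple}, as $z + r\exp(\delta) \trunceq y$ in standard form.

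To close the auxiliary clause at level $n+1$ -- showing $\exp(\gamma) \simpleq \exp(\delta)$ from $\gamma \ntrunceq_{n+1} \delta$ in $\J^{*}$ -- one applies \prettyref{prop:simple-exp-tail} to peel off the common summand $z$, and \prettyref{prop:trunc-exp-simplifies} to absorb the lower-order tail $w$, reducing the problem to $\exp(\sign r \exp(\gamma')) \simpleq \exp(\sign r \exp(\delta'))$ (after invoking \prettyref{prop:exp-LM} to replace $r\exp(\delta')$ by $\sign r \exp(\delta')$ up to simplicity). This last simplicity is then an application of \prettyref{prop:exp-simple-mon} to the monomials $\exp(\gamma'), \exp(\delta') \in \M^{>1}$, using $\exp(\gamma') \simpleq \exp(\delta')$ from the IH at level $n$, provided the dominance condition $\gamma' \vless \exp(\delta')$ is verified. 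The main obstacle is precisely this dominance: it requires careful bookkeeping of how leading monomials and supports propagate through the nested truncation structure. I expect it to follow from a further strengthening of the induction -- for instance, a clause asserting $|\gamma'| \vleq |\delta'|$ for nested truncations in $\J^{*}$ -- after which \prettyref{rem:exp-grows} immediately yields $\gamma' \vless \exp(\delta')$, closing the induction.
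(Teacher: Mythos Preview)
Your architecture is essentially the paper's: induction on $n$, an auxiliary clause asserting $\exp(\gamma')\simpleq\exp(\delta')$ for $\gamma'\ntrunceq_n\delta'$ in $\J^*$, and the simplicity chain assembled from \prettyref{prop:simple-tail}, \prettyref{prop:simple-exp-tail}, \prettyref{prop:exp-simple-mon}, \prettyref{prop:exp-LM} and \prettyref{prop:trunc-exp-simplifies}. You have also correctly isolated the one genuinely non-formal step, namely the hypothesis $\log(\exp(\gamma'))=\gamma'\vless\exp(\delta')$ required by \prettyref{prop:exp-simple-mon}.

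The gap is that your proposed invariant is false. The clause ``$|\gamma'|\vleq|\delta'|$ whenever $\gamma'\ntrunceq\delta'$ in $\J^*$'' already fails at level~$1$: take $\alpha=\omega$ and $\beta=\omega-\omega^{1/2}$, both in $\J^{>0}$. Then $\alpha\trunceq\beta$, so $\exp(\alpha)\ntrunceq_1\exp(\beta)$; yet $\exp(\alpha)/\exp(\beta)=\exp(\omega^{1/2})$ is infinite, whence $\exp(\alpha)\vgreater\exp(\beta)$. So this clause cannot be propagated, and your appeal to \prettyref{rem:exp-grows} never gets off the ground. (Note that in this very example the quantity you actually need, $\alpha\vless\exp(\beta)$, \emph{is} true---your invariant is simply too strong.)

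The fix, and what the paper does, is to carry the logarithmically weaker clause: whenever $x\ntrunceq_n y$ and $x\vgreater1$, then $\log|x|\vless y$. Instantiated at $x=\exp(\gamma')$, $y=\exp(\delta')$ (a pair related by $\ntrunceq_{n-1}$), this is literally $\gamma'\vless\exp(\delta')$, the hypothesis you were missing. This clause \emph{does} propagate: if the common head $z$ is nonzero then $x\veq y$, so $\log|x|\vless x\veq y$; if $z=0$ then $x=\pm\exp(\gamma)$ with $\gamma>0$, and the level-$(n-1)$ instance gives $\log\gamma\vless\delta$, whence $\gamma<\exp(\tfrac{1}{2}\delta)\vless\exp(\delta)\veq y$. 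Swap your invariant for this one and the rest of your argument goes through verbatim.
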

\begin{proof}
  By definition there is some $n\in\N$ such that $x\ntrunceq_{n}y$.
  We prove the conclusion by induction on $n$. At the same time, we
  also prove that if we further assume $x\vgreater1$, then
  $\log|x|\vless y$.

  First, assume $n=0$, so that $x\trunceq y$. It immediately follows
  that $x\simpleq y$ by \prettyref{prop:trunc-simple}. Moreover, we
  have $x\veq y$; it follows that if $x\vgreater1$, then
  $\log|x|\vless x\veq y$, as desired.

  Now assume that $n>0$. We first prove the case
  $x=\exp(\gamma),y=\exp(\delta)\in\M$.  By assumption we must have
  $\gamma\ntrunceq_{n-1}\delta$.

  If $n-1=0$, namely $\gamma\trunceq\delta$, then
  $\exp(\gamma)\simpleq\exp(\delta)$ follows from
  \prettyref{prop:trunc-exp-simplifies}; moreover, if
  $\exp(\gamma)\vgreater1$, then
  $\log(\exp(\gamma))=\gamma\veq\delta\vless\exp(\delta)$, as desired.

  If $n-1>0$, we can write $\gamma=z'+\sign{r'}\exp(\gamma')$,
  $\delta=z'+r'\exp(\delta')+w'$ in standard form with
  $\gamma'\ntrunceq_{n-2}\delta'$, and necessarily
  $\gamma',\delta'>0$. By inductive hypothesis, we get
  $\exp(\gamma')\simpleq\exp(\delta')$ and
  $\log(\exp(\gamma'))=\gamma'\vless\exp(\delta')$. Combining
  \prettyref{cor:inverse-mon-simple}, \prettyref{prop:exp-simple-mon},
  \prettyref{prop:exp-LM} and \prettyref{prop:trunc-exp-simplifies} we
  get
  \begin{multline*}
    \exp(\sign{r'}\exp(\gamma'))\simpleq\exp(\sign{r'}\exp(\delta'))\simpleq\\
    \simpleq\exp(r'\exp(\delta'))\simpleq\exp(r'\exp(\delta')+w').
  \end{multline*}
  By \prettyref{prop:simple-exp-tail}, we deduce that
  \[
  \exp(\gamma)=\exp(z'+\sign{r'}\exp(\gamma'))\simpleq\exp(z'+r'\exp(\delta')+w')=\exp(\delta),
  \]
  namely $x\simpleq y$. Finally, if $\exp(\gamma)\vgreater1$ we have
  $\gamma,\delta>0$. If $z'\neq0$, then
  $\log(\exp(\gamma))=\gamma\veq\delta\vless\exp(\delta)$.  If $z'=0$,
  we recall that $\gamma'\vless\exp(\delta')\veq r'\exp(\delta')+w'$;
  it follows that
  $\log(\exp(\gamma))=\gamma\veq\exp(\gamma')\vless\exp(r'\exp(\delta')+w')=\exp(\delta)$.
  In both cases we obtain $\log|x|\vless y$, as desired.

  For general $x$ and $y$, we must have $x=z+\sign r\exp(\gamma)$,
  $y=z+r\exp(\delta)+w$ in standard form, with
  $\gamma\ntrunceq_{n-1}\delta$.  Note in particular that
  $\exp(\gamma)\ntrunceq_{n}\exp(\delta)$, and by the previous
  argument, $\exp(\gamma)\simpleq\exp(\delta)$.  By
  \prettyref{prop:trunc-simple} we get
  $\sign r\exp(\gamma)\simpleq\sign r\exp(\delta)\simpleq
  r\exp(\delta)\simpleq r\exp(\delta)+w$.
  By \prettyref{prop:simple-tail} we get
  \[
  x=z+\sign r\exp(\gamma)\simpleq z+r\exp(\delta)+w=y.
  \]

  Finally, assume $x\vgreater1$. If $z\neq0$, then $x\veq y$, hence
  $\log|x|\veq\log|y|\vless y$. If $z=0$, then
  $\log|x|\sim\vell(|x|)=\gamma$ and $\gamma>0$; by inductive
  hypothesis, we have $\log(\gamma)\vless\delta=\vell(|y|)\sim\log|y|$
  and therefore $\log|x|\sim\gamma\vless\exp(\log|y|)\veq y$, as
  desired.
\end{proof}

\subsection{The nested truncation rank}

We now use the well-foundedness of $\ntrunceq$ to define an
appropriate notion of rank with ordinal values. We shall see in
\prettyref{sec:transseries} that the existence of this rank is
essentially equivalent to saying that $\no$ is a field of transseries
in the sense of Schmeling.

\begin{defn}
  \label{def:nested-rank}For all $x\in\no^{*}$, the \textbf{nested
    truncation rank} $\ntrank(x)$ of $x$ is the foundation rank of
  $\ntrunceq$, namely,
  $\ntrank(x):=\sup\left\{ \ntrank(y)+1\suchthat y\ntrunc x\right\}
  \in\on$.  We also set $\ntrank(0):=0$.
\end{defn}

Note that the real numbers have all rank zero, since they do not have
proper nested truncations. We shall describe in
\prettyref{cor:nested-rank-zero} all the other numbers of rank
zero. Note moreover that $\ntrank(x)=\ntrank(-x)$ for all $x\in\no$ by
\prettyref{rem:nested-trunc-sign}.

\begin{prop}
  \label{prop:ntrank-gamma-exp-gamma}For all $\gamma\in\J$,
  $\ntrank(\pm\exp(\gamma))=\ntrank(\gamma)$.
\end{prop}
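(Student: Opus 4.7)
The plan is to establish a rank-preserving bijection between proper nested truncations of $\gamma$ in $\J^{*}$ and proper nested truncations of $\exp(\gamma)$ in $\no^{*}$, via the map $\gamma' \mapsto \exp(\gamma')$. By \prettyref{rem:nested-trunc-sign} we have $\ntrank(-\exp(\gamma)) = \ntrank(\exp(\gamma))$, so it suffices to handle the $+$ sign. The case $\gamma = 0$ is immediate: $\exp(0) = 1$, and any standard-form decomposition $1 = z + r\exp(\delta') + w$ with $r \in \R^{*}$ and $\delta' \in \J^{*}$ would force $\exp(\delta') = 1$, i.e.\ $\delta' = 0 \notin \J^{*}$; hence $1$ has no proper nested truncation in $\no^{*}$, giving $\ntrank(1) = 0 = \ntrank(0)$.

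For $\gamma \in \J^{*}$, the forward inclusion is easy: if $\gamma' \ntrunceq \gamma$ in $\J^{*}$, then clause~(2) of \prettyref{def:nested-trunc} applied with $z = 0$, $w = 0$, $r = 1$ yields $\exp(\gamma') \ntrunceq \exp(\gamma)$, and injectivity of $\exp$ preserves strictness.

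The reverse direction is where the work lies, and where I expect the main obstacle. I would show that every strict nested truncation $x \ntrunc \exp(\gamma)$ in $\no^{*}$ has the form $x = \exp(\gamma')$ with $\gamma' \ntrunc \gamma$ in $\J^{*}$. Suppose $x \ntrunceq_{n+1} \exp(\gamma)$: by definition there exist $\delta \ntrunceq_{n} \delta'$ in $\J^{*}$, $z, w \in \no$ and $r \in \R^{*}$ such that $x = z + \sign{r}\exp(\delta)$ and $\exp(\gamma) = z + r\exp(\delta') + w$ are both in standard form. The key point is rigidity: since $\exp(\gamma)$ is a single-term monomial with support $\{\exp(\gamma)\}$, and standard form makes $\supp(z)$, $\{\exp(\delta')\}$, $\supp(w)$ pairwise disjoint and suitably ordered, the decomposition is forced to be $z = 0$, $w = 0$, $r = 1$, $\delta' = \gamma$. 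Hence $x = \exp(\delta)$ with $\delta \ntrunceq_{n} \gamma$ in $\J^{*}$, and $x \neq \exp(\gamma)$ yields $\delta \ntrunc \gamma$. The case $x \ntrunceq_{0} \exp(\gamma)$ admits no strict predecessor in $\no^{*}$, since the only truncations of the single-term monomial $\exp(\gamma)$ are $0$ and $\exp(\gamma)$ itself.

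Combining the two directions, $\gamma' \mapsto \exp(\gamma')$ induces an order-isomorphism between $(\{\gamma' \in \J^{*} : \gamma' \ntrunc \gamma\}, \ntrunceq)$ and $(\{x \in \no^{*} : x \ntrunc \exp(\gamma)\}, \ntrunceq)$, where by \prettyref{prop:nested-trunc-J} the former agrees with the set of proper nested truncations of $\gamma$ inside $\no^{*}$. Since the nested truncation rank is the foundation rank of $\ntrunceq$, which depends only on the isomorphism type of the tree of predecessors, a one-line transfinite induction gives $\ntrank(\exp(\gamma)) = \ntrank(\gamma)$. Everything beyond the rigidity step is a formal consequence of well-foundedness.
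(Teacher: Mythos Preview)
Your proof is correct and follows essentially the same approach as the paper: both argue that the proper nested truncations of $\exp(\gamma)$ are exactly the monomials $\exp(\delta)$ with $\delta\ntrunc\gamma$ in $\J^{*}$ (the paper states this ``by definition'', while you spell out the rigidity of the standard-form decomposition of a single monomial), invoke \prettyref{prop:nested-trunc-J} to identify the predecessors of $\gamma$ in $\no^{*}$ with those in $\J^{*}$, and conclude by induction on the rank.
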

\begin{proof}
  The conclusion is trivial if $\gamma=0$. If $\gamma\neq0$, by
  definition $x\ntrunc\exp(\gamma)$ if and only if $x=\exp(\delta)$
  for some $\delta\in\J^{*}$ with $\delta\ntrunc\gamma$, and
  $x\ntrunc-\exp(\gamma)$ if and only if $x=-\exp(\delta)$ for some
  $\delta\in\J^{*}$ with $\delta\ntrunc\gamma$. By
  \prettyref{prop:nested-trunc-J}, the only numbers $\delta\in\no^{*}$
  such that $\delta\ntrunc\gamma$ are in $\J^{*}$. It follows easily
  by induction on $\ntrank(\gamma)$ that
  $\ntrank(\pm\exp(\gamma))=\ntrank(\gamma)$.
\end{proof}

\begin{prop}
  \label{prop:ntrank-monomial}For all $\m\in\M^{\neq1}$ and
  $r\in\R^{*}$, if $r\neq\pm1$, then
  $\ntrank(r\m)=\ntrank(\m)+1>\ntrank(\m)$.
\end{prop}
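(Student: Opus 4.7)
The plan is to compute $\ntrank(r\m)$ directly by enumerating all strict nested truncations of $r\m$ and invoking \prettyref{prop:ntrank-gamma-exp-gamma} to convert ranks of monomials into ranks of their logarithms.

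Since $\m\in\M^{\neq 1}$ and $\exp\colon\J\to\M$ is a bijection (with $\exp(0)=1$), I would first write $\m=\exp(\delta)$ for a unique $\delta\in\J^{*}$, so that $\ntrank(\m)=\ntrank(\delta)$ by \prettyref{prop:ntrank-gamma-exp-gamma}. Next I would classify the elements $y\in\no^{*}$ with $y\ntrunceq r\m$. The only level-$0$ nested truncation is $r\m$ itself, since a nonzero truncation of the single-term series $r\m$ is necessarily $r\m$. For $n\geq 1$, a witness to $y\ntrunceq_{n}r\m$ requires a decomposition
\[
r\m=z+s\exp(\delta')+w
\]
in standard form, with $s\in\R^{*}$, $\delta'\in\J^{*}$, $z,w\in\no$, and a companion $\gamma\ntrunceq_{n-1}\delta'$ giving $y=z+\sign{s}\exp(\gamma)$.

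The key step is the standard-form analysis: the supports $\supp(z)$, $\{\exp(\delta')\}$, $\supp(w)$ are pairwise disjoint by the strict inequalities of standard form, and their union equals $\supp(r\m)=\{\m\}$. Since the middle set is nonempty, this forces $z=w=0$, $\exp(\delta')=\m$, and $s=r$, hence $\delta'=\delta$. Consequently every strict nested truncation of $r\m$ has the form $\sign{r}\exp(\gamma)$ with $\gamma\ntrunceq\delta$, where $\gamma\in\J^{*}$ by \prettyref{prop:nested-trunc-J}. The hypothesis $r\neq\pm 1$ ensures $\sign{r}\exp(\gamma)\neq r\exp(\delta)=r\m$ (their coefficients $\pm 1$ and $r$ differ), so every such $\sign{r}\exp(\gamma)$ is indeed a strict nested truncation.

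Applying \prettyref{prop:ntrank-gamma-exp-gamma} to each term I would then obtain
\[
\ntrank(r\m)=\sup\{\ntrank(\sign{r}\exp(\gamma))+1\suchthat \gamma\ntrunceq\delta,\,\gamma\in\J^{*}\}=\sup\{\ntrank(\gamma)+1\suchthat \gamma\ntrunceq\delta\}.
\]
Splitting the supremum according to whether $\gamma=\delta$ or $\gamma\ntrunc\delta$: the strict part is $\ntrank(\delta)$ by the very definition of the foundation rank, while the case $\gamma=\delta$ contributes $\ntrank(\delta)+1$, which is strictly larger. Hence $\ntrank(r\m)=\ntrank(\delta)+1=\ntrank(\m)+1$, proving the claim. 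The only potential obstacle is the standard-form bookkeeping, but this reduces to the trivial observation that a single-monomial support cannot be partitioned into three pieces with two of them nonempty.
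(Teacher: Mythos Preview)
Your argument is correct and follows essentially the same route as the paper: both proofs reduce to the observation that, because $r\m$ has single-element support, the standard-form decomposition in \prettyref{def:nested-trunc} forces $z=w=0$, $s=r$, $\delta'=\log(\m)$, so the strict nested truncations of $r\m$ are exactly the elements $\sign{r}\exp(\gamma)$ with $\gamma\ntrunceq\log(\m)$. The paper packages this as ``$x\ntrunc r\m$ iff $x=\sign{r}\m$ or $x\ntrunc\sign{r}\m$'' and reads off $\ntrank(r\m)=\ntrank(\sign{r}\m)+1$ directly, whereas you pass through the logarithm and invoke \prettyref{prop:ntrank-gamma-exp-gamma}; the content is the same.
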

\begin{proof}
  By definition, for all $x\in\no^{*}$, $x\ntrunc r\m$ if and only if
  $x=\sign r\m$ or $x\ntrunc\sign r\m$. It follows that
  $\ntrank(r\m)=\ntrank(\m)+1$.
\end{proof}

\begin{prop}
  \label{prop:ntrank-term}Let $x\in\no^{*}$. If $r\m\in\T$ is a term
  of $x$, then:
  \begin{enumerate}
  \item $\ntrank(r\m)\leq\ntrank(x)$;
  \item if $\m$ is not minimal in $\supp(x)$, then
    $\ntrank(r\m)<\ntrank(x)$.
  \end{enumerate}
\end{prop}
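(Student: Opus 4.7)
The plan is to prove (1) and (2) together by induction on $\ntrank(x)$, first dispatching (2) at rank $\ntrank(x)$ using only (1) at strictly lower rank, and then using this fresh (2) together with the full inductive hypothesis to handle (1) at the same rank.

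For (2), if $\m$ is not minimal in $\supp(x)$, write $x = u + r\m + w$ in standard form with $w\neq 0$ and let $\n_{0}$ be the leading monomial of $w$. Then $u + r\m = x|\n_{0} \trunc x$, so $\ntrank(u+r\m) < \ntrank(x)$; since $r\m$ is the minimal term of $u+r\m$, part (1) applied inductively gives $\ntrank(r\m)\leq \ntrank(u+r\m) < \ntrank(x)$.

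For (1), if $\m$ is not minimal I invoke (2); otherwise write $x = u + r\m$ in standard form. The cases $u=0$ (single term) and $\m=1$ (so $r\m\in\R$) are immediate. Now assume $u\neq 0$ and $\m\neq 1$, and set $\gamma:=\vell(\m)\in\J^{*}$, so that $\m=\exp(\gamma)$ and $\ntrank(\m)=\ntrank(\gamma)$ by \prettyref{prop:ntrank-gamma-exp-gamma}. If $|r|\neq 1$, then $y:=u+\sign{r}\m$ is a proper nested truncation of $x$ by \prettyref{def:nested-trunc} (taking $\delta=\gamma$ and $w=0$, both sums being in standard form), so $\ntrank(y)<\ntrank(x)$; since $\sign{r}\m$ is the minimal term of $y$, the inductive hypothesis yields $\ntrank(\m)\leq\ntrank(y)<\ntrank(x)$, whence $\ntrank(r\m)=\ntrank(\m)+1\leq\ntrank(x)$ by \prettyref{prop:ntrank-monomial}.

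The essential obstacle is the remaining case $|r|=1$, $u\neq 0$, $\m\neq 1$, in which $\sign{r}\m=r\m$ and the naive replacement merely reproduces $x$. Here $\ntrank(r\m)=\ntrank(\gamma)$, so by the definition of the rank it suffices to prove $\ntrank(\gamma')<\ntrank(x)$ for every $\gamma'\ntrunc\gamma$ with $\gamma'\in\J^{*}$. Fix such a $\gamma'$. If $\supp(u)>\{\exp(\gamma')\}$, then $y:=u+\sign{r}\exp(\gamma') \ntrunc x$ by \prettyref{def:nested-trunc} (strict since $\gamma'\neq\gamma$), and applying (1) inductively to $y$ together with \prettyref{prop:ntrank-gamma-exp-gamma} gives $\ntrank(\gamma')=\ntrank(\exp(\gamma'))\leq\ntrank(y)<\ntrank(x)$. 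The delicate sub-case is when some $\n\in\supp(u)$ satisfies $\vell(\n)\leq\gamma'$, so that the candidate sum $u+\sign{r}\exp(\gamma')$ is no longer in standard form. Here I would invoke \prettyref{prop:nested-trunc-convex}: the convex class $\{y\in\no^{*}\suchthat \gamma'\ntrunceq y\}$ contains both $\gamma'$ and $\vell(\m)$, and since $\vell(\m)<\vell(\n)\leq\gamma'$ it also contains $\vell(\n)$. Hence $\gamma'\ntrunceq\vell(\n)$, and therefore $\ntrank(\gamma')\leq\ntrank(\vell(\n))=\ntrank(\n)$ via \prettyref{prop:ntrank-gamma-exp-gamma}. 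Since $\n>\m$, the monomial $\n$ is non-minimal in $\supp(x)$, so the already-proven (2) applied to the term $x_{\n}\n$ of $x$ yields $\ntrank(\n)\leq\ntrank(x_{\n}\n)<\ntrank(x)$, closing the argument.
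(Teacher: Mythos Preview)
Your proof is correct and follows essentially the same route as the paper's: both argue by induction on $\ntrank(x)$, first establish (2) at the current rank from (1) at lower rank via a proper truncation, then handle the minimal-term case by splitting on whether $|r|=1$, and in the delicate $r=\pm1$ sub-case invoke \prettyref{prop:nested-trunc-convex} to relate the troublesome truncation to a monomial of $u$. The only cosmetic differences are that the paper phrases the $r=\pm1$ case as a contradiction (assuming $\ntrank(r\m)\geq\alpha+1$ and producing a witness $\n\ntrunc\m$), and in the bad sub-case it bounds $\ntrank(\n)$ via $u\trunc x$ together with (1) at rank $\ntrank(u)<\alpha$, whereas you bound it via the freshly proven (2) at rank $\alpha$; these are equivalent reorganizations of the same argument.
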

\begin{proof}
  We prove the conclusion by induction on $\alpha:=\ntrank(x)$.

  If $\m$ is not minimal in $\supp(x)$, then there exists
  $y\in\no^{*}$ such that $y\trunc x$ and $r\m$ is a term of $y$. By
  definition of the rank, we have $\ntrank(y)<\alpha$, hence
  $\ntrank(r\m)<\alpha$ by inductive hypothesis, proving (2).

  Suppose now that $\m$ is minimal, which means that we can write
  $x=z+r\m$ in standard form for some $z\in\no$. If $\m=1$, then
  clearly $\ntrank(r\m)=0\leq\ntrank(x)$, so we may assume that
  $\m\ne1$.

  If $r\neq\pm1$, since $\m\neq1$ we have $x':=z+\sign r\m\ntrunc x$.
  By definition, it follows that $\ntrank(x')<\alpha$, and by
  inductive hypothesis we must have
  $\ntrank(\m)=\ntrank(\sign r\m)\leq\ntrank(x')<\alpha$.  Therefore,
  by \prettyref{prop:ntrank-monomial},
  $\ntrank(r\m)=\ntrank(\m)+1\leq\alpha$.

  If $r=\pm1$, suppose by contradiction that
  $\ntrank(r\m)\geq\alpha+1$.  Then there exists $y\ntrunc r\m$ such
  that $\ntrank(y)\geq\alpha$.  By Remarks
  \ref{rem:nested-trunc-sign}, \ref{rem:ntrunc-mon}, we must have
  $y=r\n$ for some $\n\in\M$. Let $x':=z+r\n$. We claim that
  $x'\ntrunc x$, hence $\ntrank(x')<\alpha$. Granted the claim, by
  inductive hypothesis we have
  $\ntrank(y)=\ntrank(r\n)\leq\ntrank(x')<\alpha$, a contradiction.

  To prove the claim, by \prettyref{rem:ntrunc-std-form} we only have
  to prove that $z+r\n$ is in standard form. Suppose by contradiction
  that this is not the case. Then $z\neq0$ and there is a term
  $s\om\in\T$ of $z$ such that $\n\geq\om$, while $\om>\m$ since
  $z+r\m$ is in standard form. By \prettyref{prop:nested-trunc-convex}
  it follows that $\n\ntrunceq\om$, hence
  $\ntrank(\om)\geq\ntrank(\n)\geq\alpha$.  On the other hand,
  $z\ntrunc x$, hence $\ntrank(z)<\alpha$. By inductive hypothesis, we
  get $\ntrank(\om)\leq\ntrank(s\om)\leq\ntrank(z)<\alpha$, a
  contradiction.
\end{proof}

\section{Log-atomic numbers}

As anticipated in the introduction, a crucial subclass of $\no$ is the
one of log-atomic numbers. We defined them as follows.

\begin{defn}
  A positive infinite surreal number $x\in\no$ is \textbf{log-atomic
  }if for every $n\in\N$, $\log_{n}(x)$ is an infinite monomial. We
  call $\li$ the class of all log-atomic numbers.
\end{defn}

Note that $\li\subset\M^{>1}$ and $\exp(\li)=\log(\li)=\li$. It turns
out that the log-atomic numbers are the natural representatives of a
certain equivalence relation, similarly to how the monomials are the
natural representatives of the archimedean equivalence $\veq$.

\subsection{Levels}

We first define an appropriate notion of magnitude, which is weaker
than the dominance relation $\vleq$.

\begin{defn}
  \label{def:levels}Given two elements $x,y\in\no$ with $x,y>\N$, we
  write
  \begin{enumerate}
  \item $x\lleq y$ if $x\leq\exp_{h}\left(k\log_{h}(y)\right)$ for
    some $h,k\in\N^{*}$ (equivalently, $\log_{h}(x)\vleq\log_{h}(y)$
    for some $h\in\N$);
  \item $x\lless y$ if $x<\exp_{h}\left(\frac{1}{k}\log_{h}(y)\right)$
    for all $h,k\in\N^{*}$ (equivalently,
    $\log_{h}(x)\vless\log_{h}(y)$ for all $h\in\N$);
  \item $x\lequal y$ if
    $\exp_{h}\left(\frac{1}{k}\log_{h}(y)\right)\leq
    x\leq\exp_{h}\left(k\log_{h}(y)\right)$
    for some $h,k\in\N^{*}$ (equivalently,
    $\log_{h}(x)\veq\log_{h}(y)$ for some $h\in\N$).
  \end{enumerate}
  We call \textbf{level of $x$} the class
  $[x]:=\left\{ y\in\no\suchthat y>\N,y\lequal x\right\} $.
\end{defn}

\begin{prop}
  The relation $\lequal$ is an equivalence relation. Moreover,
  $x\lequal y$ if and only if there exists $h\in\N$ such that
  $\log_{h}(x)\sim\log_{h}(y)$.
\end{prop}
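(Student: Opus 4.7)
The plan is to establish the proposition via the following chain: first prove a key lemma comparing $\veq$ and $\sim$ under $\log$, then derive the "moreover" characterization, and finally use it to check the three axioms of an equivalence relation. Reflexivity and symmetry will be immediate from the definition (taking $h=0$ and from the symmetry of $\veq$ respectively); the real work is the "moreover" statement and transitivity.

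The key lemma I would prove is: \emph{if $a,b > \N$ and $a \veq b$, then $\log(a) \sim \log(b)$.} Indeed, $a \veq b$ yields $k \in \N^{*}$ with $\frac{1}{k} b \leq a \leq kb$, so $|\log(a) - \log(b)| = |\log(a/b)| \leq \log(k) \in \R$, hence $\log(a) - \log(b) \vleq 1$. Since $a > \N$ forces $\log(a) > \N$ (because $\exp(n) < \omega$ for all $n \in \N$, so $\log$ sends the infinite positives into the infinite positives), we get $\log(a) - \log(b) \vless \log(a)$, i.e., $\log(a) \sim \log(b)$. Applying this lemma to $a = \log_h(x)$, $b = \log_h(y)$ yields the ``upgrade'': if $\log_h(x) \veq \log_h(y)$, then $\log_{h+1}(x) \sim \log_{h+1}(y)$. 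The converse direction $\sim \Rightarrow \veq$ is immediate from the definition of $\sim$, so $x \lequal y$ (i.e., $\log_h(x) \veq \log_h(y)$ for some $h$) is equivalent to $\log_{h'}(x) \sim \log_{h'}(y)$ for some $h'$, which is the ``moreover'' clause.

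For the equivalence relation: reflexivity and symmetry are clear. For transitivity, suppose $\log_{h_1}(x) \sim \log_{h_1}(y)$ and $\log_{h_2}(y) \sim \log_{h_2}(z)$, using the characterization just proved. Set $h := \max(h_1, h_2)$. I will propagate both hypotheses up to this common $h$ using the lemma: from $\log_{h_i}(\cdot) \sim \log_{h_i}(\cdot)$ one obtains $\log_{h_i}(\cdot) \veq \log_{h_i}(\cdot)$ and then $\log_{h_i + 1}(\cdot) \sim \log_{h_i + 1}(\cdot)$, iterating finitely many times to reach $h$. Once both hold at level $h$, I finish by transitivity of $\sim$: if $u \sim v$ and $v \sim w$, then $u - w = (u-v) + (v-w) \vless u$ since $v \veq u$ implies $v - w \vless v \veq u$. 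Thus $\log_h(x) \sim \log_h(z)$, so $x \lequal z$.

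There is no real obstacle here; the only subtle point is the lemma, which hinges on the fact that iterated logarithms of infinite numbers remain infinite, so that a bounded difference becomes negligible relative to either term. Everything else is bookkeeping with the already-established properties of $\veq$, $\sim$, and $\log$.
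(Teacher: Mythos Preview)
Your proof is correct and follows essentially the same approach as the paper: both hinge on the observation that $\log_h(x)\veq\log_h(y)$ forces $\log_{h+1}(x)\sim\log_{h+1}(y)$, which you prove explicitly as your key lemma while the paper simply asserts it as ``clear.'' The only cosmetic difference is that the paper phrases the propagation step in terms of $\veq$ (noting that $\log_h(x)\veq\log_h(y)$ implies $\log_k(x)\veq\log_k(y)$ for all $k\geq h$, then using transitivity of $\veq$ at a common level), whereas you route through $\sim$ and invoke transitivity of $\sim$; these are equivalent once your lemma is in hand.
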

\begin{proof}
  Since $\log_{h}(x)\veq\log_{h}(y)$ implies
  $\log_{k}(x)\veq\log_{k}(y)$ for all $k\geq h$, we immediately get
  that $\lequal$ is an equivalence relation. Moreover, if
  $\log_{h}(x)\veq\log_{h}(y)$, then clearly
  $\log_{h+1}(x)\sim\log_{h+1}(y)$, as desired.
\end{proof}

\begin{rem}
  The equivalence relation $\lequal$ generalizes the notion of level
  in Hardy fields as in \cite{Rosenlicht1987,Marker1997}, whence the
  name. For instance, if $x\in\no$ satisfies $x>\N$, we have
  $x\lequal x^{n}$ for all $n\in\N^{*}$ and $x\lless\exp(x)$. While in
  those papers the only levels under consideration are given by
  $\log_{n}(x)$ and $\exp_{n}(x)$, the surreal numbers have
  uncountably many levels, and in fact a proper class of
  them.
\end{rem}

\begin{prop}
  Each level $[x]$ is a union of positive parts of archimedean classes
  and $\lleq$ induces a total order on levels.
\end{prop}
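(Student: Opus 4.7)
The statement has two independent halves. For the first, that each level $[x]$ is a union of positive parts of archimedean classes, the essential point is that $\veq$ refines $\lequal$ on positive infinite numbers. If $y \in [x]$ and $z > 0$ with $z \veq y$, then $z > \N$ (since $z \geq y/k$ for some $k \in \N^*$ and $y > \N$), and taking $h = 0$ in the definition of $\lequal$ yields $z \lequal y$. By transitivity of $\lequal$ we conclude $z \in [x]$, so $[x]$ is closed under $\veq$, which is exactly to say it is a union of positive parts of archimedean classes.

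For the second half, the key technical lemma I would establish is: \emph{if $x, y > \N$ and $\log_h(x) \vleq \log_h(y)$, then $\log_{h+1}(x) \vleq \log_{h+1}(y)$.} A short preliminary induction shows that $x > \N$ implies $\log_n(x) > \N$ for every $n \in \N$, since otherwise a finite bound $\log_n(x) \leq K \in \N$ would give $x \leq \exp_n(K) \in \R$, contradicting $x > \N$. Granted this, from $\log_h(x) \leq k \log_h(y)$ we obtain $\log_{h+1}(x) \leq \log(k) + \log_{h+1}(y)$, and since $\log_{h+1}(y) > \N > \log(k)$ the right-hand side is at most $2 \log_{h+1}(y)$, proving the lemma.

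From here everything is bookkeeping. Totality of $\lleq$ on $\{y \in \no \suchthat y > \N\}$ is visible already at $h = 0$, since $\vleq$ is itself total. Transitivity: given $\log_h(x) \vleq \log_h(y)$ and $\log_{h'}(y) \vleq \log_{h'}(z)$, apply the lemma to raise both inequalities to the common index $h'' = \max(h, h')$ and then invoke transitivity of $\vleq$. For antisymmetry modulo $\lequal$, if $x \lleq y$ and $y \lleq x$, aligning the two witnesses at a common large $h$ gives $\log_h(x) \veq \log_h(y)$, hence $x \lequal y$; the converse direction is immediate from the definitions, which also shows that $\lleq$ is well-defined on the quotient. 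Combining these facts, $\lleq$ descends to a total order on levels. I do not anticipate any genuine obstacle; the only care required is aligning indices across the various witnesses, which is handled uniformly by the lemma.
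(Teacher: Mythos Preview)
Your argument is correct in all parts; the key monotonicity lemma (that $\log_h(x)\vleq\log_h(y)$ propagates to $\log_{h+1}$) is exactly the right tool, and the bookkeeping you describe for totality, transitivity, antisymmetry, and well-definedness on the quotient goes through without difficulty. The paper itself declares the proof ``trivial and left to the reader,'' so there is no authorial argument to compare against---you have simply supplied the details the authors omitted.
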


The proof is trivial and left to the reader.

We can verify that $\li$ is a class of representatives for the
equivalence relation $\lequal$. For instance, any two distinct
log-atomic numbers have necessarily different levels.

\begin{prop}
  \label{prop:log-atom-diff-level}Let $\mu,\lambda\in\li$. If
  $\mu<\lambda$, then $\mu\lless\lambda$.
\end{prop}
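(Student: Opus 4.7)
The plan is to unfold the definition of $\lless$ and exploit the fact that two distinct monomials cannot lie in the same archimedean class. Since $\mu,\lambda\in\li$, for every $h\in\N$ both $\log_{h}(\mu)$ and $\log_{h}(\lambda)$ are infinite monomials. Because $\log$ is strictly increasing and $\mu<\lambda$, we have $\log_{h}(\mu)<\log_{h}(\lambda)$ (both positive) for every $h$.

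Now I would invoke the characterization of monomials as simplest positive representatives of archimedean classes (\prettyref{fact:omega}): if two distinct monomials lay in the same archimedean class, they would both be the simplest positive element of that class, forcing them to coincide. Thus $\log_{h}(\mu)\not\veq\log_{h}(\lambda)$, so either $\log_{h}(\mu)\vless\log_{h}(\lambda)$ or $\log_{h}(\mu)\vgreater\log_{h}(\lambda)$. The latter is incompatible with $0<\log_{h}(\mu)<\log_{h}(\lambda)$, so $\log_{h}(\mu)\vless\log_{h}(\lambda)$.

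Since this holds for every $h\in\N$, by the equivalent reformulation of (2) in \prettyref{def:levels} we conclude $\mu\lless\lambda$. There is no real obstacle here: the entire argument rests on the trivial observation that distinct monomials are archimedean-inequivalent, combined with monotonicity of $\log$, and the work is really in setting up the correct definitions beforehand.
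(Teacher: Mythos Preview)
Your proof is correct and follows essentially the same approach as the paper's: both arguments hinge on the fact that distinct monomials cannot lie in the same archimedean class, applied to the iterated logarithms $\log_h(\mu),\log_h(\lambda)$. The only cosmetic difference is that you argue directly (showing $\log_h(\mu)\vless\log_h(\lambda)$ for every $h$), whereas the paper proceeds by contradiction via $\mu\lequal\lambda$ and the relation $\sim$ rather than $\veq$.
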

\begin{proof}
  Suppose by contradiction that $\mu\lgeq\lambda$ and $\mu<\lambda$.
  Then $\mu\lequal\lambda$, so there exists some $n\in\N$ such that
  $\log_{n}(\mu)\sim\log_{n}(\lambda)$. Since $\log_{n}(\mu)$ and
  $\log_{n}(\lambda)$ are both monomials, we obtain
  $\log_{n}(\mu)=\log_{n}(\lambda)$, hence $\lambda=\mu$, a
  contradiction.
\end{proof}

On the other hand, any positive infinite surreal number has the same
level of some log-atomic number.

\begin{lem}
  \label{lem:nested-trunc-level}Let $x,y>\N$. If $x\ntrunceq y$, then
  $x\lequal y$.
\end{lem}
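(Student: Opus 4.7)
The plan is to induct on the least $n$ such that $x\ntrunceq_n y$. In the base case $n=0$ we have $x\trunceq y$, so $y-x$ is supported strictly below $\supp(x)$; since $x\ne 0$ (because $x>\N$), this gives $y-x\vless x$, hence $x\veq y$, and by taking $h=0$ in \prettyref{def:levels} we obtain $x\lequal y$.

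For the inductive step, I would write $x=z+\sign{r}\exp(\gamma)$ and $y=z+r\exp(\delta)+w$ in standard form with $\gamma\ntrunceq_{n-1}\delta$ in $\J^*$ and $r\in\R^*$, and split on whether $z$ vanishes. If $z\ne 0$, the standard-form conditions force the leading term of both $x$ and $y$ to coincide with that of $z$, so $x\veq z\veq y$ and $x\lequal y$ is immediate. The delicate case is $z=0$: then $x=\sign{r}\exp(\gamma)$ and $y=r\exp(\delta)+w$, and the hypotheses $x,y>\N$ force $r>0$, $\sign{r}=1$, and $\gamma,\delta>0$, so that $\gamma,\delta\in\J$ are positive infinite. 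The inductive hypothesis then yields $\gamma\lequal\delta$. One computes $\log(x)=\gamma$ directly; for $\log(y)$, I would factor out the leading term to get $y=r\exp(\delta)\bigl(1+w/(r\exp(\delta))\bigr)$, use the standard-form bound $w\vless\exp(\delta)$ to see that $w/(r\exp(\delta))$ is infinitesimal, and apply \prettyref{rem:expansion-log} to expand
\[
\log(y)=\delta+\log(r)+\log\!\left(1+\frac{w}{r\exp(\delta)}\right),
\]
whose last summand is infinitesimal. Since $\delta>\N$ dominates both the finite $\log(r)$ and the infinitesimal correction, $\log(y)\sim\delta$ and therefore $\log(y)\veq\delta$. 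Chaining this with $\gamma\lequal\delta$ by transitivity of $\lequal$ gives $\log(x)\lequal\log(y)$, and unwinding the definition of $\lequal$ (an extra iteration of $\log$ equalizes them up to $\veq$) upgrades this to $x\lequal y$.

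The main obstacle is precisely this last case: one must verify that the finite summand $\log(r)$ and the infinitesimal Taylor correction of \prettyref{rem:expansion-log} do not disturb the $\veq$-class of $\delta$, which is exactly where the assumption $\delta>\N$ (inherited from $\delta\in\J$ positive infinite) is decisive. Everything else reduces to standard-form manipulations and the inductive hypothesis.
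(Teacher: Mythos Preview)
Your proof is correct and follows the same inductive skeleton as the paper's: induct on $n$, split on $z=0$ versus $z\ne 0$, and in the delicate case pass to $\gamma$ and $\delta$ via a single $\log$. The only differences are cosmetic: the paper strengthens the inductive claim to $\log_n(x)\veq\log_n(y)$ (so the index $h$ witnessing $x\lequal y$ is tracked explicitly), and where you expand $\log(y)$ via \prettyref{rem:expansion-log} the paper simply quotes \prettyref{rem:vell} to get $\log(y)\sim\vell(y)=\delta$ in one stroke.
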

\begin{proof}
  Let $n\in\N$ be such that $x\ntrunceq_{n}y$. We claim that
  $\log_{n}(x)\veq\log_{n}(y)$, hence by definition $x\lequal y$. We
  work by induction on $n$.

  If $x\ntrunceq_{0}y$, then obviously $x\veq y$.

  If $x\ntrunceq_{n+1}y$, write $x=z+\sign r\exp(\gamma)$,
  $y=z+r\exp(\delta)+w$ in standard form with
  $\gamma\ntrunceq_{n}\delta$. If $z\neq0$, then again
  $x\veq y\veq z$, hence \emph{a fortiori}
  $\log_{n+1}(x)\veq\log_{n+1}(y)$.  If $z=0$, then
  $\log(x)\sim\vell(x)=\gamma$ and $\log(y)\sim\vell(y)=\delta$ by
  \prettyref{rem:vell}. By inductive hypothesis,
  $\log_{n}(\gamma)\veq\log_{n}(\delta)$.  This immediately implies
  that
  $\log_{n+1}(x)\sim\log_{n}(\gamma)\veq\log_{n}(\delta)\sim\log_{n+1}(y)$,
  as desired.
\end{proof}

\begin{prop}
  \label{prop:nested-trunc-log-atom}If $x>\N$, there exists
  $\lambda\in\li$ such that $\lambda\ntrunceq x$, and therefore such
  that $\lambda\lequal x$.
\end{prop}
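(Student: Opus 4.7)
My plan is to exploit the well-foundedness of $\ntrunceq$ established in \prettyref{thm:nested-trunc-simplifies}. Set $S := \{y \in \no \suchthat y > \N \text{ and } y \ntrunceq x\}$. This is non-empty since $x \in S$, and since every $y \in S$ satisfies $y \simpleq x$ it is actually a set. Pick a $\ntrunceq$-minimal $\lambda \in S$. The desired conclusion $\lambda \lequal x$ then follows at once from \prettyref{lem:nested-trunc-level}, so it only remains to verify that $\lambda \in \li$.

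Suppose for contradiction that $\lambda \notin \li$. Since $\lambda > \N$ forces $\log_{k}(\lambda) > \N$ for every $k$, each iterated logarithm is positive infinite; the only way $\lambda$ can fail to be log-atomic is that some $\log_{k}(\lambda)$ is not a monomial. Let $n \geq 0$ be the least such index. The strategy is to locate inside $\log_{n}(\lambda)$ a strict nested truncation and pull it back to $\lambda$ via $\exp$, thereby contradicting the minimality of $\lambda$.

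If $n = 0$, then $\lambda$ itself is not a monomial. Write its leading term as $r\m$, so $\lambda = r\m + w$ in standard form with $\m \in \M^{>1}$ (since $\lambda > \N$) and either $r \neq 1$ or $w \neq 0$. Combining the truncation clause with clause~(3) in the generating description of $\ntrunceq$ yields $\m \ntrunc \lambda$, and $\m > \N$ contradicts minimality. If $n \geq 1$, then every $\log_{k}(\lambda)$ for $k < n$ is an infinite monomial, hence lies in $\J^{*}$, while $\gamma := \log_{n}(\lambda) \in \J^{>0}$ is not a monomial. Applying the same leading-term analysis to $\gamma$ produces $\n \in \M^{>1}$ with $\n \ntrunc \gamma$. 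Using clause~(2)---that $\exp$ preserves $\ntrunceq$ on $\J^{*}$---together with the injectivity of $\exp$, I iterate $n$ times to conclude $\exp_{n}(\n) \ntrunc \exp_{n}(\gamma) = \lambda$; and $\exp_{n}(\n) > \N$ again contradicts minimality of $\lambda$.

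The main point requiring care is the iteration in the second case: at every intermediate step both sides of the nested truncation must lie in $\J^{*}$ for clause~(2) to apply. This is ensured by the choice of $n$, which guarantees that $\log_{k}(\lambda) \in \M^{>1} \subset \J^{*}$ for $k < n$, together with the standard fact that $\exp$ carries positive infinite monomials to infinite monomials, so that all intermediate $\exp_{k}(\n)$ also remain in $\J^{*}$.
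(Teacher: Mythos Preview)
Your argument is correct and follows essentially the same approach as the paper: both proofs exploit the well-foundedness of $\ntrunceq$ (\prettyref{thm:nested-trunc-simplifies}) to pick a $\ntrunceq$-minimal element, and then show it must be log-atomic because the leading monomial of each iterated logarithm furnishes a strict nested truncation otherwise. The only cosmetic difference is that the paper phrases this as a minimal-counterexample argument over all $x$ and runs a forward induction on $n$ (showing $r_n=1$, $\delta_n=0$ at each step), whereas you work locally with the fixed $x$, pick a minimal element of $S=\{y>\N : y\ntrunceq x\}$, and argue by contradiction at the first $n$ where $\log_n(\lambda)\notin\M$; the underlying mechanism is identical.
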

\begin{proof}
  Suppose by contradiction that there exists a counterexample $x>\N$
  such that $\lambda\not\ntrunceq x$ for all $\lambda\in\li$. By
  \prettyref{thm:nested-trunc-simplifies}, we may assume that $x$ is a
  minimal counterexample with respect to $\ntrunceq$ (for instance, we
  may take $x$ of minimal simplicity).  Note that obviously
  $x\notin\li$.

  Since $x$ is positive infinite, we may write
  $x=r_{0}\m_{0}+\delta_{0}$ and then inductively
  \[
  \m_{n}=:\exp(r_{n+1}\m_{n+1}+\delta_{n+1})
  \]
  with $r_{n}\in\R^{>0}$, $\m_{n}\in\M^{>1}$, $\delta_{n}\in\J$ and
  $r_{n}\m_{n}+\delta_{n}$ in standard form (in other words,
  $r_{n+1}\m_{n+1}$ is the leading term of $\log(\m_{n})$). We claim
  that $r_{n}=1$ and $\delta_{n}=0$ for all $n\in\N$; this implies
  that $\log_{n}(x)=\m_{n}\in\M$ and therefore that $x$ is
  $\log$-atomic. We reason by induction on $n$.

  For $n=0$, it suffices to note that $\m_{0}\ntrunceq x$. By
  transitivity, $\lambda\not\ntrunceq\m_{0}$ for all
  $\lambda\in\li$. By minimality of $x$, we must have $x=\m_{0}$,
  hence $r_{0}=1$ and $\delta_{0}=0$.

  If $n>0$, assume that for all $m<n$ we have $r_{m}=1$ and
  $\delta_{m}=0$.  In particular, we must have
  $x=\exp_{n}(r_{n}\m_{n}+\delta_{n})$.  It follows immediately that
  $\exp_{n}(\m_{n})\ntrunceq x$, hence
  $\lambda\not\ntrunceq\exp_{n}(\m_{n})$ for all $\lambda\in\li$.  By
  minimality of $x$, we must have $r_{n}=1$ and $\delta_{n}=0$, as
  desired.
\end{proof}

\begin{cor}
  \label{cor:log-atom-simplest-level}$\li$ is a class of
  representatives for $\lequal$. Moreover, for each $\lambda\in\li$,
  $\lambda$ is the simplest number in its level.
\end{cor}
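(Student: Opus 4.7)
The plan is to deduce both statements directly from the three preceding results: \prettyref{prop:log-atom-diff-level} (distinct log-atomic numbers sit in distinct levels), \prettyref{prop:nested-trunc-log-atom} (every positive infinite number has a nested-truncation that is log-atomic, and in particular a log-atomic number in its level), and \prettyref{thm:nested-trunc-simplifies} (nested truncations simplify).

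First I would prove that $\li$ is a class of representatives for $\lequal$. Existence of a representative for each $\lequal$-class is immediate from \prettyref{prop:nested-trunc-log-atom}. Uniqueness is exactly the content of \prettyref{prop:log-atom-diff-level}: if $\lambda_{1},\lambda_{2}\in\li$ satisfy $\lambda_{1}\lequal\lambda_{2}$, then neither $\lambda_{1}<\lambda_{2}$ nor $\lambda_{2}<\lambda_{1}$ can hold, so $\lambda_{1}=\lambda_{2}$.

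For the second claim, I would fix $\lambda\in\li$ and argue by contradiction, supposing there exists $\mu\simple\lambda$ with $\mu\lequal\lambda$. Since $\lambda>\N$ and $\mu\lequal\lambda$, we also have $\mu>\N$, so \prettyref{prop:nested-trunc-log-atom} provides some $\lambda'\in\li$ with $\lambda'\ntrunceq\mu$. Then \prettyref{thm:nested-trunc-simplifies} yields $\lambda'\simpleq\mu$, and \prettyref{lem:nested-trunc-level} yields $\lambda'\lequal\mu\lequal\lambda$. By the uniqueness just established (or directly by \prettyref{prop:log-atom-diff-level}), $\lambda'=\lambda$, so $\lambda=\lambda'\simpleq\mu\simple\lambda$, a contradiction.

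I do not expect any real obstacles: the hard work has been done in establishing that nested truncations simplify and that they preserve the level, plus the rigidity of log-atomic numbers within a level. The only point that needs a brief justification is that $\mu>\N$, which follows immediately from $\mu\lequal\lambda$ together with the fact that every element of a level is positive infinite by definition.
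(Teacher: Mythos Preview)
Your proposal is correct and uses essentially the same ingredients as the paper's proof: \prettyref{prop:nested-trunc-log-atom} for existence, \prettyref{prop:log-atom-diff-level} for uniqueness, and \prettyref{thm:nested-trunc-simplifies} to conclude simplicity. The only difference is cosmetic: the paper argues directly (for arbitrary $x$ in a level, the $\lambda$ produced by \prettyref{prop:nested-trunc-log-atom} satisfies $\lambda\simpleq x$), whereas you phrase the second part as a contradiction; your separate appeal to \prettyref{lem:nested-trunc-level} is also redundant, since \prettyref{prop:nested-trunc-log-atom} already records $\lambda\lequal x$.
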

\begin{proof}
  Let $[x]$ be a given level. By
  \prettyref{prop:nested-trunc-log-atom}, there exists $\lambda\in\li$
  such that $\lambda\ntrunceq x$ and $\lambda\lequal x$, and by
  \prettyref{prop:log-atom-diff-level}, $\lambda$ is also unique. By
  \prettyref{thm:nested-trunc-simplifies}, we also have
  $\lambda\simpleq x$. This shows that $\lambda$ is the simplest
  number in $[x]$, as desired.
\end{proof}

\begin{cor}
  \label{cor:nested-rank-zero}For all $x\in\no$, $\ntrank(x)=0$ if and
  only if either $x\in\R$ or $x=\pm\lambda^{\pm1}$ for some
  $\lambda\in\li$.
\end{cor}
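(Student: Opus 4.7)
The plan is to establish the two inclusions separately, both leveraging \prettyref{prop:ntrank-gamma-exp-gamma} ($\ntrank(\pm\exp\gamma)=\ntrank(\gamma)$) as the main tool for peeling off exponentials.

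For $\ntrank(x)=0\Rightarrow x\in\R\cup\pm\li^{\pm1}$: if $x\neq 0$ has rank $0$, then $x$ cannot have two or more terms in its support, because any non-minimal term would yield a proper ordinary truncation and hence a proper nested truncation, forcing $\ntrank(x)\geq 1$. Thus $x=r\m$ for some $r\in\R^{*}$ and $\m\in\M$. If $\m=1$ we are done, since $x\in\R$; otherwise \prettyref{prop:ntrank-monomial} forces $r=\pm1$, so $x=\pm\m$. The key lemma is then: if $\m\in\M^{>1}$ has $\ntrank(\m)=0$, then $\log\m\in\M^{>1}$ also has rank $0$. Indeed, by \prettyref{prop:ntrank-gamma-exp-gamma} the element $\log\m\in\J^{*}$ has rank $0$, so re-running the single-term analysis on the purely infinite $\log\m$ shows $\log\m=\pm\n$ for some $\n\in\M^{>1}$, with the coefficient forced to be $+1$ by positivity. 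Iterating the lemma yields $\log_{i}\m\in\M^{>1}$ for every $i\in\N$, so $\m\in\li$. If instead $0<\m<1$, the same reasoning applied to $\m^{-1}\in\M^{>1}$ gives $\m^{-1}\in\li$, so $\m\in\li^{-1}$.

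For the converse, $\ntrank(r)=0$ is immediate for $r\in\R$ since a real number has no non-trivial standard-form decomposition, and the cases $-\lambda,\ \lambda^{-1},\ -\lambda^{-1}$ all reduce to $\lambda\in\li$ via the identity $\lambda^{-1}=\exp(-\log\lambda)$ and \prettyref{prop:ntrank-gamma-exp-gamma}. So the crux is to show $\ntrank(\lambda)=0$ for every $\lambda\in\li$, and I plan to do this by a global minimum-rank argument. Set $\alpha:=\min\{\ntrank(\lambda)\suchthat\lambda\in\li\}$ and suppose, for contradiction, $\alpha>0$. Pick $\lambda\in\li$ realising $\alpha$ and a witness $y\ntrunc\lambda$ with $\ntrank(y)<\alpha$. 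By \prettyref{rem:ntrunc-mon}, $y\in\M$; and since $\lambda=\exp(\log\lambda)$ is the only standard-form decomposition of the monomial $\lambda$, the definition of $\ntrunceq$ forces $y=\exp(\gamma)$ with $\gamma\in\J^{*}$ and $\gamma\ntrunc\log\lambda$. By \prettyref{rem:nested-trunc-sign}, $\gamma>0$, and being a positive purely infinite number it exceeds $\N$; \prettyref{prop:nested-trunc-log-atom} then supplies $\mu\in\li$ with $\mu\ntrunceq\gamma$, whence $\mu\ntrunc\log\lambda$ by transitivity (strictness follows since $\mu\ntrunc\mu$ is impossible). Finally \prettyref{prop:ntrank-gamma-exp-gamma} gives $\ntrank(\mu)<\ntrank(\log\lambda)=\ntrank(\lambda)=\alpha$, contradicting the minimality of $\alpha$.

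The main obstacle is precisely this last minimum-rank step. One cannot cleanly induct on the simplicity or on the rank of $\lambda$ alone, because $\log$ need not simplify a log-atomic number and the nested truncation rank is constant along the whole tower $\lambda,\exp\lambda,\exp_{2}\lambda,\dots$ by \prettyref{prop:ntrank-gamma-exp-gamma}. Taking the minimum over \emph{all} of $\li$ at once and then descending one step through $\log$ via the closure of $\li$ under nested truncation (\prettyref{prop:nested-trunc-log-atom}) is what allows the contradiction to be extracted.
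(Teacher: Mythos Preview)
Your argument is correct in both directions, but you and the paper have essentially swapped which half uses \prettyref{prop:nested-trunc-log-atom}. For the implication $\ntrank(x)=0\Rightarrow x\in\R\cup\pm\li^{\pm1}$, the paper, after reducing to $x=\pm\exp(\gamma)$ with $\ntrank(\gamma)=0$ and $\gamma\neq 0$, invokes \prettyref{prop:nested-trunc-log-atom} once to obtain $\mu\in\li$ with $\mu\ntrunceq|\gamma|$, and then $\ntrank(|\gamma|)=0$ forces $\mu=|\gamma|$; this is shorter than your iterated ``$\log$ of a rank-$0$ infinite monomial is again a rank-$0$ infinite monomial'' lemma, though your version has the minor advantage of being self-contained and not relying on \prettyref{prop:nested-trunc-log-atom}. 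For the converse, the paper simply asserts that $\ntrank(\pm\lambda^{\pm1})=0$ is easy, and indeed it follows by a direct induction on the index $n$ in $\ntrunceq_n$: since every $\log_i(\lambda)$ is a single monomial with coefficient $1$, unwinding the definition of $y\ntrunceq_n\lambda$ step by step forces $y=\lambda$. Your global minimum-rank argument via \prettyref{prop:nested-trunc-log-atom} is valid but more roundabout; note in particular that you do not actually need the strictness $\mu\ntrunc\log\lambda$, since $\ntrank(\mu)\leq\ntrank(\gamma)=\ntrank(y)<\alpha$ already gives the contradiction.
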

\begin{proof}
  It is easy to see that $\ntrank(r)=\ntrank(\pm\lambda^{\pm1})=0$ for
  all $r\in\R$ and $\lambda\in\li$.

  Conversely, suppose that $x$ satisfies $\ntrank(x)=0$. If $x\neq0$,
  let $r\exp(\gamma)$ be the leading term of $x$. Since
  $\ntrank(x)=0$, by \prettyref{prop:ntrank-term} we must have
  $x=r\exp(\gamma)$, and by \prettyref{prop:ntrank-gamma-exp-gamma} we
  have $\ntrank(\gamma)=0$.

  If $\gamma=0$, then $x=r\in\R$, and we are done. If $\gamma\neq0$,
  then $r=\pm1$ by \prettyref{prop:ntrank-monomial}. Since
  $|\gamma|>\N$, by \prettyref{prop:nested-trunc-log-atom} there is
  $\mu\in\li$ such that $\mu\ntrunceq|\gamma|$. Since
  $\ntrank(\gamma)=\ntrank(-\gamma)=0$, we must have
  $\gamma=\pm\mu$. Letting $\lambda:=\exp(\mu)$, it follows that
  $x=\pm\lambda^{\pm1}$, as desired.
\end{proof}

\begin{cor}
  \label{cor:vell-n-fold}For any $x\in\no$ such that $\vell(x)\neq0$,
  there is $n\in\N$ such that $\vell_{n}(x)\in\li$, where
  $\vell_{n}=\vell\circ\dots\circ\vell$ is the $n$-fold composition of
  $\vell$ with itself.
\end{cor}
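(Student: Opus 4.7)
My plan is to monitor the nested truncation rank $\ntrank$ along the sequence $\gamma_{n}:=\vell_{n}(x)$, show that this sequence enters $\J^{>0}$ after at most two steps and then has non-increasing rank, and conclude that once the rank stabilizes the corresponding $\gamma_{n}$ must be log-atomic.

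First I would ensure that the iteration continues indefinitely. Since $\vell(y)=\vell(-y)$ for all $y\in\no^{*}$ (negation does not affect the support), and the leading monomial of a positive purely infinite number is an infinite monomial, we have $\vell(y)\in\J^{>0}$ whenever $y\in\J^{>0}$. A case split on the sign of $\gamma_{1}=\vell(x)\in\J^{*}$ then shows that there exists $n_{0}\in\{1,2\}$ such that $\gamma_{n}\in\J^{>0}$ for all $n\geq n_{0}$; in particular no $\gamma_{n}$ is zero, so $\vell_{n}(x)$ is defined for every $n\in\N$.

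The heart of the argument is the following rank inequality: for every $y\in\no^{*}$ with $\vell(y)\neq 0$,
\[
\ntrank(\vell(y))\leq\ntrank(y),
\]
with equality if and only if $y=\pm\m$ is a signed monomial. Writing $y=r\m+\delta$ in standard form with $\m=\exp(\vell(y))$ the leading monomial, \prettyref{prop:ntrank-term} gives $\ntrank(r\m)\leq\ntrank(y)$, strict when $\delta\neq 0$; \prettyref{prop:ntrank-monomial} gives $\ntrank(r\m)>\ntrank(\m)$ when $r\neq\pm 1$; and \prettyref{prop:ntrank-gamma-exp-gamma} identifies $\ntrank(\m)=\ntrank(\vell(y))$. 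Chaining these, $\ntrank(\vell(y))<\ntrank(y)$ unless simultaneously $\delta=0$ and $r=\pm 1$, that is, $y=\pm\m$.

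Applying this inequality to our sequence, $(\ntrank(\gamma_{n}))_{n\geq n_{0}}$ is a non-increasing sequence of ordinals, so it stabilizes at some index $N\geq n_{0}$. For every $n\geq N$ the equality case of the inequality forces $\gamma_{n}$ to be a signed monomial; since $\gamma_{n}\in\J^{>0}$ it is in fact a positive monomial, namely $\gamma_{n}=\exp(\gamma_{n+1})$. Iterating, $\log_{k}(\gamma_{N})=\gamma_{N+k}$ is an infinite monomial for every $k\in\N$, which is exactly the definition of $\gamma_{N}\in\li$, so we may take $n=N$. The main subtle point is the characterization of the equality case in the rank inequality: the fact that equality can only occur for signed monomials is precisely what turns rank stabilization into the monomial-and-logarithm structure that defines log-atomicity.
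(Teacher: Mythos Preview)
Your proof is correct, and it takes a genuinely different route from the paper. The paper argues via the \emph{level} structure: it observes $\vell_{2}(x)>\N$, invokes \prettyref{cor:log-atom-simplest-level} to find $\lambda\in\li$ with $\vell_{2}(x)\lequal\lambda$, notes that then $\log_{n}(\vell_{2}(x))\sim\log_{n}(\lambda)$ for some $n$, and uses $\vell(y)\sim\log(y)$ for $y>\N$ to conclude $\vell_{n+3}(x)=\vell_{n+1}(\lambda)\in\li$. Your argument bypasses levels entirely and works directly with the nested truncation rank: you chain \prettyref{prop:ntrank-gamma-exp-gamma}, \prettyref{prop:ntrank-monomial}, and \prettyref{prop:ntrank-term} to obtain $\ntrank(\vell(y))\leq\ntrank(y)$ with equality exactly when $y=\pm\m$, and then let the ordinal sequence stabilize. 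This is cleaner in that it uses only Section~\ref{sec:nested} material (no levels, no $\lequal$), and it is essentially the same mechanism that later powers \prettyref{lem:nested-rank-path} and the proof of T4 in \prettyref{thm:t4}; the paper's proof, by contrast, shows the corollary as an immediate dividend of the level machinery just developed, and gives a slightly more explicit handle on how many iterations are needed once $\lambda$ is identified.
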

\begin{proof}
  Note first that $\vell_{2}(x)>\N$ for any $x\in\no$ such that
  $\vell(x)\neq0$.  By \prettyref{cor:log-atom-simplest-level}, there
  is $\lambda\in\li$ such that $\vell_{2}(x)\lequal\lambda$, whence
  $\log_{n}(\vell_{2}(x))\sim\log_{n}(\lambda)=\vell_{n}(\lambda)$ for
  some $n\in\N$. Since $\vell(y)\sim\log(y)$ for all $y>\N$, we get
  $\vell_{n+2}(x)\sim\vell_{n}(\lambda)$. But then
  $\vell_{n+3}(x)=\vell_{n+1}(\lambda)\in\li$, as desired.
\end{proof}

\subsection{Parametrizing the levels}

Mimicking the definition of the omega-map, there is a natural way of
defining a function $\lambda:\no\to\no^{>\N}$ whose values are the
simplest representatives for the $\lequal$-equivalence classes.

\begin{defn}
  \label{def:lambda}Let $x\in\no$ and let $x=\{x'\}\mid\{x''\}$ be its
  canonical representation. We define
  \[
  \lambda_{x}:=\left\{ k,\exp_{h}(k\log_{h}(\lambda_{x'}))\right\}
  |\left\{
    \exp_{h}\left(\frac{1}{k}\log_{h}(\lambda_{x''})\right)\right\}
  \]
  where $h,k$ range in $\N^{*}$.
\end{defn}

Note that the terms of the left-hand side are increasing in $h$ and
$k$, while those on the right-hand side are decreasing in $h$ and $k$.

\begin{prop}
  \label{prop:lambda-increasing}The function $x\mapsto\lambda_{x}$ is
  well defined, increasing, and if $x<y$, then
  $\lambda_{x}\lless\lambda_{y}$.
\end{prop}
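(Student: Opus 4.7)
My plan is to perform a transfinite induction on the simplicity of $x$ that simultaneously (a) defines $\lambda_x$, (b) verifies that the representation in \prettyref{def:lambda} is well-formed (every left option is strictly less than every right option), and (c) establishes the two local monotonicities $\lambda_{x'} \lless \lambda_x$ for $x' \simple x$ with $x' < x$, and $\lambda_x \lless \lambda_{x''}$ for $x'' \simple x$ with $x < x''$. The bound $\lambda_x > \N$ follows for free from the natural numbers appearing as left options. Once (b) is granted, (c) is immediate: from $\lambda_x > \exp_h(k \log_h(\lambda_{x'}))$ for all $h,k \in \N^*$ I deduce $\log_h(\lambda_{x'}) \vless \log_h(\lambda_x)$ for every $h$, and symmetrically for the right options.

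The substantive content is (b). The key observation is that each of the operators $u \mapsto \exp_h(k \log_h u)$ and $u \mapsto \exp_h(\frac{1}{k} \log_h u)$ preserves the level $[u]$, since $\log_h$ of the result is $\veq \log_h u$; moreover, as $h,k$ range over $\N^*$, the first family is cofinal in $[u]$ and the second is coinitial. To show a left option of $\lambda_x$ lies below a right option, it therefore suffices to establish $\lambda_{x'} \lless \lambda_{x''}$ for simpler $x' < x''$. This reduces to the inductive hypothesis: the predecessors of $x$ form a chain in the tree-like simplicity order, so either $x' \simple x''$ or $x'' \simple x'$, and in either case one of the monotonicity invariants at the simpler of the two numbers supplies the required inequality. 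Concretely, taking $M \geq \max(h,h')+1$ and using $\log_M(\exp_h(k\log_h u)) \sim \log_M(u)$, which follows by iterating $\log(ku) = \log k + \log u \sim \log u$ for $u > \N$, the desired strict inequality between a left and a right option of $\lambda_x$ follows from $\log_M(\lambda_{x'}) \vless \log_M(\lambda_{x''})$ by the monotonicity of $\exp_M$.

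Once $\lambda$ has been defined on all of $\no$, the monotonicity statement $x < y \Rightarrow \lambda_x \lless \lambda_y$ follows by case analysis on the simplicity relation. If $x \simple y$, then $x$ is one of the left options used to define $\lambda_y$, so invariant (c) at $y$ immediately gives $\lambda_x \lless \lambda_y$; the case $y \simple x$ is symmetric. If $x$ and $y$ are simplicity-incomparable, the tree-like structure of $\simpleq$ supplies a simplest common predecessor $z$ with $z \simple x$, $z \simple y$, and since $x < y$ we must have $x < z < y$. The first two cases applied to the pairs $(x,z)$ and $(z,y)$, together with the transitivity of $\lless$ (obvious from its definition), close the argument.

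I expect the main obstacle to be the bookkeeping inside invariant (b): one must simultaneously manage two independent parameters $h$ and $k$ on each side of the representation, several asymptotic relations ($\sim$, $\vless$, $\veq$), and the cofinality of parametric families inside levels. Once the identity $\log_h(\exp_h(k\log_h u)) = k\log_h u \veq \log_h u$ and the tree structure of simplicity are at hand, everything else reduces to a routine chase.
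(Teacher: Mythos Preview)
Your proposal is correct and follows essentially the same approach as the paper: a simultaneous induction on simplicity establishing well-definedness and the local monotonicities $\lambda_{x'}\lless\lambda_x\lless\lambda_{x''}$, then extension to arbitrary pairs via a common simpler predecessor. The only minor technical difference is that the paper, to compare a left option at parameter $h$ with a right option at parameter $h'$, invokes directly the remark after \prettyref{def:lambda} that the left options are increasing in $h,k$ and the right options decreasing, reducing to a comparison at the common value $\max(h,h')$; your device of passing to a large $M$ and using $\log_M(\exp_h(k\log_h u))\sim\log_M u$ achieves the same reduction and is equally valid.
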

\begin{proof}
  By abuse of notation we say ``$\lambda_{x}$ is well defined'' if
  there exists a (necessarily unique) function $z\mapsto\lambda_{z}$
  defined for all $z\simpleq x$ which satisfies the equation in
  \prettyref{def:lambda} on its domain of definition. Obviously, if
  $\lambda_{x}$ is well defined, then $\lambda_{x}>\N$ and
  $\lambda_{z}$ is well defined for all $z\simpleq x$. If
  $x=\{x'\}\mid\{x''\}$ is a canonical representation and
  $\lambda_{x}$ is well defined, then
  \[
  \left\{ k,\exp_{h}(k\log_{h}(\lambda_{x'}))\right\}
  <\lambda_{x}<\left\{
    \exp_{h}\left(\frac{1}{k}\log_{h}(\lambda_{x''})\right)\right\}
  \]
  for all $h,k\in\N^{*}$. By definition of $\lless$, it follows that
  $\lambda_{x'}\lless\lambda_{x}$ and
  $\lambda_{x}\lless\lambda_{x''}$.  Therefore, if $\lambda_{x}$ and
  $\lambda_{y}$ are both well defined and $y\simple x$, then
  \[
  x<y\iff\lambda_{x}\lless\lambda_{y},\quad
  y<x\iff\lambda_{y}\lless\lambda_{x}.
  \]
  The above equivalences then hold even without the assumption
  $y\simple x$, since we can always find some $z$ between $x$ and $y$
  with $z\simpleq x,y$.

  To prove that $\lambda_{x}$ is well defined for every $x$ we proceed
  by induction on simplicity. Consider the canonical representation
  $x=\{x'\}\mid\{x''\}$. By inductive hypothesis we can assume that
  all $\lambda_{x'}$, $\lambda_{x''}$ are well defined and therefore,
  by the above arguments, $\lambda_{x'},\lambda_{x''}>\N$ and
  $\lambda_{x'}\lless\lambda_{x''}$.  By definition of $\lless$ it
  follows that
  $k\log_{h}(\lambda_{x'})<\frac{1}{k}\log_{h}(\lambda_{x''})$ for all
  $h,k\in\N^{*}$, hence
  \[
  \exp_{h}(k\log_{h}(\lambda_{x'}))<\exp_{h}\left(\frac{1}{k}\log_{h}(\lambda_{x''})\right)
  \]
  for all $h,k\in\N^{*}$. By the comment after \prettyref{def:lambda},
  this implies that the convex class associated to the definition of
  $\lambda_{x}$ is non-empty, and therefore $\lambda_{x}$ is also well
  defined.
\end{proof}

\begin{rem}
  \label{rem:lambda-simple}It is immediate to see that $x\simpleq y$
  if and only if $\lambda_{x}\simpleq\lambda_{y}$.\
\end{rem}

\begin{cor}
  \label{cor:lambda-uniform}The definition of $\lambda_{x}$ is
  uniform.
\end{cor}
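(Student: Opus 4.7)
Given an arbitrary representation $x = A \mid B$, write $\lambda'_x$ for the surreal number defined by the right-hand side of the formula in \prettyref{def:lambda} applied with $a \in A$ and $b \in B$ in place of $x'$ and $x''$, and let $L_A, R_A$ denote the corresponding left and right sets. Similarly let $L_c, R_c$ denote the sets obtained from the canonical representation $x = \{x'\} \mid \{x''\}$, so that by definition $\lambda_x$ is the simplest element of $\convex{L_c}{R_c}$. The plan is to establish two inclusions: (a) $\lambda_x \in \convex{L_A}{R_A}$, and (b) $\convex{L_A}{R_A} \subseteq \convex{L_c}{R_c}$. Granted both, any number strictly simpler than $\lambda_x$ lying in $\convex{L_A}{R_A}$ would lie in $\convex{L_c}{R_c}$ as well, contradicting the minimality of $\lambda_x$ there; hence $\lambda'_x = \lambda_x$.

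Inclusion (a) is immediate from \prettyref{prop:lambda-increasing}: for $a \in A$ and $b \in B$, the strict inequalities $a < x < b$ give $\lambda_a \lless \lambda_x \lless \lambda_b$, and unwinding the definition of $\lless$ yields $\exp_h(k \log_h(\lambda_a)) < \lambda_x < \exp_h(\frac{1}{k} \log_h(\lambda_b))$ for all $h, k \in \N^*$; we also have $k < \lambda_x$ since $\lambda_x > \N$.

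For inclusion (b), it suffices to show that every element of $L_c$ is $\leq$ some element of $L_A$, and symmetrically on the right. The key observation is the following: if $x' \simple x$ and $x' < x$, then there exists $a \in A$ with $a \geq x'$. Indeed, were $a < x'$ to hold for every $a \in A$, then, combined with $x' < x < b$ for every $b \in B$, we would have $x' \in \convex{A}{B}$; but $x$ is the simplest element of that convex class and $x'$ is strictly simpler, a contradiction. From $a \geq x'$ and the monotonicity of $\lambda$ established in \prettyref{prop:lambda-increasing}, we obtain $\lambda_a \geq \lambda_{x'}$, and consequently $\exp_h(k \log_h(\lambda_{x'})) \leq \exp_h(k \log_h(\lambda_a))$ for all $h, k$ by monotonicity of $\log_h$, multiplication by $k$, and $\exp_h$. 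The constants $k \in \N^*$ belong to both $L_c$ and $L_A$, so they require no separate treatment. The statement for $R_A$ versus $R_c$ is entirely symmetric, using $x''$ in place of $x'$ and coinitiality in place of cofinality.

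The main obstacle I anticipate is purely bookkeeping: keeping straight the correct direction of dominance between the two sets (left-sets grow upward under refinement, right-sets shrink downward) and making sure that the weak monotonicity $u \leq v \Rightarrow \lambda_u \leq \lambda_v$ is genuinely available from \prettyref{prop:lambda-increasing} so that the boundary case $a = x'$ goes through without issue. Once these are checked, the argument is essentially the standard cofinality-style verification of uniformity for surreal inductive definitions.
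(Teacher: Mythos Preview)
Your proof is correct and follows the same route as the paper, which simply says that uniformity ``follows easily from the fact that if $x<y$, then $\lambda_{x}\lless\lambda_{y}$'' (\prettyref{prop:lambda-increasing}). You have spelled out in detail the standard two-step verification (membership of $\lambda_x$ in the new convex class, and inclusion of that class in the canonical one) that the paper leaves implicit; both arguments rest entirely on \prettyref{prop:lambda-increasing}.
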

\begin{proof}
  The uniformity follows easily from the fact that if $x<y$, then
  $\lambda_{x}\lless\lambda_{y}$.
\end{proof}

In the same way one proves that every surreal number $x$ is in the
same archimedean class of some $\omega^{y}$, we can prove that every
$x>\N$ is in the same level of some $\lambda_{y}$.

\begin{prop}
  \label{prop:lambda-level}For every $x\in\no$ with $x>\N$ there is a
  (unique) $y\in\no$ such that $x\lequal\lambda_{y}$ and
  $\lambda_{y}\simpleq x$.  In particular, $\lambda_{y}$ is the
  simplest number in its level.
\end{prop}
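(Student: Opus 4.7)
The plan is to reduce the proposition to showing that every log-atomic number lies in the image of $\lambda_{\bullet}$, and then establish that by transfinite induction on simplicity. Uniqueness is immediate: if $\lambda_{y_{1}}\lequal x\lequal\lambda_{y_{2}}$ with $y_{1}<y_{2}$, Proposition~\ref{prop:lambda-increasing} yields $\lambda_{y_{1}}\lless\lambda_{y_{2}}$, contradicting $\lambda_{y_{1}}\lequal\lambda_{y_{2}}$. For existence, Corollary~\ref{cor:log-atom-simplest-level} supplies a unique $\lambda\in\li$ with $\lambda\lequal x$ and $\lambda\simpleq x$; so it suffices to produce $y$ with $\lambda_{y}=\lambda$, and the ``in particular'' clause then follows from the same corollary.

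I would prove by induction on simplicity that every $\lambda\in\li$ is of the form $\lambda_{y}$. The base case is $\lambda=\omega$ (the only log-atomic with no strictly simpler log-atomic predecessor, since the numbers simpler than $\omega$ are the finite ordinals): Definition~\ref{def:lambda} applied to $0=\emptyset\mid\emptyset$ gives $\lambda_{0}=\{k\}_{k\in\N^{*}}\mid\emptyset=\omega$. For the inductive step, assume every $\mu\in\li$ with $\mu\simple\lambda$ equals $\lambda_{z_{\mu}}$ for some (necessarily unique) $z_{\mu}$, and set
\[
A:=\{z_{\mu}\suchthat\mu\in\li,\,\mu\simple\lambda,\,\mu<\lambda\},\quad B:=\{z_{\mu}\suchthat\mu\in\li,\,\mu\simple\lambda,\,\mu>\lambda\}.
\]
These are sets (being indexed by initial segments of $\lambda$) and satisfy $A<B$ by Proposition~\ref{prop:lambda-increasing}, so I may define $y:=A\mid B$ and compute $\lambda_{y}$ via the equation of Definition~\ref{def:lambda} with $x',x''$ ranging in $A,B$, thanks to uniformity (Corollary~\ref{cor:lambda-uniform}).

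Verifying that $\lambda$ lies in the convex class of that cut is routine: any $\mu\in\li$ with $\mu\simple\lambda$ and $\mu\ne\lambda$ cannot satisfy $\mu\lequal\lambda$, since $\lambda$ is the simplest element of $[\lambda]$, and is therefore $\lless\lambda$ or $\lgreater\lambda$ according to its sign relative to $\lambda$; the corresponding inequalities $\exp_{h}(k\log_{h}(\mu))<\lambda$ (and the symmetric ones on the right) then follow from the definition of $\lless$, together with $k<\lambda$ from $\lambda>\N$. The main obstacle is the converse: showing that no $u\simple\lambda$ lies in this convex class, which is what pins down $\lambda_{y}=\lambda$. Given a hypothetical such $u$, one has $u>\N$ (since $k<u$ for all $k\in\N^{*}$), so Corollary~\ref{cor:log-atom-simplest-level} produces $\nu\in\li$ with $\nu\simpleq u\simple\lambda$ and $\nu\lequal u$. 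The inductive hypothesis gives $\nu=\lambda_{z_{\nu}}$, and the simplest-in-level argument again forces $\nu\not\lequal\lambda$, so $z_{\nu}\in A\cup B$. The cut inequality for $u$ at $\nu$ then yields $\nu\lless u$ or $u\lless\nu$, contradicting $\nu\lequal u$. This completes the induction and delivers the required $y$.
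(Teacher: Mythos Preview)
Your argument is correct, but it proceeds quite differently from the paper's. The paper proves the statement by a direct transfinite induction on the simplicity of $x$ itself: writing the canonical representation $x=\N\cup A\mid B$, one assumes the conclusion for every $c\in A\cup B$, collects the resulting indices into sets $F,G$, and then argues by cases according to whether $F<G$ or not. This proof does not invoke \prettyref{cor:log-atom-simplest-level} or any of the nested-truncation machinery behind it; it is entirely internal to the $\lambda$-parametrization.

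Your route instead \emph{factors} the statement through \prettyref{cor:log-atom-simplest-level}: that corollary already supplies the simplest log-atomic representative $\lambda$ of the level of $x$, so you reduce the proposition to the claim $\li\subseteq\lambda_{\no}$ and establish that by induction on simplicity within $\li$. In effect you prove the substantive half of \prettyref{cor:lambda-is-L} first and obtain the proposition as a corollary. This is more modular and makes the logical dependence on \prettyref{cor:log-atom-simplest-level} explicit; the cost is that you are implicitly relying on the nested-truncation results (via \prettyref{prop:nested-trunc-log-atom} and \prettyref{thm:nested-trunc-simplifies}), whereas the paper's argument for \prettyref{prop:lambda-level} stands on its own.
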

\begin{proof}
  Since $x$ is positive infinite, its canonical representation is of
  the form $x=\N\cup A\mid B$, where $A$ is greater than $\N$. By
  induction on simplicity, we can assume that every element
  $c\in A\cup B$ is in the same level of some $\lambda_{z}\simpleq
  c$.
  Define $F=\{z\suchthat(\exists a\in A)(a\lequal\lambda_{z})\}$ and
  $G=\{z\suchthat(\exists b\in B)(b\lequal\lambda_{z})\}.$ Note that
  $F$ and $G$ are sets. We distinguish a few cases.

  If $F\not<G,$ there are $z\in F,w\in G$ with $z\geq w$, whence
  $\lambda_{z}\geq\lambda_{w}$. Let $a\in A$ and $b\in B$ be such that
  $a\lequal\lambda_{z}\simpleq a$ and $b\lequal\lambda_{w}\simpleq b$.
  Since $a<x<b$ and the levels are convex, we immediately get that
  $\lambda_{z}=\lambda_{w}$ (so $z=w$) and
  $x\lequal a\lequal\lambda_{z}$.  In particular,
  $x\lequal\lambda_{z}\simpleq a\simple x$, and we are done.

  Suppose now that $F<G$. If $x\lequal\lambda_{y}$ for some
  $y\in F\cup G$, we are done, so assume otherwise. We must have
  $[\lambda_{y'}]<x<[\lambda_{y''}]$ for every $y'\in F,\,y''\in
  G$.
  By inductive hypothesis, we have
  $A\subset\bigcup_{y'\in F}[\lambda_{y'}]$ and
  $B\subset\bigcup_{y''\in G}[\lambda_{y''}]$.  Therefore, if we let
  $y:=F\mid G$, we have
  $x=\N\cup A\mid B=\N\cup\bigcup_{y'\in
    F}[\lambda_{y'}]\mid\bigcup_{y''\in
    G}[\lambda_{y''}]=\lambda_{y}$, and we are done.
\end{proof}

\begin{cor}
  \label{cor:lambda-is-L}We have
  $\li=\lambda_{\no}=\left\{ \lambda_{x}\suchthat x\in\no\right\}
  $.
\end{cor}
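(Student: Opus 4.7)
The plan is to deduce the corollary by combining \prettyref{prop:lambda-level} with \prettyref{cor:log-atom-simplest-level}, both of which identify their respective families as simplest representatives of levels. The key underlying fact is that each level is a convex subclass of $\no^{>\N}$ (it is a union of adjacent archimedean positive parts, totally ordered by $\lleq$), hence contains a \emph{unique} simplest element.

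First I would prove the inclusion $\lambda_\no \subseteq \li$. Fix $y \in \no$. By \prettyref{prop:lambda-level}, $\lambda_y$ is the simplest element of its level $[\lambda_y]$. On the other hand, by \prettyref{cor:log-atom-simplest-level} the class $\li$ is a complete set of representatives for $\lequal$, so there is a log-atomic $\mu \in [\lambda_y]$ which is also simplest in $[\lambda_y]$. Uniqueness of the simplest element of a non-empty convex class forces $\lambda_y = \mu \in \li$.

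For the reverse inclusion $\li \subseteq \lambda_\no$, I would start from $\lambda \in \li$ and apply \prettyref{prop:lambda-level} to $x = \lambda$ (which is allowed since $\li \subset \no^{>\N}$). This produces $y \in \no$ such that $\lambda \lequal \lambda_y$ with $\lambda_y \simpleq \lambda$. Since $\lambda$ is the simplest element of its level by \prettyref{cor:log-atom-simplest-level}, and $\lambda_y$ lies in the same level and is $\simpleq \lambda$, simplicity comparison again gives $\lambda = \lambda_y \in \lambda_\no$.

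There is no real obstacle here: the entire content has been packaged into the two preceding results, and the corollary is essentially a bookkeeping remark about two parametrizations of the same class. The one point worth verifying explicitly, if desired, is the convexity of levels used to guarantee uniqueness of the simplest element; this follows from the definition of $\lequal$ by applying $\log_h$ for suitable $h \in \N$ and observing that an archimedean class has a simplest positive element.
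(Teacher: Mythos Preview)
Your proposal is correct and follows essentially the same approach as the paper: both arguments combine \prettyref{cor:log-atom-simplest-level} and \prettyref{prop:lambda-level} to identify $\li$ and $\lambda_{\no}$ as the class of simplest representatives of the levels, then use uniqueness of the simplest element in a convex class. The paper compresses this into a single sentence, while you spell out the two inclusions; the only minor point is that when you assert ``by \prettyref{prop:lambda-level}, $\lambda_y$ is simplest in its level'' for an \emph{arbitrary} $y$, you are implicitly applying that proposition with $x=\lambda_y$ and using uniqueness (or \prettyref{prop:lambda-increasing}) to see that the resulting index is $y$ itself, but this is immediate.
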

\begin{proof}
  By \prettyref{cor:log-atom-simplest-level} and
  \prettyref{prop:lambda-level}, both classes $\li$ and
  $\lambda_{\no}$ are exactly the class of the simplest numbers in
  each level, and therefore they are equal.
\end{proof}

\begin{rem}
  \label{rem:lambda-n-exp-n-omega}It is easy to verify that
  $\lambda_{0}=\omega$ and $\lambda_{1}=\exp(\omega)$.

  Indeed, by definition we have
  $\lambda_{0}=\left\{ k\right\} \mid\emptyset$ for $k$ ranging in
  $\N$, hence $\lambda_{0}=\omega$. It follows that
  $\lambda_{1}=\left\{ k,\exp_{h}(k\log_{h}(\omega))\right\}
  \mid\emptyset$
  for $h,k$ ranging in $\N$. We observe that $\exp(\omega)$ is
  log-atomic and $\exp(\omega)\lgreater\omega$, hence
  $\exp(\omega)>\exp_{h}(k\log_{h}(\omega))$ for all $h,k\in\N$. It
  follows that $\lambda_{1}\simpleq\exp(\omega)$.  On the other hand,
  by \prettyref{thm:characterization-exp} we have
  $\exp(\omega)=\left\{ \omega^{k}\right\} \mid\emptyset$ for $k$
  ranging in $\N$. Since $\lambda_{1}>\omega^{k}=\exp(k\log(\omega))$
  for all $k\in\N$, we get $\exp(\omega)\simpleq\lambda_{1}$, hence
  $\lambda_{1}=\exp(\omega)$, as desired.

  With a similar argument, one can verify that
  $\lambda_{n}=\exp_{n}(\omega)$ and $\lambda_{-n}=\log_{n}(\omega)$
  for all $n\in\N$. It follows, for instance, that there exist several
  log-atomic numbers between $\omega$ and $\exp(\omega)$, such as
  $\lambda_{\frac{1}{2}}$, and in particular several levels between
  $[\omega]$ and $[\exp(\omega)]$.
\end{rem}

\subsection{$\kappa$-numbers}

We recall here the notion of $\kappa$-numbers defined in \cite[Def.\
3.1]{Kuhlmann2014}.  They can be defined using again an appropriate
notion of magnitude.

\begin{defn}
  \label{def:kappa-equiv}Given two elements $x,y\in\no$ with $x,y>\N$,
  we write
  \begin{enumerate}
  \item $x\kleq y$ if $x\leq\exp_{h}(y)$ for some $h\in\N$;
  \item $x\kless y$ if $x<\log_{h}(y)$ for all $h\in\N$;
  \item $x\kequal y$ if $\log_{h}(y)\leq x\leq\exp_{h}(y)$ for some
    $h\in\N$.
  \end{enumerate}
\end{defn}

It is easy to verify that the relation $\kequal$ is an equivalence
relation, and that $\kleq$ induces a total order on its equivalence
classes. The following proposition is also easy, and its proof is left
to the reader.

\begin{prop}
  \label{prop:level-implies-kappa}For all $x,y\in\no$ with $x,y>\N$,
  $x\lequal y$ implies $x\kequal y$.
\end{prop}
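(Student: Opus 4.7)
The plan is to unpack both equivalences directly and show that $\lequal$ provides strictly tighter control than $\kequal$, so the implication is essentially immediate from elementary growth estimates on $\exp$ and $\log$.

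First I would note that since $x,y>\N$, every iterated logarithm $\log_h(x),\log_h(y)$ is again $>\N$: indeed, from $y>\N$ we have $y>e^n$ for all $n\in\N$ (as $e^n\in\R$), so $\log(y)>n$ for every $n$, and the same applies inductively. Hence throughout the argument all quantities of interest are positive infinite, and in particular all inequalities hold without worrying about absolute values or signs.

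Next I would unpack the hypothesis $x\lequal y$ via the equivalent formulation in \prettyref{def:levels}: there exist $h\in\N$ and $k\in\N^*$ with
\[
\tfrac{1}{k}\log_h(y)\,\leq\,\log_h(x)\,\leq\,k\log_h(y).
\]
Applying $\exp_h$ to all three sides (which is monotone), I obtain
\[
\exp_h\!\bigl(\tfrac{1}{k}\log_h(y)\bigr)\,\leq\,x\,\leq\,\exp_h\!\bigl(k\log_h(y)\bigr).
\]
Now I want to dominate the right-hand side by $\exp_1(y)=\exp(y)$ and the left-hand side by $\log_1(y)=\log(y)$.

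For the upper bound, the key elementary estimate is that for any $t>\N$ and any fixed $k\in\N^*$, $kt<\exp(t)$ (indeed $t-\log t>\log k$ for $t$ larger than some natural number, hence for any $t>\N$). Taking $t=\log_h(y)$, this gives $k\log_h(y)<\exp(\log_h(y))=\log_{h-1}(y)$ when $h\geq 1$, and the case $h=0$ reduces to $ky<\exp(y)$ directly. Applying $\exp_h$ telescopes to $\exp_h(k\log_h(y))\leq\exp(y)=\exp_1(y)$. For the lower bound, the dual estimate $\log t<t/k$ for $t>\N$, applied with $t=\log_h(y)$, yields $\tfrac{1}{k}\log_h(y)>\log(\log_h(y))=\log_{h+1}(y)$, whence $\exp_h(\tfrac{1}{k}\log_h(y))\geq\exp_h(\log_{h+1}(y))=\log(y)=\log_1(y)$ (the case $h=0$ being immediate). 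Combining, $\log_1(y)\leq x\leq\exp_1(y)$, which is exactly $x\kequal y$ with witness $h=1$ in \prettyref{def:kappa-equiv}.

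There is no real obstacle here; the only thing to be careful about is justifying the elementary inequalities $kt<\exp(t)$ and $\log(t)<t/k$ for $t>\N$, which follow because $\R_{\exp}\preceq(\no,+,\cdot,\exp)$ (as recalled before \prettyref{fact:exp}) and these inequalities hold for all sufficiently large real $t$. The proof is short and requires no induction beyond unpacking definitions.
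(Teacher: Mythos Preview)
Your argument is correct. The paper itself does not prove this proposition; it merely says ``The following proposition is also easy, and its proof is left to the reader,'' so there is nothing to compare against beyond confirming that your verification is the natural one. Your reduction to the single estimate $kt<\exp(t)$ (equivalently $\log t<t/k$) for $t>\N$ is exactly right, and the simplification $\exp_h(\log_{h-1}(y))=\exp(y)$ and $\exp_h(\log_{h+1}(y))=\log(y)$ gives the conclusion cleanly. One small remark: the step you describe as ``telescopes'' is really just a single application of monotonicity of $\exp_h$ followed by the identity $\exp_h\circ\log_{h-1}=\exp$; no iteration is needed. Also, instead of invoking elementary equivalence with $\R_{\exp}$, you could cite \prettyref{rem:exp-grows} directly, which already records that $\exp(t)\vgreater t^n$ for $t>\N$ and hence $\exp(t)>kt$.
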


The $\kappa$-numbers are a natural class of representatives for the
$\kequal$-classes.

\begin{defn}[{\cite[Def.\ 3.1]{Kuhlmann2014}}]
  \label{def:kappa}Let $x\in\no$ and let $x=\{x'\}\mid\{x''\}$ be its
  canonical representation. We define
  \[
  \kappa_{x}:=\left\{ \exp_{n}(0),\exp_{n}(\kappa_{x'})\right\}
  \mid\left\{ \log_{n}(\kappa_{x''})\right\} ,
  \]
  where $n$ runs in $\N$. We call $\K$ the class of the numbers of the
  form $\kappa_{x}$.
\end{defn}

\begin{rem}
  It can be easily verified that $\kappa_{0}=\omega$ and
  $\kappa_{1}=\varepsilon_{0}$, where $\varepsilon_{0}$ is the least
  ordinal such that $\omega^{\varepsilon_{0}}=\varepsilon_{0}$; see
  \cite[Ex.\ 3.3]{Kuhlmann2014}.
\end{rem}

One can verify that this definition is uniform, and again that
$x\simpleq y$ if and only if $\kappa_{x}\simpleq\kappa_{y}$. One can
also see that for every $x>\N$ there exists a $\kappa_{y}\simpleq x$
such that $\kappa_{y}\kequal x$. In particular, $\kappa_{y}$ is the
simplest number in its $\kequal$-equivalence class. Moreover, $x<y$
implies $\kappa_{x}\kless\kappa_{y}$. We refer to \cite{Kuhlmann2014}
for more details.

It is proved in \cite{Kuhlmann2014} that $\log_{n}(\kappa_{x})$ is
always of the form $\omega^{\omega^{y}}$, and in particular belongs to
$\M$, showing that the $\kappa$-numbers are log-atomic. We rephrase
their statement as follows, and we give an extremely short proof
exploiting the relationship between $\lequal$ and $\kequal$.

\begin{thm}[{\cite[Thm.\ 4.3]{Kuhlmann2014}}]
  $\K\subseteq\li$.
\end{thm}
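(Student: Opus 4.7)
The plan is to use the simplicity characterizations of both $\li$ and $\K$ together with the implication $\lequal \Rightarrow \kequal$ recorded in \prettyref{prop:level-implies-kappa}. The whole argument fits in one round of simplicity comparison, as the author promises.

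Concretely, fix $\kappa_{x}\in\K$. Since $\kappa_{x}>\N$, \prettyref{cor:lambda-is-L} (equivalently, \prettyref{cor:log-atom-simplest-level} combined with \prettyref{prop:lambda-level}) furnishes a unique $\lambda\in\li$ such that $\lambda\lequal\kappa_{x}$ and moreover $\lambda$ is the simplest representative of its level, so in particular $\lambda\simpleq\kappa_{x}$. By \prettyref{prop:level-implies-kappa}, the equivalence $\lambda\lequal\kappa_{x}$ upgrades to $\lambda\kequal\kappa_{x}$, i.e.\ $\lambda$ sits in the same $\kequal$-class as $\kappa_{x}$.

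Now I invoke the symmetric fact on the $\K$-side: $\kappa_{x}$ is the simplest element of its $\kequal$-class (this was recalled just before the theorem statement, following \prettyref{def:kappa}). Hence $\kappa_{x}\simpleq\lambda$. Combined with the opposite inequality $\lambda\simpleq\kappa_{x}$ obtained above, and using that $\simpleq$ is a partial order, we conclude $\kappa_{x}=\lambda\in\li$, as desired.

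There is no real obstacle: the entire proof is the observation that ``simplest in its level'' and ``simplest in its $\kequal$-class'' must coincide whenever they both apply, which is forced by \prettyref{prop:level-implies-kappa}. All the work has already been done in establishing that proposition and in the parametrization $\li=\lambda_{\no}$.
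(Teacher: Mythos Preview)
Your proof is correct and follows essentially the same approach as the paper: both exploit that $\kappa_{x}$ is simplest in its $\kequal$-class together with \prettyref{prop:level-implies-kappa} and \prettyref{cor:log-atom-simplest-level}. The paper compresses your two-sided simplicity comparison into the single remark that, since each level is contained in a $\kequal$-class, the simplest element of the $\kequal$-class is automatically simplest in its level and hence lies in $\li$.
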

\begin{proof}
  Since $\kappa_{x}$ is the simplest number in its
  $\kequal$-equivalence class, by \prettyref{prop:level-implies-kappa}
  it must also be the simplest number in its level. Therefore, by
  \prettyref{cor:log-atom-simplest-level}, $\kappa_{x}=\lambda$ for
  some $\lambda\in\li$, as desired.
\end{proof}

It was conjectured in \cite[Conj.\ 5.2]{Kuhlmann2014} that $\K$
generates all the log-atomic numbers by iterated applications of
$\exp$ and $\log$. However, we can exhibit numbers in $\li$ that are
not of this form.

\begin{prop}
  \label{prop:kappa-not-gen-L}There are numbers in $\li$ that cannot
  be obtained from numbers in $\K$ by finitely many applications of
  $\exp$ and $\log$.
\end{prop}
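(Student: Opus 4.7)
The plan is to produce an explicit log-atomic number whose level lies strictly between two consecutive integer iterates of $\exp$ and $\log$ at a $\kappa$-number, and then to exploit the fact that $\kequal$ is strictly coarser than $\lequal$. A natural candidate is $\lambda_{1/2}$, which belongs to $\li$ by \prettyref{cor:lambda-is-L}. By \prettyref{prop:lambda-increasing} combined with \prettyref{rem:lambda-n-exp-n-omega} we have
\[
  \omega=\lambda_0\lless\lambda_{1/2}\lless\lambda_1=\exp(\omega),
\]
and taking $h=k=1$ in \prettyref{def:levels}(2) yields the ordinary inequalities $\omega<\lambda_{1/2}<\exp(\omega)$. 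Hence $\log(\omega)\leq\lambda_{1/2}\leq\exp(\omega)$, which by \prettyref{def:kappa-equiv} means $\lambda_{1/2}\kequal\omega=\kappa_0$.

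Next I would observe that since $\exp$ and $\log$ are mutual inverses on $\no^{>0}$, every finite composition of $\exp$ and $\log$ applied to some $\kappa_x\in\K$ has the form $\exp_n(\kappa_x)$ or $\log_n(\kappa_x)$ for a single $n\in\N$; and by \prettyref{def:kappa-equiv} any such value is $\kequal$-equivalent to $\kappa_x$. So if $\lambda_{1/2}$ were reachable from $\K$ by iterating $\exp$ and $\log$, we would have $\lambda_{1/2}\in\{\exp_n(\kappa_x),\,\log_n(\kappa_x)\}$ for some $n\in\N$ and $\kappa_x\in\K$, forcing $\kappa_x\kequal\lambda_{1/2}\kequal\kappa_0$.

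As recorded in the paper immediately after \prettyref{def:kappa}, each $\kequal$-class contains a unique $\kappa$-number, namely its simplest element; hence $\kappa_x=\kappa_0=\omega$ and $\lambda_{1/2}\in\{\exp_n(\omega),\,\log_n(\omega)\}$. But \prettyref{rem:lambda-n-exp-n-omega} identifies $\exp_n(\omega)$ with $\lambda_n$ and $\log_n(\omega)$ with $\lambda_{-n}$, while $x\mapsto\lambda_x$ is strictly $\lless$-increasing by \prettyref{prop:lambda-increasing}, in particular injective. This forces $1/2=\pm n$ for some $n\in\N$, a contradiction. No real obstacle arises: the argument rests on \prettyref{prop:level-implies-kappa}, which shows that $\kequal$ is strictly coarser than $\lequal$ and therefore allows the $\kequal$-class of $\omega$ to contain many distinct levels, among which $\lambda_{1/2}$ is one not reachable from $\omega$ by iterating $\exp$ and $\log$.
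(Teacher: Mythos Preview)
Your proof is correct and uses the same witness $\lambda_{1/2}$ as the paper, resting on the same key observation that $\omega<\lambda_{1/2}<\exp(\omega)$. The only difference is in packaging: the paper argues directly that no $\exp_n(\kappa)$ or $\log_n(\kappa)$ with $\kappa\in\K$ can lie in the open interval $(\omega,\exp(\omega))$ (trivially for $\kappa=\omega$, and by $\kappa'\kless\omega\kless\kappa''$ for the others), whereas you route the same fact through the $\kequal$-equivalence to first pin down $\kappa_x=\omega$ and then invoke the identification $\exp_n(\omega)=\lambda_n$, $\log_n(\omega)=\lambda_{-n}$ together with injectivity of $x\mapsto\lambda_x$. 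Both arguments are short and essentially equivalent; yours makes the role of the $\kequal$-classes a bit more explicit, while the paper's is slightly more self-contained since it does not need the full identification of \prettyref{rem:lambda-n-exp-n-omega}.
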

\begin{proof}
  As seen in \prettyref{rem:lambda-n-exp-n-omega}, there are
  log-atomic numbers between $\omega$ and $\exp(\omega)$, such as
  $\lambda_{\frac{1}{2}}$.  On the other hand, it is easy to verify
  that no number of the form $\log_{n}(\kappa)$ or $\exp_{n}(\kappa)$,
  with $n\in\N$ and $\kappa\in\K$, lies between $\omega$ and
  $\exp(\omega)$. Indeed, this is trivial if $\kappa=\omega$, while if
  $\kappa'<\omega<\kappa''$ we have
  $\kappa'\kless\omega\kless\kappa''$, and in particular
  $\exp_{n}(\kappa')<\omega<\exp(\omega)<\log_{n}(\kappa'')$ for all
  $n\in\N$. Therefore, $\lambda_{\frac{1}{2}}\ne\exp_{n}(\kappa)$ and
  $\lambda_{\frac{1}{2}}\ne\log_{n}(\kappa)$ for all $n\in\N$ and
  $\kappa\in\K$, as desired.
\end{proof}

For our construction, the $\kappa$-numbers that matter are actually
the ones of the form $\ka{\alpha}$ for $\alpha\in\on$.

\begin{rem}
  \label{rem:kappa-alpha-rep}If $\alpha\in\on$, then
  \[
  \ka{\alpha}=\N\mid\left\{
    \log_{n}(\ka{\beta})\,:\,n\in\N,\,\beta<\alpha\right\} ,
  \]
  namely $\kappa_{-\alpha}$ is the simplest positive infinite number
  less than $\log_{n}(\kappa_{-\beta})$ for all $n\in\N$ and
  $\beta<\alpha$.  Moreover, if $\beta<\alpha$, then
  $\kappa_{-\beta}\simple\kappa_{-\alpha}$ and of course
  $\kappa_{-\alpha}<\kappa_{-\beta}$.
\end{rem}

\begin{prop}
  \label{prop:kappa-coinitial}The sequence
  $(\kappa_{-\alpha}\suchthat\alpha\in\on)$ is decreasing and
  coinitial in the positive infinite numbers, namely every positive
  infinite number is greater than some $\kappa_{-\alpha}$.  In
  particular, $\li$ is coinitial in the positive infinite
  numbers.
\end{prop}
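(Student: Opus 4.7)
The plan is to dispatch the monotonicity statement directly from \prettyref{rem:kappa-alpha-rep}, and then to obtain coinitiality by a proper-class-vs-set contradiction exploiting the simplicity clause in the representation of $\ka{\alpha}$.

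Monotonicity is essentially a restatement of \prettyref{rem:kappa-alpha-rep}: if $\beta < \alpha$, then $\ka{\beta} \simple \ka{\alpha}$ and $\ka{\alpha} < \ka{\beta}$, so in particular the map $\alpha \mapsto \ka{\alpha}$ is strictly decreasing. For coinitiality I would argue by contradiction: suppose $x > \N$ is a positive infinite surreal number such that $x \leq \ka{\alpha}$ for every $\alpha \in \on$. The key step is then to show that this forces $\ka{\alpha} \simpleq x$ for every $\alpha$. Indeed, by \prettyref{rem:kappa-alpha-rep}, $\ka{\alpha}$ is the simplest element of the convex class
\[
C_\alpha := \{y \in \no \suchthat \N < y < \log_n(\ka{\beta}) \text{ for all } n \in \N,\ \beta < \alpha\}.
\]
The assumption $x \leq \ka{\alpha}$, together with $\N < x$ and the defining inequalities $\ka{\alpha} < \log_n(\ka{\beta})$, places $x$ inside $C_\alpha$, and then \prettyref{rem:simple-test} yields $\ka{\alpha} \simpleq x$.

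The contradiction is then immediate: the predecessors of $x$ under $\simpleq$ form a set (of cardinality at most $\dom(x)+1$, since they correspond to initial segments of the binary sequence $x$), whereas $\{\ka{\alpha} \suchthat \alpha \in \on\}$ is a proper class of pairwise distinct surreal numbers by the monotonicity established above. Hence some $\ka{\alpha}$ must satisfy $\ka{\alpha} < x$, as required. The final statement then follows immediately, since each $\ka{\alpha}$ belongs to $\K \subseteq \li$ by the preceding theorem, so the coinitiality of $\{\ka{\alpha} \suchthat \alpha \in \on\}$ in the positive infinite numbers propagates to $\li$.

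There is no serious obstacle here: the only slightly subtle ingredient is the simplicity trick in the middle paragraph, which is a standard technique once one notices that the hypothesis $x \leq \ka{\alpha}$ automatically drags $x$ into the convex class $C_\alpha$ defining $\ka{\alpha}$.
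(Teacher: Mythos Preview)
Your proof is correct but takes a genuinely different route from the paper. The paper argues directly: given $x>\N$, it invokes the fact (stated just after \prettyref{def:kappa}) that $x\kequal\kappa_{y}$ for some $y\in\no$, then picks any ordinal $\alpha$ with $-\alpha<y$, so that $\ka{\alpha}\kless\kappa_{y}\kequal x$ and hence $\ka{\alpha}<x$. Your argument instead avoids the $\kequal$-classification entirely and proceeds via a proper-class-versus-set contradiction using simplicity and \prettyref{rem:simple-test}. The paper's approach is shorter and exploits structure already set up (the $\kappa$-numbers as $\kequal$-representatives, and the order-preserving property $x<y\Rightarrow\kappa_{x}\kless\kappa_{y}$), though these facts are only stated in the paper with a reference to \cite{Kuhlmann2014}. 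Your approach is more self-contained, needing only the explicit representation in \prettyref{rem:kappa-alpha-rep} and the elementary observation that the $\simpleq$-predecessors of any fixed surreal number form a set.
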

\begin{proof}
  Let $x>\N$. We know that $x\kequal\kappa_{y}$ for some $y$. It now
  suffices to recall that there exists an $\alpha\in\on$ such that
  $-\alpha<y$, and therefore $\ka{\alpha}\kless\kappa_{y}\kequal x$.
  In particular, $\ka{\alpha}<x$, as desired.
\end{proof}

\section{\label{sec:derivations}Surreal derivations}

\subsection{Derivations}

We begin with our definition of surreal derivation. It is the
specialization to surreal numbers of other notions which have been
defined by several authors in the context of $H$-fields or
transseries.

\begin{defn}
  \label{def:surreal-D}A \textbf{surreal derivation} is a function
  $\somed:\no\to\no$ satisfying the following properties:
  \begin{enumerate}
  \item Leibniz rule: $\somed(xy)=x\somed(y)+y\somed(x)$;
  \item strong additivity:
    $\somed\left(\sum_{i\in I}x_{i}\right)=\sum_{i\in I}\somed(x_{i})$
    if $(x_{i}\suchthat i\in I)$ is summable;
  \item compatibility with exponentiation:
    $\somed(\exp(x))=\exp(x)\somed(x)$;
  \item constant field $\R$: $\ker(\somed)=\R$;
  \item $H$-field: if $x>\N$, then $\somed(x)>0$.
  \end{enumerate}
\end{defn}

Conditions (4) and (5), together with the fact that $\somed$ is a
derivation, make the pair $(\no,\somed)$ into an $H$-field, the
abstract counterpart of the notion of Hardy field. In the definition
of $H$-field \cite{Aschenbrenner2002} one also requires that if
$|x|\leq c$ for some $c\in\ker(D)$, then there is some $d\in\ker(D)$
such that $|x-d|<c$ for every positive $c\in\ker(D)$; however, this is
always true when $\ker(D)=\R$.

\begin{rem}
  \label{rem:D-exp-finite}When $x$ is infinitesimal, point (3) follows
  from (1) and (2). Indeed, if $x$ is infinitesimal we have
  \[
  \somed(\exp(x))=\somed\left(1+x+\frac{x^{2}}{2!}+\dots\right).
  \]
  By strong additivity and the Leibniz rule we get
  \[
  \somed(\exp(x))=\somed(x)+x\somed(x)+\frac{x^{2}}{2!}\somed(x)+\ldots=\exp(x)\somed(x).
  \]
\end{rem}

\begin{rem}
  \label{rem:D-incr-on-inf}By points (2) and (5), if $x,y>\N$ and
  $x\vgreater y$, then $\somed(x)>\somed(y)>0$.
\end{rem}

Before embarking on the construction of a surreal derivation, we
recall a few properties that can be easily derived from the above
axioms.  We remark that these properties hold in any Hardy field
closed under the functions $\exp$ and $\log$.

\begin{prop}
  \label{prop:D-vell-incr}Let $\somed$ be a surreal derivation and let
  $x,y\in\no$. We have:
  \begin{enumerate}
  \item if $1\not\veq x\vgreater y$, then
    $\somed(x)\vgreater\somed(y)$;
  \item if $1\not\veq x\sim y$, then $\somed(x)\sim\somed(y)$;
  \item if $1\not\veq x\veq y$, then $\somed(x)\veq\somed(y)$.
  \end{enumerate}
\end{prop}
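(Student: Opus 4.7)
The plan is to establish (1) directly from axioms (1)--(5), and then obtain (2) and (3) as quick consequences.

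\textbf{Proof of (1).} Since $\somed$ is $\R$-linear, I would replace $x$ by $-x$ if necessary and assume $x>0$. The hypothesis $1\not\veq x$ together with $x\vgreater y$ (which forces $x\neq 0$) means either $x>\N$ or $x$ is positive infinitesimal. For each $k\in\N^{>0}$ I would consider $u_{k}^{\pm}:=x\pm ky$; since $x\vgreater y$ implies $ky\vless x$, we have $u_{k}^{\pm}\sim x$, so $u_{k}^{\pm}$ lies in the same archimedean class as $x$ with the same sign. If $x>\N$, then $u_{k}^{\pm}>\N$, so axiom (5) gives $\somed(u_{k}^{\pm})>0$, i.e.\ $\somed(x)>k\somed(y)$ and $\somed(x)>-k\somed(y)$; hence $\somed(x)>k|\somed(y)|$ and in particular $\somed(x)>0$. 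If instead $x$ is positive infinitesimal, then each $u_{k}^{\pm}$ is positive infinitesimal and $1/u_{k}^{\pm}>\N$, so axiom (5) applied to $1/u_{k}^{\pm}$, together with the Leibniz identity $\somed(1/u_{k}^{\pm})=-\somed(u_{k}^{\pm})/(u_{k}^{\pm})^{2}$, yields $\somed(u_{k}^{\pm})<0$. This gives $\somed(x)<k\somed(y)$ and $\somed(x)<-k\somed(y)$, hence $-\somed(x)>k|\somed(y)|$. In either case $|\somed(x)|>k|\somed(y)|$ for every $k\in\N^{>0}$; combined with $\somed(x)\neq 0$ (which follows from axiom (4), since $x\neq 0$ and $x\not\veq 1$ force $x\notin\R$), this is precisely $\somed(x)\vgreater\somed(y)$.

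\textbf{Proofs of (2) and (3).} For (2), if $x=y$ the statement is trivial; otherwise $x-y\neq 0$ and $x-y\vless x$, so $x\vgreater x-y$, and part (1) applied to the pair $(x,x-y)$ gives $\somed(x)\vgreater\somed(x)-\somed(y)$, i.e.\ $\somed(x)-\somed(y)\vless\somed(x)$, which is $\somed(x)\sim\somed(y)$. For (3), if $y=0$ then $x=0$ by $x\veq y$ and the claim is trivial; otherwise $x/y$ is finite and nonzero, so I would write $x/y=r+\eps$ with $r\in\R^{*}$ and $\eps\vless 1$. Then $x-ry=\eps y\vless y\veq x$, so $x\sim ry$, and since $x\not\veq 1$ part (2) gives $\somed(x)\sim\somed(ry)=r\somed(y)$, whence $\somed(x)\veq\somed(y)$ because $r\in\R^{*}$.

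The main obstacle I anticipate is the infinitesimal case of (1): axiom (5) only controls the sign of $\somed$ on positive infinite numbers, so one needs the inversion trick $u\mapsto 1/u$ to push the infinitesimal problem into a regime where (5) applies. Once that step is in place, everything else reduces to straightforward manipulation of the Leibniz rule and the five axioms.
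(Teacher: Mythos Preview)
Your proof is correct and follows essentially the same approach as the paper: for (1) you apply axiom (5) to $x\pm ky$ (or to its reciprocal in the infinitesimal case) exactly as the paper does with $x-ry$ for $r\in\R$, and your deductions of (2) and (3) from (1) match the paper's verbatim. The only cosmetic differences are that you use integer multiples $k$ rather than real multiples $r$ (which is immaterial, since $\vgreater$ is defined via integer bounds) and you do not bother reducing to $y>0$, which the paper does but never actually needs.
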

\begin{proof}
  Without loss of generality, we may assume that $x,y>0$.

  (1) Assume $1\not\veq x\vgreater y$. For all $r\in\R$ we have
  $x-ry\veq x$ and $x-ry>0$.

  If $x\veq x-ry\vgreater1$, then
  $\somed(x-ry)=\somed(x)-r\somed(y)>0$.  Therefore,
  $|\somed(x)|>|r|\cdot|\somed(y)|$ for all $r\in\R$, hence
  $\somed(x)\vgreater\somed(y)$.

  If $x\veq x-ry\vless1$, then $\frac{1}{x-ry}\vgreater1$, hence
  $\somed\left(\frac{1}{x-ry}\right)=-\frac{\somed(x)-r\somed(y)}{(x-ry)^{2}}>0$,
  i.e., $\somed(x)<r\somed(y)$. Therefore,
  $|\somed(x)|>|r|\cdot|\somed(y)|$ for all $r\in\R$, hence
  $\somed(x)\vgreater\somed(y)$.

  (2) Assume $1\not\veq x\sim y$. We have $x-y\vless x$. By (1), it
  follows that $\somed(x)\vgreater\somed(x-y)=\somed(x)-\somed(y)$,
  which means $\somed(x)\sim\somed(y)$, as desired.

  (3) Assume $1\not\veq x\veq y$. The conclusion is trivial if
  $x=y=0$, so assume $x\neq0$. We have $x\sim ry$ for some
  $r\in\R^{*}$. By (2) we have
  $\somed(x)\sim\somed(ry)=r\somed(y)\veq\somed(y)$, as desired.
\end{proof}

The following proposition will play a crucial role in the sequel.

\begin{prop}
  \label{prop:vell-log-log}Let $\somed$ be a surreal derivation. Given
  $x,y\in\no$, if $x,y,x-y$ are positive infinite, then
  \[
  \log\left(\somed(x)\right)-\log\left(\somed(y)\right)\vless
  x-y\vleq\max\left\{ x,y\right\} .
  \]
\end{prop}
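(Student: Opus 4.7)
The easy bound $x-y \vleq \max\{x,y\}$ is immediate: after swapping if necessary we may assume $x > y > 0$, so $0 < x-y < x$. For the main inequality $|\log \somed(x) - \log \somed(y)| \vless x-y$, again assume WLOG $x > y$; applying the $H$-field axiom to $x-y > \N$ gives $\somed(x) - \somed(y) = \somed(x-y) > 0$, so $\log(\somed(x)/\somed(y))$ is positive, and it remains to show $\log(\somed(x)/\somed(y)) < (x-y)/n$ for every $n \in \N^{*}$. If $x \veq y$, then since $x \not\veq 1$, \prettyref{prop:D-vell-incr}(3) yields $\somed(x) \veq \somed(y)$, so that $\somed(x)/\somed(y)$ is positive and finite; its logarithm is then finite, hence dominated by the positive infinite quantity $x-y$, and we are done in this case.

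The substantial case is $x \vgreater y$, so that $x-y \veq x$ and it suffices to show $\log \somed(x) - \log \somed(y) \vless x$. The key tool is the identity $\somed(z) = z \cdot \somed(\log z)$, obtained by applying the exp-compatibility axiom to $z = \exp(\log z)$. Taking logarithms gives $\log \somed(z) = \log z + \log \somed(\log z)$, and subtracting the analogous expression for $y$ and iterating produces, for every $k \in \N$,
\[
\log \somed(x) - \log \somed(y) = \sum_{i=1}^{k}(\log_{i} x - \log_{i} y) + \bigl(\log \somed(\log_{k} x) - \log \somed(\log_{k} y)\bigr).
\]
The finite sum is dominated by its first term $\log(x/y) \leq \log x \vless x$ (the latter since $\exp$ grows faster than any power by \prettyref{rem:exp-grows}), so the problem reduces to controlling the residual.

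To close the argument I distinguish two sub-cases according to whether $x$ and $y$ share a level. If $x \lequal y$, by definition there exists $k \in \N$ with $\log_{k} x \sim \log_{k} y$, and then \prettyref{prop:D-vell-incr}(2) forces the residual to be infinitesimal, yielding the desired bound. If $x \lgreater y$, no finite number of log-iterations equalises the levels; I instead use \prettyref{cor:log-atom-simplest-level} to select a log-atomic $\lambda \in \li$ with $\lambda \lequal x$ and split
\[
\log \somed(x) - \log \somed(y) = \bigl(\log \somed(x) - \log \somed(\lambda)\bigr) + \bigl(\log \somed(\lambda) - \log \somed(y)\bigr);
\]
the first parenthesis falls under the level-equivalent sub-case just treated. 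The second is tackled by the symmetric trick with $\mu \in \li$, $\mu \lequal y$, reducing ultimately to a comparison between log-atomic numbers, which is settled by induction on the nested truncation rank: elements of $\li$ have $\ntrank = 0$ by \prettyref{cor:nested-rank-zero}, while neighbouring log-atomic levels can be separated through the $\kappa$-number parametrisation whose coinitiality is \prettyref{prop:kappa-coinitial}.

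The principal obstacle is precisely the closure of the induction in the case $x \lgreater y$: a naive recursion on $(\log x, \log y)$ does not terminate because the successive iterated logarithms remain at distinct levels for every finite depth. Reaching the log-atomic base case requires the level structure developed in the preceding section, and in particular the existence of log-atomic intermediates, which is why \prettyref{cor:log-atom-simplest-level} and \prettyref{prop:kappa-coinitial} are essential to the argument.
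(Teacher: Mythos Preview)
Your argument handles the case $x\veq y$ correctly, and the telescoping identity
\[
\log\somed(x)-\log\somed(y)=\sum_{i=1}^{k}\bigl(\log_{i}x-\log_{i}y\bigr)+\bigl(\log\somed(\log_{k}x)-\log\somed(\log_{k}y)\bigr)
\]
together with \prettyref{prop:D-vell-incr}(2) does dispose of the sub-case $x\lequal y$. The genuine gap is the sub-case $x\lgreater y$. After reducing via level representatives to log-atomic $\lambda\lgreater\mu$, you still need to bound $\log\somed(\lambda)-\log\somed(\mu)$, and here your recursion makes no progress: $\log_{k}\lambda$ and $\log_{k}\mu$ remain log-atomic and remain at distinct levels for every $k$, so the residual never becomes small by \prettyref{prop:D-vell-incr}. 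The ``induction on the nested truncation rank'' you invoke is vacuous, since by \prettyref{cor:nested-rank-zero} all elements of $\li$ already have rank $0$; there is no smaller case to fall back on, and the $\kappa$-number coinitiality of \prettyref{prop:kappa-coinitial} gives no control on the values of an \emph{arbitrary} surreal derivation $\somed$. In short, the base case of your scheme is exactly the statement you are trying to prove, restricted to $\li$, and nothing in your outline establishes it.

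The paper's proof sidesteps this entirely with a one-line trick that uses only the axioms: for any real $r>0$ one has $\exp(-rx)\vless\exp(-ry)\vless1$, so \prettyref{prop:D-vell-incr}(1) gives $\somed(\exp(-rx))\vless\somed(\exp(-ry))$; compatibility with $\exp$ turns this into $\exp(-rx)\somed(x)<\exp(-ry)\somed(y)$, i.e.\ $\log\somed(x)-\log\somed(y)<r(x-y)$ for every $r\in\R^{>0}$. No level structure or induction is needed.
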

\begin{proof}
  Since $x,y,(x-y)>\N$, we have $\somed(x),\somed(y),\somed(x-y)>0$,
  or in other words $\somed(x)>\somed(y)>0$. Moreover, for every
  $r\in\R^{>0}$ we have $\exp(r(x-y))>\N$, namely
  $\ensuremath{\exp(rx)\vgreater\exp(ry)}$.  Taking the inverses we
  get $\ensuremath{\exp(-rx)\vless\exp(-ry)}$.

  Since $y\vgreater1$, we have $\exp(-ry)\vless1$, and another
  application of \prettyref{prop:D-vell-incr} yields
  $\somed(\exp(-rx))\vless\somed(\exp(-ry))$.  In particular,
  $|\somed(\exp(-rx))|<|\somed(\exp(-ry))|$; since $\somed$ is
  compatible with $\exp$, we get
  \[
  \exp(-rx)\cdot r\cdot\somed(x)<\exp(-ry)\cdot r\cdot\somed(y).
  \]

  Taking the logarithms on both sides and rearranging the summands, we
  get
  \[
  \log\left(\somed(x)\right)-\log\left(\somed(y)\right)<r(x-y).
  \]

  Since this holds for an arbitrary $r\in\R^{>0}$, and
  $\log\left(\somed(x)\right)>\log\left(\somed(y)\right)$, we obtain
  \[
  \log\left(\somed(x)\right)-\log\left(\somed(y)\right)\vless
  x-y\vleq\max\left\{ x,y\right\} ,
  \]
  as desired.
\end{proof}

\subsection{Derivatives of log-atomic numbers}

As anticipated in the introduction, we shall construct a surreal
derivation by first giving its values on the class $\li$. Clearly, if
we want to take a function $\somed:\li\to\no$ and extend it to a
surreal derivation, it must at least satisfy the inequality of
\prettyref{prop:vell-log-log}, and also $\somed(\lambda)>0$ for all
$\lambda\in\li$.

With some heuristics, it is not difficult to find a map
$\simpled_{\li}':\li\to\no^{>0}$ satisfying the inequalities of
\prettyref{prop:vell-log-log} and compatible with $\exp$. If
$\lambda\in\li$, we note that we must have
$\simpled_{\li}'(\lambda) = \lambda\cdot\simpled_{\li}'(\log\lambda)$.
Iterating, we obtain
$\simpled_{\li}'(\lambda) =
\lambda\cdot\log(\lambda)\cdot\log_{2}(\lambda)\cdot\ldots\cdot\log_{i-1}(\lambda)\cdot\simpled_{\li}'(\log_{i}(\lambda))$,
or equivalently
$\simpled_{\li}'(\lambda) =
\exp(\log(\lambda)+\log_{2}(\lambda)+\log_{3}(\lambda)+\ldots+\log_{i}(\lambda))\cdot\simpled_{\li}'(\log_{i}(\lambda))$.
This suggests the following definition.

\begin{defn}
  \label{def:natural-nonsimple-pre-D}If $\lambda\in\li$, we let
  \[
  \simpled_{\li}'(\lambda):=\exp\left(\sum_{i=1}^{\infty}\log_{i}(\lambda)\right).
  \]
\end{defn}

We remark that $(\log_{i}(\lambda))_{i=1}^{\infty}$ is a strictly
decreasing sequence of monomials, hence it is summable. It is an easy
exercise to check that $\simpled_{\li}'$ does satisfy the inequalities
of \prettyref{prop:vell-log-log}. It can be further shown that
$\simpled_{\li}'$ extends to a surreal derivation
$\simpled':\no\to\no$ (using \prettyref{thm:extending-pre-D}).
However, this derivation is not the ``simplest'' possible one with
respect to the simplicity relation, and moreover, its behavior is not
really nice; for instance, there is no $x\in\no$ such that
$\simpled'(x)=1$.

The simplest function $\simpled_{\li}:\li\to\no^{>0}$ satisfying the
inequalities of \prettyref{prop:vell-log-log} is given by a similar
but different formula involving the subclass of the $\kappa$-numbers
(those with indexes of the form $-\alpha$ where $\alpha$ is an
ordinal, see \prettyref{def:kappa} and
\prettyref{rem:kappa-alpha-rep}).  We postpone to
\prettyref{sec:pre-derivations} the proof that $\simpled_{\li}$ is
indeed the simplest one.

\begin{defn}
  \label{def:simple-pre-D}If $\lambda\in\li$, we let
  \[
  \simpled_{\li}(\lambda):=\exp\left(-\sum_{\kappa_{-\alpha}\kgeq\lambda}\sum_{i=1}^{\infty}\log_{i}(\ka{\alpha})+\sum_{i=1}^{\infty}\log_{i}(\lambda)\right)
  \]
  where $\alpha$ ranges in $\on$.
\end{defn}

\begin{rem}
  The sequence $(\kappa_{-\alpha})_{\alpha\in\on}$ is decreasing (see
  \prettyref{rem:kappa-alpha-rep}), guaranteeing that the family
  $(\log_{i}(\ka{\alpha}))$ in the definition of
  $\simpled_{\li}(\lambda)$ is summable. The largest possible value of
  $\ka{\alpha}$ is $\kappa_{0}=\omega$, which implies that if
  $\lambda\kgreater\omega$ we have
  \[
  \simpled_{\li}(\lambda)=\exp\left(\sum_{i=1}^{\infty}\log_{i}(\lambda)\right)=\simpled_{\li}'(\lambda).
  \]

  Another special case is when $\lambda=\ka{\alpha}$ for some
  $\alpha\in\on$.  In this case the terms of the form
  $\log_{i}(\lambda)$ cancel out and the formula specializes to:
  \[
  \simpled_{\li}(\ka{\alpha})=\exp\left(-\sum_{\beta<\alpha}\sum_{i=1}^{\infty}\log_{i}(\ka{\beta})\right)
  \]
  where $\beta$ ranges in $\on$. In particular,
  $\simpled_{\li}(\omega)=\simpled_{\li}(\kappa_{0})=1$.

  In any case, we have $\simpled_{\li}(\lambda)\in\M$ for all
  $\lambda\in\li$.
\end{rem}

Before proving that $\simpled_{\li}:\li\to\no^{>0}$ extends to a
surreal derivation $\simpled:\no\to\no$, let us first verify that the
necessary condition given by \prettyref{prop:vell-log-log} is met.

\begin{prop}
  \label{prop:simple-D-log-log}For all $\lambda,\mu\in\li$,
  $\log(\simpled_{\li}(\lambda))-\log(\simpled_{\li}(\mu))\vless\max\left\{
    \lambda,\mu\right\}$.
\end{prop}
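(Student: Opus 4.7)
The plan is to expand $\log(\simpled_{\li}(\lambda))-\log(\simpled_{\li}(\mu))$ as a difference of the explicit expressions in \prettyref{def:simple-pre-D}, bound each piece by $\vless\max\{\lambda,\mu\}$, and combine. Since the statement is symmetric in $\lambda,\mu$ (swapping only flips the sign of the left-hand side, which is irrelevant for $\vless$) and the case $\lambda=\mu$ is trivial, I may assume $\lambda<\mu$, so that $\max\{\lambda,\mu\}=\mu$.

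For $x>\N$, set $K(x):=\{\alpha\in\on\suchthat\ka{\alpha}\kgeq x\}$. By \prettyref{prop:kappa-coinitial} this is a set-sized initial segment of $\on$, and from $\lambda\leq\mu$ (hence $\lambda\kleq\mu$) one gets $K(\mu)\subseteq K(\lambda)$. Substituting into the definition and cancelling the common terms, the identity to analyse becomes
\[
  \log(\simpled_{\li}(\lambda))-\log(\simpled_{\li}(\mu))=-\sum_{\alpha\in K(\lambda)\setminus K(\mu)}\sum_{i=1}^{\infty}\log_{i}(\ka{\alpha})+\sum_{i=1}^{\infty}\log_{i}(\lambda)-\sum_{i=1}^{\infty}\log_{i}(\mu).
\]

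Next I would bound each of the three summands on the right by $\vless\mu$. The second and third are immediate: $\sum_{i\geq1}\log_{i}(\lambda)$ and $\sum_{i\geq1}\log_{i}(\mu)$ are $\veq$ to their leading terms $\log(\lambda)$ and $\log(\mu)$ respectively, and $\log(\lambda)\leq\log(\mu)\vless\mu$ since $\mu$ is positive infinite. The main step is the bound on the double sum. For each $\alpha\in K(\lambda)\setminus K(\mu)$, $\ka{\alpha}\kless\mu$ means $\ka{\alpha}<\log_{h}(\mu)$ for every $h\in\N$, so for all $i\geq1$ one has $\log_{i}(\ka{\alpha})\leq\ka{\alpha}<\log_{2}(\mu)$. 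Since $\mu$ is log-atomic, both $\log(\mu)$ and $\log_{2}(\mu)$ are monomials, and they are distinct (as $\exp(x)>x$ has no fixed point in $\no$, by elementarity over $\R_{\exp}$); distinct monomials belong to distinct archimedean classes, so $\log_{2}(\mu)\vless\log(\mu)\vless\mu$. Hence every monomial in the support of the double sum is $\vless\mu$, and by summability the sum is $\veq$ to its leading monomial, so $\vless\mu$.

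Adding three quantities each of magnitude $\vless\mu$ yields a result of magnitude $\vless\mu=\max\{\lambda,\mu\}$, as claimed. The only genuinely delicate point is the monomial argument for the double sum, where the log-atomic hypothesis on $\mu$ is critical: it is precisely what forces $\log_{2}(\mu)$ to lie in a strictly smaller archimedean class than $\log(\mu)$, and hence transfers the $\kless$-bound on the $\ka{\alpha}$ into a $\vless$-bound on all their iterated logarithms. Everything else is bookkeeping on summability and archimedean classes.
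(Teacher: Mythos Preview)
Your proof is correct and follows essentially the same decomposition and bounding as the paper's own argument (with the harmless swap of roles, assuming $\lambda<\mu$ rather than $\mu<\lambda$). One remark: your claim that the log-atomic hypothesis on $\mu$ is ``critical'' for getting $\log_{2}(\mu)\vless\log(\mu)$ is overstated --- the inequality $\log(x)\vless x$ holds for any $x>\N$ (by \prettyref{rem:exp-grows}), so the detour through distinct monomials is unnecessary and the bound on the double sum goes through for any positive infinite $\mu$.
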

\begin{proof}
  Without loss of generality we may assume that
  $\mu<\lambda$. Clearly, the inequality $\ka{\alpha}\kgeq\lambda$
  implies $\ka{\alpha}\kgeq\mu$, hence
  \[
  \log(\simpled_{\li}(\lambda))-\log(\simpled_{\li}(\mu)) =
  \sum_{\mu\kleq\ka{\alpha}\kless\lambda}\sum_{i=1}^{\infty}\log_{i}(\ka{\alpha})+\sum_{i=1}^{\infty}(\log_{i}(\lambda)-\log_{i}(\mu)),
  \]
  where $\alpha$ ranges in $\on$. It follows that
  \[
  \log(\simpled_{\li}(\lambda))-\log(\simpled_{\li}(\mu))\vleq\max_{i\geq1}\left\{
    \log_{i}(\lambda),\log_{i}(\mu),\max_{\mu\kleq\ka{\alpha}\kless\lambda}\left\{
      \log_{i}(\ka{\alpha})\right\} \right\} .
  \]

  However, if $\ka{\alpha}\kless\lambda$, then
  $\log_{i}(\ka{\alpha})\vless\lambda$ for all $i\geq1$, and moreover
  $\log_{i}(\mu)\vless\mu\vless\lambda$ and
  $\log_{i}(\lambda)\vless\lambda$ for all $i\geq1$. Therefore, the
  right hand side of the above inequality is
  $\vless\lambda=\max\{\lambda,\mu\}$, as desired.
\end{proof}

\begin{prop}
  \label{prop:simple-D-exp}For all $\lambda\in\li$,
  $\simpled_{\li}(\exp(\lambda))=\exp(\lambda)\simpled_{\li}(\lambda)$.
\end{prop}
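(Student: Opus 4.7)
My plan is to reduce the claim to a direct comparison of the arguments inside the outermost $\exp$, using that $\lambda$ and $\exp(\lambda)$ lie in the same $\kequal$-equivalence class. Let me set $\mu := \exp(\lambda)$, which is again in $\li$ since $\exp(\li) = \li$. Applying $\log$ to the desired equation $\simpled_\li(\mu) = \mu \cdot \simpled_\li(\lambda)$ and rewriting $\log \mu = \lambda$, the goal becomes the identity
\[
-\sum_{\ka{\alpha} \kgeq \mu} \sum_{i=1}^\infty \log_i(\ka{\alpha}) + \sum_{i=1}^\infty \log_i(\mu) \;=\; \lambda - \sum_{\ka{\alpha} \kgeq \lambda} \sum_{i=1}^\infty \log_i(\ka{\alpha}) + \sum_{i=1}^\infty \log_i(\lambda),
\]
with $\alpha$ ranging over $\on$. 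Since $\exp$ is injective on $\no$, proving this identity (together with summability of both sides, which is already built into the definition of $\simpled_\li$) suffices.

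First I would handle the ``$\log_i$'' sums on both sides. Since $\log_i(\mu) = \log_i(\exp(\lambda)) = \log_{i-1}(\lambda)$ for every $i \geq 1$, I would split off the $i=1$ term to obtain $\sum_{i=1}^\infty \log_i(\mu) = \lambda + \sum_{i=1}^\infty \log_i(\lambda)$. This rearrangement is justified by the summability of the telescoping shift, and it cancels precisely the $\lambda$ and the $\sum_{i=1}^\infty \log_i(\lambda)$ term on the right. The identity to verify therefore collapses to
\[
\sum_{\ka{\alpha} \kgeq \mu} \sum_{i=1}^\infty \log_i(\ka{\alpha}) \;=\; \sum_{\ka{\alpha} \kgeq \lambda} \sum_{i=1}^\infty \log_i(\ka{\alpha}).
\]

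The remaining step is to show these two sums are indexed by the same ordinals $\alpha$, i.e.\ that $\ka{\alpha} \kgeq \lambda \iff \ka{\alpha} \kgeq \mu$. This is immediate from the fact that $\lambda \kequal \exp(\lambda) = \mu$: indeed, by \prettyref{def:kappa-equiv} we have $\log_1(\mu) \leq \lambda \leq \exp_1(\lambda) = \mu$, so both $\log_1(\mu) \leq \lambda$ and $\lambda \leq \exp_1(\mu)$ hold, giving $\lambda \kequal \mu$. Since $\kgeq$ passes to the $\kequal$-quotient (as $\kequal$-classes are convex with respect to $\kleq$), the two index sets coincide.

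The argument involves no real obstacle: the only delicate point is the rearrangement
$\sum_{i=1}^\infty \log_i(\mu) = \lambda + \sum_{i=1}^\infty \log_i(\lambda)$,
which is a one-line shift of index within a summable family, and the verification $\lambda \kequal \exp(\lambda)$, which is essentially the defining instance of $\kequal$. Thus the proof is a short symbolic manipulation made possible precisely because the $\kappa$-numbers in \prettyref{def:simple-pre-D} are grouped according to $\kequal$-class, a coarser equivalence than $\lequal$, so that passing from $\lambda$ to $\exp(\lambda)$ does not change the outer index set.
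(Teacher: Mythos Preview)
Your proof is correct and follows essentially the same approach as the paper: both arguments hinge on the observation that $\lambda\kequal\exp(\lambda)$, so the $\kappa$-index set is unchanged, together with the index shift $\log_i(\exp(\lambda))=\log_{i-1}(\lambda)$ that produces the extra $\lambda$ in the exponent. The only cosmetic difference is that you take logarithms first and compare the arguments, whereas the paper manipulates the exponentials directly.
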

\begin{proof}
  Let $\lambda\in\li$. Clearly, $\ka{\alpha}\kgeq\lambda$ if and only
  if $\ka{\alpha}\kgeq\exp(\lambda)$. Therefore,
  \begin{multline*}
    \simpled_{\li}(\exp(\lambda)) = \exp\left(-\sum_{\ka{\alpha}\kgeq\exp(\lambda)}\sum_{i=1}^{\infty}\log_{i}(\ka{\alpha})+\sum_{i=1}^{\infty}\log_{i}(\exp(\lambda))\right) =\\
    =
    \exp\left(-\sum_{\ka{\alpha}\kgeq\lambda}\sum_{i=1}^{\infty}\log_{i}(\ka{\alpha})+\sum_{i=1}^{\infty}\log_{i}(\lambda)+\lambda\right)=\exp(\lambda)\simpled_{\li}(\lambda).\qedhere
  \end{multline*}

\end{proof}

The proof that $\simpled_{\li}$ extends to a surreal derivation is
done by induction on the rank $\ntrank$, and uses ideas from
\cite{Schmeling2001}.  In the proof we shall not use the actual
definition of $\simpled_{\li}$, but only the fact that
$\simpled_{\li}$ satisfies the inequalities of
\prettyref{prop:vell-log-log}, is compatible with $\exp$, and takes
values in $\T$.

\subsection{Path-derivatives}

We consider the following sequences of terms, as in
\cite{Schmeling2001}.

\begin{defn}
  \label{def:path}We call \textbf{path} a sequence of terms
  $P:\N\to\T$ such that $P(i+1)$ is a term of $\vell(P(i))$ for all
  $i\in\N$.  We call $\mathcal{P}(x)$ the set of paths such that
  $P(0)$ is a term of $x$.
\end{defn}

Note that $P(0)\nin\R$ (as otherwise there would be no possible value
for $P(1)$) and $P(i+1)\in\R^{*}\M^{>1}$ for every $i$ (because
$\J\cap\R^{*}\M=\R^{*}\M^{>1}$).

\begin{rem}
  \label{rem:paths-in-L} If $\lambda\in\li$ there exists a unique path
  $P$ such that $P(0)=\lambda$, and for that path
  $P(i)=\log_{i}(\lambda)\in\li$ for all $i$.
\end{rem}

\begin{defn}
  \label{def:path-D}Given a path $P:\N\to\R^{*}\M$ we define its
  \textbf{path-derivative} $\simpled(P)\in\R\M$ as follows:
  \begin{enumerate}
  \item if there is $k\in\N$ such that $P(k)\in\li$, we let
    $\simpled(P):=\prod_{i<k}P(i)\cdot\simpled_{\li}(P(k))$;
  \item if $P(i)\notin\li$ for all $i\in\N$, then $\simpled(P):=0$.
  \end{enumerate}
\end{defn}

The value of $\simpled(P)$ does not depend on the choice of $k$ in
(1), since $\simpled_{\li}(P(i))=P(i)\cdot\simpled_{\li}(P(i+1))$
whenever $P(i)\in\li$, thanks to the fact that $\simpled_{\li}$ is
compatible with $\exp$. Therefore, if $P(i)\in\li$, then for every
$k\geq i$ we have
\[
\simpled(P)=P(0)\cdot P(1)\cdot\ldots\cdot
P(k-1)\cdot\simpled_{\li}(P(k)).
\]
We now wish to define $\simpled(x)$ as the sum of all the
path-derivatives of the paths in $\mathcal{P}(x)$. Indeed, we can
prove that the family
$\left(\simpled(P)\suchthat P\in\mathcal{P}(x)\right)$ is summable.

\begin{lem}
  \label{lem:paths-get-small}If $P$ is a path, then
  $1\vless P(i+1)\vleq\log(|P(i)|)\vless P(i)$ for all $i>0$.
\end{lem}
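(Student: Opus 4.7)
The conclusion splits into three inequalities, all for $i>0$:
\begin{enumerate}
\item[(a)] $1\vless P(i+1)$,
\item[(b)] $P(i+1)\vleq\log(|P(i)|)$,
\item[(c)] $\log(|P(i)|)\vless P(i)$.
\end{enumerate}
The plan is to handle (a) and (c) as immediate consequences of the fact, observed right after \prettyref{def:path}, that $P(j)\in\R^{*}\M^{>1}$ for every $j\geq 1$ (because $P(j)$ is a term of the purely infinite number $\vell(P(j-1))\in\J$, and such a term cannot have a finite monomial). Then I would spend the real effort on the middle inequality (b).

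For (a), the above observation applied to $j=i+1$ gives $|P(i+1)|>\N$, which is exactly $1\vless P(i+1)$. For (c), applying it to $j=i$ (using $i>0$) yields $|P(i)|>\N$, and then \prettyref{rem:exp-grows} (i.e., $\exp$ dominates every polynomial) gives $\log(|P(i)|)\vless |P(i)|$; since $\vless$ only sees absolute values, this is the same as $\log(|P(i)|)\vless P(i)$.

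For the main step (b), I would write $P(i)=r\m$ with $r\in\R^{*}$ and $\m\in\M^{>1}$, so that by definition $\vell(P(i))=\log(\m)$. Since $\log(|P(i)|)=\log(|r|)+\log(\m)$ with $\log(|r|)\in\R$ finite and $\log(\m)$ positive infinite, I obtain $\log(|P(i)|)\sim\log(\m)=\vell(P(i))$, in particular
\[
\log(|P(i)|)\veq\vell(P(i)).
\]
By the definition of a path, $P(i+1)$ is a term of $\vell(P(i))$, so its monomial lies in $\supp(\vell(P(i)))$ and is therefore bounded by the leading monomial of $\vell(P(i))$, which is $\veq \vell(P(i))$ itself. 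Hence $P(i+1)\vleq\vell(P(i))\veq\log(|P(i)|)$, which is (b).

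No genuine obstacle is expected: each inequality reduces to a one-line valuation-theoretic calculation based on \prettyref{rem:vell}, \prettyref{rem:exp-grows}, and the very definition of path. The only mildly delicate point is keeping track of absolute values in (b) and (c), which is why I use $\log(|P(i)|)$ rather than $\log(P(i))$.
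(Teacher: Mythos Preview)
Your argument is correct and is essentially an expanded version of the paper's proof, which simply reads ``Trivial, since $P(i)\in\J^{*}$ for all $i>0$.'' Both rely on the observation recorded right after \prettyref{def:path} that $P(j)\in\R^{*}\M^{>1}$ for $j\geq 1$; you have merely unpacked the three inequalities explicitly, which is fine.
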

\begin{proof}
  Trivial, since $P(i)\in\J^{*}$ for all $i>0$.
\end{proof}

\begin{lem}
  \label{lem:common-part}If $t\vleq u$ are in \textup{$\T$}, and $t'$
  is a term of $\vell(t)$ but not of $\vell(u)$, then
  $(t')^{n}\vless\frac{u}{t}$ for all $n\in\N$.
\end{lem}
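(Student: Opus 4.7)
The plan is to reduce the inequality $(t')^{n}\vless u/t$ to a comparison of leading monomials inside $\J$. I would write $t'=s\m$ and $u/t=r\n$ with $s,r\in\R^{*}$ and $\m,\n\in\M$. Since $t\vleq u$ gives $\vell(t)\leq\vell(u)$, and since $t'$ being a term of $\vell(t)$ but not of $\vell(u)$ forces $(\vell(u))_{\m}\neq s$, I get in particular $\vell(t)\neq\vell(u)$, hence $t\vless u$ and $\log\n=\vell(u/t)=\vell(u)-\vell(t)>0$ in $\J$. Moreover the coefficient of $\m$ in $\vell(u)-\vell(t)$ equals $(\vell(u))_{\m}-s\neq 0$, so $\m\in\supp(\log\n)$, and the leading monomial $\mathfrak{q}$ of $\log\n$ satisfies $\mathfrak{q}\geq\m$.

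The key step is then to exploit that $\log$ grows much more slowly than its argument on infinite monomials. Since $\m\in\M^{>1}$, I have $\log\m>\N$, so by \prettyref{rem:exp-grows} applied to $\log\m$ I get $\m=\exp(\log\m)\vgreater(\log\m)^{k}$ for every $k\in\N$, in particular $\log\m\vless\m$. By \prettyref{rem:vell} this yields $\vell(\log\m)<\vell(\m)=\log\m$, meaning that the leading monomial of $\log\m$ is strictly smaller than $\m$, hence strictly smaller than $\mathfrak{q}$. Therefore $\log\m\vless\mathfrak{q}\veq\log\n$, that is, $\log\m\vless\log\n$.

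From $\log\m\vless\log\n$ together with $\log\n>0$ I conclude $n\log\m<\log\n$ in $\no$ for every $n\in\N$, hence $\m^{n}=\exp(n\log\m)\vless\exp(\log\n)=\n$, and consequently $(t')^{n}=s^{n}\m^{n}\vless r\n=u/t$, since nonzero real factors are irrelevant for $\vless$. The main subtlety, and the main obstacle, sits in the first paragraph: the ``term of'' hypothesis (rather than the weaker ``monomial in the support of'') is exactly what prevents the coefficient of $\m$ from cancelling when passing from $\vell(t)$ and $\vell(u)$ to their difference, so that $\m$ actually appears in $\supp(\log\n)$. Once this is secured, the rest is a soft consequence of the slow growth of $\log$ on $\M^{>1}$.
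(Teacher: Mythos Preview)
Your proof is correct and follows essentially the same route as the paper's: both observe that the monomial of $t'$ survives in $\supp(\vell(u)-\vell(t))$, giving $t'\vleq\vell(u)-\vell(t)$, and then invoke the growth of $\exp$ over polynomials on the positive purely infinite number $\vell(u)-\vell(t)$. The paper's version is marginally more direct, concluding $(t')^{n}\vleq(\vell(u)-\vell(t))^{n}\vless\exp(\vell(u)-\vell(t))\veq u/t$ in one line, without your detour through $\log\m\vless\log\n$.
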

\begin{proof}
  We need to prove $n\cdot\vell(t')<\vell(u)-\vell(t)$. The hypothesis
  on $t'$ implies that $\vell(u)\neq\vell(t)$ and that $r\cdot t'$ is
  a term of $\vell(u)-\vell(t)$ for some $r\in\R^{*}$. In particular,
  $t'\vleq\vell(u)-\vell(t)$. Now observe that $\vell(u)-\vell(t)$ is
  positive and belongs to $\J$, hence $\vell(u)-\vell(t)>\N$. Since
  $\exp(x)>x^{n}$ for all $x>\N$ and $n\in\N$, we have
  \[
  (t')^{n}\vleq(\vell(u)-\vell(t))^{n}\vless\exp(\vell(u)-\vell(t))\veq\frac{u}{t}
  \]
  for all $n\in\N$, as desired.
\end{proof}

\begin{lem}
  \label{lem:path-D-decr}Let $P$, $Q$ be two paths such that
  $\simpled(P),\simpled(Q)\neq0$.

  If $P(0)\vleq Q(0)$ and $P(1)^{n}\vless\frac{Q(0)}{P(0)}$ for all
  $n\in\N$, then $\simpled(P)\vless\simpled(Q)$.

  More generally, suppose that there exists $i$ such that
  \begin{enumerate}
  \item for all $j\leq i$, $P(j)\vleq Q(j)$;
  \item $P(i+1)^{n}\vless\frac{Q(i)}{P(i)}$ for all $n\in\N$.
  \end{enumerate}
  Then $\simpled(P)\vless\simpled(Q)$.
\end{lem}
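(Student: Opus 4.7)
My plan is to induct on $i$, with the heart of the argument being the base case $i=0$. For $i\geq 1$, I will consider the shifted paths $\sigma P(j):=P(j+1)$ and $\sigma Q(j):=Q(j+1)$; the recursive structure of the path-derivative, which stems from $\simpled_{\li}(\lambda)=\lambda\cdot\simpled_{\li}(\log\lambda)$ for $\lambda\in\li$, gives $\simpled(P)=P(0)\cdot\simpled(\sigma P)$ and likewise for $Q$. The hypotheses (1) and (2) for $(P,Q)$ at index $i$ translate directly to the corresponding hypotheses for $(\sigma P,\sigma Q)$ at index $i-1$, so by induction $\simpled(\sigma P)\vless\simpled(\sigma Q)$; since $P(0)\vleq Q(0)$ bounds $|P(0)/Q(0)|$ by an integer, we conclude $\simpled(P)\vless\simpled(Q)$.

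For the base case $i=0$, since $\simpled(P),\simpled(Q)\in\T$ are single terms, the statement $\simpled(P)\vless\simpled(Q)$ is equivalent to $D:=\vell(\simpled(Q))-\vell(\simpled(P))>\N$. Fixing $m\geq\max(k_P,k_Q)$ and using the product expansion of $\simpled(P)$ and $\simpled(Q)$, with $\alpha_j:=\vell(P(j))$ and $\beta_j:=\vell(Q(j))$, I obtain
\[
D=\sum_{j=0}^{m-1}(\beta_j-\alpha_j)+E_m,\qquad E_m:=\log\simpled_{\li}(Q(m))-\log\simpled_{\li}(P(m)).
\]
Setting $\gamma:=\beta_0-\alpha_0$, hypothesis (2) translates to $\gamma\vgreater n\alpha_1$ for all $n\in\N$; since $\alpha_1$ is a positive element of $\J$ (hence $>\N$), this forces $\gamma>\N$.

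It remains to verify that $D\veq\gamma$, whence $D>\N$ will follow. The plan is to bound $D-\gamma=\sum_{j=1}^{m-1}(\beta_j-\alpha_j)+E_m$ in absolute value by something $\vless\gamma$. By \prettyref{prop:simple-D-log-log}, $|E_m|\vless\max\{P(m),Q(m)\}$, and by \prettyref{lem:paths-get-small} the magnitudes $|P(j)|,|Q(j)|$ decay through iterated logarithms of $|P(1)|,|Q(1)|$, dropping them below $\gamma$ for $m$ large enough. For the remaining sum, when $\beta_j<\alpha_j$ the absolute value $|\beta_j-\alpha_j|=\alpha_j-\beta_j$ is bounded by $\alpha_j$, and the $\alpha_j$'s satisfy $\alpha_1\vless\gamma$ together with $\alpha_j\vless\alpha_{j-1}$ for $j\geq 2$, so all negative contributions combined stay $\vless\gamma$; positive contributions only add to $\gamma$.

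The main obstacle will be controlling $\beta_1$ in the regime where it is large (as arises when $Q(1)$ is the leading term of $\vell(Q(0))$, making $\beta_1\veq\log\alpha_0$, which may exceed $\gamma$ when $\gamma\vless\alpha_0$). In this case $\beta_1-\alpha_1$ is large and positive, so it only helps $D$; the delicate check is that no negative contribution further down the path can overwhelm $\gamma$. I expect to resolve this by combining the above bounds with the monotone decay of $P(j),Q(j)$ and a case split on whether $\gamma\vgeq\alpha_0$ or $\gamma\vless\alpha_0$.
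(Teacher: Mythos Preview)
Your inductive reduction from general $i$ to $i=0$ via the shift $\sigma$ is correct and is exactly what the paper means by ``the second part follows easily.'' The gap is in the base case.

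The target ``$D\veq\gamma$'' is too strong and in fact false in general: when $\beta_1$ is large you can have $D\vgreater\gamma$. What you actually need is only $D>0$ (since $D\in\J$), and your argument does not close this. The proposed case split on $\gamma\vgeq\alpha_0$ versus $\gamma\vless\alpha_0$ does not help: in the second case $\beta_0\sim\alpha_0$, so $|Q(1)|$ can be as large as $\alpha_0$, and then for any \emph{fixed} $m\geq2$ one only gets $|Q(m)|\vleq\log_{m-1}\alpha_0$, which need not be $\vless\gamma$ (take $\gamma$ comparable to $\log_{100}\alpha_0$). Thus your bound $|E_m|\vless\max\{P(m),Q(m)\}$ alone does not force $|E_m|\vless\gamma$, and the ``delicate check'' is left open.

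The missing idea is to \emph{keep} $\beta_1$ on the positive side and let it absorb the potentially large part of $E_m$. Concretely, for $m\geq2$ one has $|P(m)|\vleq\log|P(1)|\veq\alpha_1$ and $|Q(m)|\vleq\log|Q(1)|\veq\beta_1$, hence $-E_m<\max\{\alpha_1,\beta_1\}$. Since $\beta_j>0$ for $j\geq1$,
\[
D\;\geq\;\gamma+\beta_1-\sum_{j=1}^{m-1}\alpha_j+E_m\;>\;\gamma+\beta_1-(m-1)\alpha_1-\max\{\alpha_1,\beta_1\}\;\geq\;\gamma-m\alpha_1\;>\;0,
\]
using $\alpha_1\vless\gamma$. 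This is precisely the paper's argument, phrased multiplicatively: it drops the factors $Q(2),\dots,Q(k-1)$ (which are infinite, hence $>1$ in absolute value), bounds $\simpled_{\li}(P(k))/\simpled_{\li}(Q(k))\leq\max\{|P(1)|,|Q(1)|\}$, and then uses the hypothesis $|P(1)|^{k}\vless Q(0)/P(0)$ to conclude.
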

\begin{proof}
  We prove the first part, as the second then follows easily.

  For the sake of notation, write $x_{j}:=P(j)$ and $y_{j}:=Q(j)$.
  Let $k>1$ be such that $x_{k},y_{k}\in\li$. We need to prove that
  \[
  \simpled(P)=x_{0}\cdot x_{1}\cdot\ldots\cdot
  x_{k-1}\cdot\simpled_{\li}(x_{k})\vless y_{0}\cdot
  y_{1}\cdot\ldots\cdot y_{k-1}\cdot\simpled_{\li}(y_{k})=\simpled(Q).
  \]

  We observe that $y_{2},\dots,y_{k-1}\in\J$ are infinite. Therefore,
  it suffices to prove the stronger inequality
  \[
  x_{0}\cdot x_{1}\cdot\ldots\cdot
  x_{k-1}\cdot\simpled_{\li}(x_{k})\vless y_{0}\cdot
  y_{1}\cdot\simpled_{\li}(y_{k}),
  \]
  or equivalently,
  \[
  x_{1}\cdot\ldots\cdot
  x_{k-1}\cdot\frac{\simpled_{\li}(x_{k})}{\simpled_{\li}(y_{k})}\vless\frac{y_{0}y_{1}}{x_{0}}.
  \]

  By \prettyref{lem:paths-get-small},
  $1\vless x_{k}\vless\dots\vless x_{2}\vleq\log|x_{1}|\vless x_{1}$,
  and similarly $y_{k}\vleq\log|y_{1}|\vless y_{1}$. By
  \prettyref{prop:simple-D-log-log} we have
  \[
  \log(\simpled_{\li}(x_{k}))-\log(\simpled_{\li}(y_{k}))\vless\max\left\{
    x_{k},y_{k}\right\} \vleq\max\{\log|x_{1}|,\log|y_{1}|\}.
  \]
  In particular,
  $\frac{\simpled_{\li}(x_{k})}{\simpled_{\li}(y_{k})}\leq\max\{|x_{1}|,|y_{1}|\}.$
  By the hypothesis on $x_{1}=P(1)$ we get
  \[
  \left|x_{1}\cdot\ldots\cdot
    x_{k-1}\cdot\frac{\simpled_{\li}(x_{k})}{\simpled_{\li}(y_{k})}\right|\leq|x_{1}|^{k-1}\cdot\max\left\{
    |x_{1}|,|y_{1}|\right\}
  \leq|x_{1}|^{k}\cdot|y_{1}|\vless\frac{y_{0}y_{1}}{x_{0}},
  \]
  reaching the desired conclusion.
\end{proof}

\begin{cor}
  \label{cor:path-D-decr}Let $P$, $Q$ be two paths such that
  $\simpled(P),\simpled(Q)\neq0$.  Suppose that there exists $i\in\N$
  such that:
  \begin{enumerate}
  \item for all $j\leq i$, $P(j)\vleq Q(j)$;
  \item $P(i+1)$ is not a term of $\vell(Q(i))$.
  \end{enumerate}
  Then $\simpled(P)\vless\simpled(Q)$.
\end{cor}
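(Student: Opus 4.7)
The plan is to observe that this corollary is essentially a bridge between the two preceding lemmas: \prettyref{lem:common-part} supplies the inequality $P(i+1)^{n}\vless\frac{Q(i)}{P(i)}$ from a combinatorial condition about terms, and \prettyref{lem:path-D-decr} then converts that inequality into the desired dominance $\simpled(P)\vless\simpled(Q)$. So the whole proof is a matter of checking that the hypotheses line up.

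First I would fix $i$ as in the statement and apply \prettyref{lem:common-part} with $t:=P(i)$, $u:=Q(i)$, and $t':=P(i+1)$. The hypothesis $t\vleq u$ is exactly condition (1) of the corollary specialized to $j=i$. Since $P$ is a path, $P(i+1)$ is by \prettyref{def:path} a term of $\vell(P(i))=\vell(t)$, so $t'$ is indeed a term of $\vell(t)$. Condition (2) of the corollary is exactly the statement that $t'$ is not a term of $\vell(u)$. (One should note that $P(i)\notin\R$, since otherwise $P(i+1)$ would not be defined; this is built into the definition of path.) Thus \prettyref{lem:common-part} applies and gives $P(i+1)^{n}\vless\frac{Q(i)}{P(i)}$ for all $n\in\N$.

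Next I would feed this into \prettyref{lem:path-D-decr}. The first hypothesis of that lemma, namely $P(j)\vleq Q(j)$ for all $j\leq i$, is exactly condition (1) of the corollary, and the second hypothesis is precisely the conclusion of the previous paragraph. Hence \prettyref{lem:path-D-decr} yields $\simpled(P)\vless\simpled(Q)$, which is the desired conclusion.

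There is essentially no obstacle here beyond bookkeeping: the corollary is a direct packaging of the two lemmas, where the only content is translating the qualitative statement ``$P(i+1)$ is not a term of $\vell(Q(i))$'' into the quantitative valuation inequality that \prettyref{lem:path-D-decr} needs. The only minor care point is that both $\simpled(P)$ and $\simpled(Q)$ are assumed non-zero, so the definitions in \prettyref{def:path-D} reduce to the multiplicative form $\prod_{j<k}P(j)\cdot\simpled_{\li}(P(k))$ used in \prettyref{lem:path-D-decr}, and the valuation comparison makes sense.
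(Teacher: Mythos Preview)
Your proof is correct and follows exactly the same approach as the paper: apply \prettyref{lem:common-part} with $t=P(i)$, $u=Q(i)$, $t'=P(i+1)$ to obtain $P(i+1)^{n}\vless\frac{Q(i)}{P(i)}$, and then invoke the general form of \prettyref{lem:path-D-decr}. The paper's proof is just the two-sentence version of what you wrote.
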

\begin{proof}
  By \prettyref{lem:common-part} we have
  $P(i+1)^{n}\vless\frac{Q(i)}{P(i)}$ for all $n\in\N$. It then
  follows from \prettyref{lem:path-D-decr} that
  $\simpled(P)\vless\simpled(Q)$, as desired.
\end{proof}

\begin{lem}
  \label{lem:nested-rank-path}Given $P\in\mathcal{P}(x)$, we have
  $\ntrank(P(0))\leq\ntrank(x)$, and if the equality holds, then the
  support of $x$ has a minimum $\m$ and $P(0)=r\m$ for some
  $r\in\R^{*}$.

  Similarly, for all $i\in\N$ we have
  $\ntrank(P(i+1))\leq\ntrank(P(i))$, and if the equality holds, then
  the support of $\vell(P(i))$ has a minimum $\m$ and $P(i+1)=r\m$ for
  some $r\in\R^{*}$.
\end{lem}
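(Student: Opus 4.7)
The plan is to deduce both halves of the lemma directly from \prettyref{prop:ntrank-term} by reading off its two parts in the two specific situations at hand. No substantial new ideas are needed; the work has already been done in the preceding subsection.

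For the first assertion, $P(0)$ is by definition a term of $x$, so $P(0)=r\m$ with $\m\in\supp(x)$ and $r\in\R^{*}$. Part~(1) of \prettyref{prop:ntrank-term} gives $\ntrank(P(0))\leq\ntrank(x)$ at once, and part~(2) says that this inequality is strict unless $\m$ is the minimum of $\supp(x)$; this is precisely the claimed conclusion.

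For the second assertion, I write $P(i)=r\m$ with $r\in\R^{*}$ and $\m=\exp(\gamma)\in\M^{\neq 1}$, where $\gamma=\vell(P(i))\in\J^{*}$. (The case $\m=1$ is excluded, since then $\vell(P(i))=0$ would admit no term $P(i+1)$.) Because $P(i+1)$ is a term of $\gamma$, \prettyref{prop:ntrank-term} first gives $\ntrank(P(i+1))\leq\ntrank(\gamma)$. I then chain this bound with
\[
\ntrank(\gamma)\;=\;\ntrank(\exp(\gamma))\;=\;\ntrank(\m)\;\leq\;\ntrank(r\m)\;=\;\ntrank(P(i)),
\]
using \prettyref{prop:ntrank-gamma-exp-gamma} for the first equality and \prettyref{prop:ntrank-monomial} for the inequality (which is in fact an equality when $r=\pm 1$). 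This establishes $\ntrank(P(i+1))\leq\ntrank(P(i))$.

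Finally, for the equality analysis, $\ntrank(P(i+1))=\ntrank(P(i))$ forces every step in the above chain to be an equality. In particular $\ntrank(P(i+1))=\ntrank(\gamma)$, combined with part~(2) of \prettyref{prop:ntrank-term} applied to the term $P(i+1)$ of $\gamma=\vell(P(i))$, forces the monomial appearing in $P(i+1)$ to be the minimum of $\supp(\vell(P(i)))$. This yields the desired description $P(i+1)=s\m'$, where $\m'$ is this minimum and $s\in\R^{*}$. There is no real obstacle here: the whole proof is bookkeeping that combines the three rank facts already established.
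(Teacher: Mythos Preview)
Your proof is correct and follows the same route as the paper, which simply records ``Immediate by \prettyref{prop:ntrank-term}.'' You have spelled out the details the paper leaves implicit, in particular the chain through \prettyref{prop:ntrank-gamma-exp-gamma} and \prettyref{prop:ntrank-monomial} needed for the second assertion.
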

\begin{proof}
  Immediate by \prettyref{prop:ntrank-term}.
\end{proof}

\begin{cor}
  For all $x\in\no$, there is at most one path $P\in\mathcal{P}(x)$
  such that $\ntrank(P(i))=\ntrank(x)$ for all $i\in\N$.
\end{cor}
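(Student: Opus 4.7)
The plan is to derive the uniqueness of such a path $P$ directly from \prettyref{lem:nested-rank-path}, which pins down both $P(0)$ and the recursion step $P(i) \mapsto P(i+1)$ whenever the rank fails to decrease.

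First, suppose $P \in \mathcal{P}(x)$ satisfies $\ntrank(P(0)) = \ntrank(x)$. The lemma forces $\supp(x)$ to have a minimum $\m$ and $P(0) = r\m$ for some $r \in \R^*$; since $P(0)$ is required to be a term of $x$, necessarily $r = x_\m$, the coefficient of $\m$ in $x$. Hence $P(0)$ is uniquely determined by $x$.

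Next, proceed by induction on $i$ to show that each $P(i)$ is determined. Assume $P(0), \ldots, P(i)$ are uniquely determined and that $\ntrank(P(j)) = \ntrank(x)$ for all $j \leq i+1$. In particular $\ntrank(P(i+1)) = \ntrank(P(i))$, so \prettyref{lem:nested-rank-path} again applies: the support of $\vell(P(i))$ has a minimum $\m_i$ and $P(i+1) = r_i \m_i$ for some $r_i \in \R^*$. Since $P(i+1)$ must be a term of $\vell(P(i))$, we conclude $r_i = (\vell(P(i)))_{\m_i}$, so $P(i+1)$ is uniquely determined by $P(i)$.

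By induction the whole sequence $P$ is therefore uniquely determined by $x$, which proves the claim. There is no real obstacle here beyond invoking \prettyref{lem:nested-rank-path} at each step; the content of the statement is entirely packaged in that lemma.
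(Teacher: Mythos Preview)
Your proof is correct and is precisely the argument the paper has in mind: the corollary is stated without proof because it follows immediately from \prettyref{lem:nested-rank-path} by the step-by-step uniqueness argument you give. There is nothing to add.
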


\begin{prop}
  \label{prop:path-D-summable}For all $x\in\no$, the family
  $(\simpled(P)\suchthat P\in\mathcal{P}(x))$ is summable.
\end{prop}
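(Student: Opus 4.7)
I will argue by transfinite induction on $\alpha := \ntrank(x)$. The base case $\alpha = 0$ is handled by \prettyref{cor:nested-rank-zero}, which says $x \in \R \cup \{\pm\lambda^{\pm 1} : \lambda \in \li\}$: in the real case $\mathcal{P}(x)$ is empty (since $\vell(r) = 0$ admits no terms to extend the path), while in the second case \prettyref{rem:paths-in-L} shows that $\mathcal{P}(x)$ is a singleton consisting of the path of iterated logarithms, so summability is immediate.

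For the inductive step I assume the claim for all $y$ with $\ntrank(y) < \alpha$ and decompose $\mathcal{P}(x)$ around a canonical path $P^*$. By \prettyref{lem:nested-rank-path}, any $P \in \mathcal{P}(x)$ with $\ntrank(P(i)) = \alpha$ for every $i$ is uniquely forced to take $P^*(i+1)$ to be the minimum term of $\vell(P^*(i))$ (necessarily with coefficient $\pm 1$ by \prettyref{prop:ntrank-monomial}), provided these minima exist and retain rank $\alpha$; call this the canonical path $P^*$, which contributes at most a single term to our family. Every other path $P$ admits a least index $i(P)$ at which $\ntrank(P(i(P))) < \alpha$, with $P(j) = P^*(j)$ for $j < i(P)$. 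For each pair $(i, s)$ where $s$ is a term of $\vell(P^*(i-1))$ (or of $x$ if $i = 0$) satisfying $s \neq P^*(i)$ and $\ntrank(s) < \alpha$, the tail map identifies $\mathcal{B}_{i,s} := \{P : i(P) = i, \, P(i) = s\}$ with $\mathcal{P}(\vell(s))$, under which
\[
\simpled(P) = \bigl(\textstyle\prod_{j<i} P^*(j)\bigr) \cdot s \cdot \simpled(\mathrm{tail}).
\]
Using $\ntrank(\vell(s)) \leq \ntrank(s) < \alpha$ (which follows from \prettyref{prop:ntrank-gamma-exp-gamma} and \prettyref{prop:ntrank-monomial}), the inductive hypothesis together with multiplication by the fixed constant $\bigl(\prod_{j<i} P^*(j)\bigr) \cdot s$ yields summability of each sub-family $\mathcal{B}_{i,s}$.

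To combine $\{P^*\}$ with these countably many sub-families into a single summable family, I argue by contradiction via \prettyref{rem:summability-criterion}: assume an injective sequence $(P_n)$ in $\mathcal{P}(x)$ with $\simpled(P_n) \neq 0$ and leading monomials $\m_n$ satisfying $\m_n \leq \m_{n+1}$. Summability of each $\mathcal{B}_{i,s}$ forces only finitely many of the $P_n$ to lie in any single $\mathcal{B}_{i,s}$, so after a subsequence the pairs $(i(P_n), P_n(i(P_n)))$ are pairwise distinct. I then plan to derive a contradiction by applying \prettyref{cor:path-D-decr}---or the stronger \prettyref{lem:path-D-decr}---to pairs $(P_n, P_m)$ at the common index $\min\{i(P_n), i(P_m)\}$: the paths coincide with $P^*$ up to that depth, and one of them experiences a rank drop at the diverging term, which should yield $\simpled(P_n) \vless \simpled(P_m)$ (or its reverse) and in turn contradict the monotonicity $\m_n \leq \m_{n+1}$.

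The main obstacle is precisely this last step: the hypothesis of \prettyref{cor:path-D-decr} or \prettyref{lem:path-D-decr} is not automatic, since a term of $\vell(s)$ for a non-canonical $s$ may happen to coincide with a term of $\vell(P^*(i))$. Verifying the required archimedean inequality $P(i+1)^n \vless Q(i)/P(i)$ of \prettyref{lem:path-D-decr} should ultimately follow from combining the rank drop at $s$ with the exponential decay of terms along paths guaranteed by \prettyref{lem:paths-get-small}, together with the observation that the diverging term $s$ has strictly larger monomial than the canonical $P^*(i)$.
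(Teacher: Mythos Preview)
Your decomposition around the canonical rank-$\alpha$ path $P^*$ is correct, and the inductive reduction of each subfamily $\mathcal{B}_{i,s}$ to $\mathcal{P}(\vell(s))$ is fine. However, the final combination step has a genuine gap, and the obstacle you identify is real and not resolved by your last paragraph. Suppose $P_n$ and $P_m$ diverge from $P^*$ at depths $i_n < i_m$. At depth $i_n$ you have $P_m(i_n) = P^*(i_n)$ (the minimum term) and $P_n(i_n) = s_n$ strictly larger, so $P_m(i_n) \vleq P_n(i_n)$; but to apply \prettyref{cor:path-D-decr} or \prettyref{lem:path-D-decr} with $P = P_m$, $Q = P_n$ you would need $P_m(i_n+1) = P^*(i_n+1)$ either not to be a term of $\vell(s_n)$, or to satisfy $P^*(i_n+1)^k \vless s_n / P^*(i_n)$ for all $k$. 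Neither follows: $P^*(i_n+1)$ is a term of $\vell(P^*(i_n))$, which may well share terms with $\vell(s_n)$, and its size relative to $\vell(s_n) - \vell(P^*(i_n))$ is uncontrolled. Your suggested fix via \prettyref{lem:paths-get-small} and the rank drop at $s_n$ does not supply this inequality.

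The paper's proof avoids the problem by not organizing around $P^*$. Given a bad sequence $(P_j)$ with $\simpled(P_0) \vleq \simpled(P_1) \vleq \cdots$, it runs a \emph{double} induction: primary on $\alpha = \ntrank(x)$, secondary on $m$, the least depth at which the $P_j$ differ. One takes $r\exp(\gamma)$ of \emph{maximum} $\vell$-value among the $P_j(0)$ (not the minimum, as in your $P^*$). If $\ntrank(\gamma) = \alpha$ then by \prettyref{lem:nested-rank-path} this maximum is also the minimum, so all $P_j(0)$ coincide and $m > 0$; otherwise one extracts a subsequence with $P_0(0) \vgeq P_1(0) \vgeq \cdots$. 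Now \prettyref{cor:path-D-decr} applied with $Q = P_0$ forces every $P_j(1)$ to be a term of $\gamma = \vell(P_0(0))$, so the shifted paths $P_j'(i) := P_j(i+1)$ all lie in $\mathcal{P}(\gamma)$ and still form a bad sequence. One concludes by primary induction if $\ntrank(\gamma) < \alpha$, and by secondary induction ($m' = m-1$) otherwise. The secondary induction on $m$ is exactly the device your argument is missing: your subfamilies track divergence from $P^*$, whereas what is needed is to track divergence \emph{within the bad sequence} and descend one level at a time.
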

\begin{proof}
  Since $\simpled(P)\in\R\M$ for all $P\in\mathcal{P}(x)$, we just
  need to prove that no sequence of distinct paths $(P_{j})_{j\in\N}$
  in $\mathcal{P}(x)$ is such that
  $\simpled(P_{0})\vleq\simpled(P_{1})\vleq\ldots$.  Suppose by
  contradiction that such a sequence exists. Since the paths are
  distinct, there exists a minimum integer $m$ such that
  $P_{j}(m)\neq P_{k}(m)$ for some $j,k$, and clearly
  $P_{j}(i)=P_{0}(i)$ for all $i<m$ and $j$.

  Let $\alpha:=\ntrank(x)$. We work by primary induction on $\alpha$
  and secondary induction on $m$ to reach a contradiction. Let
  $r\exp(\gamma)$ be the term of maximum $\vell$-value among
  $\{P_{j}(0)\suchthat j\in\N\}$.

  If $\ntrank(\gamma)=\alpha$, then by
  \prettyref{lem:nested-rank-path} $r\exp(\gamma)$ is also the term of
  \emph{minimum} $\vell$-value, hence $P_{j}(0)=P_{0}(0)$ for all $j$,
  and therefore $m>0$.

  If $\ntrank(\gamma)<\alpha$, after extracting a subsequence, we may
  assume that $r\exp(\gamma)=P_{0}(0)\vgeq P_{1}(0)\vgeq\ldots$
  (possibly changing the value of $m$, which however will not be
  relevant in this case).

  Now, if $P_{j}(1)$ is not a term of $\gamma=\vell(P_{0}(0))$ for
  some $j\in\N$, then by \prettyref{cor:path-D-decr} we get
  $\simpled(P_{0})\vgreater\simpled(P_{j})$, a
  contradiction. Therefore, $P_{j}(1)$ is a term of $\gamma$ for all
  $j\in\N$. Consider the paths $P_{j}'$ defined by
  $P_{j}'(i):=P_{j}(i+1)$ for $i\in\N$ and let $m'$ be the minimum
  integer such that $P_{j}'(m')\neq P_{k}'(m')$ for some
  $j,k$. Clearly, in the case $\ntrank(\gamma)=\alpha$ we have
  $m'=m-1$. Moreover, $P_{j}'\in\mathcal{P}(\gamma)$ for all $j\in\N$.

  By the equality
  \[
  \simpled(P_{j})=P_{j}(0)\cdot\simpled(P_{j}')
  \]
  and $P_{0}(0)\vgeq P_{1}(0)\vgeq\ldots$, it follows that
  $\simpled(P_{0}')\vleq\simpled(P_{1}')\vleq\ldots$.  Since we have
  either $\ntrank(\gamma)<\alpha$, or $\ntrank(\gamma)=\alpha$ and
  $m'<m$, this contradicts the inductive hypothesis that no such
  sequence may exist in $\mathcal{P}(\gamma)$.

  Therefore, $(\simpled(P)\suchthat P\in\mathcal{P}(x))$ is summable,
  as desired.
\end{proof}

\subsection{A surreal derivation}

Thanks to \prettyref{prop:path-D-summable}, we can finally define
$\simpled:\no\to\no$ by summing all the path-derivatives.

\begin{defn}
  \label{def:D}We define $\simpled:\no\to\no$ by
  \[
  \simpled(x):=\sum_{P\in\mathcal{P}(x)}\simpled(P).
  \]
\end{defn}

We claim that $\simpled:\no\to\no$ is indeed a surreal derivation.

\begin{defn}
  \label{def:dominant-path}Given $x\in\no\setminus\R$, its
  \textbf{dominant path} is the path $Q\in\mathcal{P}(x)$ such that
  $Q(0)$ is the term of maximum non-zero $\vell$-value of $x$ and
  $Q(i+1)$ is the leading term of $\vell(Q(i))$ for all
  $i\in\N$.
\end{defn}

\begin{lem}
  \label{lem:leading-D}If $x\in\no\setminus\R$ and $Q$ is the dominant
  path of $x$, then $\simpled(Q)\neq0$ and $\simpled(Q)$ is the
  leading term of $\simpled(x)$.
\end{lem}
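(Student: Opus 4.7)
The plan is to split the argument into two parts. For $\simpled(Q) \neq 0$, the idea is to show that the dominant path eventually reaches a log-atomic monomial. By \prettyref{lem:nested-rank-path} the sequence $\ntrank(Q(i))$ is non-increasing in $i$, and being a decreasing sequence of ordinals it stabilises at some value $\alpha$ for all $i \geq i_0$. At the stable stage the equality clause of that lemma forces the support of $\vell(Q(i))$ to reduce to a single monomial, so writing $Q(i) = r_i\m_i$ one obtains $\log(\m_i) = r_{i+1}\m_{i+1}$. Combining \prettyref{prop:ntrank-monomial} and \prettyref{prop:ntrank-gamma-exp-gamma}, stability of the rank forces $r_i \in \{-1, +1\}$ for all large $i$; and since $\m_i > 1$ for $i \geq 1$ (because $Q(i)$ is a term of the purely infinite $\vell(Q(i-1))$), positivity of $\log(\m_i)$ forces $r_{i+1} = +1$. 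Iterating, $\log_n(\m_i) = \m_{i+n} \in \M$ for every $n$, so $\m_i \in \li$, giving $\simpled(Q) \neq 0$.

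For the second claim, by \prettyref{prop:path-D-summable} the family $(\simpled(P))_{P \in \mathcal{P}(x)}$ is summable and each member is either zero or a single term, so it suffices to establish $\simpled(P) \vless \simpled(Q)$ for every $P \in \mathcal{P}(x)\setminus\{Q\}$ with $\simpled(P) \neq 0$. I would prove this by transfinite induction on the lexicographic pair $(\ntrank(x), d)$, where $d := \min\{i \geq 0 : P(i) \neq Q(i)\}$. If $d \geq 1$, the shifted sequences $P^*(j) := P(j+1)$ and $Q^*(j) := Q(j+1)$ are paths of $\gamma := \vell(Q(0))$, and $Q^*$ is the dominant path of $\gamma$; since $P(0) = Q(0)$ one has $\simpled(P)/\simpled(Q) = \simpled(P^*)/\simpled(Q^*)$, and the pair $(\ntrank(\gamma), d-1)$ is lexicographically strictly smaller than $(\ntrank(x), d)$ (as $\ntrank(\gamma) \leq \ntrank(x)$), so the inductive hypothesis closes this case. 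If $d = 0$, then $x$ has at least two distinct terms and $P(0) \vless Q(0)$ (because $Q(0)$ has maximal $\vell$); when $P(1)$ is not a term of $\vell(Q(0))$, \prettyref{cor:path-D-decr} applied with $i = 0$ yields the conclusion at once.

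The delicate subcase is $d = 0$ together with $P(1)$ being a term of $\vell(Q(0))$: then \prettyref{cor:path-D-decr} cannot be applied directly, and the estimate in \prettyref{lem:path-D-decr} fails because $|P(1)|$ is infinite while $Q(0)/P(0)$ need only be mildly infinite. The plan is to introduce the hybrid path $\tilde{P}$ defined by $\tilde{P}(0) := Q(0)$ and $\tilde{P}(j) := P(j)$ for $j \geq 1$; this is a genuine path of $x$ precisely because $P(1)$ is a term of $\vell(Q(0))$, and a direct computation gives $\simpled(\tilde{P}) = (Q(0)/P(0))\,\simpled(P)$, so $\simpled(P) \vless \simpled(\tilde{P})$ (the ratio $P(0)/Q(0)$ being infinitesimal). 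Since $\tilde{P}$ and $Q$ agree at index $0$ and differ later, the $d \geq 1$ reduction applies to $(\tilde{P}, Q)$, taking us to paths of $\gamma = \vell(Q(0))$; because $x$ has multiple terms, the monomial of $Q(0)$ is not minimal in $\supp(x)$, and \prettyref{prop:ntrank-term}(2) gives $\ntrank(Q(0)) < \ntrank(x)$, so $\ntrank(\gamma) \leq \ntrank(Q(0)) < \ntrank(x)$---a strict primary decrease. The inductive hypothesis then delivers $\simpled(\tilde{P}) \vless \simpled(Q)$, and chaining yields $\simpled(P) \vless \simpled(Q)$, as desired.
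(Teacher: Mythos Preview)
Your argument is correct but follows a markedly different route from the paper's proof, and it is worth pointing out how the two compare.

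For the first claim, the paper observes directly that $\vell(Q(i))=\vell_{i+1}(x)$ and invokes \prettyref{cor:vell-n-fold} to obtain some $k$ with $Q(k)\in\li$. Your rank--stabilisation argument reaches the same conclusion via \prettyref{lem:nested-rank-path}, \prettyref{prop:ntrank-gamma-exp-gamma} and \prettyref{prop:ntrank-monomial}; this is valid, though it re-derives by hand what \prettyref{cor:vell-n-fold} already packages.

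For the second claim, the paper avoids induction entirely: it splits into two cases according to whether $P(i+1)$ is a term of $\vell(Q(i))$ for \emph{every} $i$. In the first case one has $P(i)\vleq Q(i)$ for all $i$ and $P(i)=Q(i)$ for $i>k$ (where $Q(k)\in\li$), so a direct comparison of the finite products $\prod_{i\leq k}P(i)$ and $\prod_{i\leq k}Q(i)$ gives $\simpled(P)\vless\simpled(Q)$; in the second case one applies \prettyref{cor:path-D-decr} at the first index $j$ where $P(j+1)$ fails to be a term of $\vell(Q(j))$. Your transfinite induction on $(\ntrank(x),d)$ with the hybrid path $\tilde P$ is a correct alternative: the key point making it go through is that when $d=0$ and $P(1)$ is a term of $\vell(Q(0))$, the presence of the strictly smaller term $P(0)$ forces the monomial of $Q(0)$ to be non-minimal in $\supp(x)$, so \prettyref{prop:ntrank-term}(2) yields a strict primary decrease $\ntrank(\gamma)<\ntrank(x)$. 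One small omission: you should treat separately the case $\tilde P=Q$ (i.e.\ $P(j)=Q(j)$ for all $j\geq1$), where no further reduction is needed since $\simpled(P)\vless\simpled(\tilde P)=\simpled(Q)$ already.

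In short, both proofs are sound; the paper's direct case split is shorter and exploits that once $Q$ enters $\li$ all later values of any ``parallel'' path are forced, while your inductive scheme is a clean descent argument that leans only on the rank machinery.
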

\begin{proof}
  Let $Q\in\mathcal{P}(x)$ be the dominant path of $x$. Without loss
  of generality, we may assume that $x\not\veq1$ (if $x\veq1$, it
  suffices to subtract the leading real number), so that $Q(0)$ is the
  leading term of $x$, and $Q(i+1)$ is the leading term of
  $\vell(Q(i))$.  Letting $\vell_{i}$ be the $i$-fold composition
  $\vell\circ\ldots\circ\vell$, it follows that
  $\vell(Q(i))=\vell_{i+1}(x)$ for all $i\in\N$. By
  \prettyref{cor:vell-n-fold}, there exists $k\in\N$ such that
  $Q(k)\in\li$, and therefore $\simpled(Q)\neq0$. Let
  $P\in\mathcal{P}(x)$ be any other path different from $Q$ and such
  that $\simpled(P)\neq0$.

  We distinguish two cases. Suppose first that $P(i+1)$ is a term of
  $\vell(Q(i))$ for all $i$. By definition of $Q$, this clearly
  implies that $Q(i)\vgeq P(i)$ for all $i\in\N$. Moreover, we must
  have $Q(i)=P(i)$ for all $i>k$. Since $Q\neq P$, there must be an
  $i\leq k$ such that $Q(i)\vgreater P(i)$, and it follows immediately
  that $\simpled(Q)\vgreater\simpled(P)$.

  In the other case, take the minimal $j\in\N$ such that $P(j+1)$ is
  not a term of $\vell(Q(j))$. By definition of $Q$, we have
  $Q(i)\vgeq P(i)$ for all $j\leq i$. By \prettyref{cor:path-D-decr},
  we have $\simpled(Q)\vgreater\simpled(P)$ in this case as
  well. Since $\simpled(x)=\sum_{P\in\mathcal{P}(x)}\simpled(P)$ by
  definition, we get that $\simpled(Q)$ is the leading term of
  $\simpled(x)$, as desired.
\end{proof}

\begin{cor}
  \label{cor:D-ker}For all $x\in\no$, $\simpled(x)=0$ if and only if
  $x\in\R$.
\end{cor}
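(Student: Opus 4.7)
The plan is to derive the corollary directly from \prettyref{lem:leading-D} together with an elementary observation about paths starting from real numbers.

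First I would handle the forward direction by showing that if $x \in \R$, then $\mathcal{P}(x) = \emptyset$. If $x = 0$ the support is empty so there are no terms at all and $\mathcal{P}(0) = \emptyset$ trivially. If $x = r \in \R^{*}$ then the only term of $x$ is $r$ itself, but the remark following \prettyref{def:path} already records that $P(0) \notin \R$ for any path (since $\vell(r) = 0$ has no terms, so $P(1)$ cannot exist). Hence $\mathcal{P}(x) = \emptyset$, and by \prettyref{def:D} we get
\[
\simpled(x) = \sum_{P \in \mathcal{P}(x)} \simpled(P) = 0.
\]

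For the reverse direction, suppose $x \in \no \setminus \R$. Then $x$ has at least one term whose leading monomial is $\neq 1$ (otherwise $x$ would be purely real), so its dominant path $Q$ (as in \prettyref{def:dominant-path}) is well-defined. By \prettyref{lem:leading-D}, $\simpled(Q) \neq 0$ and in fact $\simpled(Q)$ is the leading term of $\simpled(x)$. In particular $\simpled(x) \neq 0$.

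There is no real obstacle here: the hard work has already been done in establishing summability (\prettyref{prop:path-D-summable}) and in identifying the dominant term of $\simpled(x)$ (\prettyref{lem:leading-D}). The only slightly delicate point is verifying the vacuity of $\mathcal{P}(x)$ for real $x$, but this is immediate from the fact that terms are by definition non-zero elements of $\R^{*}\M$ and that $\vell$ of a real number is $0$, leaving no room to extend a path past its zeroth entry.
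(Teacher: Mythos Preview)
Your proof is correct and follows essentially the same approach as the paper: both directions rely on the emptiness of $\mathcal{P}(x)$ for $x\in\R$ and on \prettyref{lem:leading-D} for $x\notin\R$. Your write-up simply spells out in more detail what the paper's two-line argument leaves implicit.
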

\begin{proof}
  By \prettyref{lem:leading-D}, if $x\notin\R$, then
  $\simpled(x)\neq0$.  Conversely, if $x\in\R$, then
  $\mathcal{P}(x)=\emptyset$, whence $\simpled(x)=0$, as
  desired.
\end{proof}

\begin{cor}
  \label{cor:D-H-field}If $x>\N$, then $\simpled(x)>0$.
\end{cor}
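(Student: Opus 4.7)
The plan is to reduce positivity of $\simpled(x)$ to positivity of a single distinguished path-derivative. By the just-proved \prettyref{lem:leading-D}, the leading term of $\simpled(x)$ is precisely $\simpled(Q)$, where $Q$ is the dominant path of $x$. Since $x \notin \R$, \prettyref{cor:D-ker} already tells us $\simpled(x) \neq 0$, so it suffices to verify that $\simpled(Q) > 0$.

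To prove this, I would argue by induction on $i$ that every entry $Q(i)$ of the dominant path is positive. The base case holds because $x > \N$ forces $x$ to have a positive leading term $r_{0}\exp(\gamma_{0})$ with $r_{0} > 0$ and $\gamma_{0} > 0$ (since the leading monomial $\exp(\gamma_{0})$ is infinite, so $\gamma_{0} \in \J^{>0}$). For the inductive step, if $Q(i) > 0$, then $Q(i) = r_{i}\exp(\gamma_{i})$ with $r_{i} > 0$ and $\gamma_{i} > 0$ (the latter because $Q(i) \in \R^{*}\M^{>1}$), and by definition $Q(i+1)$ is the leading term of $\vell(Q(i)) = \gamma_{i}$; since $\gamma_{i} > 0$, its leading term is positive, giving $Q(i+1) > 0$.

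By \prettyref{lem:leading-D} we also know $\simpled(Q) \neq 0$, so there exists $k \in \N$ with $Q(k) \in \li$, and then
\[
\simpled(Q) = \prod_{i<k} Q(i) \cdot \simpled_{\li}(Q(k)).
\]
Every factor $Q(i)$ is positive by the induction above, while $\simpled_{\li}(Q(k)) \in \M$ is positive by the definition of monomial. Hence $\simpled(Q) > 0$, which, being the leading term of $\simpled(x)$, forces $\simpled(x) > 0$. There is essentially no obstacle here: once \prettyref{lem:leading-D} has identified the leading term of $\simpled(x)$ with $\simpled(Q)$, the argument is pure sign bookkeeping along the recursive structure of the dominant path, exploiting only that (i) leading terms of positive surreals are positive and (ii) monomials are positive.
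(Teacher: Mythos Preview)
Your proposal is correct and follows essentially the same approach as the paper's proof: reduce via \prettyref{lem:leading-D} to the positivity of $\simpled(Q)$ for the dominant path $Q$, then show inductively that each $Q(i)$ is positive and conclude from $\simpled_{\li}(Q(k))\in\M\subset\no^{>0}$. The paper phrases the induction as $Q(i)>\N$ rather than $Q(i)>0$ (using that $Q(i+1)\in\J$, so positivity upgrades to $Q(i+1)>\N$), but this extra strength is not needed; your invocation of \prettyref{cor:D-ker} is also harmless but redundant, since \prettyref{lem:leading-D} already gives $\simpled(Q)\neq 0$.
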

\begin{proof}
  By \prettyref{lem:leading-D}, it suffices to prove that if $x>\N$
  and $P$ is the dominant path of $x$, then $\simpled(P)>0$.

  By definition,
  $\simpled(P)=P(0)\cdot P(1)\cdot\ldots\cdot
  P(k-1)\cdot\simpled_{\li}(P(k))$,
  where $k$ is such that $P(k)\in\li$. We can easily prove by
  induction that $P(i)>\N$ for all $i$. Clearly $P(0)>\N$ holds by
  assumption.  If $P(i)>\N$, then $\vell(P(i))>0$; since $P(i+1)$ is
  the leading term of $\vell(P(i))$, we must have $P(i+1)>0$ as
  well. Since $P(i+1)\in\J$, it follows that $P(i+1)>\N$, concluding
  the induction. Moreover, $\simpled_{\li}(P(k))>0$, since
  $\simpled_{\li}$ takes only positive values. Therefore,
  $\simpled(P)>0$, as desired.
\end{proof}

\begin{prop}
  \label{prop:D-strong-additive}The function $\simpled$ is strongly
  linear, hence strongly additive.
\end{prop}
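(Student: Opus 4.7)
The plan is to unwind the definition $\simpled(x)=\sum_{P\in\mathcal{P}(x)}\simpled(P)$ by partitioning the paths according to their initial term. For each $\m\in\supp(x)$, set $\mathcal{P}_{\m}(x):=\{P\in\mathcal{P}(x)\suchthat P(0)=x_{\m}\m\}$, so that $\mathcal{P}(x)=\bigsqcup_{\m\in\supp(x)}\mathcal{P}_{\m}(x)$. Since $\vell(x_{\m}\m)=\vell(\m)$, there is a canonical bijection $\mathcal{P}_{\m}(x)\leftrightarrow\mathcal{P}(\m)$ sending $P$ to the path $P'$ with $P'(0)=\m$ and $P'(i)=P(i)$ for $i\geq 1$. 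A direct inspection of \prettyref{def:path-D} shows $\simpled(P)=x_{\m}\simpled(P')$ in every case: when some $P'(k)\in\li$ with $k\geq 1$ the identity is immediate from the product formula, and when $\m\in\li$ itself (so that $P'(0)\in\li$), one takes $k=1$ on the $P$ side, noting $P(1)=\log(\m)\in\li$ is forced, and invokes $\simpled_{\li}(\m)=\m\cdot\simpled_{\li}(\log(\m))$ from \prettyref{prop:simple-D-exp} to absorb the factor $x_{\m}$.

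Next I would establish summability of $(x_{\m}\simpled(\m)\suchthat\m\in\supp(x))$. By the bijection and the preceding identity,
\[
x_{\m}\simpled(\m)=x_{\m}\sum_{P'\in\mathcal{P}(\m)}\simpled(P')=\sum_{P\in\mathcal{P}_{\m}(x)}\simpled(P),
\]
so $\supp(x_{\m}\simpled(\m))$ is contained in the set of monomial parts of the nonzero $\simpled(P)$ with $P\in\mathcal{P}_{\m}(x)$. Taking the union over $\m$ yields a subset of the set of monomial parts of all nonzero $\simpled(P)$ with $P\in\mathcal{P}(x)$, which is reverse well-ordered and contains each monomial only finitely often by \prettyref{prop:path-D-summable}. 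Because each path $P$ belongs to exactly one $\mathcal{P}_{\m}(x)$, each monomial appears in the support of only finitely many of the $x_{\m}\simpled(\m)$, confirming the summability criterion of \prettyref{rem:summability-criterion}.

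With summability in hand, infinite associativity of summable families allows me to rearrange:
\[
\sum_{\m}x_{\m}\simpled(\m)=\sum_{\m}\sum_{P\in\mathcal{P}_{\m}(x)}\simpled(P)=\sum_{P\in\mathcal{P}(x)}\simpled(P)=\simpled(x),
\]
which is precisely strong linearity. Strong additivity then follows at once from \prettyref{prop:strong-linearity}.

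There is no deep obstacle; the only step requiring genuine care is the verification that $\simpled(P)=x_{\m}\simpled(P')$ holds uniformly in all subcases, and in particular when $\m\in\li$ but $x_{\m}\ne1$ so that $P(0)=x_{\m}\m$ is not itself log-atomic. Once this bookkeeping is done, the argument is a clean application of the summability of path-derivatives and the infinite associativity built into the Hahn field structure of $\no$.
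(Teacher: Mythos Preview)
Your proposal is correct and follows essentially the same approach as the paper: partition $\mathcal{P}(x)$ according to the initial term, use the bijection $\mathcal{P}_{\m}(x)\leftrightarrow\mathcal{P}(\m)$ together with $\simpled(P)=x_{\m}\simpled(P')$, and regroup the summable family of path-derivatives. The paper compresses all of this into a single chain of equalities, while you spell out the bijection, the case analysis for $\simpled(P)=x_{\m}\simpled(P')$, and the summability argument; your invocation of \prettyref{prop:strong-linearity} for the final step is in fact the more apt reference.
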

\begin{proof}
  It suffices to observe that if $x=\sum_{\m}x_{\m}\m$, then
  \[
  \simpled(x) = \sum_{P\in\mathcal{P}(x)}\simpled(P) =
  \sum_{\m\in\supp(x)}\sum_{P\in\mathcal{P}(\m)}x_{\m}\simpled(P) =
  \sum_{\m}x_{\m}\simpled(\m).
  \]
  By \prettyref{rem:summability-criterion} it follows that $\simpled$
  is strongly additive.
\end{proof}

\begin{prop}
  \label{prop:D-exp-J}For all $\gamma\in\J$,
  $\simpled(\exp(\gamma))=\exp(\gamma)\simpled(\gamma)$.
\end{prop}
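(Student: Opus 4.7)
The plan is to exploit the simple Ressayre form of $\exp(\gamma)$: it consists of the single term $\exp(\gamma)$, with $\vell(\exp(\gamma))=\gamma$. If $\gamma=0$, then both sides vanish, since $\exp(0)=1\in\R$ forces $\mathcal{P}(1)=\emptyset$ and $\simpled(1)=0=\simpled(0)$. So assume $\gamma\in\J^{*}$. The first step is to set up the bijection $\Phi:\mathcal{P}(\gamma)\to\mathcal{P}(\exp(\gamma))$ given by $\Phi(Q)(0):=\exp(\gamma)$ and $\Phi(Q)(i+1):=Q(i)$. This is well-defined and a bijection because $\exp(\gamma)$ has a unique term (itself) and $\vell(\exp(\gamma))=\gamma$, so every path $P\in\mathcal{P}(\exp(\gamma))$ is of the form $\Phi(Q)$ for $Q$ the shifted path $(P(1),P(2),\dots)\in\mathcal{P}(\gamma)$.

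The second step is to show that $\simpled(\Phi(Q))=\exp(\gamma)\cdot\simpled(Q)$ for every $Q\in\mathcal{P}(\gamma)$. There are two cases, controlled by whether $\exp(\gamma)\in\li$ or not, which (using that $\exp$ restricts to a bijection of $\li$ with itself) is equivalent to whether $\gamma\in\li$. If $\exp(\gamma)\notin\li$, then the first log-atomic entry of $\Phi(Q)$, if any, occurs at some $k\geq1$ corresponding to the entry of $Q$ at $k-1$; the product defining the path-derivative in \prettyref{def:path-D} then differs from $\simpled(Q)$ exactly by the extra factor $\Phi(Q)(0)=\exp(\gamma)$, and if neither path enters $\li$ both path-derivatives are zero. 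If $\exp(\gamma)\in\li$, then $\gamma\in\li$, both paths are unique and entirely log-atomic, and the identity reduces to $\simpled_{\li}(\exp(\gamma))=\exp(\gamma)\simpled_{\li}(\gamma)$, which is \prettyref{prop:simple-D-exp}.

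Finally, I would conclude by summing along the bijection:
\[
\simpled(\exp(\gamma))=\sum_{P\in\mathcal{P}(\exp(\gamma))}\simpled(P)=\sum_{Q\in\mathcal{P}(\gamma)}\exp(\gamma)\cdot\simpled(Q)=\exp(\gamma)\cdot\simpled(\gamma),
\]
where the last equality is distributivity of the fixed monomial factor $\exp(\gamma)$ over the summable family $(\simpled(Q)\suchthat Q\in\mathcal{P}(\gamma))$, which is summable by \prettyref{prop:path-D-summable} (multiplication by a monomial just shifts supports). I do not expect any serious obstacle here: the whole argument is essentially a book-keeping identity made clean by the fact that $\exp(\gamma)$ has only one Ressayre term. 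The only mildly delicate point is the edge case $\exp(\gamma)\in\li$, which is handled directly by \prettyref{prop:simple-D-exp}.
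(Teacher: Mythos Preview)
Your proof is correct and follows essentially the same approach as the paper: both set up the shift bijection between $\mathcal{P}(\exp(\gamma))$ and $\mathcal{P}(\gamma)$ and observe that $\simpled(P)=\exp(\gamma)\cdot\simpled(P')$ for the shifted path $P'$. Your explicit case analysis for $\exp(\gamma)\in\li$ is absorbed in the paper by the remark after \prettyref{def:path-D} that the path-derivative is independent of the index $k$ at which one stops, so the single line ``by definition $\simpled(P)=\exp(\gamma)\cdot\simpled(P')$'' already covers it.
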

\begin{proof}
  Let $\gamma\in\J$. Consider the bijection
  $\mathcal{P}(\exp(\gamma))\to\mathcal{P}(\gamma)$ sending
  $P\in\mathcal{P}(\exp(\gamma))$ to the path
  $P'\in\mathcal{P}(\gamma)$ defined by $P'(i):=P(i+1)$ for
  $i\in\N$. Recall that by definition
  $\simpled(P)=\exp(\gamma)\cdot\simpled(P')$. We thus obtain
  \[
  \simpled(\exp(\gamma))=\sum_{P\in\mathcal{P}(\exp(\gamma))}\simpled(P)=\exp(\gamma)\sum_{P\in\mathcal{P}(\gamma)}\simpled(P)=\exp(\gamma)\simpled(\gamma).\qedhere
  \]
\end{proof}

\begin{prop}
  \label{prop:D-leibniz}For all $x,y\in\no$,
  $\simpled(xy)=x\simpled(y)+y\simpled(x)$.
\end{prop}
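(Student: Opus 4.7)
The plan is to reduce the Leibniz rule to the monomial case via strong linearity and then exploit the fact that on monomials $\simpled$ is controlled by $\simpled$ on the purely infinite numbers, for which multiplication becomes addition under $\exp$.

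First I would write $x=\sum_{\m}x_{\m}\m$ and $y=\sum_{\n}y_{\n}\n$ in their normal forms. The doubly-indexed family $(x_{\m}y_{\n}\m\n\suchthat \m,\n\in\M)$ is summable with sum $xy$ (this is part of the ring axioms of $\R((\M))$ together with infinite distributivity, already recorded in the remark after \prettyref{def:summability}). Applying \prettyref{prop:strong-linearity} to $\simpled$, which is strongly linear by \prettyref{prop:D-strong-additive}, gives
\[
\simpled(xy)=\sum_{\m,\n}x_{\m}y_{\n}\simpled(\m\n).
\]
On the other hand, using strong linearity of $\simpled$ on $y$, then distributivity of product over summable families to push $x$ inside, and finally strong linearity again on $x$, I obtain
\[
x\simpled(y)=\sum_{\m,\n}x_{\m}y_{\n}\,\m\simpled(\n),\qquad y\simpled(x)=\sum_{\m,\n}x_{\m}y_{\n}\,\n\simpled(\m).
\]
Thus it suffices to establish the Leibniz rule on monomials: $\simpled(\m\n)=\m\simpled(\n)+\n\simpled(\m)$ for all $\m,\n\in\M$.

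For the monomial case I would invoke the Ressayre form $\M=\exp(\J)$: write $\m=\exp(\gamma)$ and $\n=\exp(\delta)$ with $\gamma,\delta\in\J$, so that $\m\n=\exp(\gamma+\delta)$ with $\gamma+\delta\in\J$ (since $\J$ is an additive subgroup). By \prettyref{prop:D-exp-J} applied to $\gamma+\delta$, together with strong additivity of $\simpled$ applied to the two-element family $(\gamma,\delta)$,
\[
\simpled(\m\n)=\exp(\gamma+\delta)\simpled(\gamma+\delta)=\m\n\bigl(\simpled(\gamma)+\simpled(\delta)\bigr).
\]
Again by \prettyref{prop:D-exp-J},
\[
\m\simpled(\n)+\n\simpled(\m)=\m\cdot\n\simpled(\delta)+\n\cdot\m\simpled(\gamma)=\m\n\bigl(\simpled(\gamma)+\simpled(\delta)\bigr),
\]
matching the previous expression and finishing the proof.

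There is no serious obstacle here: all the real work has been done in \prettyref{prop:D-strong-additive} (strong linearity) and \prettyref{prop:D-exp-J} (compatibility with $\exp$ on $\J$), and the Ressayre form reduces the multiplicative Leibniz identity to the additivity of $\simpled$ on $\J$. The only small point requiring care is the bookkeeping of the double sum reduction: one must apply \prettyref{prop:strong-linearity} to the summable family $(x_{\m}y_{\n}\m\n)_{(\m,\n)}$ rather than to the unindexed normal form of $xy$ (since the map $(\m,\n)\mapsto\m\n$ need not be injective), and similarly use infinite distributivity to commute $x$ with the summation defining $\simpled(y)$.
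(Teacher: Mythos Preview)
Your proof is correct and follows essentially the same route as the paper: reduce to monomials via strong linearity (\prettyref{prop:D-strong-additive}), then handle the monomial case by writing $\m=\exp(\gamma)$, $\n=\exp(\delta)$ and combining \prettyref{prop:D-exp-J} with additivity of $\simpled$ on $\gamma+\delta$. The paper presents the two steps in the opposite order and is terser about the double-sum bookkeeping you spell out, but the argument is the same.
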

\begin{proof}
  We first prove the conclusion on $\M$. Let $\m,\n\in\M$ and write
  $\m=\exp(\gamma),\n=\exp(\delta)$ with $\gamma,\delta\in\J$. By
  \prettyref{prop:D-exp-J}, we get
  $\simpled(\m)=\exp(\gamma)\simpled(\gamma)$,
  $\simpled(\n)=\exp(\delta)\simpled(\delta)$ and
  $\simpled(\m\n)=\exp(\gamma+\delta)\simpled(\gamma+\delta)$.  By
  \prettyref{prop:D-strong-additive}, we conclude
  $\simpled(\m\n)=\m\simpled(\n)+\simpled(\m)\n$.

  For the general case, let $x,y\in\no$ and write
  $x=\sum_{\m}x_{\m}\m$ and $y=\sum_{\n}y_{\n}\n$. By
  \prettyref{prop:D-strong-additive} again,
  $\simpled(xy)=\simpled(\sum_{\m,\n}x_{\m}y_{\n}\m\n)=\sum_{\m,\n}x_{\m}y_{\n}\simpled(\m\n)=\sum_{\m,\n}(x_{\m}\m\cdot
  y_{\n}\simpled(\n)+x_{\m}\simpled(\m)\cdot
  y_{\n}\n)=x\simpled(y)+y\simpled(x)$, as desired.
\end{proof}

\begin{cor}
  \label{cor:D-exp}For all $x\in\no$,
  $\simpled(\exp(x))=\exp(x)\simpled(x)$.
\end{cor}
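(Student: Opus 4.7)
The plan is to reduce the general case to the three already-established pieces by using the Ressayre-type decomposition of $x$ into purely infinite, real, and infinitesimal parts, together with Leibniz's rule, strong linearity, and the fact that $\ker(\simpled)=\R$.

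First, I would write any $x\in\no$ as $x=\gamma+r+\eps$ with $\gamma\in\J$, $r\in\R$, and $\eps\in o(1)$, and use the multiplicative analogue $\exp(x)=\exp(\gamma)\cdot\exp(r)\cdot\exp(\eps)$. Applying \prettyref{prop:D-leibniz} to this three-fold product yields
\[
  \simpled(\exp(x))=\simpled(\exp(\gamma))\exp(r)\exp(\eps)+\exp(\gamma)\simpled(\exp(r))\exp(\eps)+\exp(\gamma)\exp(r)\simpled(\exp(\eps)).
\]
The first summand is handled by \prettyref{prop:D-exp-J}, giving $\exp(\gamma)\simpled(\gamma)\cdot\exp(r)\exp(\eps)$. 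The middle summand vanishes because $\exp(r)\in\R$ and $\ker(\simpled)=\R$ by \prettyref{cor:D-ker}.

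The only substantive step is the infinitesimal case, which is the content of \prettyref{rem:D-exp-finite}: since $\eps\vless 1$, \prettyref{fact:exp}(3) gives the summable expansion $\exp(\eps)=\sum_{n\geq 0}\eps^{n}/n!$, and by strong linearity (\prettyref{prop:D-strong-additive}) combined with Leibniz,
\[
  \simpled(\exp(\eps))=\sum_{n\geq 0}\frac{\simpled(\eps^{n})}{n!}=\sum_{n\geq 1}\frac{n\eps^{n-1}\simpled(\eps)}{n!}=\exp(\eps)\simpled(\eps).
\]
Strictly speaking, here I use that $\simpled(\eps^{n})=n\eps^{n-1}\simpled(\eps)$ (easy induction from \prettyref{prop:D-leibniz}) and that the resulting family is summable so strong additivity applies; this is the one point requiring a little care, but nothing beyond the already established properties of $\simpled$.

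Plugging these three computations back in and factoring $\exp(x)=\exp(\gamma)\exp(r)\exp(\eps)$, one gets $\simpled(\exp(x))=\exp(x)\bigl(\simpled(\gamma)+\simpled(\eps)\bigr)$. Finally, strong additivity (\prettyref{prop:D-strong-additive}) together with $\simpled(r)=0$ yields $\simpled(\gamma)+\simpled(\eps)=\simpled(\gamma+r+\eps)=\simpled(x)$, which gives the desired identity $\simpled(\exp(x))=\exp(x)\simpled(x)$. No genuine obstacle arises; the whole argument is a bookkeeping exercise assembling \prettyref{prop:D-exp-J}, \prettyref{prop:D-strong-additive}, \prettyref{prop:D-leibniz}, and \prettyref{cor:D-ker}.
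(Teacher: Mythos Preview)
Your argument is correct and follows essentially the same route as the paper's own proof: decompose $x=\gamma+r+\eps$, apply Leibniz to the product $\exp(\gamma)\exp(r)\exp(\eps)$, and handle the three factors respectively via \prettyref{prop:D-exp-J}, \prettyref{cor:D-ker}, and the power-series argument of \prettyref{rem:D-exp-finite}. The only difference is cosmetic---you spell out the three-term Leibniz expansion and the final use of strong additivity a bit more explicitly than the paper does.
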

\begin{proof}
  Let $x\in\no$. Write $x=\gamma+r+\varepsilon$ with $\gamma\in\J$,
  $r\in\R$, $\varepsilon\in o(1)$. Since $\varepsilon$ is
  infinitesimal, we can apply the strong additivity
  (\prettyref{prop:D-strong-additive}) and the Leibniz rule
  (\prettyref{prop:D-leibniz}) as in \prettyref{rem:D-exp-finite} to
  obtain
  $\simpled(\exp(\varepsilon))=\exp(\varepsilon)\simpled(\varepsilon)$.
  Since $\gamma\in\J$, we have
  $\simpled(\exp(\gamma))=\exp(\gamma)\simpled(\gamma)$ by
  \prettyref{prop:D-exp-J}. By \prettyref{cor:D-ker}, we also have
  $\simpled(\exp(r))=0=\exp(r)\simpled(r)$. By Leibniz' rule
  (\prettyref{prop:D-leibniz}) applied to the product
  $\exp(\gamma)\exp(r)\exp(\varepsilon)=\exp(x)$ we conclude that
  $\simpled(\exp(x))=\exp(x)\simpled(x)$, as desired.
\end{proof}

Therefore, $\simpled$ is a surreal derivation.

\begin{thm}
  \label{thm:D-surreal}The function $\simpled:\no\to\no$ is a surreal
  derivation extending $\simpled_{\li}$.
\end{thm}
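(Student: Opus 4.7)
The plan is essentially to collect the pieces already established throughout the section. All five defining properties of a surreal derivation, as listed in \prettyref{def:surreal-D}, have already been proved separately: strong additivity is \prettyref{prop:D-strong-additive}, the Leibniz rule is \prettyref{prop:D-leibniz}, compatibility with exponentiation is \prettyref{cor:D-exp}, the identification $\ker(\simpled)=\R$ is \prettyref{cor:D-ker}, and the $H$-field inequality $\simpled(x)>0$ for $x>\N$ is \prettyref{cor:D-H-field}. So the only thing to do is assemble these citations and then check the extension property.

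For the extension statement, I would argue as follows. Let $\lambda\in\li$. Since $\lambda$ is a monomial, the only term of $\lambda$ in its normal form is $\lambda$ itself, so any path $P\in\mathcal{P}(\lambda)$ must start with $P(0)=\lambda$. By \prettyref{rem:paths-in-L}, $\mathcal{P}(\lambda)$ consists of a single path $P$ with $P(i)=\log_{i}(\lambda)\in\li$ for every $i\in\N$. In particular we may take $k=0$ in \prettyref{def:path-D}, giving
\[
\simpled(P)=\prod_{i<0}P(i)\cdot\simpled_{\li}(P(0))=\simpled_{\li}(\lambda).
\]
Consequently $\simpled(\lambda)=\sum_{P\in\mathcal{P}(\lambda)}\simpled(P)=\simpled_{\li}(\lambda)$, which is exactly the required extension property.

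There is no real obstacle in this final step: all the technical content — most crucially the summability result \prettyref{prop:path-D-summable} together with the careful analysis of path-derivatives in \prettyref{lem:leading-D} — has already been dispatched. The theorem is thus a straightforward assembly of the previously established propositions and corollaries.
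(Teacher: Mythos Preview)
Your proposal is correct and follows exactly the same approach as the paper: both proofs simply cite \prettyref{prop:D-leibniz}, \prettyref{prop:D-strong-additive}, \prettyref{cor:D-exp}, \prettyref{cor:D-ker}, and \prettyref{cor:D-H-field} for the five axioms. You additionally spell out the extension property $\simpled_{\restriction\li}=\simpled_{\li}$ via \prettyref{rem:paths-in-L} and the $k=0$ case of \prettyref{def:path-D}, which the paper's proof leaves implicit; this is a welcome bit of extra care but not a substantive difference.
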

\begin{proof}
  The function $\simpled$ satisfies Leibniz' rule by
  \prettyref{prop:D-leibniz}, strong additivity by
  \prettyref{prop:D-strong-additive}, it is compatible with
  exponentiation by \prettyref{cor:D-exp}, its kernel is $\R$ by
  \prettyref{cor:D-ker}, and it is an $H$-field derivation by
  \prettyref{cor:D-H-field}.
\end{proof}

\begin{rem}
  \label{rem:simple-D-bracket}The restriction of $\simpled:\no\to\no$
  to $\li$, namely the map $\simpled_{\li}$, takes values in the
  subfield $\bracket{\li}$ of $\no$. Since $\simpled$ is calculated
  using finite products and infinite sums, we can easily verify that
  $\simpled(\bracket{\li})\subseteq\bracket{\li}$.  Therefore, the
  restriction $\simpled_{\restriction\bracket{\li}}$ induces a
  structure of $H$-field on $\bracket{\li}$.
\end{rem}

In more generality, with the same proof we obtain:

\begin{thm}
  \label{thm:extending-pre-D}Let $\somed:\li\to\no^{>0}$ be a map such
  that:
  \begin{enumerate}
  \item for all $\lambda,\mu\in\li$,
    $\log(D(\lambda))-\log(D(\mu))\vless\max\left\{
      \lambda,\mu\right\} $;
  \item for all $\lambda\in\li$,
    $D(\exp(\lambda))=\exp(\lambda)D(\lambda)$;
  \item $D(\li)\subset\T$.
  \end{enumerate}
  Then $D$ extends to a surreal derivation on $\no$.
\end{thm}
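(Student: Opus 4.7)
The plan is to observe that the construction of $\simpled$ from $\simpled_{\li}$ carried out in \prettyref{sec:derivations} is parametric in the pre-derivation on $\li$ and relies solely on three facts: the log-inequality \prettyref{prop:simple-D-log-log}, compatibility with $\exp$ on $\li$, and the fact that the pre-derivation takes values in $\T$. These are precisely hypotheses (1), (2) and (3) of the present statement. Hence the proof consists in replaying that construction with the given $D$ in place of $\simpled_{\li}$ and checking, step by step, that each lemma only invokes properties (1)--(3).

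First, I would extend $D$ to path-derivatives exactly as in \prettyref{def:path-D}: for a path $P$, set $D(P):=\prod_{i<k}P(i)\cdot D(P(k))$ whenever some $P(k)\in\li$, and $D(P):=0$ otherwise. Hypothesis (2) guarantees that the value is independent of the chosen $k$, exactly as for $\simpled$, and hypothesis (3) ensures $D(P)\in\T\cup\{0\}$. Next, I would re-prove the summability of $(D(P)\suchthat P\in\mathcal{P}(x))$ as in \prettyref{prop:path-D-summable}: the argument uses \prettyref{lem:paths-get-small}, \prettyref{lem:common-part}, \prettyref{lem:path-D-decr} and \prettyref{cor:path-D-decr}, and of these only \prettyref{lem:path-D-decr} invokes the particular form of $\simpled_{\li}$, and only through \prettyref{prop:simple-D-log-log}, which is exactly hypothesis (1). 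Therefore the entire argument transfers verbatim, and we may define $D(x):=\sum_{P\in\mathcal{P}(x)}D(P)$.

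Finally, to verify that the extended $D:\no\to\no$ is a surreal derivation, I would repeat the proofs of \prettyref{cor:D-ker}, \prettyref{cor:D-H-field}, \prettyref{prop:D-strong-additive}, \prettyref{prop:D-exp-J}, \prettyref{prop:D-leibniz} and \prettyref{cor:D-exp}. Strong linearity is formal from the definition via paths; the kernel and the $H$-field inequality follow from the leading-term analysis of \prettyref{lem:leading-D}, which again uses only hypothesis (1) and the positivity of $D$ on $\li$; the Leibniz rule on monomials $\m=\exp(\gamma),\n=\exp(\delta)$ reduces via hypothesis (2) and strong linearity to the identity $D(\gamma+\delta)=D(\gamma)+D(\delta)$, and then extends to all of $\no$ by bilinearity; and compatibility with $\exp$ on the whole of $\no$ is recovered by splitting $x=\gamma+r+\varepsilon$ with $\gamma\in\J$, $r\in\R$, $\varepsilon\in o(1)$, and combining the $\J$-case (via the bijection $\mathcal{P}(\exp(\gamma))\to\mathcal{P}(\gamma)$) with the usual Taylor-series argument of \prettyref{rem:D-exp-finite}. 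Finally, to see that this $D$ extends the given one on $\li$, note that for $\lambda\in\li$ the set $\mathcal{P}(\lambda)$ consists of the single path $i\mapsto\log_i(\lambda)$ by \prettyref{rem:paths-in-L}, whose path-derivative is $D(\lambda)$ in the original sense.

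The only delicate point is the parametricity claim itself, namely verifying that every lemma in \prettyref{sec:derivations} from \prettyref{def:path-D} onward relies on $\simpled_{\li}$ exclusively through (1)--(3). This amounts to a careful but mechanical audit of the proofs, and since the only non-formal quantitative input is the log-inequality (1)---which is explicit in the hypotheses---no genuine obstacle arises.
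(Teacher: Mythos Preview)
Your proposal is correct and matches the paper's own approach exactly: the paper introduces this theorem with the phrase ``In more generality, with the same proof we obtain,'' meaning the argument is literally a replay of the construction of $\simpled$ with the given $D$ in place of $\simpled_{\li}$. Your audit of which lemmas depend on which of the hypotheses (1)--(3) is in fact more detailed than what the paper spells out.
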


Once we have a derivation, we can apply Ax's theorem to deduce some
transcendence results. If $V$ is a $\Q$-vector space and $W$ is a
subspace of $V$, we say that a set $H\subset V$ is \emph{$\Q$-linearly
  independent modulo $W$} if its projection to the quotient $V/W$ is
$\Q$-linearly independent.

\begin{thm}[\cite{Ax1971}]
  Let $(K,D)$ be a differential field. If
  $x_{1},\dots,x_{n},y_{1},\dots,y_{n}$ are such that
  $D(x_{i})=D(y_{i})/y_{i}$ for $i=1,\dots,n$, and if
  $x_{1},\dots,x_{n}$ are $\Q$-linearly independent modulo $\ker(D)$,
  then
  \[
  \mathrm{tr.deg.}_{\ker(D)}(x_{1},\dots,x_{n},y_{1},\dots,y_{n})\geq
  n+1.
  \]
\end{thm}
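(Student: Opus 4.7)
This is Ax's theorem, quoted as a black box from \cite{Ax1971}; the paper itself will not reprove it, so what follows is a sketch of how I would reconstruct Ax's original differential-algebraic argument via Kähler differentials.

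Set $C := \ker(D)$ and $L := C(x_{1},\dots,x_{n},y_{1},\dots,y_{n}) \subseteq K$, and suppose for contradiction that $t := \mathrm{tr.deg.}_{C}L \leq n$. A preliminary reduction uses the hypothesis that the $x_{i}$ are $\Q$-linearly independent modulo $C$: after a $\Q$-linear change of basis of the $x_{i}$'s, one may further arrange that the $y_{i}$ are multiplicatively independent modulo $C^{*}$. Indeed, a relation $\prod y_{i}^{q_{i}} \in C^{*}$ would yield, via logarithmic differentiation, $\sum q_{i}D(x_{i}) = 0$, and integrating this equation (using that $D$ has constant field $C$) forces $\sum q_{i}x_{i} \in C$, contradicting the independence hypothesis.

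The heart of the argument is to analyze the $L$-vector space $\Omega := \Omega^{1}_{L/C}$ of Kähler differentials, whose dimension is exactly $t$. The derivation $D_{\restriction L}$ factors uniquely as $D = \hat{D} \circ d$, where $d \colon L \to \Omega$ is the universal differential and $\hat{D} \colon \Omega \to L$ is $L$-linear. The scalar identities $D(x_{i}) = D(y_{i})/y_{i}$ translate immediately to $\hat{D}(dx_{i} - dy_{i}/y_{i}) = 0$ for every $i$, but \emph{a priori} the forms $\omega_{i} := dx_{i} - dy_{i}/y_{i} \in \Omega$ need not vanish; they only lie in $\ker \hat{D}$. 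The crucial and most delicate step, which is the main obstacle, is to promote these kernel relations to genuine identities $dx_{i} = dy_{i}/y_{i}$ in $\Omega$. Ax achieves this by an induction on $n$ combined with a Brownawell--Masser-type lemma controlling the logarithmic derivatives of the $y_{i}$; the multiplicative independence arranged in the reduction step is precisely what makes the lifting work.

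Once the lifting is in hand, the contradiction follows by a dimension count. Standard exactness arguments together with the $\Q$-linear independence of the $x_{i}$ modulo $C$ force $dx_{1},\dots,dx_{n}$ to be $L$-linearly independent in $\Omega$, so they must span $\Omega$ under the assumption $t \leq n$. But the $L$-span of $\{dx_{i}\}$ coincides (by the lifted identities) with the $L$-span of the logarithmic forms $\{dy_{i}/y_{i}\}$, and one exhibits an explicit element of $L$ whose differential cannot lie in this span, using the multiplicative independence of the $y_{i}$ together with at least one $x_{j}$. This produces an $(n{+}1)$-st $L$-linearly independent direction in $\Omega$, contradicting $\dim_{L}\Omega = t \leq n$ and giving the desired inequality $\mathrm{tr.deg.}_{C}L \geq n+1$.
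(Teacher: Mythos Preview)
You are right that the paper does not prove this theorem: it is quoted from \cite{Ax1971} and immediately applied to obtain the Schanuel-type corollary, with no argument supplied. There is therefore no proof in the paper to compare your sketch against.

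Your reconstruction of Ax's argument, offered as an aside, does contain a genuine error. The assertion that ``the $\Q$-linear independence of the $x_{i}$ modulo $C$ force $dx_{1},\dots,dx_{n}$ to be $L$-linearly independent in $\Omega$'' is false: with $x_{1}=t$ and $x_{2}=t^{2}$ one has $dx_{2}=2t\,dx_{1}$, yet $t$ and $t^{2}$ are $\Q$-linearly independent over any constant field. $L$-linear independence of the $dx_{i}$ is equivalent to \emph{algebraic} independence of the $x_{i}$ over $C$, which is not assumed. Since this step fails, the dimension count in your last paragraph collapses, and the preceding ``lifting'' step (promoting $\omega_{i}\in\ker\hat{D}$ to $\omega_{i}=0$ in $\Omega$) is left without justification; the appeal to a ``Brownawell--Masser-type lemma'' is in any case anachronistic for a 1971 paper. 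Ax's actual route does not try to make the $dx_{i}$ independent: he extends $D$ to a derivation of $\Omega$ itself and relies on a lemma characterising when a $C$-linear combination of the logarithmic forms $dy_{i}/y_{i}$ can be exact.
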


In our case, it suffices to take $(\no,\simpled)$ as differential
field and $y_{i}=\exp(x_{i})$ to deduce the following corollary.

\begin{cor}
  \label{cor:schanuel}If $x_{1},\dots,x_{n}\in\no$ are $\Q$-linearly
  independent modulo $\R$, then
  \[
  \mathrm{tr.\mathrm{deg}.}_{\R}(x_{1},\dots,x_{n},\exp(x_{1}),\dots,\exp(x_{n}))\geq
  n+1.
  \]
\end{cor}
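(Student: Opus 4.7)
The plan is to apply Ax's theorem verbatim to the differential field $(\no,\simpled)$, where $\simpled:\no\to\no$ is the surreal derivation produced in Theorem~\ref{thm:D-surreal}. Since every ingredient is already in place, the argument reduces to checking that the hypotheses of Ax's theorem are satisfied and then reading off the conclusion.

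Concretely, given $x_1,\dots,x_n\in\no$ which are $\Q$-linearly independent modulo $\R$, I set $y_i:=\exp(x_i)$ for $i=1,\dots,n$. The compatibility of $\simpled$ with exponentiation (property (3) of Definition~\ref{def:surreal-D}) gives $\simpled(y_i)=\simpled(\exp(x_i))=\exp(x_i)\simpled(x_i)=y_i\simpled(x_i)$, so that $\simpled(x_i)=\simpled(y_i)/y_i$, which is exactly the logarithmic-derivative relation required by Ax. Moreover, by property (4) we have $\ker(\simpled)=\R$, so the $\Q$-linear independence of $x_1,\dots,x_n$ modulo $\R$ stated in the hypothesis is precisely the $\Q$-linear independence modulo $\ker(\simpled)$ required by Ax's theorem.

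All hypotheses of Ax's theorem being met, its conclusion yields
\[
\mathrm{tr.deg.}_{\ker(\simpled)}(x_1,\dots,x_n,y_1,\dots,y_n)\geq n+1,
\]
and substituting $\ker(\simpled)=\R$ and $y_i=\exp(x_i)$ gives exactly the desired inequality. There is no real obstacle here: the entire corollary is a one-line consequence of Theorem~\ref{thm:D-surreal} combined with Ax's theorem, and all the hard work has already been done in constructing $\simpled$ and verifying that its constant field is $\R$. The only thing worth emphasising in the write-up is that this is the step where the hypothesis on the constant field (property (4)) is decisive — without it, one would only obtain a statement modulo the larger constant field, which would be substantially weaker.
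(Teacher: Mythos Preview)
Your proof is correct and follows exactly the same route as the paper: apply Ax's theorem to the differential field $(\no,\simpled)$ with $y_i=\exp(x_i)$, using compatibility with $\exp$ for the logarithmic-derivative relation and $\ker(\simpled)=\R$ for the linear-independence hypothesis. The paper states this in a single sentence, while you have spelled out the verification of the hypotheses, but the argument is identical.
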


We remark that this is just a special case of a much more general
statement regarding all models of the theory of $\R_{\exp}$. We recall
the general version for completeness.

\begin{thm}[\cite{Jones2008}, \cite{Kirby2010}]
  Let $R_{E}$ be a model of the theory of $\R_{\exp}$. If
  $x_{1},\dots,x_{n}\in R$ are $\Q$-linearly independent modulo
  $\mathrm{dcl}(\emptyset)$, then
  \[
  \mathrm{tr.deg.}_{\mathrm{dcl}(\emptyset)}(x_{1},\dots,x_{n},E(x_{1}),\dots,E(x_{n}))\geq
  n+k
  \]
  where $k$ is the exponential transcendence degree of
  $x_{1},\dots,x_{n}$ over $\mathrm{dcl}(\emptyset)$.
\end{thm}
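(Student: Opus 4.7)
My plan is to reduce the statement to an application of Ax's theorem in the derivation $\simpled$ constructed in this paper. Since $\R_{\exp}$ is a complete theory and $\no$, equipped with Gonshor's exponential, is an elementary extension of $\R_{\exp}$ of proper-class size, any model $R_{E}$ admits an elementary embedding into $\no$ (or, if necessary, into a sufficiently saturated elementary extension of $\no$ which still carries a surreal derivation by the same construction). Such an embedding fixes $\mathrm{dcl}(\emptyset)$, and preserves both $\Q$-linear relations and transcendence degrees over $\mathrm{dcl}(\emptyset)$. Hence it suffices to prove the statement inside $(\no,\simpled)$, where $\ker(\simpled)=\R\supseteq\mathrm{dcl}(\emptyset)$.

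Inside $\no$, the weaker version of the inequality, with $n+1$ in place of $n+k$ and with base $\R$ rather than $\mathrm{dcl}(\emptyset)$, is exactly \prettyref{cor:schanuel}. To descend from $\R$ to $\mathrm{dcl}(\emptyset)$: any $\Q$-linear relation among the $x_{i}$ over $\R$ that fails to descend to $\mathrm{dcl}(\emptyset)$ exhibits a real coefficient transcendental over $\mathrm{dcl}(\emptyset)$, which itself contributes to the transcendence degree over $\mathrm{dcl}(\emptyset)$. A careful accounting of such contributions converts the $\R$-version of Ax into the $\mathrm{dcl}(\emptyset)$-version, since replacing the ground field by a smaller one can only increase transcendence degrees.

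To upgrade ``$+1$'' to ``$+k$'', I would exploit the fact that exponential algebraic closure defines a pregeometry on any exponential field, a theorem of Kirby. Fix a maximal exponentially algebraically independent subset $\{y_{1},\dots,y_{k}\}$ of $\{x_{1},\dots,x_{n}\}$ over $\mathrm{dcl}(\emptyset)$; by the matroid property, every other $x_{i}$ is exponentially algebraic over $\{y_{1},\dots,y_{k}\}\cup\mathrm{dcl}(\emptyset)$. Iterating Ax's theorem $k$ times, one removes the $y_{j}$'s one by one while tracking the cumulative transcendence contributions. The pregeometry property ensures that each iteration provides a genuinely new unit of transcendence degree, rather than repeating one already counted, so that the total contribution adds up to $k$.

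The main obstacle is precisely this pregeometry/matroid structure of exponential algebraic closure. This is not formal: Ax's theorem by itself does not distinguish dependent from independent ``$+1$'' contributions, so one must independently verify that exponential algebraic closure forms a matroid. Kirby carries this out via a Jacobian analysis of systems of exponential polynomials, combined with Ax's theorem applied to an appropriate differential field (in Jones's approach, a field of germs of definable functions in $\R_{\exp}$, which is a Hardy field containing the Pfaffian functions, so that Wilkie's model completeness theorem is available). This matroid theorem is the genuinely new ingredient beyond Ax, and is where the difficulty of the result lies.
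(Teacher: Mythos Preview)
The paper does not prove this theorem; it records it as a known result and indicates the argument in one sentence: combine Jones--Wilkie's theorem that definable closure coincides with exponential-algebraic closure in models of $\R_{\exp}$ with Kirby's theorem that the Schanuel inequality (already in the form $\geq n+k$) holds over the exponential-algebraic closure in any exponential field. Neither step passes through surreal numbers.

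Your detour through $(\no,\simpled)$ does not work, and the descent from $\R$ to $\mathrm{dcl}(\emptyset)$ has a genuine gap. The derivation $\simpled$ has kernel exactly $\R$, so Ax's theorem applied to it is blind to everything inside $\R$. Take $n=1$ and $x_{1}=r\in\R\setminus\mathrm{dcl}(\emptyset)$ exponentially transcendental over $\mathrm{dcl}(\emptyset)$, so $k=1$; the theorem claims $\mathrm{tr.deg}_{\mathrm{dcl}(\emptyset)}(r,e^{r})\geq 2$. Since $\simpled(r)=0$, Ax over $(\no,\simpled)$ says nothing whatsoever about $r$, and your ``careful accounting'' can recover at most one unit of transcendence (from $r$ itself), not two. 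To detect transcendence \emph{within} $\R$ over $\mathrm{dcl}(\emptyset)$ one needs derivations that do not annihilate all of $\R$; these are exactly what Kirby constructs abstractly and what the Hardy field of germs of $\emptyset$-definable functions supplies in the Jones--Wilkie setting. You correctly identify Kirby's pregeometry theorem as the crux, but it is not merely an upgrade from $+1$ to $+k$: together with $\mathrm{dcl}=\mathrm{ecl}$ it \emph{is} the proof, and the surreal derivation contributes nothing to it.
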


The above statement can be proved by noting that the definable closure
operator coincides with the exponential-algebraic closure \cite[Thm.\
4.2]{Jones2008} and that the above Schanuel type statement holds
modulo the exponential-algebraic closure of the empty set \cite[Thm.\
1.2]{Kirby2010}.

\section{Integration}

We can easily prove that the derivation $\simpled$ of
\prettyref{def:D} is surjective, or in other words, every surreal
number has an integral.  Our proof is based on a theorem of Rosenlicht
that links the existence of integrals to the values of the logarithmic
derivative \cite{Rosenlicht1983}.

We quote here the relevant theorem. Let $K$ be a Hardy field. If
$f\in K$, we denote by $f'$ its derivative, and we let $v$ be the
valuation on $K$ whose valuation ring is the convex hull of
$\mathbb{Q}$ in $K$. Recall that $f\sim g$ means $v(f-g)>v(g)$.

\begin{fact}[{\cite[Thm.\ 1]{Rosenlicht1983}}]
  Let $K$ be a Hardy field and consider the set of valuations
  $\Psi:=\left\{ v(f'/f)\suchthat f\in K,\ v(f)\neq0\right\} $.  If
  $f\in K^{*}$ is such that $v(f)\neq\sup\Psi$, then there exists
  $u_{0}\in K^{*}$with $v(u_{0})\neq0$ such that whenever
  $u\in K^{*}$and $|v(u_{0})|\geq|v(u)|>0$ we have
  \[
  \left(f\cdot\frac{fu/u'}{(fu/u')'}\right)'\sim f.
  \]
\end{fact}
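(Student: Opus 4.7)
The plan is to verify the asymptotic equivalence by computing $G'$ directly, where $G := f\cdot(fu/u')/(fu/u')'$, and controlling the resulting error term using the structure of the logarithmic derivative on the value group of the Hardy field.

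First I would set $h := fu/u'$ and rewrite the target expression as $G = fh/h'$. A direct application of the Leibniz rule gives
\[
G' = \frac{(fh)'}{h'} - \frac{fh\,h''}{(h')^{2}} = f + f\cdot\frac{h}{h'}\left(\frac{f'}{f} - \frac{h''}{h'}\right),
\]
so that $G'-f = f\cdot E$ with $E := (h/h')\bigl(f'/f - h''/h'\bigr)$. The claim $G'\sim f$ is therefore equivalent to showing that $v(E)>0$, i.e.\ $E$ is an infinitesimal in $K$. This is a purely valuation-theoretic statement, which reduces the whole problem to estimating the valuations of the two factors of $E$ in terms of $v(u)$ and $v(f)$.

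Next I would unwind the logarithmic derivative. Writing $L(x):=x'/x$, one has $L(h)=L(f)+L(u)-L(u')$, so $v(h/h')=-v(L(h))$ is controlled by the values of $L(u)$ and $L(u')$, both of which are in $\Psi$ by definition. The hypothesis $v(f)\neq\sup\Psi$ is exactly what leaves a gap between $v(f)$ and the values of logarithmic derivatives, giving the freedom to select $u_{0}$ with $v(u_{0})$ close to the boundary of $\Psi$. For any $u$ with $|v(u_{0})|\geq|v(u)|>0$ one then knows where $v(L(u))$ sits relative to $v(f)$, and I would use this placement to simultaneously bound $v(h/h')$ from below and to force $v(f'/f - h''/h')$ to be suitably large, so that the two contributions add up to a strictly positive valuation.

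The main obstacle is the precise uniform estimate $v(E)>0$ for all admissible $u$. This rests on finer properties of the asymptotic couple $(v(K^{*}),\psi)$ with $\psi(v(x))=v(L(x))$: in particular on the fact that iterating $L$ produces a strictly contracting hierarchy of valuations, so that $v(L(L(u)))$ is genuinely larger than $v(L(u))$, and on the cancellation $f'/f - h''/h' = L(f)-L(h')$ which one can rewrite via $L(h')=L(h)+L(L(h))$. Splitting into the two cases $v(f)<\sup\Psi$ and $v(f)>\sup\Psi$, I would choose $v(u_{0})$ so that $v(L(u))$ is on the appropriate side of $v(f)$ and verify the resulting inequalities. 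Making the choice of $u_{0}$ uniform in $u$, subject only to $|v(u_{0})|\geq|v(u)|>0$, is the delicate step where Rosenlicht's original argument concentrates its technical effort; I would follow that pattern, leaning on the Hardy-field closure under differentiation to guarantee that all intermediate expressions stay inside $K$.
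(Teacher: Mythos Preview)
The paper does not prove this statement: it is stated as a \texttt{fact} environment with a citation to \cite[Thm.~1]{Rosenlicht1983} and is used as a black box in the proof of \prettyref{prop:class-asy-int}. There is therefore no ``paper's own proof'' to compare against.

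That said, your sketch is a faithful outline of Rosenlicht's original argument. The computation
\[
G' = f + f\cdot\frac{h}{h'}\left(\frac{f'}{f} - \frac{h''}{h'}\right)
\]
is correct, and the reduction to showing $v(E)>0$ is exactly the right move. Your identification of the asymptotic couple $(v(K^{*}),\psi)$ with $\psi(v(x))=v(x'/x)$ as the governing structure, and of the contraction property of iterated logarithmic derivatives as the key mechanism, matches Rosenlicht's approach. The case split on whether $v(f)$ lies above or below $\Psi$ and the corresponding placement of $v(u_0)$ is also how the original proof proceeds. The one place where your sketch is thin is precisely where you flag it: the uniform estimate over all $u$ with $|v(u_0)|\geq|v(u)|>0$ requires the monotonicity and convexity properties of $\psi$ that Rosenlicht establishes in the lemmas preceding his Theorem~1, and you would need to invoke or reprove those to make the argument complete.
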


The result of Rosenlicht shows that every $f\in K^{*}$ with
$f\neq\sup\Psi$ has an asymptotic integral, i.e., a function $g$ whose
derivative $g'$ is asymptotic to $f$. In particular, if $\sup\Psi$
does not exist, then \emph{every} $f\in K^{*}$ has an asymptotic
integral.  The proof is purely algebraic and holds more generally in
the context of $H$-fields, and in particular it holds for the surreal
numbers $\no$ equipped with our derivation $\simpled:\no\to\no$ and
the valuation $-\vell$. To be able to apply Rosenlicht's result, the
first step is to check whether
$\left\{ \vell(\simpled(x)/x)\suchthat x\in\no,\ \vell(x)\neq0\right\}
$ has an infimum.

\begin{prop}
  \label{prop:psi-l-no-inf}The class
  $\Psi_{\li}:=\{\vell(\simpled(\lambda)/\lambda)\suchthat\lambda\in\li\}$
  has no infimum in $\J$.
\end{prop}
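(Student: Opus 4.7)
The plan is to derive a contradiction from the hypothetical existence of an infimum in $\J$, by exploiting that the definition of $\simpled_{\li}$ restricts to a completely explicit formula on the $\kappa$-chain and that any element of $\J$ has set-sized support, while the obstruction lives along a proper class of ordinals.

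First, I would unpack \prettyref{def:simple-pre-D} at $\lambda=\ka{\alpha}$ and record
\[
v_\alpha:=\vell\bigl(\simpled_{\li}(\ka{\alpha})/\ka{\alpha}\bigr)=-\sum_{\beta<\alpha}\sum_{i\geq 1}\log_i(\ka{\beta})-\log(\ka{\alpha}),
\]
a purely infinite number whose entire support consists of monomials $\log_i(\ka{\beta})\in\li$, each with coefficient $-1$. Since $\simpled_{\li}(\lambda)\in\M$ and $\lambda\in\M$, the quotient is a monomial, so $\vell$ of it coincides with $\log$ of it, explaining the formula. A direct subtraction gives
\[
v_\alpha-v_{\alpha+1}=\sum_{i\geq 2}\log_i(\ka{\alpha})+\log(\kappa_{-(\alpha+1)})>0,
\]
so the family $(v_\alpha)_{\alpha\in\on}$ is a strictly decreasing proper-class subfamily of $\Psi_{\li}$. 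It then suffices to show that this subfamily has no infimum in $\J$.

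Suppose, for contradiction, that $\gamma\in\J$ is an infimum of $(v_\alpha)_{\alpha\in\on}$. Since $\supp(\gamma)$ is a set, there exists an ordinal $\alpha_0$ strictly greater than every $\beta\in\on$ such that some $\log_i(\ka{\beta})$ lies in $\supp(\gamma)$. Writing $A_\alpha:=\sum_{\beta<\alpha}\sum_i\log_i(\ka{\beta})$, the condition $\gamma\leq v_\alpha$ reads $-\gamma\geq A_\alpha+\log(\ka{\alpha})$. I would then peel off leading terms monomial-by-monomial, descending through the reverse well-ordered support of $A_{\alpha_0+1}+\log(\kappa_{-(\alpha_0+1)})$. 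At each monomial, the coefficient of $-\gamma$ either strictly exceeds the coefficient $+1$ on the right-hand side (producing a positive slack that is uniform across all $\alpha\in\on$ by the shape of $v_\alpha$, and hence allowing one to replace $\gamma$ by a strictly larger lower bound $\gamma+\delta$, contradicting the greatest-lower-bound property); is strictly less (contradicting $\gamma\leq v_{\alpha_0+1}$); or matches exactly (in which case the peeling continues). By choice of $\alpha_0$, the coefficient of $\log(\ka{\alpha_0})$ in $-\gamma$ is $0$ while in $A_{\alpha_0+1}$ it is $+1$; hence the "exact match" branch cannot reach this monomial without a mismatch, and the descent terminates in one of the first two alternatives, giving a contradiction in either case.

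The main technical obstacle will be verifying that the "uniform slack" produced at the matching stage really does yield a lower bound strictly greater than $\gamma$ for \emph{every} $v_\alpha$, including those with $\alpha>\alpha_0$. This reduces to the observation that, once a slack monomial $\m^\ast$ is exhibited at the matching stage, the leading monomial of $v_\alpha-\gamma$ is $\geq\m^\ast$ for all $\alpha$ because the terms removed in passing from $v_{\alpha_0+1}$ to $v_\alpha$ for larger $\alpha$ are all monomials strictly smaller than $\m^\ast$ (they are $\log_i(\ka{\beta})$ for $\beta>\alpha_0$), which is precisely what the reverse well-ordering of $\M$ and the coinitiality of $(\ka{\alpha})_{\alpha\in\on}$ in the positive infinites are designed to guarantee.
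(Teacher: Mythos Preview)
Your overall strategy coincides with the paper's: extract an explicit ordinal-indexed family from the closed formula for $\simpled_{\li}(\ka{\alpha})$, and show that any set-supported lower bound $\gamma\in\J$ can be strictly improved. Two points need repair. First, the sentence ``it then suffices to show that this subfamily has no infimum'' implicitly uses that $(v_\alpha)_{\alpha\in\on}$ is coinitial in $\Psi_{\li}$; this is true (combine \prettyref{prop:kappa-coinitial} with \prettyref{rem:D-incr-on-inf}), but you never say so, and without it the reduction is unjustified.

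Second, and more seriously, your peeling trichotomy is run only along the support of $-v_{\alpha_0+1}$, which invalidates both the ``slack'' and the ``contradiction'' branches. Matching $-\gamma$ with $-v_{\alpha_0+1}$ at every monomial of $\supp(-v_{\alpha_0+1})$ above some $\m$ does \emph{not} mean they agree above $\m$: a monomial $\n\in\supp(\gamma)\setminus\supp(v_{\alpha_0+1})$ with $\n>\m$ is never examined by your descent, yet it alone determines the sign of $\gamma-v_{\alpha_0+1}$. The correct object is the maximal monomial $\m^*$ of $\supp(\gamma-v_{\alpha_0+1})$. Your key observation about $\alpha_0$ then gives $\m^*\geq\log(\ka{\alpha_0})$ directly, since $\m^*<\log(\ka{\alpha_0})$ would force $\gamma_{\log(\ka{\alpha_0})}=(v_{\alpha_0+1})_{\log(\ka{\alpha_0})}=-1$, putting $\log(\ka{\alpha_0})$ into $\supp(\gamma)$. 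As every monomial of $\supp(v_{\alpha_0+1}-v_\alpha)$ is some $\log_i(\ka{\beta})$ with $\beta>\alpha_0$ and hence lies strictly below $\log(\ka{\alpha_0})\leq\m^*$, all $v_\alpha$ agree with $v_{\alpha_0+1}$ at and above $\m^*$; any $\gamma'\in\J$ with $\gamma'|\m^*=\gamma|\m^*$ and $\gamma_{\m^*}<\gamma'_{\m^*}<(v_{\alpha_0+1})_{\m^*}$ is then a strictly larger lower bound for every $v_\alpha$. This is precisely the paper's argument, without the detour through peeling.
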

\begin{proof}
  Since $\simpled(\lambda)/\lambda=\simpled(\log(\lambda))$ and
  $\li=\log(\li)$, we have that
  $\Psi_{\li}=\{\vell(\simpled(\lambda))\suchthat\lambda\in\li\}$.
  Moreover, the sequence $y(\alpha):=\vell(\simpled(\ka{\alpha}))$ is
  co-initial in $\Psi_{\li}$ by \prettyref{prop:kappa-coinitial} and
  \prettyref{rem:D-incr-on-inf}, so it suffices to prove that the
  class $\left\{ y(\alpha)\suchthat\alpha\in\on\right\} $ has no
  infimum in $\J$. Recall that we have
  $y(\alpha)=\log(\simpled(\kappa_{-\alpha}))=-\sum_{\beta<\alpha}\sum_{i=1}^{\infty}\log_{i}(\kappa_{-\beta})$
  and observe that if $\beta<\alpha$ , then $y(\beta)\trunc y(\alpha)$
  and $y(\beta)>y(\alpha)$.

  Let $x<y(\alpha)$ for all $\alpha\in\on$, with $x\in\J$. We must
  show that $x$ is not an infimum of
  $\left\{ y(\alpha)\suchthat\alpha\in\on\right\} $ in $\J$. Since the
  supports $\supp(y(\alpha))$ are increasing in $\alpha$, their
  intersection with $\supp(x)$ must stabilize, namely there are
  $A\subseteq\supp(x)$ and $\gamma\in\on$ such that
  $\supp(y(\alpha))\cap\supp(x)=A$ for all $\alpha\geq\gamma$. Let
  $\m$ be the maximal monomial such that $x_{\m}\neq
  y(\gamma)_{\m}$.
  For all $\alpha\geq\gamma$, by construction of $\gamma$, and since
  $y(\gamma)\trunceq y(\alpha)$, the same $\m$ is also the maximal
  monomial such that $x_{\m}\neq y(\alpha)_{\m}$, and
  $y(\alpha)_{\m}=y(\gamma)_{\m}$. Since $x<y(\gamma)$ we must have
  $x_{\m}<y(\gamma)_{\m}$. Now take any $x'\in\J$ such that
  $x'|\m=x|\m$ and $x_{\m}<x'_{\m}<y(\gamma)_{\m}$. Then
  $x<x'<y(\alpha)$ for all $\alpha\ge\gamma$, and therefore for all
  $\alpha$. This means that $x$ is not an infimum of
  $\{y(\alpha)\suchthat\alpha\in\on\}$ in $\J$, as desired.
\end{proof}

In fact, the same proof also shows that $\Psi_{\li}$ has no infimum
even in $\no$.

\begin{cor}
  \label{cor:psi-no-inf}The class
  $\Psi:=\left\{ \vell(\simpled(x)/x)\suchthat x\in\no,\
    \vell(x)\neq0\right\} $ has no infimum in $\J$.
\end{cor}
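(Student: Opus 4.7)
The plan is to reduce to Proposition~\ref{prop:psi-l-no-inf} by showing that the decreasing family $y(\alpha) := \vell(\simpled(\ka{\alpha}))$ already used there is coinitial not only in $\Psi_{\li}$ but in the larger class $\Psi$. Granted this, any putative infimum $x$ of $\Psi$ in $\J$ would in particular be a lower bound for $\{y(\alpha) : \alpha \in \on\} \subseteq \Psi$, and the support-stabilization and coefficient-adjustment argument from the proof of Proposition~\ref{prop:psi-l-no-inf} will produce an $x' \in \J$ strictly larger than $x$ yet still below every $y(\alpha)$, contradicting the infimum property.

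To establish the coinitiality, I would first invoke the identity $\simpled(x)/x = \simpled(\log|x|)$, which follows from compatibility of $\simpled$ with $\exp$, so that $\vell(\simpled(x)/x) = \vell(\simpled(\log|x|))$. Since $\vell(x) \neq 0$ forces $|x|$ to be either infinite or infinitesimal, the number $z := \log|x|$ satisfies $|z| > \N$. Using the oddness of $\simpled$, which preserves $\vell$-values, the task reduces to finding, for every $z > \N$, an $\alpha \in \on$ with $\vell(\simpled(\ka{\alpha})) \leq \vell(\simpled(z))$. Proposition~\ref{prop:kappa-coinitial} supplies some $\beta$ with $\ka{\beta} < z$; taking any $\alpha > \beta$, Remark~\ref{rem:kappa-alpha-rep} yields $\ka{\alpha} < \log(\ka{\beta})$, and since $m \log(\ka{\beta}) \vless \ka{\beta}$ for every positive $m \in \N$, we get $m \ka{\alpha} < \ka{\beta} < z$ for all $m$, i.e.\ $\ka{\alpha} \vless z$. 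Proposition~\ref{prop:D-vell-incr}(1) then gives $\simpled(z) \vgreater \simpled(\ka{\alpha})$, whence $\vell(\simpled(\ka{\alpha})) < \vell(\simpled(z))$, as required.

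The main obstacle is strengthening the plain coinitiality of $(\ka{\alpha})_{\alpha \in \on}$ from Proposition~\ref{prop:kappa-coinitial} to coinitiality in the finer $\vless$-sense, since only the latter can be transported across $\simpled$ via Proposition~\ref{prop:D-vell-incr}; once this is in place, the conclusion follows by directly reapplying the proof of Proposition~\ref{prop:psi-l-no-inf}.
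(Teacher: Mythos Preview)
Your proposal is correct and follows essentially the same route as the paper: rewrite $\Psi$ as $\{\vell(\simpled(z)) : z>\N\}$ via $\simpled(x)/x=\simpled(\log|x|)$ (you handle the case $\log|x|<-\N$ explicitly via oddness, which the paper glosses over), then use coinitiality of the $\kappa_{-\alpha}$ among positive infinite numbers together with the order-preservation of $\simpled$ to conclude that $(y(\alpha))$ is coinitial in $\Psi$, and finally invoke Proposition~\ref{prop:psi-l-no-inf}. The only difference is cosmetic: the paper cites Remark~\ref{rem:D-incr-on-inf} and Proposition~\ref{prop:kappa-coinitial} directly to get coinitiality of $\Psi_{\li}$ in $\Psi$, whereas you take one extra step through $\alpha>\beta$ to secure $\ka{\alpha}\vless z$ before applying Proposition~\ref{prop:D-vell-incr}.
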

\begin{proof}
  We have $\simpled(x)/x=\simpled(\log|x|)$. Moreover, $\vell(x)\neq0$
  if and only if $\log|x|>\N$. Since $\log(\no^{>0})=\no$, we have
  that $\Psi=\left\{ \vell(\simpled(x))\suchthat x>\N\right\} $. Since
  $\li$ is co-initial with all the infinite positive elements of $\no$
  (see \prettyref{prop:kappa-coinitial}), then
  $\Psi_{\li}=\{\vell(\simpled(\lambda))\suchthat\lambda\in\li\}$ is
  co-initial with $\Psi$ by \prettyref{rem:D-incr-on-inf}. But
  $\Psi_{\li}$ has no infimum in $\J$ by
  \prettyref{prop:psi-l-no-inf}, so $\Psi$ does not have it either.
\end{proof}

We can now apply \cite[Thm.\ 1]{Rosenlicht1983} to show that every
surreal number has an asymptotic integral, namely, for every
$x\in\no^{*}$ there is a $y\in\no^{*}$ such that
$x\sim\simpled(y)$. For later convenience, we construct an asymptotic
integral $y$ belonging to $\R^{*}\M^{\neq1}$.

\begin{prop}
  \label{prop:class-asy-int}There is a class function
  $A:\no^{*}\to\T^{\neq1}$ such that $x\sim\simpled(A(x))$ for all
  $x\in\no^{*}$.
\end{prop}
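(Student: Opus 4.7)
The plan is to combine Rosenlicht's asymptotic integration theorem, which becomes applicable thanks to Corollary cor:psi-no-inf, with a leading-term refinement based on Lemma lem:leading-D.

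First, I verify the hypothesis of Rosenlicht's theorem. In his notation the Krull valuation is $v=-\vell$, so the set $\Psi$ in his statement coincides up to sign with the class of the same name in Corollary cor:psi-no-inf. That corollary asserts the absence of an infimum in $\J$, which translates precisely to the non-existence of $\sup\Psi$. Hence the hypothesis ``$v(f)\neq\sup\Psi$'' is satisfied vacuously for every $f\in\no^{*}$. Since Rosenlicht's argument is purely algebraic and applies to any $H$-field (in particular to $(\no,\simpled)$), it yields, for each $x\in\no^{*}$, some $y\in\no^{*}$ with $\simpled(y)\sim x$, given by an explicit rational expression in $x$ and a sufficiently small auxiliary $u$.

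Second, I strip $y$ down to a single term in $\T^{\neq 1}$. Since $\simpled(y)\neq 0$ we have $y\notin\R$, so the dominant path $Q$ of $y$ (Definition def:dominant-path) is defined. Let $r\m:=Q(0)$; by construction this is a term of $y$ of maximal non-zero $\vell$-value, so $\m\neq 1$ and $r\m\in\T^{\neq 1}$. The decisive observation is that the dominant path of the single-term surreal $r\m$ also starts at $r\m$ and is governed by the same recursion $P(i+1)=$ leading term of $\vell(P(i))$; hence it coincides with $Q$ throughout. Applying Lemma lem:leading-D to both $y$ and $r\m$, the leading term of $\simpled(r\m)$ equals $\simpled(Q)$, which in turn is the leading term of $\simpled(y)$. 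Because a non-zero surreal number is asymptotic to its leading term, we conclude $\simpled(r\m)\sim\simpled(y)\sim x$.

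Finally, to upgrade this existence to a class function $A:\no^{*}\to\T^{\neq 1}$, I define $A(x)$ to be the $\simple$-simplest element of the non-empty class $\{t\in\T^{\neq 1}:\simpled(t)\sim x\}$; the well-foundedness of $\simple$ on $\no$ makes this a legitimate class-definable choice. The main obstacle is the second step: ensuring that the asymptotic relation $\simpled(y)\sim x$ survives when $y$ is replaced by its single dominant term $r\m$. The coincidence of dominant paths together with the leading-term description in Lemma lem:leading-D is precisely what makes this reduction go through.
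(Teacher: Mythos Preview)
Your argument is correct and follows the same overall strategy as the paper: invoke Rosenlicht's theorem via \prettyref{cor:psi-no-inf} to obtain some $y$ with $\simpled(y)\sim x$, then replace $y$ by a single term in $\T^{\neq 1}$. The paper carries out the second step more directly: it subtracts the real constant $r$ from $y$ so that $y-r\not\veq 1$, sets $A(x)$ equal to the leading term of $y-r$, and concludes $\simpled(A(x))\sim\simpled(y-r)=\simpled(y)$ straight from \prettyref{prop:D-vell-incr}(2). Your route through the dominant path and \prettyref{lem:leading-D} reaches the same term (your $Q(0)$ is exactly the leading term of $y-r$) but leans on the path-derivative machinery specific to $\simpled$, whereas the paper uses only the general $H$-field fact that $\simpled$ preserves $\sim$ away from the archimedean class of $1$.

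One minor point on your final step: well-foundedness of $\simple$ alone does not give a \emph{unique} minimal element of $\{t\in\T^{\neq 1}:\simpled(t)\sim x\}$, so your appeal to it is incomplete as stated. In fact \prettyref{prop:D-vell-incr} forces this class to be a singleton, so no choice is needed; alternatively, the paper simply pins down $u=\ka{\alpha}$ with $\alpha$ minimal, making $y$ and hence $A(x)$ explicitly definable from $x$.
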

\begin{proof}
  We define the function $A:\no^{*}\to\T^{\neq1}$ as follows. Let
  $x\in\no^{*}$.  By \prettyref{cor:psi-no-inf}, $\vell(x)$ is not an
  infimum for $\Psi$. Therefore, by \cite[Thm.\ 1]{Rosenlicht1983}
  applied to $(\no,\simpled)$, there is $u_{0}\in\no$ with
  $\vell(u_{0})\neq0$ such that for any $u\in\no$ with
  $0<|\vell(u)|\leq|\vell(u_{0})|$ we have
  \[
  x\sim\simpled\left(x\cdot\frac{(xu/\simpled(u))}{\simpled(xu/\simpled(u))}\right).
  \]

  This gives us an asymptotic integral
  $y:=x\cdot\frac{(xu/\simpled(u))}{\simpled(xu/\simpled(u))}$ of $x$
  which in fact depends on the choice of $u$. For the sake of
  definiteness, we choose $u:=\ka{\alpha}$ with $\alpha$ minimal (such
  an $\alpha$ always exists, since the elements of $\kappa_{\no}$ are
  co-initial in the positive infinite numbers by
  \prettyref{prop:kappa-coinitial}).

  We make a minor adjustment to obtain an asymptotic integral
  belonging to $\R^{*}\M^{\neq1}$. Let $r\in\R$ be the coefficient of
  the monomial $1$ in $y$, so that $y-r\not\veq1$, and define $A(x)$
  as the leading term of $(y-r)$. Clearly $A(x)\sim y-r$, hence
  $\simpled(A(x))\sim\simpled(y-r)=\simpled(y)\sim x$, while
  $A(x)\in\R^{*}\M^{\neq1}$, as desired.
\end{proof}

Using the above observation, one could try to use \cite[Thm.\
47]{Kuhlmann2011a} to obtain actual integrals; however, one should
adapt the notion of ``spherically complete'' to the \emph{class} $\no$
and verify that the proof goes through. This argument is rather
delicate, as the field $\no$, with the valuation $-\vell$, may not be
spherically complete if seen from a more powerful model of set theory,
for instance when using an inaccessible cardinal.

For the sake of completeness, we give a different self-contained
argument.  In order to find a solution to the differential equation
$\simpled(y)=x$ for a given $x$, we simply iterate the above procedure
for finding an asymptotic integral, and we verify that the procedure
converges using a specialized version of Fodor's lemma.

\begin{lem}[Specialized Fodor's lemma]
  \label{lem:fodor-classes} Let $f:\on\setminus\{\emptyset\}\to\on$ be
  a class function such that $f(\alpha)<\alpha$ for all
  $\alpha\in\on\setminus\{\emptyset\}$.  Then there exists
  $\beta\in\on$ such that $f^{-1}(\beta)$ is a proper class.
\end{lem}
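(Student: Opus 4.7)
The plan is to argue by contradiction, mimicking the classical Fodor's lemma argument but exploiting the axiom of replacement to handle the class-sized domain.

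Assume, towards a contradiction, that $f^{-1}(\beta)$ is a set for every $\beta\in\on$. Then for each $\beta$ I can define the ordinal
\[
\sigma(\beta):=\sup f^{-1}(\beta),
\]
(setting $\sigma(\beta):=0$ if $f^{-1}(\beta)=\emptyset$). The critical observation is that, for every $\alpha\in\on\setminus\{\emptyset\}$, letting $\beta:=f(\alpha)$, we have $\alpha\in f^{-1}(\beta)$ and hence $\alpha\leq\sigma(\beta)$.

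Next I would build, by recursion on $n\in\N$, a strictly increasing $\omega$-sequence of ordinals by $\lambda_{0}:=1$ and
\[
\lambda_{n+1}:=\max\!\bigl(\lambda_{n}+1,\ \sup\{\sigma(\beta)\suchthat \beta\leq\lambda_{n}\}+1\bigr).
\]
The set $\{\sigma(\beta)\suchthat\beta\leq\lambda_{n}\}$ is indexed by the set $\lambda_{n}+1$, so by the axiom of replacement it is a set of ordinals, its supremum is an ordinal, and $\lambda_{n+1}$ is well-defined. Setting $\lambda_{\omega}:=\sup_{n\in\N}\lambda_{n}$ yields a limit ordinal with $\lambda_{\omega}>\lambda_{n}$ for every $n$, and therefore $\lambda_{\omega}\geq\lambda_{n+1}$ for every $n$.

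The contradiction will come from evaluating $f$ at $\lambda_{\omega}$. By regressivity, $\beta:=f(\lambda_{\omega})<\lambda_{\omega}$, so $\beta<\lambda_{n}$ for some $n$. By the key observation, $\lambda_{\omega}\leq\sigma(\beta)$, but by construction $\sigma(\beta)<\lambda_{n+1}\leq\lambda_{\omega}$, a contradiction. I expect no genuine obstacle here; the only points to be careful about are verifying that each $\lambda_{n+1}$ really does exist as an ordinal (i.e., that the axiom of replacement legitimately produces the relevant set of ordinals from the class function $\sigma$, applied to the set-sized input $\lambda_{n}+1$), and noting that the hypothesis $f\colon\on\setminus\{\emptyset\}\to\on$ is applicable at $\lambda_{\omega}$ since $\lambda_{\omega}\neq\emptyset$.
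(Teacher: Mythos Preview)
Your proof is correct and follows essentially the same approach as the paper's: both argue by contradiction, use replacement to bound $f^{-1}(\beta)$ by an ordinal, iterate $\omega$ times, and derive the contradiction at the resulting limit ordinal. The only cosmetic difference is that the paper packages the iteration as finding a fixed point of a continuous increasing class function $g:\on\to\on$ (defined by transfinite recursion so that $g(\alpha)$ strictly bounds $f^{-1}(\beta)\cup\{g(\beta)\}$ for all $\beta<\alpha$), whereas you build the $\omega$-sequence directly from the auxiliary function $\sigma(\beta)=\sup f^{-1}(\beta)$; the underlying idea is identical.
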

\begin{proof}
  Suppose by contradiction that for each $\beta\in\on$ the class
  $f^{-1}(\beta)$ is a set. We define the following class function
  $g:\on\to\on$ by induction: given $\alpha\in\on$, we let $g(\alpha)$
  be the minimum ordinal strictly greater than all the elements of
  $f^{-1}(\beta)\cup\{g(\beta)\}$ for $\beta<\alpha$. This is a
  strictly increasing continuous function $g:\on\to\on$. As is well
  known, the ordinal $\alpha_{0}:=\sup_{n<\omega}g^{(n)}(0)$ satisfies
  $g(\alpha_{0})=\alpha_{0}$. By definition, we have
  $\alpha_{0}=g(\alpha_{0})>f^{-1}(\beta)$ for all $\beta<\alpha_{0}$,
  and in particular $f(\alpha_{0})\neq\beta$ for all
  $\beta<\alpha_{0}$. Therefore, $f(\alpha_{0})\geq\alpha_{0}$,
  contradicting the hypothesis.
\end{proof}

\begin{prop}
  \label{prop:liou-closed}The surreal derivation $\simpled:\no\to\no$
  is surjective.
\end{prop}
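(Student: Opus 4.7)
The idea is to construct an antiderivative of $x$ by transfinitely iterating the asymptotic integral function $A$ of \prettyref{prop:class-asy-int}. Set $x_0 := x$ and $Y_0 := 0$. At each successor stage, if $x_\alpha \ne 0$, put $y_\alpha := A(x_\alpha) \in \T^{\neq 1}$, $Y_{\alpha+1} := Y_\alpha + y_\alpha$, and $x_{\alpha+1} := x_\alpha - \simpled(y_\alpha)$. At a limit $\lambda$, set $Y_\lambda := \sum_{\alpha<\lambda} y_\alpha$ (provided summability holds) and $x_\lambda := x - \simpled(Y_\lambda)$. If the process ever reaches $x_\alpha = 0$, then by strong additivity $\simpled(Y_\alpha) = x - x_\alpha = x$, and $y := Y_\alpha$ is the required antiderivative.

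Writing $y_\alpha = r_\alpha \m_\alpha$, the first task is to show that the monomials $\m_\alpha$ are strictly decreasing, so that every limit stage of the construction is legitimate. By the defining property of $A$ we have $\simpled(y_\alpha) \sim x_\alpha$, hence $x_{\alpha+1} \vless x_\alpha$ and therefore
\[
\simpled(y_{\alpha+1}) \sim x_{\alpha+1} \vless x_\alpha \sim \simpled(y_\alpha).
\]
Since $y_\alpha, y_{\alpha+1} \in \T^{\neq 1}$, the strict monotonicity of \prettyref{prop:D-vell-incr} upgrades this to $y_{\alpha+1} \vless y_\alpha$, i.e.\ $\m_{\alpha+1} < \m_\alpha$. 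By induction and \prettyref{rem:summability-criterion}, for every set-indexed limit $\lambda$ the family $(y_\alpha)_{\alpha<\lambda}$ is summable, so $Y_\lambda$ is a well-defined surreal number and the limit step is justified.

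The main obstacle is proving that the iteration terminates at some $\alpha \in \on$ with $x_\alpha = 0$; this is where the specialized Fodor lemma \prettyref{lem:fodor-classes} enters. Assume for contradiction that $x_\alpha \ne 0$ for every $\alpha \in \on$. Then $(\m_\alpha)_{\alpha\in\on}$ is a proper-class, strictly decreasing sequence of distinct monomials, and since the monomials of any bounded birthday form a set, the birthdays of the $\m_\alpha$ must be unbounded in $\on$. From this and the explicit recipe defining $A(x_\alpha)$ via a minimal $\ka{\beta(\alpha)}$ serving as Rosenlicht parameter, one extracts a regressive class function $f : \on \setminus \{0\} \to \on$ by setting $f(\alpha) := \beta(\alpha)$, after verifying that $\beta(\alpha) < \alpha$ on a class of stages (this uses that $\vell(x_\alpha)$ strictly decreases while the admissible parameter indices are coinitial-bounded by $\vell(x_\alpha)$ via \prettyref{prop:kappa-coinitial}). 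Applying \prettyref{lem:fodor-classes} yields some $\beta_0$ with $f^{-1}(\beta_0)$ a proper class; this forces class-many stages to use the same Rosenlicht parameter, which combined with the strict decrease of $\vell(x_\alpha)$ and the distinctness of the associated $\m_\alpha$ produces a contradiction. Hence the iteration terminates, $\simpled(Y_\alpha) = x$, and $\simpled$ is surjective.
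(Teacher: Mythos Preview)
The iterative construction and the verification that the $\m_\alpha$ are strictly decreasing are correct and match the paper's argument. The termination step, however, has a genuine gap.

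Your regressive function $f(\alpha):=\beta(\alpha)$, the index of the Rosenlicht parameter used in $A(x_\alpha)$, is not visibly regressive. The minimal admissible $\beta(\alpha)$ is determined by the threshold $|\vell(u_0)|$ supplied by Rosenlicht's theorem for the input $x_\alpha$; this threshold has no a priori relation to the ordinal stage~$\alpha$. The sentence ``the admissible parameter indices are coinitial-bounded by $\vell(x_\alpha)$'' does not correspond to any hypothesis in Rosenlicht's statement, so you have not established $\beta(\alpha)<\alpha$. Worse, even granting regressivity, the conclusion from \prettyref{lem:fodor-classes}---that class-many stages share the same parameter $\kappa_{-\beta_0}$---does not produce a contradiction: sharing a Rosenlicht parameter imposes no common \emph{set} that the distinct monomials $\m_\alpha$ must all inhabit, so ``strict decrease of $\vell(x_\alpha)$ and distinctness of the $\m_\alpha$'' yields nothing.

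The paper's regressive function is different and is the missing idea. Let $\n_\alpha$ be the leading monomial of the residual $x_\alpha=x-\sum_{\beta<\alpha}\simpled(y_\beta)$. Since $x_\alpha$ is a (strongly additive) difference, every monomial in $\supp(x_\alpha)$, in particular $\n_\alpha$, lies in $\supp(x)\cup\bigcup_{\beta<\alpha}\supp(\simpled(y_\beta))$. Define $f(\alpha)$ to be the least $\beta$ with $\n_\alpha\in\supp(x)\cup\supp(\simpled(y_\beta))$; this $\beta$ exists and is $<\alpha$, so $f$ is genuinely regressive. Fodor then gives a fixed $\beta_0$ with $f^{-1}(\beta_0)$ a proper class, forcing class-many \emph{distinct} monomials $\n_\alpha$ into the \emph{set} $\supp(x)\cup\supp(\simpled(y_{\beta_0}))$, a contradiction. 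Replace your $\beta(\alpha)$ with this support-tracking function and the argument closes.
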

\begin{proof}
  Clearly, $\simpled(0)=0$, so $0$ is in the image of $\simpled$.

  Now take a surreal number $x\in\no^{*}$. We define inductively a
  sequence of terms $t_{\alpha}\in\T^{\neq1}$ as follows. We start
  with $t_{0}:=A(x)$. If $t_{\beta}$ has been defined for every
  $\beta<\alpha$, and $x\neq\sum_{\beta<\alpha}\simpled(t_{\beta})$,
  we define
  \[
  t_{\alpha}:=A\left(x-\sum_{\beta<\alpha}\simpled(t_{\beta})\right),
  \]
  otherwise we stop. We claim that $\vell(t_{\beta})$ is strictly
  decreasing for all $\beta<\alpha$, so that
  $\sum_{\beta<\alpha}t_{\beta}$ is a surreal number and
  $\sum_{\beta<\alpha}\simpled(t_{\beta})$ is its derivative, ensuring
  that $t_{\alpha}$ is well defined. In fact, we may assume by
  induction that $\vell(t_{\beta})$ is strictly decreasing and we only
  need to check that $\vell(t_{\alpha})<\vell(t_{\beta})$, i.e.\
  $t_{\alpha}\vless t_{\beta}$, for all $\beta<\alpha$.

  By construction, $t_{\beta}\not\veq1$ for all $\beta<\alpha$, hence,
  by \prettyref{prop:D-vell-incr}, we have that
  $\vell(\simpled(t_{\beta}))$ is strictly decreasing for
  $\beta<\alpha$. Now fix $\gamma<\alpha$.  By definition of
  asymptotic integral,
  \[
  \simpled(t_{\alpha})\sim
  x-\sum_{\beta<\alpha}\simpled(t_{\beta})\vleq\max\left\{
    \left|x-\sum_{\beta\leq\gamma}\simpled(t_{\beta})\right|,\left|\sum_{\gamma<\beta<\alpha}\simpled(t_{\beta})\right|\right\}
  .
  \]
  Note that this is true even if the last sum is empty, namely when
  $\alpha=\gamma+1$. Since
  $\simpled(t_{\beta})\vless\simpled(t_{\gamma})$ for all
  $\gamma<\beta<\alpha$, then
  $\sum_{\gamma<\beta<\alpha}\simpled(t_{\beta})\vless\simpled(t_{\gamma})$.
  Moreover, again by definition of asymptotic integral,
  $x-\sum_{\beta\leq\gamma}\simpled(t_{\beta})=\left(x-\sum_{\beta<\gamma}\simpled(t_{\beta})\right)-\simpled(t_{\gamma})\vless\simpled(t_{\gamma})$.
  Therefore, $\simpled(t_{\alpha})\vless\simpled(t_{\gamma})$, and by
  \prettyref{prop:D-vell-incr} we get $t_{\alpha}\vless t_{\gamma}$,
  as desired.

  We now claim that there is an $\alpha$ such that
  $x=\sum_{\beta<\alpha}\simpled(t_{\beta})$.  Suppose by
  contradiction that $x\neq\sum_{\beta<\alpha}\simpled(t_{\beta})$ for
  all $\alpha\in\on$. Let $\m_{\alpha}$ be the leading monomial of
  $x-\sum_{\beta<\alpha}\simpled(t_{\beta})$. Recall that by
  construction
  $\m_{\alpha}\veq
  x-\sum_{\beta<\alpha}\simpled(t_{\beta})\sim\simpled(t_{\alpha})$;
  since $\vell(\simpled(t_{\alpha}))$ is strictly decreasing, the
  sequence $\m_{\alpha}$ is strictly decreasing as well, and in
  particular injective.  Let $f:\on\to\on$ be the class function that
  sends $\alpha$ to the minimum $\beta\in\on$ such that
  $\m_{\alpha}\in\supp(\simpled(t_{\beta}))\cup\supp(x)$; clearly,
  such a $\beta$ always exists and it must be strictly less than
  $\alpha$.

  Since $f(\alpha)<\alpha$ for all $\alpha\in\on$, by
  \prettyref{lem:fodor-classes} there exists a $\beta\in\on$ such that
  $f^{-1}(\beta)$ is a proper class. However, by definition of $f$ the
  class $\left\{ \m_{\alpha}\,:\,\alpha\in f^{-1}(\beta)\right\} $ is
  actually a subset of $\supp(\simpled(t_{\beta}))\cup\supp(x)$.
  Since the map $\alpha\mapsto\m_{\alpha}$ is injective, this implies
  that $\supp(\simpled(t_{\beta}))\cup\supp(x)$ contains a proper
  class, a contradiction.

  Therefore, for some $\alpha$ we have
  $x=\sum_{\beta<\alpha}\simpled(t_{\beta})=\simpled\left(\sum_{\beta<\alpha}t_{\beta}\right)$,
  as desired.
\end{proof}

\begin{thm}
  \label{thm:liou-closed-small}The differential field $(\no,\simpled)$
  is a Liouville closed $H$-field with small derivation in the sense
  of \cite[p.\ 3]{Aschenbrenner2002}.
\end{thm}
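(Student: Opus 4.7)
The plan is to combine the ingredients already in place. By \prettyref{thm:D-surreal}, $(\no, \simpled)$ is an $H$-field, and $\no$ is real closed by \cite[Thm.\ 5.10]{Gonshor1986}. Liouville closedness then reduces to (i) surjectivity of $\simpled$ and (ii) the existence, for each $a \in \no$, of some $y \in \no^{*}$ with $\simpled(y) = ya$. Item (i) is exactly \prettyref{prop:liou-closed}. For (ii), pick $b \in \no$ with $\simpled(b) = a$ using (i) and set $y := \exp(b) \neq 0$; compatibility of $\simpled$ with $\exp$ then gives $\simpled(y) = \exp(b)\simpled(b) = ya$, as required.

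For the small derivation property, I must show $\simpled(\eps) \vless 1$ whenever $\eps \vless 1$. Assume $\eps \neq 0$ and let $\m_{0} = \exp(\gamma_{0})$ be its leading monomial. Since $\m_{0} \vless 1$ and $\m_{0} \neq 1$, $\gamma_{0} \in \J$ with $\gamma_{0} < -\N$. By \prettyref{lem:leading-D}, the leading term of $\simpled(\eps)$ is $\simpled(Q)$ for the dominant path $Q$ of $\eps$; since $Q(0) = \eps_{\m_{0}} \m_{0}$ while the shifted sequence $i \mapsto Q(i+1)$ is the dominant path of $\gamma_{0}$, unpacking \prettyref{def:path-D} yields
\[
\vell(\simpled(\eps)) = \sum_{j=1}^{k} \vell_{j}(\eps) + \vell(\simpled_{\li}(\vell_{k}(\eps))),
\]
where $\vell_{j}$ denotes the $j$-fold composition of $\vell$ and $k$ is the least index with $\vell_{k}(\eps) \in \li$, which exists by \prettyref{cor:vell-n-fold}.

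The remaining step is to show that this right-hand side is strictly negative. The leading summand $\vell_{1}(\eps) = \gamma_{0}$ is negative infinite; iterating \prettyref{lem:paths-get-small} gives $\vell_{j}(\eps) \veq \log_{j-1}|\gamma_{0}|$ for $j \geq 2$, so every subsequent summand has magnitude strictly dominated by $|\gamma_{0}|$. Similarly, \prettyref{prop:simple-D-log-log} applied with a reference point chosen coinitially from the $\kappa$-numbers via \prettyref{prop:kappa-coinitial} (with the explicit form of $\simpled_{\li}$ at those $\kappa$-numbers provided by \prettyref{def:simple-pre-D}) bounds $|\vell(\simpled_{\li}(\vell_{k}(\eps)))|$ by a quantity dominated by $|\gamma_{0}|$. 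Hence $\vell(\simpled(\eps)) \sim \gamma_{0} < 0$ and $\simpled(\eps) \vless 1$. The main obstacle is this last valuation estimate, where one must handle the case $|\gamma_{0}| \vless \omega$ by shrinking the reference from $\omega$ down into the coinitial family of $\kappa$-numbers; the rest of the argument is routine given the tools already developed.
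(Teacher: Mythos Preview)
Your treatment of Liouville closedness is correct and matches the paper's argument; using $y=\exp(b)$ with $\simpled(b)=a$ is equivalent to the paper's formulation via $\simpled(\log|x|)=y$.

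The small derivation argument, however, has a genuine gap, and the fix you sketch does not work. Your claimed bound $|\vell(\simpled_{\li}(\vell_k(\eps)))|\vless|\gamma_0|$ (and hence $\vell(\simpled(\eps))\sim\gamma_0$) is false in general. Take $\eps=\exp(-\kappa_{-1})$, so $\gamma_0=-\kappa_{-1}$ and $\lambda=\vell_2(\eps)=\log(\kappa_{-1})$. A direct computation from \prettyref{def:simple-pre-D} gives $\log(\simpled_{\li}(\lambda))=-\sum_{i\geq 1}\log_i(\omega)-\log(\kappa_{-1})$, whose magnitude is $\veq\log(\omega)\vgreater\kappa_{-1}=|\gamma_0|$. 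So $\vell(\simpled(\eps))=-\kappa_{-1}-\sum_{i\geq1}\log_i(\omega)\sim-\log(\omega)$, not $\sim\gamma_0$. Your proposed remedy of moving the reference point to some $\kappa_{-\alpha}$ with large $\alpha$ does not help: for every $\alpha\geq 1$ the leading term of $\log(\simpled_{\li}(\kappa_{-\alpha}))$ is $-\log(\omega)$, so $|\log(\simpled_{\li}(\kappa_{-\alpha}))|\veq\log(\omega)$ regardless of how far down the coinitial family you go, and \prettyref{prop:simple-D-log-log} cannot produce a bound below $|\gamma_0|$.

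The paper avoids all of this with a one-line argument: since $\simpled(\omega)=\simpled_{\li}(\omega)=1$ and $\omega\vgreater\eps$ with $\omega\not\veq 1$, \prettyref{prop:D-vell-incr}(1) gives $\simpled(\eps)\vless\simpled(\omega)=1$ directly. You already have $\simpled(\omega)=1$ available from the remark following \prettyref{def:simple-pre-D}, so there is no need to unpack the dominant path at all.
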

\begin{proof}
  By \prettyref{prop:liou-closed}, the function $\simpled$ is
  surjective.  In particular, the differential equations
  $\simpled(x)=y$ and $\simpled(x)/x=\simpled(\log|x|)=y$ always have
  solution in $\no$, and therefore $(\no,\simpled)$ is
  Liouville-closed. Moreover, since $\simpled(\omega)=1$, we have that
  if $x\vless1$, then $\simpled(x)\vless\simpled(\omega)=1$, which
  means by definition that the derivation is small, as
  desired.
\end{proof}

\begin{rem}
  The conclusion of \prettyref{cor:psi-no-inf} applies to
  $\bracket{\li}$ as well. Since the remaining construction is done
  using just field operations and infinite sums, we can easily verify
  that $\bracket{\li}$, equipped with the derivation
  $\simpled_{\restriction\bracket{\li}}$ (see
  \prettyref{rem:simple-D-bracket}), is Liouville-closed as well.
\end{rem}

\section{\label{sec:transseries}Transseries}

As anticipated in \prettyref{sec:nested}, the fact that the nested
truncation $\ntrunceq$ is well-founded is related in an essential way
to the structure of $\no$ as a field of transseries. We discuss here
in which sense $\no$ can be seen as a field of transseries and compare
the result to a previous conjecture.

\subsection{\label{sub:elt4}Axiom ELT4 of \cite{Kuhlmann2014}}

We mentioned in the introduction that a rather natural object to
consider is the smallest subfield of $\no$ containing $\li$ and closed
under some natural operations.

\begin{defn}
  We call $\bracket{\li}$ the smallest subfield of $\no$ containing
  $\R(\li)$ and closed under infinite sums, exponentiation and
  logarithm.
\end{defn}

A natural question is whether $\no=\bracket{\li}$; we can see that
this is equivalent to the first part of Conjecture 5.2 in
\cite{Kuhlmann2014}.  However, we can verify that the inclusion is
strict. To prove this, we characterize $\bracket{\li}$ in terms of
paths.

\begin{prop}
  \label{prop:bracket-paths-in-L}For all $x\in\no$,
  $x\in\bracket{\li}$ if and only if for every path
  $P\in\mathcal{P}(x)$ there exists $i$ such that
  $P(i)\in\li$.
\end{prop}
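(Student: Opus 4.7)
Let $\mathcal{X}$ denote the class of $x\in\no$ such that every path $P\in\mathcal{P}(x)$ enters $\li$; the goal is to show $\mathcal{X}=\bracket{\li}$, handling the two inclusions separately.

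For the inclusion $\bracket{\li}\subseteq\mathcal{X}$, I would verify that $\mathcal{X}$ contains $\R\cup\li$ and is a subfield of $\no$ closed under infinite sums, $\exp$, and $\log$. The key observation is that a path past its root depends only on the monomial of $P(0)$: from $P(0)=r\exp(\gamma)$ one continues with a path of $\gamma$, so the coefficient $r$ plays no role. This yields an \emph{inheritance property}: if $x\in\mathcal{X}$ and $\m\in\supp(x)$, then $\log\m\in\mathcal{X}$, since paths of $\log\m$ are shifted tails of paths of $x$ through the term $x_\m\m$. Given inheritance, the closure statements are routine. Reals are in $\mathcal{X}$ vacuously, and each $\lambda\in\li$ admits the unique path $P(i)=\log_i(\lambda)\in\li$ (see \prettyref{rem:paths-in-L}). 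Paths of $x+y$ and $\sum_i x_i$ past the root are paths of one of the summands, giving closure under additive operations and infinite sums. Paths of $xy$ past the root are paths of $\log\m+\log\n$ for $\m\in\supp(x)$, $\n\in\supp(y)$, which reduces to the additive case via inheritance. Closure under $\exp$ follows from the decomposition $\exp(x)=\exp(\gamma)\cdot\exp(s)\cdot\sum_{n\geq 0}\eps^n/n!$ with $x=\gamma+s+\eps$, because $\exp(\gamma)$ is a monomial whose single path-tail is a path of $\gamma\in\mathcal{X}$; closure under $\log$ and inverses is analogous, exploiting the series expansions of $\log(1+\eps)$ and $(1+\eps)^{-1}$.

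For the converse inclusion $\mathcal{X}\subseteq\bracket{\li}$, given $x\in\mathcal{X}$ I would build a \emph{path tree} $T_x$: its root has children indexed by the terms of $x$, and a node labeled $r\exp(\gamma)$ with $\exp(\gamma)\in\li$ is a leaf, while otherwise its children are indexed by the terms of $\gamma$. By construction, an infinite branch of $T_x$ would produce a path of $x$ with $P(i)\notin\li$ for every $i$, contradicting $x\in\mathcal{X}$; hence $T_x$ is well-founded and carries an ordinal rank. Transfinite induction on this rank shows that every node corresponds to an element of $\bracket{\li}$: a leaf $r\exp(\gamma)$ with $\exp(\gamma)\in\li$ lies in $\R\cdot\li\subseteq\bracket{\li}$; at an internal node $r\exp(\gamma)$, the inductive hypothesis places each term of $\gamma$ in $\bracket{\li}$, so $\gamma\in\bracket{\li}$ by the infinite-sum closure, then $\exp(\gamma)\in\bracket{\li}$ by the $\exp$-closure, and finally $r\exp(\gamma)\in\bracket{\li}$. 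A last application of infinite-sum closure at the root recovers $x=\sum_\m x_\m\m$ in $\bracket{\li}$.

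The main obstacle lies in the $(\Rightarrow)$ direction: although the inheritance property is natural and each closure statement is easy in isolation, the verifications must be sequenced so that each step relies only on previously established ones, with product closure invoked before $\exp$, and infinite-sum closure established early so as to handle the series expansions needed for $\exp$, $\log$, and inverses. The $(\Leftarrow)$ direction is conceptually clean once the path tree is identified as the natural well-founded structure on which to induct.
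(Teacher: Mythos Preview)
Your proposal is correct. The $(\Rightarrow)$ direction, showing that $\mathcal{X}$ is a subfield containing $\R\cup\li$ and closed under infinite sums, $\exp$ and $\log$, matches the paper's argument essentially verbatim; your inheritance property is exactly the paper's observation that $\gamma\in\F$ if and only if $r\exp(\pm\gamma)\in\F$ for all $r\in\R^{*}$, and the subsequent closure checks are organized the same way.

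For the $(\Leftarrow)$ direction the paper takes a shorter contrapositive route: if $x\notin\bracket{\li}$, then some term $r\exp(\gamma)$ of $x$ must have $\gamma\notin\bracket{\li}$ (otherwise closure under $\exp$ and infinite sums would force $x\in\bracket{\li}$), and iterating this choice produces a path with $P(i)\notin\bracket{\li}\supseteq\li$ for every $i$. Your well-founded path-tree with rank induction is the direct dual of this recursive construction and is perfectly valid; it simply sets up more machinery than the paper's three-sentence argument. What your formulation buys is an explicit ordinal invariant on which to induct, which could be reused elsewhere; what the paper's buys is brevity, since the only fact needed is that $\bracket{\li}$ is closed under $\log$, multiplication by reals, and infinite sums.
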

\begin{proof}
  Let $\mathbb{F}$ be the class of all $x\in\no$ such that for every
  $P\in\mathcal{P}(x)$ there exists $i$ such that $P(i)\in\li$. If
  $x\nin\bracket{\li}$, then clearly there is some term
  $r\exp(\gamma)$ in $x$ with $\gamma\nin\bracket{\li}$. Iterating
  this procedure we produce an infinite path $P\in\mathcal{P}(x)$ with
  $P(0)=r\exp(\gamma)$ and $P(i)\nin\bracket{\li}$ for every
  $i\in\N$. In particular, $P(i)\nin\li$ for all $i$, hence
  $x\notin\F$. Since $x$ was arbitrary, we have proved
  $\F\subseteq\bracket{\li}$.

  For the other inclusion, it is enough to observe that $\mathbb{F}$
  is a field containing $\R\cup\li$ and closed under infinite sums,
  $\exp$ and $\log$. The verification is easy once we recall that when
  $x$ is finite $\exp(x)$ and $\log(1+x)$ are given by power series
  expansions. The details are as follows:
  \begin{enumerate}
  \item $\mathbb{F}$ is clearly closed under infinite sums, contains
    $\R\cup\li$, and if $x\in\mathbb{F}$, then each term of $x$ is in
    $\F$;
  \item for $\gamma\in\J$, we have $\gamma\in\mathbb{F}$ if and only
    if $r\exp(\pm\gamma)\in\mathbb{F}$ for all $r\in\R^{*}$;
  \item using (2), if $t,u\in\T\cap\mathbb{F}$, then
    $t\cdot u\in\T\cap\mathbb{F}$ and $t^{-1}\in\T\cap\mathbb{F}$;
  \item by infinite distributivity, if $x,y\in\mathbb{F}$, then
    $x\cdot y\in\mathbb{F}$;
  \item expanding the definitions of $\exp$ and $\log$ (see
    \prettyref{thm:characterization-exp} and
    \prettyref{rem:expansion-log}) and using the above (1)-(4), if
    $x\in\mathbb{F}$, then $\exp(x)$ and $\log(x)$ are in
    $\mathbb{F}$.
  \end{enumerate}

  Therefore, $\bracket{\li}\subseteq\F$, hence $\F=\bracket{\li}$, as
  desired.
\end{proof}

The above proposition shows that $\bracket{\li}$ is a ``field of
exponential-logarithmic transseries'' in the sense of \cite[Def.\
5.1]{Kuhlmann2014}.  We omit here the full definition of
exponential-logarithmic transseries and just recall their main
defining property.

\begin{defn}
  Let $\F$ be a subfield of $\no$. Following \cite{Mourgues1993} we
  say that $\F$ is \textbf{truncation closed} if for every $f\in\F$
  and $\m\in\M$ we have $f|\m\in\F$.
\end{defn}

For instance, $\bracket{\li}$ is a truncation closed subfield of
$\no$. The following definition is a slight variation of \cite[Def.\
5.1]{Kuhlmann2014}.

\begin{defn}[{\cite[Def.\ 5.1]{Kuhlmann2014}}]
  A truncation closed subfield $\mathbb{F}$ of $\no$ closed under
  $\log$ satisfies ELT4 if and only if the following holds:
  \begin{itemize}
  \item[\textbf{ELT4.}] For all sequences of monomials
    $\m_{i}\in\M\cap\F$, with $i\in\N$, such that
    \[
    \m_{i}=\exp(\gamma_{i+1}+r_{i+1}\m_{i+1}+\delta_{i+1})
    \]
    where $r_{i+1}\in\R^{*}$, $\gamma_{i+1},\delta_{i+1}\in\J$, and
    $\gamma_{i+1}+r_{i+1}\m_{i+1}+\delta_{i+1}$ is in standard form,
    there is $k\in\N$ such that $r_{i+1}=1$ and
    $\gamma_{i+1}=\delta_{i+1}=0$ for all $i\geq k$.
  \end{itemize}
\end{defn}

\begin{rem}
  \label{rem:elt4}ELT4 implies that the sequence $(\m_{i})$ eventually
  satisfies $\m_{i}\in\li$. We can rephrase this in term of paths: a
  truncation closed subfield $\F$ of $\no$ closed under $\log$
  satisfies ELT4 if and only if for every $x\in\F$ and every path
  $P\in\mathcal{P}(x)$ there exists $k$ such that
  $P(k+1)\in\li$.
\end{rem}

\begin{prop}
  \label{prop:bracket-elt4}$\bracket{\li}$ is the largest truncation
  closed subfield of $\no$ closed under $\log$ and satisfying
  ELT4.
\end{prop}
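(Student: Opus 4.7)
The plan is to prove the two inclusions separately. To see that $\bracket{\li}$ itself is a truncation closed subfield closed under $\log$ satisfying ELT4, note first that closure under $\log$ is built into the definition. Truncation closedness follows from \prettyref{prop:bracket-paths-in-L}: every path of $x|\m$ is a path of $x$ (it simply starts at a term of $x$ whose monomial is $>\m$), so if every path of $x$ enters $\li$ then the same holds for $x|\m$, hence $x|\m\in\bracket{\li}$. For ELT4, I would invoke \prettyref{rem:elt4}, which identifies ELT4 for truncation closed subfields closed under $\log$ with the path condition ``there exists $k$ with $P(k+1)\in\li$''; this is in turn equivalent to the condition in \prettyref{prop:bracket-paths-in-L}, since once a path enters $\li$ it stays in $\li$ because $\log(\li)=\li$.

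For the converse, let $\F$ satisfy the hypotheses and $x\in\F$. By \prettyref{prop:bracket-paths-in-L} it suffices to prove that every $P\in\mathcal{P}(x)$ enters $\li$, and the strategy is to apply ELT4 to the sequence of monomials $\m_i$ in $P(i)=r_i\m_i$. The crux is to ensure $\m_i\in\F$ for all $i$, which I would establish via an extraction lemma: under truncation closedness alone, every term of every element of $\F$ already lies in $\F$. Indeed, given $\m\in\supp(x)$, the set $\{\n\in\supp(x)\suchthat\n<\m\}$ is reverse well-ordered and hence either empty or admits a maximum $\m''$; choosing any $\m'\in\M$ with $\m''<\m'<\m$ (or simply $\m'<\m$ if $\m''$ does not exist), truncation closedness yields $x|\m,\,x|\m'\in\F$ and therefore $r_\m\m=x|\m'-x|\m\in\F$, so $\m\in\F$. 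Iterating, closure under $\log$ puts $\log(\m_i)\in\F$, and a second application of the extraction lemma places the term $r_{i+1}\m_{i+1}$ of $\log(\m_i)$, and in particular $\m_{i+1}$, inside $\F$. The sequence $(\m_i)$ then meets the hypotheses of ELT4, producing a $k$ from which $r_{i+1}=1$ and $\gamma_{i+1}=\delta_{i+1}=0$ for all $i\geq k$; this forces $\m_i=\exp(\m_{i+1})$ in that range, so $\m_k\in\li$ and $P(k+1)=\m_{k+1}\in\li$, as required.

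The main obstacle I anticipate is precisely the bridge between the two formulations: ELT4 is a statement about sequences of monomials lying \emph{inside} $\F$, whereas \prettyref{prop:bracket-paths-in-L} is a statement about paths through terms of arbitrary elements of $\F$. Bridging the two requires both the extraction lemma and its inductive use through $\log$. Once that bridge is in place, the two inclusions follow directly from \prettyref{rem:elt4} and \prettyref{prop:bracket-paths-in-L}.
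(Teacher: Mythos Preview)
Your proof is correct and takes essentially the same approach as the paper, which is a one-line appeal to \prettyref{rem:elt4} and \prettyref{prop:bracket-paths-in-L}. The only difference is in level of detail: you explicitly supply the argument behind \prettyref{rem:elt4} (the extraction lemma showing that truncation closedness plus closure under $\log$ forces each $\m_i$ to lie in $\F$, so that ELT4 genuinely applies to the sequence arising from an arbitrary path) and you verify truncation closedness of $\bracket{\li}$, both of which the paper merely asserts.
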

\begin{proof}
  By \prettyref{rem:elt4} and \prettyref{prop:bracket-paths-in-L},
  $\bracket{\li}$ satisfies ELT4 and every other truncation closed
  subfield $\F$ of $\no$ closed under $\log$ and satisfying ELT4 is
  included in $\bracket{\li}$.
\end{proof}

In \cite[Conj.\ 5.2]{Kuhlmann2014} it was conjectured that $\no$
satisfies ELT4, which is equivalent to saying that
$\bracket{\li}=\no$.  However, this is not the case.

\begin{thm}
  \label{thm:elt4-fails}We have $\bracket{\li}\subsetneq\no$.
\end{thm}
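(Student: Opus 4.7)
By \prettyref{prop:bracket-paths-in-L}, we have $x \in \bracket{\li}$ if and only if every path $P \in \mathcal{P}(x)$ eventually enters $\li$, so it suffices to exhibit some $\m_0 \in \no$ admitting an infinite path $P \in \mathcal{P}(\m_0)$ with $P(i) \nin \li$ for all $i \in \N$. The plan is to construct a sequence of monomials $\m_0, \m_1, \m_2, \ldots$ lying in $\M \setminus \li$ such that for each $i$,
\[
\log(\m_i) = \lambda_i + \m_{i+1}
\]
is in standard form for a suitable log-atomic $\lambda_i \in \li$; then $P(i) := \m_i$ will give the desired path.

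Set $\lambda_i := \log_{2i}(\omega) \in \li$: these form a strictly decreasing sequence of log-atomic numbers, with $\lambda_{i+1} = \log_{2i+2}(\omega) < \log_{2i+1}(\omega) = \log(\lambda_i)$ strictly as monomials, which will allow the standard-form constraints to be met. For each $n \in \N$ define the finite nested approximations
\[
T^{(n)}_n := \exp(\lambda_n), \qquad T^{(n)}_i := \exp\bigl(\lambda_i + T^{(n)}_{i+1}\bigr) \text{ for } 0 \leq i < n.
\]
A downward induction on $n - i$, using $\lambda_{i+1} < \log(\lambda_i)$, shows that each $T^{(n)}_i$ is a monomial and that the sum $\lambda_i + T^{(n)}_{i+1}$ is in standard form; moreover the sequence $(T^{(n)}_i)_{n > i}$ is strictly increasing in $n$ and bounded above, say by $\exp(2\lambda_i)$. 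Then, at each level $i$, define $\m_{i+1}$ to be the simplest monomial $\m$ with $T^{(n)}_{i+1} < \m < \lambda_i$ for all $n$, and set $\m_i := \exp(\lambda_i + \m_{i+1})$.

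This produces a coherent sequence in which $\log(\m_i) = \lambda_i + \m_{i+1}$ has the two-element support $\{\lambda_i, \m_{i+1}\}$, so $\log(\m_i) \nin \M$ and hence $\m_i \nin \li$. The path $P(i) := \m_i$ then avoids $\li$ for every $i$, whence $\m_0 \nin \bracket{\li}$. The principal obstacle is the construction of the $\m_i$ from the approximations $T^{(n)}_i$: because $\no$ is not Dedekind complete, one cannot simply pass to a supremum, and the argument must proceed through the tree-like simplicity structure, selecting at each level the simplest monomial in the relevant convex class and verifying that it admits the recursive decomposition $\m_i = \exp(\lambda_i + \m_{i+1})$ with $\m_{i+1}$ constructed analogously one level deeper. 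Verifying this consistency relies on the uniformity of the definitions of sum and of $\exp$ (\prettyref{rem:uniform}, \prettyref{thm:characterization-exp}) together with the simplicity-preservation results of \prettyref{sec:nested} for $\exp$ applied to standard-form sums.
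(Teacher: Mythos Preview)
Your overall strategy is correct: by \prettyref{prop:bracket-paths-in-L} it suffices to produce a single surreal number admitting a path that never enters $\li$, and a nested sequence of monomials with $\log(\m_i)=\lambda_i+\m_{i+1}$ in standard form would do the job. The gap is that you do not actually construct such a sequence. You define each $\m_{i+1}$ as the simplest monomial in the convex class $\{T^{(n)}_{i+1}<\m<\lambda_i\text{ for all }n\}$, and then \emph{separately} set $\m_i:=\exp(\lambda_i+\m_{i+1})$. For this to yield a coherent path you need the two definitions to agree, i.e.\ that the simplest monomial in the level-$i$ class is exactly $\exp(\lambda_i+\m_{i+1})$. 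You acknowledge this as ``the principal obstacle'' and then assert that it follows from uniformity and the results of \prettyref{sec:nested}, but none of those results says that the map $\m\mapsto\exp(\lambda_i+\m)$ carries the simplest element of one convex class to the simplest element of another. Indeed, the simplest monomial above all the $T^{(n)}_{i+1}$ and below $\lambda_i$ might be much simpler than anything of the form $\exp(\lambda_{i+1}+\n)$; your upper cut $\m<\lambda_i$ is far too loose to force the leading term of $\log(\m)$ to be $\lambda_{i+1}$.

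The paper sidesteps this entirely. Rather than trying to pin down the simplest element at each level, it observes that the convex classes
\[
C_i:=\exp\bigl(\m_1+\exp(\m_2+\cdots+\exp(\m_i+o(\m_i))\cdots)\bigr)
\]
are nested, and invokes the saturation of $\no$ (every decreasing $\omega$-chain of nonempty convex classes defined by set-sized bounds has nonempty intersection) to pick any $x\in\bigcap_iC_i$. One then reads off from $x$ the path $P$ with $P(i+1)$ a term of $\vell(P(i))$ lying below the prescribed $\m_{i+1}$, and checks that $\vell(P(i))$ always has at least two terms, so $P(i)\notin\M\supseteq\li$. No control over simplicity, and no coherence between levels, is needed: saturation does all the work your recursive construction was trying to do by hand.
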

\begin{proof}
  Let $(\m_{i})$ be a sequence of monomials in $\M^{>1}$ such that
  $\m_{i+1}\vless\log(\m_{i})$.

  For $i\in\N$, let $C_{i}$ be the non-empty convex class defined by
  \[
  C_{i}:=\exp(\m_{1}+\exp(\m_{2}+\ldots+\exp(\m_{i}+o(\m_{i}))\ldots)).
  \]
  Since $\m_{i+1}\vless\log(\m_{i})$, we have
  $\vell(\exp(\m_{i+1}+o(\m_{i+1})))<2\m_{i+1}\vless\log(\m_{i})\veq\vell(\m_{i})$,
  and in particular $\exp(\m_{i+1}+o(\m_{i+1}))\subseteq o(\m_{i})$.
  Therefore, $C_{i+1}\subseteq C_{i}$. By the saturation properties of
  surreal numbers, the intersection $\bigcap_{i}C_{i}$ is non empty.

  Let $x\in\bigcap_{i}C_{i}$. We can write, for every $i\in\N$,
  \[
  x=x_{0}=\exp(\m_{1}+\exp(\m_{2}+\ldots+\exp(\m_{i}+x_{i})\ldots))
  \]
  where $x_{i}\vless\m_{i}$ for $i>0$. By construction we have
  $x_{i}=\exp(\m_{i+1}+x_{i+1})$.  Note, however, that this may not be
  the Ressayre representation of $x_{i}$, as $x_{i+1}$ is not
  necessarily in $\J$.

  Write $x_{i}=\gamma_{i}+r_{i}+\varepsilon_{i}$, with
  $\gamma_{i}\in\J$, $r_{i}\in\R$, $\varepsilon_{i}\in o(1)$. By the
  assumption $x_{i}\vless\m_{i}$ we get
  $\gamma_{i}\vless\m_{i}$. Moreover, since
  $x_{i}>\exp(\frac{1}{2}\m_{i+1})\vgreater1$, we have
  $\gamma_{i}\neq0$.

  Now define $P(i)$ as the leading term of $\gamma_{i}$ for $i\in\N$.
  We claim that $P$ in a path in $\mathcal{P}(x)$. Recall that if
  $y=\gamma+r+\varepsilon$, with $\gamma\in\J$, $r\in\R$,
  $\varepsilon\in o(1)$, then $\vell(\exp(y))=\gamma$. It follows that
  \[
  \vell(P(i))=\vell(\gamma_{i})=\vell(x_{i})=\vell(\exp(\m_{i+1}+\gamma_{i+1}+r_{i+1}+\varepsilon_{i+1}))=\m_{i+1}+\gamma_{i+1},
  \]
  Since $P(i+1)$ is a term of $\gamma_{i+1}$, it is also a term of
  $\vell(P(i))$, hence $P$ is a path, and clearly
  $P\in\mathcal{P}(x)$.

  Since $\vell(P(i))=\vell(\gamma_{i})=\m_{i+1}+\gamma_{i+1}$, where
  $\gamma_{i+1}\neq0$, we have $P(i)\nin\M\supset\li$ for all $i$.  By
  \prettyref{prop:bracket-paths-in-L} we have that
  $x\nin\bracket{\li}$, and therefore $\bracket{\li}\subsetneq\no$.
\end{proof}

Despite the fact that some paths may not end in $\li$, recall that if
$x\in\no\setminus\R$ and $P$ is its dominant path, then there exists
$i$ such that $P(i)\in\li$ (see \prettyref{cor:vell-n-fold} or
\prettyref{lem:leading-D}).

\subsection{Axiom T4 of \cite{Schmeling2001}}

We have seen that axiom ELT4 fails in $\no$. However, as we prove in
this section, $\no$ satisfies a weaker axiom called ``T4'' in
\cite[Def.\ 2.2.1]{Schmeling2001}. In fact, we shall see that T4 is
essentially equivalent to the fact that the relation $\ntrunceq$ of
nested truncation is well-founded. This will show that $\no$ is a
field of transseries as axiomatized by Schmeling.

We recall the definition of transseries in \cite{Schmeling2001}.  One
starts with an ordered field $C$ equipped with an increasing group
homomorphism $\exp:(C,+)\to(C^{*},\cdot)$, with $\exp(x)\geq1+x$ for
all $x\in C$ and $\img(\exp)=C^{>0}$. We are then given an ordered
group $\Gamma$, an additive group $B\subseteq C((\Gamma))$ containing
$C((\Gamma^{\leq0}))$ and an increasing homomorphism
$\exp:(B,+)\to(C((\Gamma))^{*},\cdot)$ extending $\exp:C\to C^{*}$ to
$B$. We say that $C((\Gamma)))$ equipped with $\exp$ is a
\textbf{field of transseries} if
\begin{itemize}
\item[\textbf{T1.}] \textbf{}$\img(\exp)=C((\Gamma))^{>0}$;
\item[\textbf{T2.}] \textbf{}$\Gamma\subseteq\exp(C((\Gamma^{>0})))$;
\item[\textbf{T3.}]
  \textbf{$\exp(x)=\sum_{n=1}^{\infty}\frac{x^{n}}{n!}$} for all
  $x\in C((\Gamma^{<0}))$;
\item[\textbf{T4.}] \textbf{}for all sequences of monomials
  $\m_{i}\in\Gamma$, with $i\in\N$, such that
  \[
  \m_{i}=\exp(\gamma_{i+1}+r_{i+1}\m_{i+1}+\delta_{i+1})
  \]
  where $r_{i+1}\in C^{*}$,
  $\gamma_{i+1},\delta_{i+1}\in C((\Gamma^{>0}))$, and
  $\gamma_{i+1}+r_{i+1}\m_{i+1}+\delta_{i+1}$ is in standard form,
  there is $k\in\N$ such that $r_{i+1}=\pm1$ and $\delta_{i+1}=0$ for
  $i\geq k$.
\end{itemize}
If we take $C=\R$, $\Gamma=\M=\exp(\J)$, and $B=\no$, then
$\no=\R((\M))$ equipped with $\exp$ is clearly a model of T1-T3. Axiom
T4 is related to the nested truncation $\ntrunceq$: it is not
difficult to see that assuming T4 one can easily deduce that
$\ntrunceq$ is well-founded.  We shall now verify that since
$\ntrunceq$ is well-founded (\prettyref{thm:nested-trunc-simplifies}),
axiom T4 holds on $\no$, thereby proving that $\no$ is a field of
transseries.

\begin{defn}
  \label{def:T4}Consider a path $P$ and write
  \[
  P(i)=r_{i}\exp(\gamma_{i+1}+P(i+1)+\delta_{i+1})
  \]
  where $0\neq r_{i}\in\R$, $\gamma_{i+1},\delta_{i+1}\in\J$, and
  $\gamma_{i+1}+P(i+1)+\delta_{i+1}$ is in standard form.

  We say that $P$ \textbf{satisfies T4} if there exists $k\in\N$ such
  that $r_{i+1}=\pm1$ and $\delta_{i+1}=0$ for all $i\geq k$,
  otherwise we say that $P$ \textbf{refutes T4}.

  We say that $x\in\no$ \textbf{satisfies T4} if all paths in
  $\mathcal{P}(x)$ satisfy T4, otherwise we say that $x$
  \textbf{refutes T4}.
\end{defn}

Clearly, T4 is equivalent to saying that every $x\in\no$ satisfies T4.

\begin{lem}
  \label{lem:nested-rank-path-unique}Let $x\in\no$ and
  $P\in\mathcal{P}(x)$.  If $\ntrank(P(i))=\ntrank(x)$ for all
  $i\in\N$, then $P$ satisfies T4.
\end{lem}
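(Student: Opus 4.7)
The plan is to exploit the rank formulas of Section~\ref{sec:nested} and to prove the stronger statement that T4 holds with $k=0$, i.e.\ that $r_{i}=\pm1$ and $\delta_{i+1}=0$ for every $i\geq0$. Write $P(i)=r_{i}\m_{i}$, so that, in the notation of \prettyref{def:T4}, $\m_{i}=\exp(\vell(P(i)))$ with $\vell(P(i))=\gamma_{i+1}+r_{i+1}\m_{i+1}+\delta_{i+1}\in\J$ in standard form. Since $P(0)\notin\R$ and $P(i)\in\R^{*}\M^{>1}$ for $i\geq1$, we have $\m_{i}\neq1$ for every $i$, which is precisely what is needed in order to apply \prettyref{prop:ntrank-monomial}.

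The core of the argument is a rank squeeze carried out uniformly in $i$. First, \prettyref{prop:ntrank-gamma-exp-gamma} applied to $\m_{i}=\exp(\vell(P(i)))$ gives $\ntrank(\m_{i})=\ntrank(\vell(P(i)))$. Second, since $P(i+1)=r_{i+1}\m_{i+1}$ is a term of $\vell(P(i))$, \prettyref{prop:ntrank-term} yields
\[
\ntrank(P(i+1))\leq\ntrank(\vell(P(i)))=\ntrank(\m_{i}),
\]
with strict inequality unless $\m_{i+1}$ is the minimum monomial in $\supp(\vell(P(i)))$. Third, because $\m_{i}\neq1$, \prettyref{prop:ntrank-monomial} gives $\ntrank(P(i))=\ntrank(\m_{i})+1$ when $r_{i}\neq\pm1$ and $\ntrank(P(i))=\ntrank(\m_{i})$ when $r_{i}=\pm1$. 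Chaining these relations produces $\ntrank(P(i+1))\leq\ntrank(\m_{i})\leq\ntrank(P(i))$, and the standing hypothesis $\ntrank(P(i+1))=\ntrank(P(i))$ collapses both inequalities to equalities.

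The right-hand equality $\ntrank(\m_{i})=\ntrank(P(i))$ forces $r_{i}=\pm1$. The left-hand equality $\ntrank(P(i+1))=\ntrank(\m_{i})=\ntrank(\vell(P(i)))$ forces $\m_{i+1}$ to be the minimum element of $\supp(\vell(P(i)))$; since the standard-form decomposition of $\vell(P(i))$ places $\supp(\delta_{i+1})$ strictly below $\m_{i+1}$, this compels $\delta_{i+1}=0$. Both conclusions hold for every $i\geq0$, so $P$ satisfies T4 with $k=0$ and the lemma follows. No essential difficulty is anticipated: the argument is a direct unwinding of the rank machinery developed in Section~\ref{sec:nested}, with the only mildly delicate point being the need to observe that $\m_{i}\neq1$ (which is guaranteed by the standing conventions on paths) in order to legitimately invoke \prettyref{prop:ntrank-monomial}.
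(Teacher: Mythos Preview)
Your argument is correct and coincides with the paper's approach: the paper packages the chain $\ntrank(P(i+1))\leq\ntrank(\vell(P(i)))=\ntrank(\m_i)\leq\ntrank(P(i))$ into an appeal to \prettyref{lem:nested-rank-path} and then invokes Propositions~\ref{prop:ntrank-gamma-exp-gamma}, \ref{prop:ntrank-monomial}, \ref{prop:ntrank-term} exactly as you do to force $r_i=\pm1$ and $\delta_{i+1}=0$. The only difference is that you unpack \prettyref{lem:nested-rank-path} into its constituent propositions explicitly, which is fine.
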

\begin{proof}
  By \prettyref{lem:nested-rank-path}, $P(0)=r_{0}\m_{0}$ with
  $r_{0}\in\R^{*}$ and $\m_{0}$ minimal in $\supp(x)$, and for all
  $i\in\N$, $P(i+1)=r_{i+1}\m_{i+1}$ with $r_{i+1}\in\R^{*}$ and
  $\m_{i+1}$ minimal in $\vell(P(i))$.  By Propositions
  \ref{prop:ntrank-gamma-exp-gamma}, \ref{prop:ntrank-monomial} and
  \ref{prop:ntrank-term}, it follows that
  $\ntrank(\vell(P(i)))=\ntrank(P(i))=\ntrank(x)$ for all $i$ and
  $r_{i}=\pm1$ for all $i\in\N$. Therefore, $P$ satisfies T4.
\end{proof}

We can now prove that T4 holds on $\no$.

\begin{thm}
  \label{thm:t4}Axiom T4 of \cite[Def.\ 2.2.1]{Schmeling2001} holds in
  $\no$ (with $C=\R$ and $\Gamma=\M$), hence $\no$ is a field of
  transseries in the sense of that paper.
\end{thm}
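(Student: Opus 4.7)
The plan is to reduce the theorem to \prettyref{lem:nested-rank-path-unique} via a minimality argument on the rank $\ntrank$. Suppose for contradiction that some $x\in\no$ admits a path $P\in\mathcal{P}(x)$ refuting T4 in the sense of \prettyref{def:T4}, and choose such an $x$ with $\ntrank(x)$ minimal (possible since $\on$ is well-ordered). If I can show that $\ntrank(P(i))=\ntrank(x)$ for every $i\in\N$, then \prettyref{lem:nested-rank-path-unique} will force $P$ to satisfy T4, a contradiction.

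I will prove this rank equality by induction on $i$. For $i=0$: \prettyref{lem:nested-rank-path} gives $\ntrank(P(0))\leq\ntrank(x)$; if this were strict, then $P$ itself, viewed as an element of $\mathcal{P}(P(0))$ (since the term $P(0)=r_{0}\m_{0}$ is its own only term), would refute T4 at strictly smaller rank, contradicting minimality. For the inductive step, assume $\ntrank(P(i))=\ntrank(x)$ and consider the tail-shifted path $P'(j):=P(j+i+1)$, which lies in $\mathcal{P}(\vell(P(i)))$ and still refutes T4. Writing $P(i)=r_{i}\m_{i}$ and combining \prettyref{prop:ntrank-gamma-exp-gamma} and \prettyref{prop:ntrank-monomial}, we obtain $\ntrank(\vell(P(i)))=\ntrank(\m_{i})\leq\ntrank(r_{i}\m_{i})=\ntrank(x)$; a strict inequality here would exhibit $P'$ as a T4-refuting path of smaller rank, contradicting minimality. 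Next, \prettyref{lem:nested-rank-path} applied to $P'$ yields $\ntrank(P(i+1))\leq\ntrank(\vell(P(i)))=\ntrank(x)$, and strict inequality is ruled out once more by viewing $P'$ as a path in $\mathcal{P}(P(i+1))$ and reapplying minimality.

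The main technical point to verify is that refuting T4 is preserved under tail-shifts of a path, but this is immediate from \prettyref{def:T4} since T4 is a purely eventual condition on the sequences $(r_{i+1})$ and $(\delta_{i+1})$. No further obstacle arises, as all the necessary rank estimates are already packaged in \prettyref{sec:nested}. The second clause of the theorem---that $\no$ is a field of transseries in Schmeling's sense---then requires only the verification of axioms T1--T3 with $C=\R$ and $\Gamma=\M$: T1 is the surjectivity of $\exp:\no\to\no^{>0}$ from \prettyref{fact:exp}(4), T2 is the identity $\M=\exp(\J)$ recalled at the start of the discussion of the Ressayre form, and T3 is the Taylor expansion from \prettyref{fact:exp}(3).
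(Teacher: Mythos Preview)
Your proof is correct and follows essentially the same approach as the paper: both arguments reduce to \prettyref{lem:nested-rank-path-unique} by showing that a T4-refuting path would have to maintain constant rank, using tail-shifts and the rank estimates from \prettyref{sec:nested}. The paper phrases this as a direct induction on $\ntrank(x)$ rather than a minimal-counterexample argument, but the content is identical.
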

\begin{proof}
  We prove that all $x\in\no$ satisfy T4. Let $x\in\no$, and assume by
  induction that $y$ satisfies T4 for all $y\in\no$ with
  $\ntrank(y)<\alpha:=\ntrank(x)$.

  Let $P\in\mathcal{P}(x)$ be any path. If $\ntrank(P(j))<\alpha$ for
  some $j\in\N$, then by inductive hypothesis the path
  $i\mapsto P(j+i)$ in $\mathcal{P}(P(j))$ satisfies T4, hence $P$
  itself satisfies T4. On the other hand, if $\ntrank(P(j))=\alpha$
  for all $j\in\N$, then $P$ satisfies T4 by
  \prettyref{lem:nested-rank-path-unique}.  Since $P$ was an arbitrary
  path, $x$ satisfies T4, as desired.
\end{proof}

\section{\label{sec:pre-derivations}Pre-derivations}

The purpose of this section is to show that $\simpled_{\li}$
(\prettyref{def:simple-pre-D}) is the simplest function (in the sense
of \ref{thm:simplest-pre-D}) with positive values satisfying the
inequalities of \prettyref{prop:vell-log-log}.  As anticipated in the
introduction, we call such functions ``pre-derivations''.

\begin{defn}
  \label{def:pre-D}A \textbf{pre-derivation} is a map
  $\somed_{\li}:\li\to\R^{>0}\M$ such that
  \[
  \log\left(\somed_{\li}(\lambda)\right)-\log\left(\somed_{\li}(\mu)\right)\vless\max\left\{
    \lambda,\mu\right\}
  \]
  and $\somed_{\li}(\exp(\lambda))=\exp(\lambda)\somed_{\li}(\lambda)$
  for all $\lambda,\mu\in\li$.
\end{defn}

By \prettyref{thm:extending-pre-D}, any pre-derivation can be extended
to a surreal derivation. We shall verify that $\simpled_{\li}$ has an
inductive definition that involves a variant of the inequalities of
\prettyref{def:pre-D}. As a corollary, $\simpled_{\li}$ is the
simplest pre-derivation. We first observe that pre-derivations must
satisfy the following condition.

\begin{prop}
  \label{prop:pre-D}If $\somed_{\li}$ is a pre-derivation, then
  \[
  \log\left(\frac{\somed_{\li}(\lambda)}{\prod_{i=0}^{l-1}\log_{i}(\lambda)}\right)-\log\left(\frac{\somed_{\li}(\mu)}{\prod_{i=0}^{m-1}\log_{i}(\mu)}\right)\vless\max\left\{
    \log_{l}(\lambda),\log_{m}(\mu)\right\}
  \]
  for all $\lambda,\mu\in\li$ and $l,m\in\N$.
\end{prop}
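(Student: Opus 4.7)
The plan is to reduce the desired inequality to the defining inequality of a pre-derivation applied at the iterated logarithms $\log_{l}(\lambda)$ and $\log_{m}(\mu)$. The key observation is that since $\li$ is closed under $\log$ and $\exp$, we have $\log_{l}(\lambda),\log_{m}(\mu)\in\li$ for all $l,m\in\N$, so the pre-derivation inequality is applicable to them.

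First, I would unwind the compatibility condition $\somed_{\li}(\exp(\nu))=\exp(\nu)\somed_{\li}(\nu)$, which, written in the form $\somed_{\li}(\nu)=\nu\cdot\somed_{\li}(\log\nu)$ valid for every $\nu\in\li$, yields by an easy induction on $l$ the identity
\[
\somed_{\li}(\lambda)=\lambda\cdot\log(\lambda)\cdot\log_{2}(\lambda)\cdots\log_{l-1}(\lambda)\cdot\somed_{\li}(\log_{l}(\lambda))
=\left(\prod_{i=0}^{l-1}\log_{i}(\lambda)\right)\somed_{\li}(\log_{l}(\lambda)),
\]
and the analogous identity for $\mu$ and $m$. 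Dividing and taking logarithms, the left-hand side of the desired inequality becomes exactly $\log(\somed_{\li}(\log_{l}(\lambda)))-\log(\somed_{\li}(\log_{m}(\mu)))$.

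Since $\log_{l}(\lambda)$ and $\log_{m}(\mu)$ are both in $\li$, the defining property of a pre-derivation (\prettyref{def:pre-D}) applied to these two elements gives
\[
\log(\somed_{\li}(\log_{l}(\lambda)))-\log(\somed_{\li}(\log_{m}(\mu)))\vless\max\{\log_{l}(\lambda),\log_{m}(\mu)\},
\]
which is exactly the right-hand side of the claimed inequality. This completes the proof.

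There is no real obstacle here: the statement is essentially a restatement of the pre-derivation inequality after using the multiplicative relation coming from compatibility with $\exp$ to ``strip off'' the first $l$ (respectively $m$) iterated logarithms. The point of isolating this reformulation is that it is the inequality one actually uses in the inductive characterization of $\simpled_{\li}$, so it will serve as the bridge to showing that $\simpled_{\li}$ is the simplest pre-derivation.
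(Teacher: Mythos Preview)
Your proof is correct and follows essentially the same approach as the paper: both use the compatibility condition to rewrite $\dfrac{\somed_{\li}(\lambda)}{\prod_{i=0}^{l-1}\log_{i}(\lambda)}$ as $\somed_{\li}(\log_{l}(\lambda))$ (and similarly for $\mu$), and then apply the defining inequality of a pre-derivation to the pair $\log_{l}(\lambda),\log_{m}(\mu)\in\li$.
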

\begin{proof}
  The conclusion follows trivially from
  \[
  \log(\somed_{\li}(\log_{l}(\lambda)))-\log(\somed_{\li}(\log_{m}(\mu)))\vless\max\left\{
    \log_{l}(\lambda),\log_{m}(\mu)\right\}
  \]
  since
  $\frac{\somed_{\li}(\lambda)}{\prod_{i=0}^{l-1}\log_{i}(\lambda)}=\somed_{\li}(\log_{l}(\lambda))$
  and
  $\frac{\somed_{\li}(\mu)}{\prod_{i=0}^{m-1}\log_{i}(\mu)}=\somed_{\li}(\log_{m}(\mu))$.
\end{proof}

We now use the above inequalities to give an inductive definition for
$\simpled_{\li}$.

\begin{lem}
  \label{lem:min-alpha}Let $x\in\no$ be such that $x>\N$. If
  $\alpha\in\on$ is the minimum ordinal such that
  $\ka{\alpha}\kleq x$, then $\ka{\alpha}\simpleq x$.
\end{lem}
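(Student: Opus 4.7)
The plan is to apply \prettyref{rem:simple-test} directly to the representation of $\ka{\alpha}$ recorded in \prettyref{rem:kappa-alpha-rep}, namely
\[
\ka{\alpha} = \N \mid \{\log_n(\ka{\beta}) \suchthat n \in \N,\ \beta < \alpha\}.
\]
Per that remark, it suffices to verify the two-sided inequality $\N < x < \log_n(\ka{\beta})$ for every $n \in \N$ and every $\beta < \alpha$, and then to conclude $\ka{\alpha} \simpleq x$ by \prettyref{rem:simple-test}.

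The left inequality $\N < x$ is immediate from the hypothesis $x > \N$. For the right inequality, I would translate the minimality of $\alpha$ into the required form. By minimality, for every $\beta < \alpha$ we have $\ka{\beta} \not\kleq x$. Unpacking \prettyref{def:kappa-equiv}(1), this says that $\ka{\beta} > \exp_h(x)$ for every $h \in \N$. Applying $\log_h$ to both sides (legitimate since both quantities are positive, and $\log_h(\ka{\beta})$ is well-defined because $\ka{\beta}$ is log-atomic, hence positive infinite at every finite iterate) yields $\log_h(\ka{\beta}) > x$ for every $h \in \N$ and every $\beta < \alpha$, which is exactly what we need.

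Combining both inequalities and invoking \prettyref{rem:simple-test} gives $\ka{\alpha} \simpleq x$. The proof really is just an unpacking of the definitions of $\kleq$ and of the representation of $\ka{\alpha}$, so no serious obstacle is to be expected. The only subtlety worth noting in passing is that the representation in \prettyref{rem:kappa-alpha-rep} is obtained from \prettyref{def:kappa} via a routine cofinality argument, replacing the left set $\{\exp_n(0) \suchthat n \in \N\}$ by the cofinal set $\N$.
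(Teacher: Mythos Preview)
Your argument is correct and takes a genuinely different route from the paper. The paper first passes to the index level: it takes the unique $z$ with $x\kequal\kappa_{-z}$ (so $\kappa_{-z}\simpleq x$ by the general theory of $\kappa$-numbers), observes that the minimality of $\alpha$ translates to $-\alpha$ being the least ordinal-negative $\leq -z$, and then uses the simple representation $-\alpha=\emptyset\mid\{-\beta:\beta<\alpha\}$ to get $-\alpha\simpleq -z$; finally it lifts this back via the simplicity-preserving property of the $\kappa$-map to obtain $\ka{\alpha}\simpleq\kappa_{-z}\simpleq x$. Your approach instead works directly in $\no$: you invoke the explicit representation of $\ka{\alpha}$ from \prettyref{rem:kappa-alpha-rep} and verify the defining inequalities for $x$ by unpacking $\ka{\beta}\not\kleq x$ as $x<\log_n(\ka{\beta})$ for all $n$. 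This is more elementary, avoiding the detour through $z$ and the need to cite that the $\kappa$-map preserves simplicity; the paper's version, on the other hand, makes the structural picture (minimality at the index level) slightly more transparent.
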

\begin{proof}
  Let $z\in\no$ be the unique number such that $x\kequal\kappa_{-z}$.
  It follows that $\kappa_{-z}\simpleq x$. Therefore, $\alpha\in\on$
  is the minimum ordinal such that $-\alpha\leq-z$. Since the
  representation
  $-\alpha=\emptyset\mid\left\{ -\beta\suchthat\beta<\alpha\right\} $
  is simple, and $-z<-\beta$ for all $\beta<\alpha$, we have
  $-\alpha\simpleq-z$.  It follows that
  $\ka{\alpha}\simpleq\kappa_{-z}\simpleq x$, as desired.
\end{proof}

\begin{lem}
  \label{lem:inductive-simple-pre-D}For all $\lambda\in\li$,
  $\simpled_{\li}(\lambda)$ is the simplest number $x\in\no^{>0}$ such
  that
  \[
  \log\left(\frac{x}{\prod_{i=0}^{l-1}\log_{i}(\lambda)}\right)-\log\left(\frac{\simpled_{\li}(\mu)}{\prod_{i=0}^{m-1}\log_{i}(\mu)}\right)\vless\max\left\{
    \log_{l}(\lambda),\log_{m}(\mu)\right\}
  \]
  for all $\mu\in\li$ such that $\mu\simple\lambda$ and for all
  $l,m\in\N$.
\end{lem}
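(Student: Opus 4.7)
My strategy has two halves: first check that $\simpled_{\li}(\lambda)$ belongs to the convex class $C$ defined by the inequality, and then identify it as the simplest positive element of $C$.

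For the first half, I note that the map $\simpled_{\li}$ of \prettyref{def:simple-pre-D} is a pre-derivation in the sense of \prettyref{def:pre-D}: the inequality from \prettyref{def:pre-D} is exactly \prettyref{prop:simple-D-log-log}, and the exponential compatibility is \prettyref{prop:simple-D-exp}. Applying \prettyref{prop:pre-D} to $\simpled_{\li}$ itself yields the stated inequality for all $\mu \in \li$ and all $l,m \in \N$; restricting to $\mu \simple \lambda$ is a weakening, so $\simpled_{\li}(\lambda)$ lies in $C$.

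For the second half, I proceed by induction on the simplicity of $\lambda \in \li$, so that the inductive hypothesis provides each $\simpled_{\li}(\mu)$ with $\mu \simple \lambda$ as the simplest positive solution of the analogous inequality for $\mu$. From the inequality in the statement I extract a representation $A \mid B$ of the simplest element of $C$: for each $\mu \in \li$ with $\mu \simple \lambda$, each $l,m \in \N$, and each positive $r \in \R$, the two one-sided constraints
\[
\pm\left(\log x - \sum_{i=1}^{l}\log_i(\lambda) - \log\simpled_{\li}(\mu) + \sum_{i=1}^{m}\log_i(\mu)\right) < r\cdot\max\{\log_l(\lambda),\log_m(\mu)\}
\]
feed one element into $A$ and one into $B$. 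These sets are non-empty (by the first half) with $A < B$, so $x_0 := A \mid B$ is the simplest positive element of $C$. It remains to verify $x_0 = \simpled_{\li}(\lambda)$.

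To check this equality, I expand $\log\simpled_{\li}(\mu) = \gamma_\mu$ via the explicit formula of \prettyref{def:simple-pre-D} and compare, term by term, with $\gamma_\lambda$. Writing $\gamma_\lambda - \gamma_\mu = -\sum_{\mu \kleq \ka\alpha \kless \lambda}\sum_i\log_i(\ka\alpha) + \sum_i(\log_i(\lambda) - \log_i(\mu))$ (with the appropriate sign depending on the relative position of the $\kequal$-classes), each piece on the right is controlled by $\max\{\log_l(\lambda),\log_m(\mu)\}$ once $l,m$ are chosen large, exactly as needed. This exhibits $\exp(\gamma_\lambda) = \simpled_{\li}(\lambda)$ as being in $C$ and, using \prettyref{lem:min-alpha} together with \prettyref{prop:nested-trunc-log-atom} to realise each relevant $\ka\alpha$ as a log-atomic number simpler than $\lambda$, shows that the bounds in $A \mid B$ are tight enough to force $x_0 = \simpled_{\li}(\lambda)$.

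The main obstacle will be the last comparison: matching the $\kappa$-indexed part of $\gamma_\lambda$ with bounds generated by specific $\mu$'s. The $\kappa$-numbers $\ka\alpha$ with $\ka\alpha \kgeq \lambda$ are simpler than $\lambda$ (by \prettyref{rem:kappa-alpha-rep} and \prettyref{lem:min-alpha}), so they can legitimately appear as values of $\mu$; using $\mu = \ka\alpha$ the inductive formula gives $\log\simpled_{\li}(\ka\alpha) = -\sum_{\beta < \alpha}\sum_i\log_i(\ka\beta)$, which produces precisely the ``$\kappa$-sum'' appearing in $\gamma_\lambda$. Taking $\mu = \ka\alpha$ for all admissible $\alpha$ and $l,m \to \infty$ forces $\log x_0$ to agree with $\gamma_\lambda$ on the whole support of $\gamma_\lambda$ except possibly on tails that vanish in the $\vless$-sense, and monomial simplicity of $\exp(\gamma_\lambda)$ via \prettyref{prop:mon-rep} and \prettyref{prop:trunc-exp-simplifies} seals the equality $x_0 = \exp(\gamma_\lambda) = \simpled_{\li}(\lambda)$.
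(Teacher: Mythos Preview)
Your overall strategy matches the paper's: verify that $\simpled_{\li}(\lambda)$ satisfies the inequalities, then show that for any $x$ in the convex class $C$ one has $\gamma_\lambda := \log\simpled_{\li}(\lambda) \trunceq \log x$, using the $\kappa$-numbers $\ka{\alpha}$ as the test values of $\mu$. The paper does this directly for an arbitrary $x\in C$; your detour through $x_0 = A\mid B$ is harmless but buys nothing, since proving $x_0 = \simpled_{\li}(\lambda)$ still requires establishing $\simpled_{\li}(\lambda)\simpleq x_0$ from the single fact $x_0\in C$, which is exactly the paper's argument specialised to one element. Your announced induction on the simplicity of $\lambda$ is likewise never invoked: you only use the explicit formula for $\simpled_{\li}(\mu)$, not any inductive hypothesis.

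The genuine gap is in your final step. You invoke \prettyref{prop:trunc-exp-simplifies} to pass from $\gamma_\lambda \trunceq \log x_0$ to $\exp(\gamma_\lambda)\simpleq x_0$, but that proposition requires both arguments to lie in $\J$, and nothing forces $\log x_0$ to be purely infinite: the class $C$ certainly contains non-monomials. \prettyref{prop:mon-rep} is not relevant here either. The paper's remedy is to observe that, when $\gamma_\lambda\neq 0$, the relation $\gamma_\lambda\trunceq\log x$ gives $\simpled_{\li}(\lambda)=\exp(\gamma_\lambda)\ntrunceq x$ directly from the definition of nested truncation, and then \prettyref{thm:nested-trunc-simplifies} yields $\simpled_{\li}(\lambda)\simpleq x$. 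The case $\lambda=\omega$ (where $\gamma_\lambda=0$) is handled separately, since then $\simpled_{\li}(\omega)=1$ is trivially the simplest positive number. Your sketch also elides the case split $\lambda=\ka{\alpha}$ versus $\lambda\neq\ka{\alpha}$ that the paper uses to choose, for each monomial $\m\in\supp(\gamma_\lambda)$, a specific $\mu\simple\lambda$ and specific $l,m$ witnessing $\log x - \gamma_\lambda \vless \m$.
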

\begin{proof}
  Let $\lambda\in\li$ and let $x\in\no^{>0}$ be a number satisfying
  the above inequalities. We need to prove that
  $\simpled_{\li}(\lambda)\simpleq x$.  If $\lambda=\omega$, then
  $\simpled_{\li}(\lambda)=1$, and we already know that $1\simpleq x$
  since $x>0$. For arbitrary $\lambda$, we claim that
  $\log(\simpled_{\li}(\lambda))\trunceq\log(x)$. When
  $\lambda\neq\omega$, this clearly implies that
  $\simpled_{\li}(\lambda)\ntrunceq x$, hence
  $\simpled_{\li}(\lambda)\simpleq x$ by
  \prettyref{thm:nested-trunc-simplifies}.

  Since $\simpled_{\li}$ is a pre-derivation, we have
  \[
  \log\left(\frac{\simpled_{\li}(\lambda)}{\prod_{i=0}^{l-1}\log_{i}(\lambda)}\right)-\log\left(\frac{\simpled_{\li}(\mu)}{\prod_{i=0}^{m-1}\log_{i}(\mu)}\right)\vless\max\left\{
    \log_{l}(\lambda),\log_{m}(\mu)\right\}
  \]
  for all $l,m\in\N$ and $\mu\in\li$. It follows that
  \[
  \log\left(\frac{x}{\prod_{i=0}^{l-1}\log_{i}(\lambda)}\right)-\log\left(\frac{\simpled_{\li}(\lambda)}{\prod_{i=0}^{l-1}\log_{i}(\lambda)}\right)\vless\max\left\{
    \log_{l}(\lambda),\log_{m}(\mu)\right\}
  \]
  for all $l,m\in\N$ and $\mu\simple\lambda$. Expanding the two
  logarithms, we get
  \begin{equation}
    \log(x)-\log(\simpled_{\li}(\lambda))\vless\max\left\{ \log_{l}(\lambda),\log_{m}(\mu)\right\} \label{eq:x-simpled-lambda}
  \end{equation}
  for all $l,m\in\N$ and $\mu\simple\lambda$.

  In order to prove $\log(\simpled_{\li}(\lambda))\trunceq\log(x)$,
  let $\m$ be a monomial in the support of
  $\log(\simpled_{\li}(\lambda))$.  We need to prove that
  \[
  \log(x)-\log(\simpled_{\li}(\lambda))\vless\m.
  \]

  Let $\alpha$ be the minimum ordinal such that
  $\ka{\alpha}\kleq\lambda$.  By \prettyref{lem:min-alpha}, we have
  $\ka{\alpha}\simpleq\lambda$.  Note moreover that
  $\ka{\alpha}\kleq\lambda\kless\ka{\beta}$ for all $\beta<\alpha$. We
  distinguish two cases.

  If $\lambda=\ka{\alpha}$, then
  $\log(\simpled_{\li}(\lambda))=-\sum_{\beta<\alpha}\sum_{i=1}^{\infty}\log_{i}(\ka{\beta})$.
  Therefore, $\m$ is of the form $\log_{i}(\ka{\beta})$ for some
  $\beta<\alpha$ and $i\in\N$. Note that
  $\ka{\beta}\simple\ka{\alpha}=\lambda$ and
  $\ka{\beta}\kgreater\ka{\alpha}$. It follows that
  $\log_{l}(\lambda)<\log_{i}(\ka{\beta})$ for all $l\in\N$. Taking
  $\mu=\ka{\beta}$ and $m=i$ in \prettyref{eq:x-simpled-lambda}, we
  get
  $\log(x)-\log(\simpled_{\li}(\lambda))\vless\log_{i}(\ka{\beta})=\m$,
  as desired.

  If $\lambda\neq\ka{\alpha}$, then
  $\log(\simpled_{\li}(\lambda)) =
  -\sum_{\ka{\beta}\kgeq\lambda}\sum_{i=1}^{\infty}\log_{i}(\ka{\beta})+\sum_{i=1}^{\infty}\log_{i}(\lambda)$,
  and $\ka{\alpha}\simple\lambda$. By the choice of $\alpha$ we also
  have $\ka{\alpha}\kleq\lambda$, which means that for all $l\in\N$
  there exists $m\in\N$ such that
  $\log_{m}(\ka{\alpha})<\log_{l}(\lambda)$.  Since
  $\ka{\alpha}\simple\lambda$, we can take $\mu=\ka{\alpha}$ in
  \prettyref{eq:x-simpled-lambda} and deduce that for all $l\in\N$ we
  have
  \[
  \log(x)-\log(\simpled_{\li}(\lambda))\vless\log_{l}(\lambda).
  \]
  If $\m=\log_{l}(\lambda)$ for some $l\in\N$, we are done. If
  $\m=\log_{i}(\ka{\beta})$ for some $i\in\N$ and some
  $\ka{\beta}\kgeq\lambda$, then there exists $l$ such that
  $\log_{l}(\lambda)<\log_{i}(\ka{\beta})$, and therefore
  \[
  \log(x)-\log(\simpled_{\li}(\lambda))\vless\log_{l}(\lambda)\vless\log_{i}(\ka{\beta})=\m,
  \]
  as desired.
\end{proof}

\begin{rem}
  \prettyref{lem:inductive-simple-pre-D} shows that one can
  \emph{define} inductively $\simpled_{\li}(\lambda)$ as the simplest
  $x\in\no^{>0}$ satisfying the inequalities of the lemma. However,
  the fact that $x$ is the \emph{simplest} such number is not
  essential, and other choices of $x\in\T$ satisfying the same
  inequalities are possible and lead to other surreal
  derivations.
\end{rem}

\begin{thm}
  \label{thm:simplest-pre-D}Let $\somed_{\li}:\li\to\no^{>0}$ be a
  pre-derivation. If $\lambda\in\li$ is a number of minimal simplicity
  such that $\somed_{\li}(\lambda)\neq\simpled_{\li}(\lambda)$, then
  $\simpled_{\li}(\lambda)\simple\somed_{\li}(\lambda)$.
\end{thm}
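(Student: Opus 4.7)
The plan is to combine Proposition \ref{prop:pre-D}, which gives the inequalities satisfied by any pre-derivation, with Lemma \ref{lem:inductive-simple-pre-D}, which characterizes $\simpled_{\li}(\lambda)$ as the simplest positive element satisfying precisely those inequalities relative to the previously computed values of $\simpled_{\li}$ at $\mu \simple \lambda$.

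Concretely, let $\lambda$ be a number of minimal simplicity in $\li$ with $\somed_{\li}(\lambda) \neq \simpled_{\li}(\lambda)$. By minimality, for every $\mu \in \li$ with $\mu \simple \lambda$ we have $\somed_{\li}(\mu) = \simpled_{\li}(\mu)$. Applying Proposition \ref{prop:pre-D} to $\somed_{\li}$, with $\mu$ ranging over log-atomic numbers strictly simpler than $\lambda$ and $l,m \in \N$ arbitrary, we obtain
\[
\log\left(\frac{\somed_{\li}(\lambda)}{\prod_{i=0}^{l-1}\log_{i}(\lambda)}\right)-\log\left(\frac{\simpled_{\li}(\mu)}{\prod_{i=0}^{m-1}\log_{i}(\mu)}\right) \vless \max\{\log_{l}(\lambda),\log_{m}(\mu)\},
\]
after substituting $\somed_{\li}(\mu) = \simpled_{\li}(\mu)$ in the left-hand side. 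This is exactly the system of inequalities appearing in Lemma \ref{lem:inductive-simple-pre-D} with $x = \somed_{\li}(\lambda) \in \no^{>0}$. Since $\simpled_{\li}(\lambda)$ is, by that lemma, the simplest positive surreal number satisfying these inequalities, it follows that $\simpled_{\li}(\lambda) \simpleq \somed_{\li}(\lambda)$. Combined with the hypothesis $\somed_{\li}(\lambda) \neq \simpled_{\li}(\lambda)$, this yields $\simpled_{\li}(\lambda) \simple \somed_{\li}(\lambda)$, as required.

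There is no substantive obstacle here beyond verifying that the inequalities of Proposition \ref{prop:pre-D} match those in Lemma \ref{lem:inductive-simple-pre-D} and that the substitution is legitimate; the real work has already been done in establishing the inductive characterization of $\simpled_{\li}$ in Lemma \ref{lem:inductive-simple-pre-D}. The only point requiring brief care is that the condition in Lemma \ref{lem:inductive-simple-pre-D} quantifies over $\mu \in \li$ with $\mu \simple \lambda$, whereas Proposition \ref{prop:pre-D} quantifies over all $\mu \in \li$; but restricting to $\mu \simple \lambda$ only weakens the condition, so the deduction goes through unchanged.
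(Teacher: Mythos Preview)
Your proof is correct and follows essentially the same route as the paper: apply Proposition~\ref{prop:pre-D} to $\somed_{\li}$, use minimality of $\lambda$ to replace $\somed_{\li}(\mu)$ by $\simpled_{\li}(\mu)$ for $\mu\simple\lambda$, and then invoke Lemma~\ref{lem:inductive-simple-pre-D} to conclude $\simpled_{\li}(\lambda)\simpleq\somed_{\li}(\lambda)$, hence $\simple$ since the two values differ. Your closing remarks about the quantifier restriction and the passage from $\simpleq$ to $\simple$ are accurate and add a bit of welcome clarity, but the argument is otherwise identical to the paper's.
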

\begin{proof}
  Let $\lambda\in\li$ is a number of minimal simplicity such that
  $\somed_{\li}(\lambda)\neq\simpled_{\li}(\lambda)$.  By assumption,
  $\somed_{\li}(\mu)=\simpled_{\li}(\mu)$ for all $\mu\simple\lambda$.
  Since $\somed_{\li}$ is a pre-derivation, by \prettyref{prop:pre-D}
  it follows that for all $\mu\simple\lambda$ and $l,m\in\N$ we have
  \[
  \log\left(\frac{\somed_{\li}(\lambda)}{\prod_{i=0}^{l-1}\log_{i}(\lambda)}\right)-\log\left(\frac{\simpled_{\li}(\mu)}{\prod_{i=0}^{m-1}\log_{i}(\mu)}\right)\vless\max\left\{
    \log_{l}(\lambda),\log_{m}(\mu)\right\} .
  \]
  By \prettyref{lem:inductive-simple-pre-D}, this implies that
  $\simpled_{\li}(\lambda)\simple\somed_{\li}(\lambda)$, as
  desired.
\end{proof}

\begin{rem}
  A similar argument can be applied to the function $\simpled_{\li}'$
  of \prettyref{def:natural-nonsimple-pre-D} to prove that for all
  $\lambda\in\li$, $\simpled_{\li}'(\lambda)$ is the simplest
  \emph{infinite} number $x\in\no^{>0}$ such that for all
  $\mu\simple\lambda$ and $l,m\in\N$ we have
  \[
  \log\left(\frac{x}{\prod_{i=0}^{l-1}\log_{i}(\lambda)}\right)-\log\left(\frac{\simpled_{\li}'(\mu)}{\prod_{i=0}^{m-1}\log_{i}(\mu)}\right)\vless\max\left\{
    \log_{l}(\lambda),\log_{m}(\mu)\right\} .
  \]
  In particular, $\simpled_{\li}'$ is the simplest pre-derivation with
  only infinite values.
\end{rem}

\providecommand{\bysame}{\leavevmode\hbox to3em{\hrulefill}\thinspace}
\providecommand{\MR}{\relax\ifhmode\unskip\space\fi MR }
% \MRhref is called by the amsart/book/proc definition of \MR.
\providecommand{\MRhref}[2]{%
  \href{http://www.ams.org/mathscinet-getitem?mr=#1}{#2}
}
\providecommand{\href}[2]{#2}

\end{document}